\numberwithin{equation}{section} 
\numberwithin{figure}{section}
\newtheorem{theorem}[equation]{Theorem}%
\newtheorem*{theorem*}{Theorem}%
\newtheorem{lemma}[equation]{Lemma}%
\newtheorem{proposition}[equation]{Proposition}%
\newtheorem{corollary}[equation]{Corollary}%
\newtheorem*{conjecture*}{Conjecture}%
\providecommand{\customgenericname}{}
\newcommand{\newcustomtheorem}[2]{%
  \newenvironment{#1}[1]
  {%
   \renewcommand\customgenericname{#2}%
   \renewcommand\theinnercustomgeneric{##1}%
   \innercustomgeneric
  }
  {\endinnercustomgeneric}
}
\theoremstyle{definition} 
\newtheorem{definition}[equation]{Definition}%
\newtheorem*{definition*}{Definition}%
\newtheorem{example}[equation]{Example}%
\newtheorem{remark}[equation]{Remark}%
\newtheorem{notation}[equation]{Notation}%
\newcommand{\set}[1]{\left\{#1\right\}}%
\newcommand{\C}{\ensuremath{\mathcal{C}}}%
\renewcommand{\P}{\ensuremath{\mathcal{P}}}%
\renewcommand{\O}{\ensuremath{\mathcal{O}}}%
\newcommand{\sSet}{\ensuremath{\mathsf{sSet}}}%
\newcommand{\Cat}{\mathsf{Cat}}
\newcommand{\Op}{\mathsf{Op}}%
\newcommand{\sOp}{\ensuremath{\mathsf{sOp}}}%
\newcommand{\dSet}{\mathsf{dSet}}
\DeclareMathOperator{\colim}{colim}%
\tikzset{%
  treenode/.style = {shape=rectangle, rounded corners,%
                     draw, align=center,%
                     top color=white, bottom color=blue!20},%
  root/.style     = {treenode, font=\Large, bottom color=red!30},%
  env/.style      = {treenode, font=\ttfamily\normalsize},%
  dummy/.style    = {circle,draw,inner sep=0pt,minimum size=2mm}%
}%
\author{Peter Bonventre, Lu\'is A. Pereira}%
\title{Equivariant dendroidal Segal spaces and $G$-$\infty$-operads}%
\date{\today}
\begin{document}	\maketitle%

\begin{abstract}
	We introduce the analogues of the notions of complete Segal space and of Segal category in the context of equivariant operads with norm maps, and build model categories with these as the fibrant objects. We then show that these model categories are Quillen equivalent to each other and to the model category for 
	$G$-$\infty$-operads built in a previous paper.

	Moreover, we establish variants of these results for the Blumberg-Hill indexing systems. 
      
	In an appendix, we discuss Reedy categories in the equivariant context.
\end{abstract}

\tableofcontents

%

\section{Introduction}

This paper follows \cite{Per18} and \cite{BP17} and is the third piece of a larger project aimed at understanding the homotopy theory of 
\textit{equivariant operads with norm maps}.
Here, norm maps are a new piece of structure that must be considered when dealing with equivariant operads
(see Remark \ref{NORMMAP REM} for a brief definition of norm maps or the introductions to 
\cite{Per18},\cite{BP17} for a more extensive discussion).
The need to understand norm maps
was made clear by Hill, Hopkins and Ravenel, who used  them in the context of equivariant ring spectra
as part of their solution of the Kervaire invariant one problem \cite{HHR16}.

The starting point of this project was the discovery by the authors of,
for each finite group $G$,
a category $\Omega_G$ of $G$-trees whose objects diagrammatically encode compositions of norm maps 
(in a $G$-equivariant operad)
and whose arrows encode the necessary compatibilities between such compositions.
Our categories $\Omega_G$ are a somewhat non-obvious  generalization of the dendroidal category $\Omega$
of Cisinski-Moerdijk-Weiss, 
and indeed all the key combinatorial concepts in their work,
such as faces, degeneracies, boundaries and horns, generalize to $G$-trees \cite[\S 5,\S 6]{Per18}.
As such, it is natural to attempt to generalize the 
Cisinski-Moerdijk program \cite{CM11},\cite{CM13a},\cite{CM13b} to the equivariant context. 

Recall that the main result of their program is the existence of a Quillen equivalence
\[
	W_{!} \colon \mathsf{dSet} 
		\rightleftarrows
	\mathsf{sOp}  \colon N_{hc} 
\]
where $\mathsf{dSet} = \mathsf{Set}^{\Omega^{op}} $
is the category of presheaves on $\Omega$, 
which are called \textit{dendroidal sets},
and 
$\mathsf{sOp}$ is the category of \textit{simplicial colored operads} (also referred to as simplicial multicategories).
Their program was carried out in three main steps:
\begin{inparaenum}
	\item[(i)] \cite{CM11} established the existence of the \textit{$\infty$-operad} model structure on $\mathsf{dSet}$ 
	(with some of the key combinatorial analysis based on Moerdijk and Weiss' previous work in \cite{MW09});
	\item[(ii)] \cite{CM13a} established auxiliary model structures on the categories $\mathsf{sdSet}$ and $\mathsf{PreOp}$
              of dendroidal spaces and pre-operads, with fibrant objects the \textit{complete dendroidal Segal spaces} and \textit{Segal operads},
              and showed that all three of $\mathsf{dSet}$, $\mathsf{sdSet}$ and $\mathsf{PreOp}$ are Quillen equivalent;
	\item[(iii)] lastly, \cite{CM13b} established the existence of the model structure on $\mathsf{sOp}$ as well as the Quillen equivalence between $\mathsf{sOp}$ and $\mathsf{PreOp}$, finishing the proof of the main result of the program\footnote{Recall that by using the inclusions of simplicial categories and simplicial sets into simplicial operads and dendroidal sets (cf. the introduction to \cite{CM13b}), the Cisinski-Moerdijk program recovers and generalizes the Bergner-Joyal-Lurie-Rezk-Tierney program studying the various models for $(\infty,1)$-categories. A survey of these models can be found at \cite{Ber10}.}.
\end{inparaenum}

From the perspective of the Cisinski-Moerdijk program, 
\cite{Per18} is then the equivariant analogue of the first step \cite{CM11} (as well as \cite{MW09}), 
while the present paper provides the equivariant analogue of the
second step \cite{CM13a}.
More explicitly, in \cite{Per18}, and inspired by the category $\Omega_G$ of $G$-trees,
the second author equipped the category
$\mathsf{dSet}^G$ of $G$-equivariant dendroidal sets with a model structure whose fibrant objects 
are ``equivariant operads with norm maps up to homotopy'',
called $G$-$\infty$-operads.
Further, it was shown therein \cite[Prop. 6.15]{Per18} that whenever a $G$-operad
$\O \in \mathsf{sOp}^G$ is
suitably fibrant the homotopy coherent nerve
$N_{hc}(\O)$ is such a 
$G$-$\infty$-operad (rather than just an ``$\infty$-operad with a $G$-action'').
In the present paper our main results,
Theorems \ref{INC0AGJ THM}, \ref{PREOPMOD THM}, \ref{ANOQUEQUIV THM},
are then the existence of suitable model structures on the categories 
$\mathsf{sdSet}^G$ and $\mathsf{PreOp}^G$
of $G$-dendroidal spaces and $G$-pre-operads,
with fibrant objects the \textit{complete equivariant dendroidal Segal spaces} and \textit{equivariant Segal operads},
as well as the existence of Quillen equivalences
between all three of 
$\mathsf{dSet}^G$, $\mathsf{sdSet}^G$ and $\mathsf{PreOp}^G$.

Table \ref{TABLE} provides a summary of the parallel narratives for categories, operads and equivariant operads,
 listing the different model categories as well as the terminology for the fibrant objects.
\begin{table}[htbp]
      \label{TABLE}
      \centering
      \resizebox{\columnwidth}{!}{%
        \begin{tabular}{|c|c|c|}
          \hline
          ``categories up to htpy'' & ``operads up to htpy'' & ``equivariant operads up to htpy''
          \\ \hline
          simplicial sets $\sSet$ & dendroidal sets $\dSet$ & equivariant dendroidal sets $\dSet^G$
          \\
          Joyal model structure & model str. from \cite{CM11} & model structure from \cite{Per18}
          \\
          $\infty$-categories & $\infty$-operads & $G$-$\infty$-operads
          \\ \hline
          bisimplicial sets $\mathsf{ssSet}$ & simp. dend. sets $\mathsf{sdSet}$ & equiv. simp. dend. sets $\mathsf{sdSet}^G$
          \\
          Rezk model structure & model str. from \cite{CM13a} & model structure from \S \ref{CEDSS SEC}
          \\
          complete Segal spaces & complete dend. Segal spaces & complete equiv. dend. Segal spaces
          \\ \hline
          Segal precategories $\mathsf{SeCat}$ & Segal preoperads $\mathsf{PreOp}$ & equiv. Segal preoperads $\mathsf{PreOp}^G$
          \\
          Hirschowitz-Simpson & model str. from \cite{CM13a} & model structure from \S \ref{PREOP SEC}
          \\
          Segal categories & Segal operads & equiv. Segal operads
          \\ \hline
          simplicial categories $\mathsf{sCat}$ & simplicial operads $\sOp$ & equiv. simplicial operads $\sOp^G$
          \\
          Bergner model structure & model str. from \cite{CM13b} & model structure forthcoming
          \\ \hline
        \end{tabular}
      }
      \caption{A summary of models for $\infty$-categories, $\infty$-operads, and $G$-$\infty$-operads.}
\end{table}

It is worth noting that, much as was the case of the work in \cite{Per18}, our results are not formal consequences of their non-equivariant analogues, due to the nature of norm maps\footnote{As a point of contrast, we note that the lack of norms in the categorical case causes the equivariant generalization of this latter program to indeed be formal; see \cite{Ste16,Ber17}.}.
Indeed, in \cite{BP17}, the second piece of our project,
the authors introduced the notion of 
\textit{genuine equivariant operads},
which are new algebraic objects motivated by the combinatorics of norm maps as encoded by the category $\Omega_G$ of $G$-trees.
And while a priori the work in \cite{BP17} is largely perpendicular to the Cisinski-Moerdijk program
(the main result \cite[Thm. III]{BP17} is what one might call the 
``operadic Elmendorf-Piacenza theorem'', which is an equivariant phenomenon),
some of the new technical hurdles in this paper versus \cite{CM13a} come from the need to work with 
(colored) genuine equivariant operads, 
which we repackage in \S \ref{GENEQOP SEC} via an independent (but equivalent) perspective to that of \cite{BP17}.

\vskip 10pt

The organization of the paper is as follows.

\S \ref{PREL SEC} mostly recalls the necessary notions concerning the category $\Omega_G$ of $G$-trees and the category $\mathsf{dSet}^G$ of $G$-dendroidal sets 
that were introduced in \cite{Per18}.
However, some new notions and results can be found throughout, most notably the notion of \textit{orbital face} of a $G$-tree in Definition \ref{ORBFACE DEF}
and the associated notion of \textit{orbital horn} in 
\S \ref{EQDENDSETS SEC}.

The main goal of \S \ref{EQINNERAN SEC} is to establish
Proposition \ref{HYPER PROP},
which roughly states that Segal core inclusions, horn inclusions and orbital horn inclusions can in some circumstances be used interchangeably.
The bulk of the work takes place in 
\S \ref{CHAREDGE SEC} where Lemma \ref{CHAREDGE LEM},
a powerful technical result we call the
\textit{characteristic edge lemma}, is established.
\S \ref{HYPERSAT SEC} then shows Proposition \ref{HYPER PROP} via a string of easy applications of 
Lemma \ref{CHAREDGE LEM}.
Lastly, \S \ref{GENEQOP SEC} recasts the genuine equivariant operads of \cite{BP17} in a different perspective more suitable for our purposes in \S \ref{EDSS_SEC}.

\S \ref{QUIEQ SEC} establishes the desired Quillen equivalences between 
$\mathsf{dSet}^G$, $\mathsf{sdSet}^G$, $\mathsf{PreOp}^G$
via largely abstract methods.
Our approach is inspired by 
\cite[Thm. 6.6]{CM13a}, which observes that the Rezk/complete model structure on $\mathsf{sdSet}$ can be built via two distinct localization procedures.
In fact, we will prefer to use the common localization perspective to define the equivariant Rezk/complete model structure on $\mathsf{sdSet}^G$ (cf. Definition \ref{JOINREED DEF} and Remark \ref{JOINREED REM}),
and then ``work backward'' (cf. Remark \ref{RECOVDEF REM}) to obtain the analogues of the definitions in \cite{CM13a} and of \cite[Thm. 6.6]{CM13a}.
%
As such, in \S \ref{JOINBOUS SEC} we first discuss an abstract setting for such common
localizations,
which is then applied in \S \ref{CEDSS SEC} to obtain
the Quillen equivalence
$\mathsf{dSet}^G \rightleftarrows \mathsf{sdSet}^G$
in Theorem \ref{INC0AGJ THM}.
\S \ref{PREOP SEC} then uses purely formal techniques to induce the model structure on
$\mathsf{PreOp}^G$ from the model structure on
$\mathsf{sdSet}^G$
and to establish the Quillen equivalence
$\mathsf{PreOp}^G \rightleftarrows \mathsf{sdSet}^G$
in Theorem \ref{ANOQUEQUIV THM}.

In our last main section \S \ref{EDSS_SEC},
motivated by the fact that in our desired model structure on simplicial $G$-operads $\mathsf{sOp}^G$
(to be described in a follow-up paper)
the weak equivalences are Dwyer-Kan equivalences
(i.e. characterized by fully faithfulness and essential surjectivity), 
we establish Theorem \ref{COMPIFFDK THM}, which gives a Dwyer-Kan type description of the weak equivalences 
between the fibrant objects in either of $\mathsf{sdSet}^G$, $\mathsf{PreOp}^G$.

\S \ref{INDEX SEC}, which is transversal to the rest of the paper, generalizes all our main results by replacing the category $\Omega_G$ of $G$-trees
with certain special subcategories
$\Omega_{\mathcal{F}} \subseteq \Omega_G$
which (almost exactly) correspond to the 
\textit{indexing systems} first identified
by Blumberg and Hill in \cite{BH15}.

Lastly, Appendix \ref{EQREED AP} discusses an equivariant variant of the \textit{generalized Reedy categories}
of \cite{BM11} which plays an essential role in \S \ref{CEDSS SEC}
when describing the model structure on $\mathsf{sdSet}^G$.
The key to this appendix is the \textit{Reedy-admissibility} condition in 
Definition \ref{GENRED DEF}(iv),
which is a fairly non-obvious equivariant generalization of one of the generalized Reedy axioms in \cite{BM11}.

\subsection{Acknowledgments}

The authors would like to thank the anonymous referee for their many helpful suggestions and comments, 
including the use of Table \ref{TABLE}.

\section{Preliminaries}\label{PREL SEC}

\subsection{The category of trees $\Omega$}

We start by recalling the key features of the category $\Omega$ of trees that will be used throughout.
Our official model for $\Omega$ will be Weiss' algebraic model of \textit{broad posets} as discussed in \cite[\S 5]{Per18},
hence we first recall some key notation and terminology.
Given a tree diagram $T$ such as
\begin{equation}\label{FIRSTTREE EQ}
	\begin{tikzpicture}[auto,grow=up,
	level distance = 2.2em,
	every node/.style={font=\footnotesize,minimum size=1.5mm}]%
	\tikzstyle{level 2}=[sibling distance=4.25em]%
	\tikzstyle{level 3}=[sibling distance=1.5em]%
		\node at (0,0)[font=\normalsize]{$T$}%
			child{node [dummy] {}%
				child{node [dummy] {}%
					child{node {}%
					edge from parent node [swap] {$c$}}%
				edge from parent node [swap] {$f$}}%
				child[level distance = 2.5em]{
				edge from parent node [swap] {$e$}}%
				child{node [dummy] {}%
					child{node[dummy] {}%
					edge from parent node [very near end,swap] {$b$}}%
					child{node {}%
					edge from parent node [very near end] {$\phantom{b}a$}}%
				edge from parent node {$d$}}%
			edge from parent node [swap] {$r$}};%
	\end{tikzpicture}%
\end{equation}
and for each edge $t$ of $T$ topped by a vertex $\circ$, 
we write $t^{\uparrow}$ to denote the tuple of edges immediately above $t$.
In our example, 
$r^{\uparrow}=def$, 
$d^{\uparrow} = ab$,
$f^{\uparrow} = c$ and
$b^{\uparrow} = \epsilon$, 
where $\epsilon$ is the empty tuple.
Edges $t$ for which:
\begin{inparaenum}
\item[(i)] $t^{\uparrow} \neq \epsilon$, such as $r,d,f$, are called \textit{nodes};
\item[(ii)] $t^{\uparrow} = \epsilon$, such as $b$, are called \textit{stumps};
\item[(iii)] $t^{\uparrow}$ is undefined, such as $a,c,e$, are called \textit{leaves}.
\end{inparaenum}
Each vertex of $T$ is then encoded symbolically as 
$t^{\uparrow} \leq t$, which we call a \textit{generating broad relation}. 
This notation is meant to suggest a form of transitivity: for example, the generating relations
$ab \leq d$ and $def \leq r$
generate, via \textit{broad transitivity},
a relation $abef \leq r$
(we note that this is essentially compact notation for the operations and composition in the colored operad $\Omega(T)$ generated by $T$
\cite[\S 3]{MW07},\cite[Rem. 4.4, Ex. 4.6]{Per18}). The other broad relations obtained by broad transitivity are 
$aef \leq r$,
$dec \leq r$,
$abec \leq r$,
$aec \leq r$,
$a \leq d$.
The set of edges of $T$ together with these broad relations
(as well as identity relations $t \leq t$) form the 
\textit{broad poset} associated to the tree, which is again denoted $T$.

Given a broad relation $t_0 \cdots t_n \leq t$,
we further write $t_i \leq_d t$.
Pictorially, this says that the edge $t_i$ is above $t$,
and it is thus clear that $\leq_d$ defines a partial order on edges of $T$.
Trees always have a single $\leq_d$-maximal edge, called the \textit{root}. Edges other than the root or the leaves are called \textit{inner edges}. In our example $r$ is the root, $a,e,c$ are leaves, and $b,d,f$ are inner edges. 

We denote the sets of edges, inner edges, vertices of $T$ by 
$\boldsymbol{E}(T)$, $\boldsymbol{E}^{\mathsf{i}}(T)$,
$\boldsymbol{V}(T)$.

The Cisinski-Moerdijk-Weiss category $\Omega$ of trees then has as objects the tree diagrams as in \eqref{FIRSTTREE EQ}
and as maps $\varphi \colon T \to S$ the monotone maps of broad posets
(meaning that if $t_1 \cdots t_k \leq t$ then
$\varphi(t_1) \cdots \varphi(t_k) \leq \varphi(t)$).
In fact, in \cite{Wei12} Weiss characterized 
those broad posets associated to trees (see \cite[Defs. 5.1 and 5.9]{Per18}),
so that one is free to work intrinsically with broad posets.

Moreover, our discussion will be somewhat simplified by the assumption that $\Omega$
contains exactly one representative of each \textit{planarized} tree.
Informally, this means that trees $T \in \Omega$
come with a preferred planar representation,
though this can also be formalized in purely algebraic terms, see \cite[\S 3.1]{BP17}.
For our purposes, the main consequence is that any map 
$S \to T$ in $\Omega$ has a (strictly) unique factorization
$S \xrightarrow{\simeq} S' \to T$ as an isomorphism followed by a \textit{planar map} \cite[Prop. 3.23]{BP17}. 
Informally, $S'$ is obtained from $S$
by ``pulling back'' the planarization of $T$.

We now recall the key classes of maps of $\Omega$.
A map $\varphi \colon S \to T$ which is injective on edges is called a \textit{face map}
while a map that is surjective on edges and preserves leaves is called a \textit{degeneracy map}
(the extra requirement ensures that leaves of $S$ do not become stumps of $T$).
Moreover, a face map $\varphi$ is further called an \textit{inner face map}
if $\varphi(r_S) = r_T$ and 
$\varphi(\underline{l}_S) = \underline{l}_T$ 
(where $r_{(-)}$ denotes the root edge and $\underline{l}_{(-)}$ the leaf tuple)
and called an \textit{outer face map} if it does not admit a factorization as a non-isomorphism inner face map followed by a face map.

The following result is \cite[Cor. 3.32]{BP17}, with the additional planar statement following from the unique factorization of maps in $\Omega$ as isomorphisms followed by planar maps. 

\begin{proposition}\label{UNIQUEFACT PROP}
	A map $\varphi \colon S \to T$ in $\Omega$ has a factorization, unique up to unique isomorphisms,
\[
	S \xrightarrow{\varphi^{-}}
	U \xrightarrow{\varphi^{i}}
	V \xrightarrow{\varphi^{o}}
	T	
\]
as a degeneracy followed by an inner face map followed by an outer face map.

Moreover, there is a (strictly) unique factorization with $\varphi^i,\varphi^o$ planar. 
\end{proposition}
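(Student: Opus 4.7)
The plan is to construct the factorization in two stages and derive uniqueness from universal characterizations of the intermediate trees.

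Stage one: factor $\varphi \colon S \to T$ as a degeneracy $\varphi^{-} \colon S \to U$ followed by a face map $U \to T$. On edge sets, the map $\varphi$ factors in $\Set$ as a surjection onto its image followed by an injection, so the plan is to realize this factorization inside $\Omega$ by taking $U$ to be the broad poset with edge set $\varphi(\boldsymbol{E}(S)) \subseteq \boldsymbol{E}(T)$ and broad relations inherited from $T$, and $\varphi^{-}$ to be the induced quotient map. The content of this step is to verify that $U$ satisfies Weiss' tree axioms (\cite[Defs.~5.1,~5.9]{Per18}), so that $U \in \Omega$, and that $\varphi^{-}$ preserves leaves (no leaf of $S$ is sent to a stump or to an edge with something above it in $U$); this last point uses the broad-monotonicity of $\varphi$ together with the tree structure of $T$.

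Stage two: factor the resulting face map $\psi \colon U \to T$ as an inner face followed by an outer face. Let $V \subseteq T$ be the sub-broad-poset consisting of every edge of $T$ lying weakly between $\psi(r_U)$ and the tuple $\psi(\underline{l}_U)$ in the order $\leq_d$. One checks that $V$ is again a tree with root $\psi(r_U)$ and leaf tuple $\psi(\underline{l}_U)$, that the restricted map $\varphi^i \colon U \to V$ preserves root and leaves (hence is an inner face), and that $\varphi^o \colon V \to T$ admits no nontrivial factorization through an inner face, since any such would provide a subtree strictly between $V$ and $T$ with the same root and leaves as $V$, contradicting the minimality built into the definition of $V$.

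Uniqueness up to unique isomorphism falls out of the characterizations: $U$ is forced to be the edge-set image of $\varphi$ with its inherited broad-poset structure, and $V$ is forced to be the minimal subtree of $T$ realizing the prescribed root and leaves of $\psi(U)$. The main technical obstacle will be the verifications that these quotient and span constructions land in $\Omega$ rather than in the larger category of broad posets, and that $\varphi^{-}$ really is a degeneracy (not merely a surjection on edges) given that $\varphi$ was a map in $\Omega$ to begin with.

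For the final planar assertion, the plan is to apply the iso-then-planar factorization \cite[Prop.~3.23]{BP17} to transport the planarization of $T$ backwards through $V$ and $U$: each intermediate tree inherits a canonical planar structure by pulling back from its target, and with respect to these canonical planarizations all three factor maps become planar. Strict uniqueness of this refinement then follows because the planarization at each step is strictly unique, so there is no remaining freedom to absorb into the intermediate isomorphisms.
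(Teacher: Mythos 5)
The paper's proof is a one‑line citation to \cite[Cor.\ 3.32]{BP17} together with a remark deducing the planar refinement, so your proposal is genuinely a different route: a hands‑on construction of the two intermediate trees. Unfortunately, the construction as written has a real gap, and it comes precisely from the presence of stumps.

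Both of your intermediate trees are taken to be \emph{full} sub‑broad‑posets of $T$: you say $U$ has ``broad relations inherited from $T$'', and $V$ is ``the sub‑broad‑poset'' on a certain edge set. But a face map into $T$ need not be full (this is exactly the content of Remark \ref{INNFULL REM}). Concretely, let $T$ be the two‑edge tree with root $r$, $r^{\uparrow} = (a)$ and $a^{\uparrow} = \epsilon$, so $a$ is a stump and $T$ has empty leaf tuple. For the stage‑one problem, take $S = \iota([2])$ with edges $p \leq_d q \leq_d s$ and the monotone surjection $\varphi(p) = \varphi(q) = a$, $\varphi(s) = r$. Your $U$ is the full sub‑broad‑poset on $\varphi(\boldsymbol{E}(S)) = \{a,r\}$, which is $T$ itself, in which $a$ is still a stump. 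The induced $\varphi^{-}\colon S \to U$ then sends the leaf $p$ to the stump $a$, so it fails to preserve leaves and is \emph{not} a degeneracy. The correct $U$ is $\iota([1])$, i.e.\ the proper sub‑broad‑poset on $\{a,r\}$ obtained by \emph{dropping} the stump vertex $a^{\uparrow} = \epsilon \leq a$; it is determined by the images of $S$'s relations, not by restricting $T$'s. Your parenthetical (``no leaf of $S$ is sent to a stump $\ldots$'') is precisely the claim that fails, so there is no way to repair the argument by verifying it. For the stage‑two problem, take the injective map $\varphi\colon \iota([1]) \to T$, $x \mapsto a$, $y \mapsto r$. Your $V$ is again the full sub‑broad‑poset on $\{a,r\} = T$, and the induced map $\iota([1]) \to V$ sends the leaf tuple $(x)$ to $(a)$ while $\underline{l}_{V} = \epsilon$, so it is not inner. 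The correct $V$ is the outer face $T_{a \leq r}$, whose vertex set \emph{omits} $a^{\uparrow} \leq a$, so that $a$ becomes a leaf of $V$; again a proper sub‑broad‑poset.

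So the key missing idea is exactly the prescription the paper records for $T_{\underline{t} \leq t}$: one must decide, vertex by vertex, which generating relations $s^{\uparrow} \leq s$ to retain (namely those with $s \leq_d t$ and $s \not\leq_d t_i$ for all $i$). The shortcut ``take the edges and inherit all of $T$'s relations'' produces the wrong trees. Once stumps are present, $U$ and $V$ cannot be read off from $T$ alone. A smaller point: your argument that $\varphi^o \colon V \to T$ is outer appeals to ``minimality built into the definition of $V$'', but $T_{\underline{t} \leq t}$ is characterized as the \emph{largest} face with the given root and leaf tuple (cf.\ the text following Proposition \ref{UNIQUEFACT PROP}), so the correct invocation is maximality, not minimality.
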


We next recall an explicit characterization and notation for planar inner/outer faces
(planar degeneracies are characterized by edge multiplicities, see \cite[Prop. 3.47(ii)]{BP17}).
For any subset $E \subseteq \boldsymbol{E}^{\mathsf{i}}(T)$, there is a planar inner face
$T-E$ which removes the inner edges in $E$ but keeps all broad relations involving edges not in $E$
(this is the hardest class of maps to visualize pictorially, as the vertices adjacent to each $e \in E$ are combined via broad transitivity/composition).
For each broad relation
$t_1 \cdots t_k = \underline{t} \leq t$ in $T$,
there is a planar outer face
$T_{\underline{t} \leq t}$
such that
$r_{T_{\underline{t} \leq t}} = t$ and
$\underline{l}_{T_{\underline{t} \leq t}} = \underline{t}$
(in fact, by Proposition \ref{UNIQUEFACT PROP} this is the largest such face).
Moreover, the edges $s$ of $T_{\underline{t} \leq t}$ are the edges of $T$ such that
$s \leq_d t$ and $\forall_{i} s \not <_d t_i$ while the vertices are the $s^{\uparrow} \leq s$ such that 
$s \leq_d t$ and $\forall_{i} s \not \leq_d t_i$ 
(pictorially, $T_{\underline{t} \leq t}$ removes those sections of $T$ not above $t$ and above some $t_i$).

\begin{remark}\label{INNFULL REM}
	Inner faces $T-E \hookrightarrow T$ are always full, i.e. $T-E$ contains all broad relations of $T$ between those edges in 
$\boldsymbol{E}(T-E) = \boldsymbol{E}(T) \setminus E$.
	By contrast, whenever $T$ has stumps some of its outer faces $T_{\underline{t} \leq t}$ are not full,
	the main example being given by the maximal outer faces
	that ``remove stumps'' \cite[Not. 5.41]{Per18}.
\end{remark}

\begin{remark}\label{DEGREE REM}
	Following \cite[Ex. 2.8]{BM11}, one has a degree function 
	$|-|\colon \Omega \to \mathbb{N}$ given by $|T|=|\boldsymbol{V}(T)|$
	such that non isomorphism face maps (resp. degeneracies) strictly increase (decrease) $|-|$.
	As such, the subcategory of face maps is denoted $\Omega^+$ while that of degeneracies is denoted $\Omega^-$.
\end{remark}

We now collect a couple of useful lemmas concerning faces.

\begin{lemma}\label{INNINT LEM}
	Consider a diagram of planar faces in $\Omega$
	(implicitly regarded as inclusion maps)
\[
\begin{tikzcd}[column sep =3em]
	V \ar[hookrightarrow]{r}{\text{out}} 
	\ar[hookrightarrow]{d}[swap]{\text{inn}} &
	U \ar[hookrightarrow]{d}
\\
	\bar{V} \ar[hookrightarrow]{r}{\text{out}}&
	\bar{U}
\end{tikzcd}
\]
	such that the horizontal maps are outer face maps and the left vertical map is an inner face map.

Then $\boldsymbol{E}^{\mathsf{i}}(V) = 
\boldsymbol{E}^{\mathsf{i}}(U) \cap 
\boldsymbol{E}^{\mathsf{i}} (\bar{V})$.
\end{lemma}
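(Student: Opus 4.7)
The plan is to prove the two inclusions of edge sets separately.

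For the forward inclusion $\boldsymbol{E}^{\mathsf{i}}(V) \subseteq \boldsymbol{E}^{\mathsf{i}}(U) \cap \boldsymbol{E}^{\mathsf{i}}(\bar{V})$, given $e \in \boldsymbol{E}^{\mathsf{i}}(V)$ I would observe that it lies in both $U$ and $\bar{V}$ by the respective inclusions. That $e$ is inner in $\bar{V}$ is immediate since $V \hookrightarrow \bar{V}$ is an inner face and hence preserves root and leaves, so $e \notin \{v, \underline{v}\}$ translates directly to $e \notin \{r_{\bar{V}}, \underline{l}_{\bar{V}}\}$. For innerness in $U$, writing $V = U_{\underline{v} \leq v}$, the relation $e \leq_d v \leq_d r_U$ together with $e \neq v$ forces $e \neq r_U$, while the existence of a vertex above $e$ in $V \subseteq U$ (guaranteed by $e \notin \underline{l}_V = \underline{v}$) shows $e$ is not a leaf of $U$ either.

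For the reverse inclusion, the key reduction is to establish the edge-level identity $\boldsymbol{E}(V) = \boldsymbol{E}(U) \cap \boldsymbol{E}(\bar{V})$ as subsets of $\boldsymbol{E}(\bar{U})$; combined with the forward inclusion, this at once yields the desired inner-edge equality since any common element automatically avoids $\{v, \underline{v}\}$ and thus $\{r_U, \underline{l}_U\}$. To establish the identity, I would apply Proposition \ref{UNIQUEFACT PROP} to factor $U \hookrightarrow \bar{U}$ uniquely as an inner face $U \hookrightarrow U^*$ followed by an outer face $U^* = \bar{U}_{\underline{l}_U \leq r_U} \hookrightarrow \bar{U}$. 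A short check, classifying each leaf of $U$ as either an element of $\underline{v}$ or an edge incomparable with $v$, shows $\bar{V} \subseteq U^*$, so that $\bar{V} = U^*_{\underline{v} \leq v}$ by the root/leaves characterization of outer faces. Then, using the fullness of the inner face $U \hookrightarrow U^*$ (Remark \ref{INNFULL REM}) to replace $U$-order by $U^*$-order in the defining conditions of $V = U_{\underline{v} \leq v}$, one obtains
\[
	\boldsymbol{E}(V) = \bigl\{s \in \boldsymbol{E}(U) : s \leq_d v,\ s \not<_d v_i \text{ in } U^*\bigr\} = \boldsymbol{E}(U) \cap \boldsymbol{E}(U^*_{\underline{v} \leq v}) = \boldsymbol{E}(U) \cap \boldsymbol{E}(\bar{V}).
\]

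The main obstacle I expect is the verification $\bar{V} \subseteq U^*$, which hinges on a careful comparison of the two sets of leaves $\underline{l}_U$ and $\underline{v}$. The case analysis I have in mind runs as follows: if $l_j \leq_d v$ in $U$, then since $l_j$ is $\leq_d$-minimal we automatically have $l_j \not <_d v_i$, so $l_j \in V$ as a leaf, forcing $l_j \in \underline{v}$; otherwise $l_j$ must be incomparable with $v$ in the tree order (the case $v <_d l_j$ being impossible, as $l_j$ has no edge above it in $U$), and the tree property then precludes any $s \leq_d v$ from being strictly above $l_j$, which is exactly what is needed to conclude $s \not<_d l_j$.
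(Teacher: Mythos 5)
Your overall strategy---splitting into two inclusions, reducing the nontrivial one to the edge-level identity $\boldsymbol{E}(V) = \boldsymbol{E}(U) \cap \boldsymbol{E}(\bar{V})$, and establishing that via the inner-outer factorization $U \hookrightarrow U^* \hookrightarrow \bar{U}$---is structurally sound and is a genuinely different, if longer, route than the paper's. The paper instead observes directly that, since the horizontal maps are outer and $V$, $\bar{V}$ share root $r$ and leaves $\underline{l}$ (because $V \hookrightarrow \bar{V}$ is inner), an edge of $\boldsymbol{E}^{\mathsf{i}}(U)$ (resp.\ $\boldsymbol{E}^{\mathsf{i}}(\bar{U})$) lies in $\boldsymbol{E}^{\mathsf{i}}(V)$ (resp.\ $\boldsymbol{E}^{\mathsf{i}}(\bar{V})$) iff $e <_d r$ and $\forall_i\, e \not\leq_d l_i$, from which the intersection formula is immediate; no factorization of $U \hookrightarrow \bar{U}$ or appeal to fullness is needed.

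However, the ``short check'' you give for $\bar{V} \subseteq U^*$ contains a genuine error. The claimed classification---every leaf $l_j$ of $U$ is either in $\underline{v}$ or incomparable with $v$---is false, and specifically the step ``since $l_j$ is $\leq_d$-minimal we automatically have $l_j \not<_d v_i$'' does not hold. Minimality of $l_j$ says nothing lies strictly \emph{above} $l_j$; it places no restriction on $l_j$ being strictly above some $v_i$, and the leaves $v_i$ of $V$ are in general not leaves of $U$. A concrete counterexample: let $U$ have root $r$, one edge $a$ above $r$, and two leaves $b,c$ above $a$, and take $V = U_{(a)\leq r}$ so that $v = r$, $\underline{v} = (a)$; then the leaf $l_j = b$ of $U$ satisfies $b \leq_d r = v$ and $b <_d a = v_1$, so $b \notin V$ and $b \notin \underline{v}$, yet $b$ is comparable with $v$. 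The conclusion $\bar{V} \subseteq U^*$ is in fact correct, but the argument must be carried out pointwise in $s \in \boldsymbol{E}(\bar{V})$: supposing $s <_d l_j$, combine this with $s \leq_d v$ to force $l_j$ and $v$ comparable; rule out $v <_d l_j$ by leaf-minimality of $l_j$ in $U$ (which \emph{is} what minimality buys you), so $l_j \leq_d v$; and then derive $\forall_i\, l_j \not<_d v_i$ from the $s$-specific hypothesis $\forall_i\, s \not<_d v_i$ via $s <_d l_j <_d v_i \Rightarrow s <_d v_i$, concluding $l_j \in V$ as a leaf, hence $l_j \in \underline{v}$, hence $s <_d l_j$ contradicts $\forall_i\, s \not<_d v_i$. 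The information $\forall_i\, l_j \not<_d v_i$ is not available unconditionally on the leaf $l_j$; it only becomes available once $s$ is fixed.
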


\begin{proof}
	Write $r$ and $\underline{l}=l_1\cdots l_n$
	for the root and leaf tuple of $V$ or, equivalently, of $\bar{V}$.
	Since the horizontal maps are outer, an edge
	$e \in \boldsymbol{E}^{\mathsf{i}}(U)$ 
	(resp. $e \in \boldsymbol{E}^{\mathsf{i}}(\bar{U})$)
	is also in $\boldsymbol{E}^{\mathsf{i}}(V)$ (resp. in $E^{\mathsf{i}}(\bar{V})$) iff
	$e <_d r$ and $\forall_i e \not \leq_d l_i$.
	But then 
	$\boldsymbol{E}^{\mathsf{i}}(V) =
	\boldsymbol{E}^{\mathsf{i}}(U) \cap 
	\boldsymbol{E}^{\mathsf{i}}(V) =
	\boldsymbol{E}^{\mathsf{i}}(U) \cap
	\boldsymbol{E}^{\mathsf{i}}(\bar{V})$. 
\end{proof}

\begin{lemma}\label{CUPCAP LEM}
	Let $\{U_i \hookrightarrow T\}$ be a collection of planar outer faces of $T$ with a common root $t$. Then there are planar outer faces
	$U^{\cup} \hookrightarrow T$, $U^{\cap} \hookrightarrow T$,
	also with root $t$, such that
\begin{equation}\label{CUPCAP EQ}
	\boldsymbol{E}(U^{\cup}) = \bigcup_i \boldsymbol{E}(U_i), \quad
	\boldsymbol{V}(U^{\cup}) = \bigcup_i \boldsymbol{V}(U_i), \qquad
	\boldsymbol{E}(U^{\cap}) = \bigcap_i \boldsymbol{E}(U_i), \quad
	\boldsymbol{V}(U^{\cap}) = \bigcap_i \boldsymbol{V}(U_i).
\end{equation}
Moreover, these are the smallest (resp. largest) outer faces
containing (contained in) all $U_i$.
\end{lemma}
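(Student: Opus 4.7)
The plan is to exhibit $U^{\cup}$ and $U^{\cap}$ directly as planar outer faces by specifying their leaf tuples, then verify the identities \eqref{CUPCAP EQ} and the universal property using the explicit edge/vertex formulas for outer faces recalled before Remark \ref{INNFULL REM}. To begin, write each $U_i = T_{\underline{t}_i \leq t}$ for a broad relation $\underline{t}_i = t_{i,1} \cdots t_{i,k_i} \leq t$. Since the $U_i$'s share the common root $t$, one can view the $\underline{t}_i$'s as different ``cuts'' of the subtree of $T$ lying above $t$, each contributing exactly one edge on every maximal path from $t$ to a leaf of $T$.

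I would then define $\underline{t}^{\cap}$ (resp.\ $\underline{t}^{\cup}$) to consist of the $\leq_d$-maximal (resp.\ $\leq_d$-minimal) elements of $\bigcup_i \underline{t}_i$; equivalently, on each path from $t$, one picks the cut edge closest to the root $t$ (resp.\ closest to the top) among all the $\underline{t}_i$'s. The main work is to verify that $\underline{t}^{\cap} \leq t$ and $\underline{t}^{\cup} \leq t$ are genuine broad relations in $T$. By induction on $n$ it suffices to treat $n = 2$; for $\underline{t}^{\cup}$, starting from $\underline{t}_1 \leq t$, one checks that whenever $t_{1,j}$ is not $\leq_d$-minimal, the subset $\underline{v}_j = \{t_{2,k} : t_{2,k} \leq_d t_{1,j}\}$ of $\underline{t}_2$ is a valid cut of the subtree above $t_{1,j}$ (so $\underline{v}_j \leq t_{1,j}$ is itself a broad relation), and substituting these via broad transitivity at each such $j$ yields the desired $\underline{t}^{\cup} \leq t$. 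The construction of $\underline{t}^{\cap} \leq t$ is dual, substituting upward instead of downward along the paths.

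Once $U^{\cup}$ and $U^{\cap}$ are known to be outer faces, the identities \eqref{CUPCAP EQ} follow directly from the explicit formulas: since $s \in \boldsymbol{E}(T_{\underline{t} \leq t})$ iff $s \leq_d t$ and $s \not <_d t_i$ for all $i$ (and analogously for vertices using the stronger condition $s \not \leq_d t_i$), a routine argument translates the $\leq_d$-maximality (resp.\ minimality) defining $\underline{t}^{\cap}$ (resp.\ $\underline{t}^{\cup}$) into the intersection (resp.\ union) quantifier over the $U_i$'s. The universal property then drops out because planar outer faces with fixed root are determined by their edge sets, so any outer face comparable to every $U_i$ is forced to compare to $U^{\cup}$ or $U^{\cap}$. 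The main obstacle is the broad-transitivity argument in the second paragraph: one must check, using that each $\underline{t}_i$ is a valid cut, that the relevant subset of $\underline{t}_2$ above a non-minimal $t_{1,j}$ genuinely forms a cut of the subtree above $t_{1,j}$ --- the key point being that on each maximal path through $t_{1,j}$, the cut $\underline{t}_2$ must meet the path at an edge that is $\leq_d t_{1,j}$, since otherwise $\underline{t}_2$ would have two distinct comparable cut edges on a common path.
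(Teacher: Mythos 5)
Your approach --- exhibiting $U^{\cup}$ and $U^{\cap}$ as $T_{\underline{t}^{\cup} \leq t}$, $T_{\underline{t}^{\cap} \leq t}$ for explicit leaf tuples and then proving these are broad relations by substitution --- is genuinely different from the paper's, which simply \emph{defines} $U^{\cup}$ and $U^{\cap}$ to be the sub-pre-broad-posets of $T$ given by the edge and vertex identities in \eqref{CUPCAP EQ} and then checks the tree axioms (antisymmetry, simplicity, root, nodal), so that the identities and the universal property hold by construction with no manipulation of broad relations. Unfortunately your construction of $U^{\cup}$ has a genuine gap once $T$ has stumps: the recipe taking $\underline{t}^{\cup}$ to be the $\leq_d$-minimal elements of $\bigcup_i \underline{t}_i$ does not return the leaf tuple of $U^{\cup}$, and the resulting face $T_{\underline{t}^{\cup} \leq t}$ need not contain the $U_i$.

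For a counterexample, let $T$ have root $r$, a binary vertex $ab \leq r$, a stump $\epsilon \leq a$, and a unary vertex $d \leq b$, and take $\underline{t}_1 = ab$, $\underline{t}_2 = b$. Then $U_1 = T_{ab \leq r}$ is the binary corolla in which $a$ is a leaf (the stump having been removed), while $U_2 = T_{b \leq r}$ is the binary corolla in which $a$ remains a stump. They have the same edge set $\{r,a,b\}$, but $\boldsymbol{V}(U_1) = \{ab \leq r\}$ and $\boldsymbol{V}(U_2) = \{ab \leq r,\, \epsilon \leq a\}$, so $U^{\cup}$ must be $U_2$, with leaf tuple $b$. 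Your formula, however, returns $\underline{t}^{\cup} = ab$ (since $a$ and $b$ are incomparable, hence both $\leq_d$-minimal), producing $T_{ab\leq r} = U_1$, which has the \emph{intersection} of vertex sets rather than the union and does not contain $U_2$. This example also refutes the assertion near the end that ``planar outer faces with fixed root are determined by their edge sets'' ($U_1$ and $U_2$ above share an edge set), and it is exactly where the cut heuristic breaks: $\underline{t}_2 = b$ does not meet the maximal path from $r$ through $a$, which dead-ends at a stump. The $U^{\cap}$ half of your recipe does appear to survive the presence of stumps, since the $\leq_d$-maximal elements of $\bigcup_i \underline{t}_i$ reproduce both the edge and vertex descriptions of $\bigcap_i U_i$, but given the asymmetry the paper's direct definition is the cleaner route.
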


\begin{remark}
	More generally, $U^{\cup}$ and $U^{\cap}$
	can be defined whenever the $U_i$ have a common edge.
\end{remark}

\begin{proof}
	Since edges and vertices are simply elements and generating broad relations of the broad poset of a tree,  
	\eqref{CUPCAP EQ} generates pre-broad posets 
	(cf. \cite[Rem. 5.2]{Per18}) $U^{\cup}$ and $U^{\cap}$.
	It now suffices to check that 
	$U^{\cup}$ and $U^{\cap}$ are trees,
	i.e. that they satisfy the axioms in 
	\cite[Defs. 5.1, 5.3, 5.9]{Per18}.
	Antisymmetry and simplicity are inherited from $T$, the root axiom follows since the $U_i$ have a common root (in the $U^{\cap}$ case note that if $s$ is in $U^{\cap}$, then so is any $s'$ such that
	$s \leq_d s' \leq_d t$),
	and the nodal axiom is obvious from \eqref{CUPCAP EQ} (which, a posteriori, is correct as an identity on sets of edge and vertex sets).
\end{proof}

\begin{notation}
      \label{DELTAOMEGA NOT}
We will write $\iota \colon \Delta \to \Omega$ for the standard inclusion, which sends $[n]$ to the linear tree with $n+1$ edges (recall that all notions in this section generalize the usual notions for $\Delta$).
Additionally, we will as usual write $\eta =  \iota([0])$ for the ``stick tree'' containing a single edge and no vertices, and note that $\iota$ induces an identification between $\Delta$ and the overcategory $\Omega \downarrow \eta$.
\end{notation}

\subsection{The category of $G$-trees $\Omega_G$}

We next recall the category $\Omega_G$ of $G$-trees introduced in \cite[\S 5.3]{Per18}. We start with an explicit and representative example of a $G$-tree (for more examples, see \cite[\S 4.3]{Per18}).
Letting $G = \{ \pm 1, \pm i, \pm j, \pm k\}$ denote the group of quaternionic units 
and $G \geq H \geq K \geq L$ denote the subgroups %
$H = \langle j \rangle$, %
$K = \langle -1 \rangle$, %
$L = \{1\}$,
there is a $G$-tree $T$ with 
\textit{expanded representation}
given by the two trees on the left below and
\textit{orbital representation}
given by the (single) tree on the right.
\begin{equation}\label{TWOREP EQ}
	\begin{tikzpicture}[auto,grow=up, level distance = 2.2em,
	every node/.style={font=\scriptsize,inner sep = 2pt}]%
		\tikzstyle{level 2}=[sibling distance=7em]%
		\tikzstyle{level 3}=[sibling distance=2.25em]%
			\node at (4.75,0){}%
				child{node [dummy] {}%
					child{node [dummy] {}%
						child{node {}%
						edge from parent node [swap,very near end] {$-k a$}}%
						child[level distance = 2.4em]{node {}%
						edge from parent node [swap,near end] {$k b$}}%
						child{node {}%
						edge from parent node [very near end] {$k a$}}%
					edge from parent node [swap] {$k c$}}%
					child{node [dummy] {}%
						child{node {}%
						edge from parent node [swap,very near end] {$-i a$}}%
						child[level distance = 2.4em]{node {}%
						edge from parent node [swap,near end] {$i b$}}%
						child{node {}%
						edge from parent node [very near end] {$i a$}}%
					edge from parent node  {$i c$}}%
				edge from parent node [swap] {$i d$}};%
			\node at (0,0){}%
				child{node [dummy] {}%
					child{node [dummy] {}%
						child{node {}%
						edge from parent node [swap,very near end] {$-j a$}}%
						child[level distance = 2.4em]{node {}%
						edge from parent node [swap,near end] {$j b$}}%
						child{node {}%
						edge from parent node [very near end] {$j a$}}%
					edge from parent node [swap] {$j c$}}%
					child{node [dummy] {}%
						child{node {}%
						edge from parent node [swap,very near end] {$-a\phantom{j}$}}%
						child[level distance = 2.4em]{node {}%
						edge from parent node [swap,near end] {$b\phantom{j}$}}%
						child{node {}%
						edge from parent node [very near end] {$\phantom{-j}a$}}%
					edge from parent node  {$\phantom{j}c$}}%
				edge from parent node [swap] {$d$}};%
		\begin{scope}[every node/.style={font=\footnotesize}]%
			\node at (9.35,0){}%
				child{node [dummy] {}%
					child{node [dummy] {}%
						child{node {}%
						edge from parent node [swap,very near end] {$(G/K) \cdot b$}}%
						child{node {}%
						edge from parent node [very near end] {$(G/L) \cdot a$}}%
					edge from parent node [right] {$(G/K) \cdot c$}}%
				edge from parent node [right] {$(G/H) \cdot d$}};%
		\end{scope}%
		\draw[decorate,decoration={brace,amplitude=2.5pt}] (4.85,0) -- (-0.1,0) node[midway,inner sep=4pt,font=\normalsize]{$T$}; %
		\node at (9.35,-0.15) [font=\normalsize] {$T$};
	\end{tikzpicture}%
\end{equation}%
Note that the edge labels on the expanded representation encode the action of $G$ so that the edges 
$a,b,c,d$ have isotropy $L,K,K,H$.

Formally, the definition of $\Omega_G$ \cite[Def. 5.44]{Per18} is given as follows.
Given a non-equivariant forest diagram $F$ 
(i.e. a finite collection of tree diagrams side by side),
there is
an associated broad poset just as before, and one thus obtains a category $\Phi$ of forests and broad monotone maps.
Letting $\Phi^G$ denote the category of $G$-objects in $\Phi$, referred to as $G$-forests,
the category $\Omega_G \subset \Phi^G$ of $G$-trees
is defined as the full subcategory of those $G$-forests such that the $G$-action is transitive on tree components.

We note that any $G$-tree $T$ can be written as
an induction $T \simeq G \cdot_H T_{\**}$, where $T_{\**}$ is some fixed tree component, $H\leq G$ is the subgroup sending $T_{\**}$ to itself,
and we regard $T_{\**} \in \Omega^H$, i.e., as a tree with a $H$-action (where we caution that $\Omega^G \subsetneq \Omega_G$).
For example, in \eqref{TWOREP EQ} it is 
$T \simeq G \cdot_H T_d$ for $H\leq G$, $T \in \Omega_G$ as defined therein and
$T_d \in \Omega^H$ the tree component containing $d$. 

Moreover, we similarly assume that $G$-trees (and forests in general) are planarized, meaning that they come with a total order of the tree components, each of which is  planarized.

If $T\in \Omega_G$ has tree components $T_1,\cdots, T_k$, we write
$\boldsymbol{E}(T) = \amalg_i \boldsymbol{E}(T_i)$, 
$\boldsymbol{E}^{\mathsf{i}}(T) = 
\amalg_i \boldsymbol{E}^{\mathsf{i}}(T_i)$,
$\boldsymbol{V}(T) = \amalg_i \boldsymbol{V}(T_i)$
for its sets of edges, inner edges and vertices, as well as
$\boldsymbol{E}_G(T) = \boldsymbol{E}(T)/G$,
$\boldsymbol{E}^{\mathsf{i}}_G(T) = \boldsymbol{E}^{\mathsf{i}}(T)/G$,
$\boldsymbol{V}_G(T) = \boldsymbol{V}(T)/G$ for its sets of 
\textit{edge orbits},
\textit{inner edge orbits} and
\textit{$G$-vertices}.

Before discussing face maps in the equivariant context, it is worth commenting on the complementary roles of the expanded and orbital representations.
On the one hand, the $G$-broad posets associated to $G$-trees are diagrammatically represented by the expanded representation,
so that the arrows of $\Omega_{G}$ are best understood from that perspective.
On the other hand, the diagrams encoding compositions of norm maps of an equivariant operad $\mathcal{O}$
are given by the orbital representations of  
$G$-trees (see Example \ref{STRICTLIFT EX} and Remark \ref{NORMMAP REM}, or alternatively 
\cite[Ex. 4.9]{Per18}, \cite[(1.10)]{BP17}).
As a result, different aspects of our discussion are guided by different representations, and this will require us to discuss the different notions of face/boundary/horn suggested by the two representations.
We start by recalling the notion of face discussed in \cite{Per18}, which is motivated by the expanded representation.

\begin{definition}
	Let $T \in \Omega_G$ be a $G$-tree with non-equivariant tree components 
	$T_1, T_2,\cdots,T_k$.
	
	A \textit{face} of $T$ 
	is an underlying face map
	$U \hookrightarrow T_i$ in $\Omega$ for some $1 \leq i \leq k$.
	Further, we abbreviate faces of $T$ as
	$U \hookrightarrow T$,
	and call them \textit{planar/outer} faces
	whenever so is the map $U \hookrightarrow T_i$.
\end{definition}

\begin{notation}
	Given $T \in \Omega_G$, we write $\mathsf{Face}(T)$ for the
	$G$-poset of \textit{planar faces} $U \hookrightarrow T$.
	We note that the $G$-action is given by the unique factorization of the composite
	$U \hookrightarrow T \xrightarrow{g} T$
	as $U \simeq gU \hookrightarrow T$ such that 
	$gU \hookrightarrow T$ is planar.
\begin{equation}\label{FACEGACT EQ}
\begin{tikzcd}
	U \ar[hookrightarrow]{r} \ar{d}[swap]{\simeq} &
	T \ar{d}{g}
\\
	gU \ar[hookrightarrow]{r} & T
\end{tikzcd}
\end{equation}
Alternatively, planar faces $U \hookrightarrow T$ can be viewed as sub-broad posets $U \subseteq T$,
identifying this $G$-action with the natural action on subsets. However, we prefer the planar face framework since it is more readily related to the presheaves $\Omega[T]$ discussed in the next section
(see Remark \ref{FACEGACT REM}).
\end{notation}

\begin{notation}\label{BARUT NOT}
	Given $T\in \Omega_G$ and a face
	$\varphi \colon U \hookrightarrow T$ we write $\bar{U}^T$, or just $\bar{U}$ when no confusion should arise, for the planar face in the unique factorization
	$U \overset{\varphi^i}{\hookrightarrow} \bar{U}^T \overset{\varphi^o}{\hookrightarrow} T$
	(cf. Proposition \ref{UNIQUEFACT PROP}) with $\varphi^o$ planar.
	We will call $\bar{U}^T$ the \textit{outer closure of $U$}, due to it being the smallest planar outer face of $T$ such that $U \hookrightarrow \bar{U}^T$ (to see this, let $V \hookrightarrow T$ be a planar outer face such that $U \hookrightarrow V$; then Proposition \ref{UNIQUEFACT PROP} applied to 
	$U \hookrightarrow \bar{U}^V \hookrightarrow V \hookrightarrow T$ implies it must be $\bar{U}^V = \bar{U}^T$, showing 
	$\bar{U}^T \hookrightarrow V$).
\end{notation}

\begin{remark}\label{PLFUNCTOR REM}
	Recalling the notation $\Omega^+ \subset \Omega$
	for the subcategory of face maps,
	we write $\Omega^+ \downarrow T$ for the category of all faces of $T \in \Omega_G$.
	By pulling back the planarization of $T$ one then obtains a \textit{planarization functor}
	\[
		\Omega^+ \downarrow T \xrightarrow{pl} \mathsf{Face}(T)
	\]
which respects the $G$-actions on the two categories.
	Note, however, that the inclusion 
	$\mathsf{Face}(T) \subset \Omega^+ \downarrow T$ (which is a section of $pl$) does not respect the $G$-actions, as displayed in (\ref{FACEGACT EQ}).
\end{remark}

We now introduce the notion of face of a $G$-tree that is suggested by the orbital representation.

\begin{definition}\label{ORBFACE DEF}
	Let $T \in \Omega_G$ be a $G$-tree.
	An \textit{orbital face} of $T$ is a map 
	$S \hookrightarrow T$ in $\Omega_G$ which is injective on edges. Further, an orbital face is called
	\textit{inner/outer} if any (and thus all) of its  component maps is and \textit{planar} if it is a planar map of forests \cite[Def. 3.21]{BP17}.
\end{definition}

\begin{example}\label{ORBFACE EX}
The following are three planar orbital faces of the $G$-tree $T$ in \eqref{TWOREP EQ},
with $R_1 \hookrightarrow T$,  
$R_2 \hookrightarrow T$ orbital outer faces and 
$S \hookrightarrow T$ an orbital inner face.
\begin{equation}
	\begin{tikzpicture}[auto,grow=up, level distance = 2.2em,
	every node/.style={font=\scriptsize,inner sep = 2pt}]%
	\begin{scope}
		\tikzstyle{level 2}=[sibling distance=7em]%
			\node at (17.5em,0){}%
				child{node [dummy] {}%
					child{
					edge from parent node [swap] {$k c$}}%
					child{
					edge from parent node  {$i c$}}%
				edge from parent node [swap] {$i d$}};%
			\node at (3.5em,0){}%
				child{node [dummy] {}%
					child{
					edge from parent node [swap] {$j c$}}%
					child{
					edge from parent node  {$\phantom{j}c$}}%
				edge from parent node [swap] {$d$}};%
		\begin{scope}[every node/.style={font=\footnotesize}]%
			\node at (32em,0){}%
				child{node [dummy] {}%
					child{
					edge from parent node [right] {$(G/K) \cdot c$}}%
				edge from parent node [right] {$(G/H) \cdot d$}};%
		\end{scope}%
		\draw[decorate,decoration={brace,amplitude=2.5pt}] (18em,0) -- (3em,0) node[midway,inner sep=4pt,font=\normalsize]{$R_1$}; %
		\node at (32em,-0.15) [font=\normalsize] {$R_1$};
	\end{scope}
	\begin{scope}[yshift = -7em]
		\begin{scope}
		\tikzstyle{level 2}=[sibling distance=2.25em]%
			\node at (0,0){}%
				child{node [dummy] {}%
					child{node {}%
					edge from parent node [swap,very near end] {$-a\phantom{j}$}}%
					child[level distance = 2.4em]{node {}%
					edge from parent node [swap,near end] {$b\phantom{j}$}}%
					child{node {}%
					edge from parent node [very near end] {$\phantom{-j}a$}}%
				edge from parent node [swap] {$c\phantom{j}$}};%
			\node at (7em,0){}%
				child{node [dummy] {}%
					child{node {}%
					edge from parent node [swap,very near end] {$-j a$}}%
					child[level distance = 2.4em]{node {}%
					edge from parent node [swap,near end] {$j b$}}%
					child{node {}%
					edge from parent node [very near end] {$j a$}}%
				edge from parent node [swap] {$j c$}};%
			\node at (14em,0){}%
				child{node [dummy] {}%
					child{node {}%
					edge from parent node [swap,very near end] {$-i a$}}%
					child[level distance = 2.4em]{node {}%
					edge from parent node [swap,near end] {$i b$}}%
					child{node {}%
					edge from parent node [very near end] {$i a$}}%
				edge from parent node [swap] {$i c$}};%
			\node at (21em,0){}%
				child{node [dummy] {}%
					child{node {}%
					edge from parent node [swap,very near end] {$-k a$}}%
					child[level distance = 2.4em]{node {}%
					edge from parent node [swap,near end] {$k b$}}%
					child{node {}%
					edge from parent node [very near end] {$k a$}}%
				edge from parent node [swap] {$k c$}};%
		\end{scope}
		\begin{scope}[every node/.style={font=\footnotesize}]%
		\tikzstyle{level 2}=[sibling distance=2.25em]
			\node at (32em,0){}%
				child{node [dummy] {}%
					child{node {}%
					edge from parent node [swap,very near end] {$(G/K) \cdot b$}}%
					child{node {}%
					edge from parent node [very near end] {$(G/L) \cdot a$}}%
				edge from parent node [right] {$(G/K) \cdot c$}};%
		\end{scope}%
		\draw[decorate,decoration={brace,amplitude=2.5pt}] (21.5em,0) -- (-0.5em,0) node[midway,inner sep=4pt,font=\normalsize]{$R_2$}; %
		\node at (32em,-0.15) [font=\normalsize] {$R_2$};
	\end{scope}
	\begin{scope}[yshift = -15em]
		\begin{scope}
		\tikzstyle{level 2}=[sibling distance=3em]%
			\node at (3.5em,0){}%
				child{node [dummy] {}%
					child[sibling distance=2.5em,level distance=2.2em]{node {}%
					edge from parent node [swap, near end] {$-ja\phantom{j}$}}%
					child[sibling distance=2.5em,level distance=3.2em]{node {}%
					edge from parent node [very near end,swap] {$j b\phantom{j}$}}%
					child[sibling distance=2em,level distance=3.8em]{node {}%
					edge from parent node [swap,very near end] {$j a$}}%
					child[sibling distance=2em,level distance=3.8em]{node {}%
					edge from parent node [very near end] {$\phantom{j}-a$}}%
					child[sibling distance=2.5em,level distance=3.2em]{node {}%
					edge from parent node [very near end] {$\phantom{j}b$}}%
					child[sibling distance=2.5em,level distance=2.2em]{node {}%
					edge from parent node [near end] {$\phantom{-j}a$}}%
				edge from parent node [swap] {$d\phantom{j}$}};%
			\node at (17.5em,0){}%
				child{node [dummy] {}%
					child[sibling distance=2.5em,level distance=2.2em]{node {}%
					edge from parent node [swap, near end] {$-ka\phantom{j}$}}%
					child[sibling distance=2.5em,level distance=3.2em]{node {}%
					edge from parent node [very near end,swap] {$k b\phantom{j}$}}%
					child[sibling distance=2em,level distance=3.8em]{node {}%
					edge from parent node [swap,very near end] {$k a$}}%
					child[sibling distance=2em,level distance=3.8em]{node {}%
					edge from parent node [very near end] {$\phantom{j}-ia$}}%
					child[sibling distance=2.5em,level distance=3.2em]{node {}%
					edge from parent node [very near end] {$\phantom{j}i b$}}%
					child[sibling distance=2.5em,level distance=2.2em]{node {}%
					edge from parent node [near end] {$\phantom{-j}i a$}}%
				edge from parent node [swap] {$i d\phantom{j}$}};%
		\end{scope}
		\begin{scope}[every node/.style={font=\footnotesize}]%
		\tikzstyle{level 2}=[sibling distance=2.25em]%
			\node at (32em,0){}%
				child{node [dummy] {}%
					child{node {}%
					edge from parent node [swap,very near end] {$(G/K) \cdot b$}}%
					child{node {}%
					edge from parent node [very near end] {$(G/L) \cdot a$}}%
				edge from parent node [right] {$(G/H) \cdot d$}};%
		\end{scope}%
		\draw[decorate,decoration={brace,amplitude=2.5pt}] (18em,0) -- (3em,0) node[midway,inner sep=4pt,font=\normalsize]{$S$}; %
		\node at (32em,-0.15) [font=\normalsize] {$S$};
	\end{scope}
	\end{tikzpicture}%
\end{equation}%
These examples illustrate our motivation for the term 
``orbital face'': the tree diagrams in the orbital representations of $R_1,R_2,S$ look like faces of the tree in the orbital representation of $T$.

Adapting the notation for (non-equivariant) inner faces, we write
$S = T-Gc = T-\{c,jc,ic,kc\}$ and analogously throughout the paper.
We will need no analogous notation for orbital outer faces.
\end{example}

\begin{notation}\label{TREEDIFNOT NOT}
	In the remainder of the paper we sometimes need to consider (non-equivariant) faces and orbital faces simultaneously.
	As such, we reserve the letters $U,V,W$ for trees in $\Omega$
	and the letters $R,S,T$ for $G$-trees in $\Omega_G$.
\end{notation}

One has the following orbital face analogue of Proposition \ref{UNIQUEFACT PROP}.

\begin{proposition}\label{INNOUTORB PROP}
	Any orbital face $\varphi \colon S \hookrightarrow T$ in $\Omega_G$ has a factorization 
	$S \overset{\varphi^i}{\hookrightarrow} R \overset{\varphi^o}{\hookrightarrow} T$, unique up to unique isomorphism, as an orbital inner face followed by an orbital outer face.	
	
	Moreover, there is a (strictly) unique factorization with $\varphi^o$ planar. 
\end{proposition}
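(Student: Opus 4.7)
The plan is to descend Proposition \ref{UNIQUEFACT PROP} from $\Omega$ to $\Omega_G$, using the componentwise nature of Definition \ref{ORBFACE DEF} together with the uniqueness assertions in that proposition to transport the $G$-action across the factorization.

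First, I would forget the $G$-action and work in the category $\Phi$ of forests. Since a map of forests sends each tree component of the source into a single tree component of the target, the orbital face $\varphi$ decomposes into a function $\pi$ on component sets together with face maps $\varphi_j \colon S_j \hookrightarrow T_{\pi(j)}$ in $\Omega$ for each tree component $S_j$ of $S$. Proposition \ref{UNIQUEFACT PROP} then factors each $\varphi_j$, essentially uniquely, as $S_j \overset{\varphi_j^i}{\hookrightarrow} R_j \overset{\varphi_j^o}{\hookrightarrow} T_{\pi(j)}$, and taking the coproduct over components gives a forest factorization $S \hookrightarrow R \hookrightarrow T$.

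Next, I would promote this to a factorization in $\Omega_G$. The component set of $R$ matches that of $S$ and hence inherits a transitive $G$-action, so $R$ will lie in $\Omega_G$ as soon as one supplies a $G$-action on $R$ compatible with both $\varphi^i$ and $\varphi^o$. For each $g \in G$, the $G$-equivariance of $\varphi$ combined with the essential uniqueness in Proposition \ref{UNIQUEFACT PROP} supplies a canonical isomorphism $g \colon R_j \xrightarrow{\simeq} R_{gj}$ rendering both halves of the factorization $G$-equivariant, and the cocycle identity follows from the same uniqueness applied to composites. Because orbital inner and outer faces are defined componentwise, $S \hookrightarrow R$ and $R \hookrightarrow T$ are indeed orbital inner and outer as required. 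Uniqueness in $\Omega_G$ up to unique isomorphism is inherited from componentwise uniqueness, since any comparison isomorphism built componentwise is automatically $G$-equivariant by the same argument.

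For the planar refinement, I would invoke the strictly unique planar factorization of Proposition \ref{UNIQUEFACT PROP}: each $R_j$ is singled out as the unique planar outer subface of $T_{\pi(j)}$ with root and leaves prescribed by $\varphi_j$. Assembling these subforests, with components ordered via $\pi$ followed by the chosen order on components of $T$, endows $R$ with a canonical planarization, and the strict uniqueness of the planar outer factor in $\Omega$ forces the transported $G$-action to respect this planar structure. The main bookkeeping I anticipate is precisely this last step: one must check that the componentwise planar uniqueness assembles coherently with the $G$-action on components, so that the chosen planar representative of $R$ is independent of which $g \in G$ one tracks through the uniqueness argument. This again reduces to the strict uniqueness clause in Proposition \ref{UNIQUEFACT PROP}, so no substantive new argument is required.
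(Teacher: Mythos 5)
Your proposal follows essentially the same strategy as the paper: decompose the orbital face into its per-tree-component face maps $S_j \hookrightarrow T_{\pi(j)}$, apply Proposition \ref{UNIQUEFACT PROP} componentwise to obtain factorizations $S_j \hookrightarrow \bar{S}_j \hookrightarrow T_{\pi(j)}$, assemble these into $S \hookrightarrow R \hookrightarrow T$, and use the uniqueness clause of Proposition \ref{UNIQUEFACT PROP} both to produce isomorphisms $R_j \xrightarrow{\simeq} R_{gj}$ and to verify the cocycle condition. The paper works with the planar factorization from the start rather than layering planarity on at the end, but this is a reorganization, not a different argument.

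There is, however, one genuine gap. You assert that taking the coproduct over components gives a forest \emph{face} $R \hookrightarrow T$, but this is not automatic. Several components $S_j$, $S_{j'}$ may map into the same tree component $T_i$, and nothing a priori prevents the outer closures $\bar{S}_j$ and $\bar{S}_{j'}$ from overlapping inside $T_i$; if they did, the assembled map $R = \amalg_j \bar{S}_j \to T$ would fail to be injective on edges, so $R \hookrightarrow T$ would not be an orbital face in the sense of Definition \ref{ORBFACE DEF}. The paper addresses this explicitly: since $\varphi$ is injective on edges, the roots $r_{\bar{S}_j} = r_{S_j}$ are distinct and pairwise $\leq_d$-incomparable, and by \cite[Cor. 5.25]{Per18} this forces edges of $T$ lying in different $\bar{S}_j$ to be incomparable, hence distinct. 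Your proof needs this injectivity check (or an equivalent) before the factorization can be said to land in orbital faces; everything else in your argument is sound.
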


\begin{proof}
It suffices to prove the planar statement.
Writing $T = \amalg_{i \in I} T_i$ and $S = \amalg_{j\in J} S_j$ for the tree component decompositions,
$\varphi \colon S \hookrightarrow T$ is described by a map 
$\varphi \colon J \to I$ together with planar face maps
$S_j \hookrightarrow T_{\varphi(j)}$.
Abbreviating $\bar{S}_j = \bar{S}_j^{T_{\varphi(j)}}$ (cf. Notation \ref{BARUT NOT}), the uniqueness in Proposition \ref{UNIQUEFACT PROP}
shows that the maps 
$S_j \hookrightarrow \bar{S}_j \hookrightarrow T_{\varphi(j)}$ must be the components of the desired factorization
$S \hookrightarrow R \hookrightarrow T$, so that it remains only to show that 
$R = \amalg_j \bar{S}_j$
admits a unique compatible $G$-action and that the natural map
$R \to T$ is injective.
To obtain the $G$-action on $R$, we again apply 
Proposition \ref{UNIQUEFACT PROP}
to obtain unique dashed arrows as in the following diagram.
\[
\begin{tikzcd}
	S_j \ar[hookrightarrow]{r} \ar{d}{g} & 
	\bar{S}_j \ar[hookrightarrow]{r} \ar[dashed]{d}{g} &
	T_{\varphi(j)} \ar{d}{g}
\\
	S_{gj} \ar[hookrightarrow]{r} & 
	\bar{S}_{gj} \ar[hookrightarrow]{r} & T_{g\varphi(j)}
\end{tikzcd}
\]
That the $G$-action is associative and unital, meaning that the composite
$\bar{S}_j \xrightarrow{g} \bar{S}_{gj} \xrightarrow{\bar{g}} \bar{S}_{\bar{g}gj}$ equals 
$\bar{S}_j \xrightarrow{\bar{g}g} \bar{S}_{\bar{g}gj}$
and that 
$\bar{S}_j \xrightarrow{e} \bar{S}_{ej}$
is the identity $\bar{S}_j = \bar{S}_{ej}$, 
follows from the analogous properties for $S,T$ and the uniqueness in Proposition \ref{UNIQUEFACT PROP}. 
Lastly, the remaining claim that $R \to T$ is injective follows since each map $\bar{S}_j \hookrightarrow T$ is injective together with the fact that the roots $r_{\bar{S}_j} = r_{S_j}$
are $\leq_d$-incomparable, so that edges of $T$ in different $\bar{S}_j$ are also $\leq_d$-incomparable \cite[Cor. 5.25]{Per18}.
\end{proof}

The argument at the end of the previous proof has the following two consequences.

\begin{remark}\label{ROOTISOMONO REM}
	If $U \in \mathsf{Face}(T)$ has isotropy $H$,
	the induced map 
	$G \cdot_H U \to T$ is injective on edges iff
	$H$ is also the isotropy of the root $r_U$.
\end{remark}

\begin{remark}\label{INNFULLORB REM}
	Orbital inner faces $S \hookrightarrow T$ are full (cf. Remark \ref{INNFULL REM}), i.e. all broad relations in $T$ between edges of $S$ are also in $S$. 
\end{remark}

We next discuss the interactions between (non-equivariant) faces and orbital faces.

\begin{proposition}\label{MINGFACT PROP}
	Let $T \in \Omega_G$.
	For any (non-equivariant) face $U \hookrightarrow T$ 
	there is a smallest planar orbital face $GU \hookrightarrow T$
	such that $U \hookrightarrow GU$. 
\end{proposition}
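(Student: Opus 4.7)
The plan is to construct $GU$ explicitly as an induced $G$-tree. Write $T_i$ for the tree component of $T$ containing $U$ and $K := \Stab_G(r_U)$, which acts on $T_i$ since it fixes $r_U$. Let $V_0$ be the smallest $K$-stable planar face of $T_i$ with root $r_U$ containing $U$; this exists because the collection of such faces is non-empty (the maximal planar outer face of $T_i$ with root $r_U$ belongs to it, being $K$-stable as $K$ fixes $r_U$), finite, and closed under the natural intersection operation, thanks to the common root $r_U$. Then set $GU := G \cdot_K V_0$; by Remark \ref{ROOTISOMONO REM}, since $r_{V_0} = r_U$ has isotropy $K$, the induced map $GU \to T$ is injective on edges, so $GU$ is a planar orbital face containing $U$.

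For minimality, let $S \hookrightarrow T$ be any planar orbital face containing $U$, let $S_j$ denote the tree component of $S$ receiving $U$, and set $K_S := \Stab_G(r_{S_j})$, the isotropy of $S_j$ by Remark \ref{ROOTISOMONO REM}. Since $r_U \leq_d r_{S_j}$ in $T_i$ and the chain of $\leq_d$-ancestors of $r_U$ in $T_i$ is uniquely determined (each non-root edge has a unique parent in the tree), we find $K \leq K_S$; in particular $S_j$ is $K$-stable. Restricting $S_j$ to the planar sub-face $S_j|_{r_U}$ consisting of edges $\leq_d r_U$ yields a $K$-stable planar face of $T_i$ with root $r_U$ containing $U$, so by minimality of $V_0$ we have $V_0 \hookrightarrow S_j|_{r_U} \hookrightarrow S_j$, and passing to $G$-orbits gives $GU \hookrightarrow S$.

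The main technical obstacle is verifying closure of this collection under intersection: since broad relations can interact subtly, especially in the presence of stumps in $T_i$ (whose presence in faces is optional), one must check carefully that intersections of $K$-stable planar faces sharing root $r_U$ inherit the tree axioms from \cite[Defs. 5.1, 5.3, 5.9]{Per18}. The remaining steps — invoking Remark \ref{ROOTISOMONO REM} for injectivity, and tracing through the minimality via the restriction $S_j|_{r_U}$ — are then relatively routine.
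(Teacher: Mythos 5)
Your plan has a genuine gap in the construction of $V_0$, and you correctly sense where it is, but you understate how much work is hiding there.

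The claim that the collection of $K$-stable planar faces of $T_i$ with root $r_U$ containing $U$ is \emph{closed under the natural intersection operation} does not hold for the naive candidate. For two arbitrary planar faces with a common root the edge-wise intersection need not carry the structure of a tree, even without any equivariance. Concretely, take $T_i$ with vertices $bc\leq a$ and $d\leq b$, and set
$V_1 := T_i - b$ (a single vertex $dc\leq a$) and $V_2 := (T_i)_{bc\leq a}$ (the corolla with leaves $b,c$). Both are planar faces with root $a$ containing $U := \{a\}$, but $\boldsymbol{E}(V_1)\cap\boldsymbol{E}(V_2)=\{a,c\}$ carries no broad relation in either $V_1$ or $V_2$ (neither $c\leq a$ nor $ac\leq a$ holds in $T_i$), so it fails the root/connectedness axiom and is not a face. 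The common-root hypothesis does not rescue this: Lemma \ref{CUPCAP LEM} only provides an intersection for \emph{outer} faces with a common root, and once inner faces enter the picture, passing to a smaller face can change the generating relations in ways that are not compatible between $V_1$ and $V_2$. So the finite poset in question is not obviously a meet-semilattice via intersection, and the existence of a unique minimal element cannot be waved through.

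What actually rescues the construction is precisely the outer/inner decomposition that the paper's proof uses: one first passes to outer closures $\bar V_i$, where Lemma \ref{CUPCAP LEM} does apply to build the smallest $K$-stable outer face $H\bar U$ containing $U$, and one then removes from $H\bar U$ the inner edges whose $K$-orbit misses $\boldsymbol{E}(U)$, using that inner face maps are full (Remark \ref{INNFULL REM}) and that all leaves of $H\bar U$ are already $K$-translates of leaves of $\bar U$, hence edges of $U$, hence present in every $V$ in the collection. Running that argument does produce a minimum $V_0 = H\bar U - E$ and shows your collection has one, but at that point you have essentially reproduced the paper's two-stage construction rather than bypassing it; the ``closed under intersection'' step is where all of that work actually lives.

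The remainder of your proposal is sound and genuinely different in flavor from the paper's. The identification $GU = G\cdot_K V_0$ with injectivity via Remark \ref{ROOTISOMONO REM} matches the paper, but your minimality argument — deducing $K\leq K_S$ from the total ordering of the $\leq_d$-ancestor chain of $r_U$, then cutting $S_j$ down to the maximal outer face $S_j|_{r_U}$ above $r_U$ — is a clean alternative to the paper's appeal to the outer case plus fullness of orbital inner faces. One small point worth spelling out there: $S_j|_{r_U}$ should be the maximal planar outer face of $S_j$ with root $r_U$, which is full among edges $\leq_d r_U$ and therefore does contain $U$; that fullness is what licenses $U\hookrightarrow S_j|_{r_U}$.
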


\begin{proof} Without loss of generality, we assume $U$ is planar throughout.

Suppose first that $U=\bar{U}^T$ is outer and write $H\leq G$ for
the isotropy of its root $r_U$.
By Lemma \ref{CUPCAP LEM} there exists a smallest planar outer face 
containing all $h U \hookrightarrow T$ for $h \in H$,
which we denote by $HU$.
Moreover, $HU$ inherits the $H$-action from $T$ 
(by either its construction or its characterization).
The natural map
$G \cdot_H HU \to T$
is then injective on edges
(Remark \ref{ROOTISOMONO REM})
and we thus let $GU$ be $G \cdot_H HU$
with the planar structure induced from $T$. The claim that $GU$ is the smallest planar orbital face such that $U \hookrightarrow GU$ is clear.

	Before tackling the general case, we collect some key observations.
	Firstly, if $U$ is outer then so is the (non-equivariant) face $HU$ and the orbital face $GU$.
	Secondly, the root tuple of 
	$GU$ is $G\cdot_H r_U$.
	Lastly, we need to characterize the leaf tuple of $GU$. We call a leaf $l$ of $U$ \textit{orbital} if 
all the edges in $Hl \cap \boldsymbol{E}(U)$ are leaves of $U$, 
	and claim that the leaves of $GU$ are the tuple $\underline{l}$ formed by the $G$-orbits of the orbital leaves of $U$. 
	Indeed, a leaf $l$ of $U$ is also a leaf of $HU$ iff 
	$\forall_{h \in H}$($l \in \boldsymbol{E}(hU)$ implies that $l$ is a leaf of $hU$) iff
	$\forall_{h \in H}$($h^{-1} l \in \boldsymbol{E}(U)$ implies that $l$ is a leaf of $U$).

	In the general case, we define $GU$ as the orbital inner face of $G \bar{U}$ that removes all edge orbits not represented in $U$ 	(that all such edge orbits are inner follows from the description of the roots and leaves of $G\bar{U}$ in the previous paragraph). 
	For the remaining claim that $G U$ is the smallest planar orbital face with $U \hookrightarrow GU$, let $U \hookrightarrow S$ be any such face, and write $S \hookrightarrow \bar{S} \hookrightarrow T$ for the planar orbital factorization given by Proposition \ref{INNOUTORB PROP}.
	Then by the outer case established before it is	
	$GU \hookrightarrow G \bar{U} \hookrightarrow \bar{S}$ and, since all edges of $GU$ (which are the orbits of the edges of $U$) are in $S$, it follows that $GU \hookrightarrow S \hookrightarrow \bar{S}$ due to the inner face inclusion 
	$S \hookrightarrow \bar{S}$ being full (Remark \ref{INNFULLORB REM}).
\end{proof}

\begin{example}
Much of the complexity in the previous proof is needed to handle  the scenario of non outer faces $U \hookrightarrow T$
of $G$-trees $T$ which have stumps,
which is easily the subtlest case, as illustrated by the following
example (where $G = \mathbb{Z}_{/2}=\{\pm1\}$).
\[%
	\begin{tikzpicture}[auto,grow=up, every node/.style = {font=\footnotesize}]%
	\begin{scope}[level distance = 1.9em]
	\tikzstyle{level 2}=[sibling distance=2em]%
	\tikzstyle{level 3}=[sibling distance=0.75em]%
		\node at (0,0) [font=\normalsize]{$U$}%
			child{node [dummy] {}%
				child{
				edge from parent node [swap, near end] {$d$}}%
				child{
				edge from parent node [near end] {$\phantom{d}c$}}%
			edge from parent node [swap] {$r$}};%
	\end{scope}
	\begin{scope}[level distance = 2.1em]
	\tikzstyle{level 2}=[sibling distance=2.85em]%
	\tikzstyle{level 3}=[sibling distance=0.75em]%
		\node at (2.25,0) [font=\normalsize]{$GU$}%
			child{node [dummy] {}%
				child{node [dummy] {}%
				edge from parent node [swap] {$-c$}}%
				child[level distance = 2.4em]{
				edge from parent node [near end,swap] {$d$}}%
				child{node [dummy] {}%
				edge from parent node {$\phantom{-}c$}}%
			edge from parent node [swap] {$r$}};%
		\node at (5.5,0) [font=\normalsize]{$\bar{U}$}%
			child{node [dummy] {}%
				child{node [dummy] {}%
					child{node [dummy] {}
					edge from parent node [swap] {$-a$}}%
				edge from parent node [swap] {$-c$}}%
				child[level distance = 2.4em]{
				edge from parent node [near end,swap] {$d$}}%
				child{
				edge from parent node {$\phantom{-}c$}}%
			edge from parent node [swap] {$r$}};%
		\node at (8.5,0) [font=\normalsize]{$G\bar{U}$}%
			child{node [dummy] {}%
				child{node [dummy] {}%
					child{node [dummy] {}
					edge from parent node [swap] {$-a$}}%
				edge from parent node [swap] {$-c$}}%
				child[level distance = 2.4em]{
				edge from parent node [near end,swap] {$d$}}%
				child{node [dummy] {}%
					child{node [dummy] {}
					edge from parent node {$\phantom{-}a$}}%
				edge from parent node {$\phantom{-}c$}}%
			edge from parent node [swap] {$r$}};%
		\node at (12.25,0) [font=\normalsize]{$T$}%
			child{node [dummy] {}%
				child{node [dummy] {}%
					child{node [dummy] {}
					edge from parent node [swap] {$-a$}}%
				edge from parent node [swap] {$-c$}}%
				child[level distance = 2.4em]{node [dummy] {}%
					child{
					edge from parent node [near end,swap]{$-b$}}%
					child{
					edge from parent node [near end]{$\phantom{-}b$}}%
				edge from parent node [near end,swap] {$d$}}%
				child{node [dummy] {}%
					child{node [dummy] {}
					edge from parent node {$\phantom{-}a$}}%
				edge from parent node {$\phantom{-}c$}}%
			edge from parent node [swap] {$r$}};%
	\end{scope}
	\end{tikzpicture}%
\]%

\end{example}

\begin{remark}\label{GOUT REM}
If $U \hookrightarrow T$ is outer, the characterization of the roots and leaves of $GU$ in the previous proof shows that
the inner edges of $GU$ are the $G$-orbits of the inner edges of $U$.
\end{remark}

\begin{remark}\label{GINNER REM}
	For any inner face $V-e$ 
	of a face $V \hookrightarrow T$ one has 
	that $G(V-e)$ is either $GV - Ge$ or $GV$.
	Indeed, the latter holds iff $V-e$ contains either an inner edge or a leaf of the form $ge$.
\end{remark}

\begin{remark}
Writing $\mathsf{Face}_o(T)$ for the poset of planar orbital faces, Proposition \ref{MINGFACT PROP} gives a $G$-equivariant functor (note that $G$ acts trivially on $\mathsf{Face}_o(T)$)
\[
	\mathsf{Face}(T) \xrightarrow{G(-)} \mathsf{Face}_o(T),
\qquad
	U \mapsto GU.
\]
Moreover, there is a natural inclusion
$\mathsf{Face}_o(T) \subseteq \mathsf{Face}(T)/G$ (sending an orbital face $S$ to the class of components $[S_{\**}]$)
whose left adjoint is the induced functor 
$\mathsf{Face}(T)/G \to \mathsf{Face}_o(T)$.
\end{remark}

\begin{remark}\label{ORB_FACE_REM}
	Following the intuition in Example \ref{ORBFACE EX}, there is an isomorphism of posets
	\begin{equation}
	\mathsf{Face}_o(T) \xrightarrow{\simeq} \mathsf{Face}(T/G), \qquad S \mapsto S/G,
	\end{equation}
	where $T/G$, which is formally defined below, can be informally thought of as the underlying tree in the orbital representation of $T$. 
	
	However, we should first caution that though this claim is intuitive, some care is needed.
	For example, the broad poset of $T/G$ is in general \textit{not} the quotient of the broad poset of $T$,
	as that may fail the simplicity axiom in 
	\cite[Def. 5.9]{Per18}.
	In fact, the assignment $T \mapsto T/G$ is \textit{not}
	a functor $\Omega_G \to \Omega$, as shown by
	the following (for $G = \mathbb{Z}_{/2} = \{\pm 1\}$),
	since no dashed arrow exists.
\begin{equation}\label{QUOTMAP EQ}
	\begin{tikzpicture}[auto,grow=up, level distance = 2.2em,
	every node/.style={font=\scriptsize,inner sep = 2pt}]%
		\tikzstyle{level 2}=[sibling distance=2.25em]%
		\begin{scope}
			\node at (0,0){}%
				child{node [dummy] {}%
					child{node {}%
					edge from parent node [swap,very near end] {$b\phantom{j}$}}%
					child{node {}%
					edge from parent node [very near end] {$\phantom{-j}a$}}%
				edge from parent node [swap] {$r\phantom{j}$}};%
			\node (T2) at (5em,0){}%
				child{node [dummy] {}%
					child{node {}%
					edge from parent node [swap,very near end] {$-a\phantom{j}$}}%
					child{node {}%
					edge from parent node [very near end] {$\phantom{j} -b$}}%
				edge from parent node [swap] {$-r$}};%
			\node (S) at (14em,0){}%
				child{node [dummy] {}%
					child{node {}%
					edge from parent node [swap,very near end] {$-a\phantom{j}$}}%
					child{node {}%
					edge from parent node [very near end] {$\phantom{j} a$}}%
				edge from parent node [swap] {$r$}};%
			\node at ($(S)+(0,-0.25)$) [font=\normalsize] {$S$};
		\draw[decorate,decoration={brace,amplitude=2.5pt}] (5.5em,0) -- (-0.5em,0) node[midway,inner sep=4pt,font=\normalsize]{$T$}; %
		\draw[->]
	($(T2) + (1,.75)$) -- ($(S)+(-1,.75)$) node [midway,above] {$b \mapsto -a$};
		\end{scope}%
		\begin{scope}[xshift=25em]
			\node (TG) at (0,0){}%
				child{node [dummy] {}%
					child{node {}%
					edge from parent node {}}%
					child{node {}%
					edge from parent node {}}%
				edge from parent node {}};%
			\node at ($(TG) + (0,-0.25)$) [font=\normalsize] {$T/G$};
			\node (SG) at (9em,0){}%
				child{node [dummy] {}%
					child{node {}%
					edge from parent node {}}%
				edge from parent node {}};%
			\node at ($(SG) + (0,-0.25)$) [font=\normalsize] {$S/G$};
			\draw[->,dashed]
			($(TG) + (1,.75)$) -- ($(SG)+(-1,.75)$);
		\end{scope}
	\end{tikzpicture}%
\end{equation}%
We now outline the formal construction of $T/G$,
starting with some preliminary notation.

Given $\underline{e},\underline{f}$ tuples of edges of $T$, 
write $\underline{f} \leq \underline{e}$
if $\underline{e} = e_1 e_2 \cdots e_k$
and there is a tuple decomposition
$\underline{f} = 
\underline{f}_1 \underline{f}_2 \cdots
\underline{f}_k$
such that $\underline{f}_i \leq e_i$.
When the $e_i$ are $\leq_d$-incomparable,
\cite[Prop. 5.30]{Per18} says that such decomposition is unique, so that $\underline{e},\underline{f}$ consist of distinct edges and we can regard 
$\underline{e},\underline{f}$ as subsets $\underline{e},\underline{f} \subseteq \boldsymbol{E}(T)$.
	
We now say that a relation
$\underline{f} \leq \underline{e}$	
is an \textit{orbital relation} if
$\underline{e} \subseteq \boldsymbol{E}(T)$
is a transitive $G$-subset and $\underline{f} \subseteq \boldsymbol{E}(T)$ is a $G$-subset. 
Reinterpreting the orbital relations of $T$ 
as broad relations on the set 
$\boldsymbol{E}_{G}(T) = \boldsymbol{E}(T)/G$ of edge orbits,
one readily checks that this defines a 
dendroidally ordered set \cite[Def. 5.9]{Per18},
i.e. a tree, that we denote $T/G$.
Note that one hence has a functor
$(-)/G \colon \Omega_G^+ \to \Omega$,
where $\Omega_G^+$ is the subcategory of orbital face maps,
and planarizations of the $T/G$ are chosen arbitrarily.

Lastly, we observe that, in analogy to the non-equivariant case,
the orbital outer faces of $T$ are indexed by orbital relations.
\end{remark}

\subsection{Equivariant dendroidal sets}\label{EQDENDSETS SEC}

Recall \cite[\S 5.4]{Per18} that the category of 
\textit{$G$-equivariant dendroidal sets}
is the presheaf category 
$\mathsf{dSet}^G = \mathsf{Set}^{\Omega^{op} \times G}$.
Given $T \in \Omega_G$ with non-equivariant tree components $T_1,\cdots,T_k$,
we extend the usual notation for representable functors 
to obtain $\Omega[T] \in \mathsf{dSet}^G$ via
\[
	\Omega[T] = \Omega[T_1] \amalg \cdots \amalg \Omega[T_k]
\]
regarded as a $G$-object in $\mathsf{dSet}$.
One further defines \textit{boundaries} (in the union formula we regard the injections $\Omega[U] \to \Omega[T]$ as inclusions; the equivalence between the colimit and union formulas follows from Proposition \ref{UNIQUEFACT PROP})
\[
	\partial \Omega[T] = 
	\mathop{\colim}_{U \in \mathsf{Face}(T),U \neq T_i}
	\Omega[U] =
	\bigcup_{U \in \mathsf{Face}(T),U \neq T_i}
	\Omega[U]
\]
and, for $\emptyset \neq E \subseteq \boldsymbol{E}^{\mathsf{i}}(T)$ a
non-empty $G$-subset of inner edges 
(we abbreviate $E_i = E \cap \boldsymbol{E}^{\mathsf{i}}(T_i)$), \textit{$G$-inner horns}
\[
	\Lambda^{E}[T] = 
	\mathop{\colim}_{U \in 
	\mathsf{Face}(T),
	(T_i - E_i) \not \hookrightarrow U}
	\Omega[U] =
	\bigcup_{U \in 
	\mathsf{Face}(T),
	(T_i - E_i) \not \hookrightarrow U}
	\Omega[U]
\]
which, informally, are the subcomplexes of $\Omega[T]$ that remove the inner faces $T_i-D$ for $D \subseteq E_i$.

Lastly, letting $\mathsf{Face}_{sc}(T)$ denote those outer faces of $T$ with no inner edges (these are either single edges $t$ or generated by single vertices $t^{\uparrow} \leq t$), we define the 
\textit{Segal core of $T$}
\[
	Sc[T] 
= 
	\mathop{\colim}_{U \in 
	\mathsf{Face_{sc}}(T)}
	\Omega[U] 
=
	\bigcup_{U \in 
	\mathsf{Face}_{sc}(T)}
	\Omega[U].
\]

Note that if $T \simeq G \cdot_H T_{\**}$ for some $T_{\**} \in \Omega^H$ then 
\begin{equation}\label{T_DECOMP_EQ}
	\Omega[T] \simeq G \cdot_H \Omega[T_{\**}], 
\quad
	\partial \Omega[T] \simeq G \cdot_H \partial \Omega[T_{\**}], \quad
	\Lambda^{E}[T] \simeq G \cdot_H \Lambda^{E_{\**}}[T_{\**}],
\quad
	Sc[T] \simeq G \cdot_H Sc[T_{\**}].
\end{equation}
As a cautionary note, we point out that though representable functors $\Omega[T]$ are defined for $T \in \Omega_G$,
evaluations $X(U)$ of $X \in \mathsf{dSet}^G$
are defined only for $U \in \Omega$ (cf. Notation \ref{TREEDIFNOT NOT}).

\begin{remark}\label{FACEGACT REM}
	For $T \in \Omega_G$, a planar face $\varphi_U \colon U \hookrightarrow T$
	can also be regarded as a dendrex $\varphi_U \in \Omega[T](U)$.
	However, the $G$-isotropy $H$ of $U \in \mathsf{Face}(T)$ must not be confused with the $G$-isotropy of $\varphi_U$.
	Instead, $\Omega[T](U)$ has a larger $G \times \mathsf{Aut}(U)^{op}$-action,	
	where $\mathsf{Aut}(U)^{op}$ acts by precomposition,
	and the $G \times \mathsf{Aut}(U)^{op}$-isotropy of $\varphi_U$
	is a subgroup 
	$\Gamma \leq G \times \mathsf{Aut}(U)$
	consisting of elements
	$(h,\phi^{-1}(h))$
	where $h\in H$ and
	$\phi\colon H \to \mathsf{Aut}(U)$ is a homomorphism.
	Indeed, noting that \eqref{FACEGACT EQ} implies there is an identity $\Omega[gU] = g \Omega[U]$ as subpresheaves of $\Omega[T]$, it follows that the $G$-isotropy $H$ of $U \in \mathsf{Face}(T)$ coincides with the $G$-isotropy of the subpresheaf $\Omega[U]\subseteq \Omega[T]$ so that, by Yoneda, 
	$U \in \Omega$ has a canonical $H$-action 
	$\phi \colon H \to \mathsf{Aut}(U)$
	(more explicitly, $\phi(h)$ is the left isomorphism in \eqref{FACEGACT EQ}).
	We abuse notation by writing 
	$U \in \Omega^H \subseteq \Omega_H$ to denote this.  
\end{remark}


Recall that a class of maps is called \textit{saturated} if it is closed under pushouts, transfinite composition and retracts.

The saturation of the boundary inclusions 
$\partial \Omega[T] \to \Omega[T]$
is the class of \textit{$G$-normal monomorphisms},
i.e. those monomorphisms $X \to Y$ in $\mathsf{dSet}^G$ such that
$Y(U) \setminus X(U)$ has an $\mathsf{Aut}(U)$-free action for all $U \in \Omega$ (non-equivariantly, this is \cite[Prop. 1.5]{CM11}, and also holds equivariantly by \cite[Rem. 6.7]{Per18}; alternatively, it can be shown using \cite[Props. 6.5(ii) and 5.62]{Per18}).
Moreover, since one can forget the $G$-action when verifying this condition, we will usually call these simply \textit{normal monomorphisms}.

The saturation of the $G$-inner horn inclusions 
$\Lambda^E[T] \to \Omega[T]$
is called the class of \textit{$G$-inner anodyne maps}, 
while those $X \in \mathsf{dSet}^G$
with the right lifting property against all $G$-inner horn inclusions are called \textit{$G$-$\infty$-operads}.

We can now recall the statement of \cite[Thm 2.1]{Per18}, which was the main result therein.

\begin{theorem}\label{DSETGMODEL THM}
	There is a model structure on $\mathsf{dSet^G}$
	such that the cofibrations are the normal monomorphisms and the fibrant objects are the $G$-$\infty$-operads.
\end{theorem}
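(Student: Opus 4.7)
The plan is to adapt the Cisinski–Moerdijk construction of the $\infty$-operad model structure on $\mathsf{dSet}$ to the $G$-equivariant setting. Since $\mathsf{dSet}^G = \mathsf{Set}^{\Omega^{op}\times G}$ is a Grothendieck topos, it fits into Cisinski's framework for building cofibrantly generated model structures on presheaf categories. One first observes that the class of normal monomorphisms is the saturation of the set $\{\partial\Omega[T]\hookrightarrow \Omega[T]\}_{T\in \Omega_G}$, so that these boundary inclusions form a convenient generating set of cofibrations; the characterization of normal monomorphisms in terms of free $\mathsf{Aut}(U)$-actions on new cells then follows by a standard presheaf-topos argument.

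The heart of the construction, and the principal obstacle, would be establishing an equivariant pushout-product theorem: the pushout-product of any normal monomorphism with any $G$-inner horn inclusion $\Lambda^E[T]\hookrightarrow \Omega[T]$ is $G$-inner anodyne. Non-equivariantly this is proved by a delicate filtration of $\Omega[T]\times \Omega[S]$ along faces, attaching cells at each step along inner horn inclusions. Equivariantly one must filter along the $G$-poset $\mathsf{Face}(T)$ while tracking isotropies, so that cells are attached along induced horn inclusions of the form $G\cdot_H(\Lambda^{E_\star}[T_\star]\to \Omega[T_\star])$ for suitable subgroups $H\leq G$. The orbital faces of Definition \ref{ORBFACE DEF} and the outer-closure construction of Notation \ref{BARUT NOT} are essential bookkeeping devices: they mediate between ordinary faces (which index $G$-normal cells) and orbital faces (which organize the equivariant horn attachments), and Propositions \ref{INNOUTORB PROP} and \ref{MINGFACT PROP} supply the canonical factorizations needed to make induction arguments work.

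Granting this pushout-product property, one chooses a cylinder coming from a Joyal-style interval $J\in \mathsf{dSet}^G$, defines the anodyne class as the saturation of the $G$-inner horn inclusions together with inclusions derived from $J$, and invokes Cisinski's theorem to obtain a cofibrantly generated model structure whose cofibrations are the normal monomorphisms. Its fibrant objects have the right lifting property against all generating trivial cofibrations, in particular against all $G$-inner horn inclusions, hence are $G$-$\infty$-operads. Conversely, for a $G$-$\infty$-operad $X$, the pushout-product property reduces the fibrancy of $X\to \ast$ to the lifting against the generating horn inclusions, which is the defining hypothesis on $X$.

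The main obstacle throughout is the equivariant combinatorics. The $G$-orbit of an ordinary face need not be an orbital face; outer faces may fail to be full when stumps are present (Remark \ref{INNFULL REM}); and planarizations of $G$-trees must be chosen coherently across face maps, which entails the subtle interplay between $\mathsf{Face}(T)$, $\mathsf{Face}_o(T)$ and the passage functor $G(-)$ discussed in the preliminaries. Organizing the equivariant cell attachments so that each new cell is glued along a $G$-inner horn (and not merely along some $G$-orbit of non-equivariant horns that collectively fail to form a single equivariant horn) is the decisive combinatorial task, and it is precisely what the apparatus recalled from \cite{Per18} is designed to accomplish.
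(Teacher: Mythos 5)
This theorem is not proved in the present paper: it is recalled verbatim from \cite{Per18} (this is stated explicitly just before the theorem environment), so there is no ``paper's own proof'' to compare against. Your proposal is, in effect, a summary of the strategy used in \cite{Per18}, and at that level it is essentially accurate: one runs Cisinski's machinery for model structures on presheaf toposes, taking the boundary inclusions as generating cofibrations, an equivariant version of the $J$-interval to define the cylinder, and the equivariant pushout-product theorems as the main combinatorial input. The characterization of normal monomorphisms via free $\mathsf{Aut}(U)$-actions on new cells does hold, as noted in \S\ref{EQDENDSETS SEC}.

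Two corrections, though. First, your central technical claim --- that the pushout-product of \emph{any} normal monomorphism with \emph{any} $G$-inner horn inclusion is $G$-inner anodyne --- is too strong as stated and is not what \cite{Per18} proves. As the paper recalls when discussing \cite[Thms 7.1 and 7.2]{Per18} in Example \ref{THM71 EX} and the subsequent remark, the pushout-product theorem is established under restrictions (both $G$-trees open, or one of them linear while the other may have stumps). This is the same restriction that occurs non-equivariantly in \cite{MW09} and \cite{CM11}, precisely because the dendroidal tensor product is not well-behaved in the presence of stumps. Cisinski's machinery only needs the restricted form, since the interval is linear. Second, the ``orbital face'' and ``orbital horn'' apparatus you invoke (Definition \ref{ORBFACE DEF}, $\mathsf{Face}_o(T)$, the passage functor $G(-)$) is introduced in the present paper, not in \cite{Per18}, and plays no role in the construction of the $\mathsf{dSet}^G$ model structure; the filtrations in \cite{Per18} use ordinary faces, the $G$-poset $\mathsf{Face}(T)$, and the outer closure $\bar{U}^T$, but not orbital faces. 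The orbital notions serve the Segal-space comparison results of this paper, not the earlier model-structure construction.
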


\begin{remark}
The definition of $G$-$\infty$-operads just given is a priori distinct from the original definition \cite[Def. 6.12]{Per18} which used only 
\textit{generating $G$-inner horn inclusions}, i.e. 
those inclusions $\Lambda^{Ge}[T] \to \Omega[T]$ with $E=Ge$ an inner edge orbit.
The definition herein has the technical advantages of being naturally compatible with restricting the $G$-action and of allowing for a simpler proof of Lemma \ref{CHAREDGE LEM}, 
which is our main tool for showing that maps
are $G$-inner anodyne.
The equivalence between the two definitions follows from 
\cite[Prop. 6.17]{Per18},
although we will also independently recover this fact
from Lemma \ref{CHAREDGE LEM} as Corollary \ref{REGGENHORN COR}.
\end{remark}

In addition to the $G$-inner horns defined above, we now introduce a new kind of horn that, much like orbital faces,
is naturally suggested by the orbital representation of $G$-trees.
Given $\emptyset \neq E \subseteq \boldsymbol{E}^{\mathsf{i}}(T)$ a $G$-equivariant set of inner edges, we define the associated 
\textit{orbital $G$-inner horn} by
\[
	\Lambda^{E}_o[T] = 
	\mathop{\colim}_{S \in 
	\mathsf{Face}_o(T),
	(T - E) \not \hookrightarrow S}
	\Omega[S] =
	\bigcup_{S \in 
	\mathsf{Face}_o(T),
	(T - E) \not \hookrightarrow S}
	\Omega[S]
\]
where we note that the equivalence between the colimit and union formulas now follows from Proposition \ref{MINGFACT PROP}.

\begin{remark}\label{ORB_HORN_REM}
One can strengthen the identification 
$\mathsf{Face}_o(T) \simeq \mathsf{Face}(T/G)$
in Remark \ref{ORB_FACE_REM}.

Say a subcomplex $A \subseteq \Omega[T]$ is \textit{orbital}
if it is the union of orbital faces $\Omega[S]$, $S\in \mathsf{Face}_o(T)$. 
Equivalently, by Proposition \ref{MINGFACT PROP} this means that for $U \in \mathsf{Face}(T)$
one has $\Omega[U] \subseteq A$ iff $\Omega[GU] \subseteq A$. There is then a natural bijection 
of posets (under inclusion)
\begin{equation}
	\set{
	\text{orbital subcomplexes $\bigcup_i\Omega[S_i]$ of $\Omega[T]$}}
		\leftrightarrow
	\set{\text{subcomplexes $\bigcup_i\Omega[S_i/G]$ of $\Omega[T/G]$}}.
\end{equation}
In particular, note that $\Lambda^{Ge}_o[T]$ corresponds to $\Lambda^{[e]}[T/G]$
and $Sc[T]$ corresponds to $Sc[T/G]$.
\end{remark}

\begin{example}\label{HORNEX EX}
	Let $G = \mathbb{Z}_{/2} = \{\pm 1\}$,
	and consider the tree $T \in \Omega^G \subset \Omega_G$ at the top below.
	The following depicts the poset of planar faces of $T$ \textit{not in} $\Lambda_o^{Gb}[T]$.
	By contrast, $\Lambda^{Gb}[T]$ lacks only the boxed faces (which are precisely those faces pictured below that are \textit{inner} faces of $T$).
        \noeqref{HORN_EX_FIG}
	      \begin{equation}\label{HORN_EX_FIG}
            \newlength{\mywidth}
            \setlength{\mywidth}{1.7em}
            \newlength{\rh}
            \setlength{\rh}{6\mywidth}
            \begin{tikzpicture}[grow= up, level distance = \mywidth, auto,
                  y=2\mywidth,
                  every node/.style = {font=\small, transform shape},
                  scale=.9]
                  \tikzstyle{level 2}=[sibling distance=4em]
                  \tikzstyle{level 3}=[sibling distance=4em]
                  \node (T) at (0,\rh+\mywidth) [font=\normalsize]{$T$}
                  child{node [dummy] {}
                    child{node [dummy] {}
                      child{node [dummy] {}
                        child{edge from parent node [swap] {$-a$}}
                        edge from parent node [swap] {$-b$}}
                      child{node [dummy] {}
                        child{edge from parent node [] {$a$}}
                        edge from parent node {$b$}}
                      edge from parent node [swap] {$c$}}
                    edge from parent node [swap] {$d$}};
                  \node (Tb) at (6em,0) {}
                  child{node [dummy] {}
                    child{node [dummy] {}
                      child{node [dummy] {}
                        child{edge from parent node [swap] {$-a$}}
                        edge from parent node [swap] {$-b$}}
                      child{
                        edge from parent node [] {$\phantom{-b}a$}
                      }
                      edge from parent node [swap] {$c$}}
                    edge from parent node [swap] {$d$}};
                  \node (Tbb) at (-6em,0) {}
                  child{node [dummy] {}
                    child{node [dummy] {}
                      child{edge from parent node [swap] {$-a\phantom{b}$}}
                      child{node [dummy] {}
                        child{edge from parent node [] {$a$}}
                        edge from parent node [] {$b$}}
                      edge from parent node [swap] {$c$}}
                    edge from parent node [swap] {$d$}};
                  \node (Paa) at (-18em,0) {}
                  child{node [dummy] {}
                    child{node [dummy] {}
                      child{
                        edge from parent node [swap] {$-b$}}
                      child{node [dummy] {}
                        child{edge from parent node [] {$a$}}
                        edge from parent node [] {$b$}}
                      edge from parent node [swap] {$c$}}
                    edge from parent node [swap] {$d$}};
                  \node (Pa) at (18em,0) {}
                  child{node [dummy] {}
                    child{node [dummy] {}
                      child{node [dummy] {}
                        child{
                          edge from parent node [swap] {$-a$}}
                        edge from parent node [swap] {$-b$}}
                      child{
                        edge from parent node [] {$b$}}
                      edge from parent node [swap] {$c$}}
                    edge from parent node [swap] {$d$}};
                  \draw[->]
                  (Paa)++(1,2.1) -- ($(T)+(-.7,-.3)$); 
                  \draw[->]
                  (Tbb)++(1.33,2.2) -- ($(T)+(-.2,-.3)$); 
                  \draw[->]
                  (Pa)++(-1,2.2) -- ($(T)+(.7,-.3)$); 
                  \draw[->]
                  (Tb)++(-1.33,2.2) -- ($(T)+(.2,-.3)$); 
                  \node (TGb) at (0,-\rh) {}
                  child{node [dummy] {}
                    child{node [dummy] {}
                      child{
                        edge from parent node [swap] {$-a$}}
                      child{
                        edge from parent node [] {$a$}}
                      edge from parent node [swap] {$c$}}
                    edge from parent node [swap] {$d$}};
                  \node (Pab) at (-12em,-\rh) {}
                  child{node [dummy] {}
                    child{node [dummy] {}
                      child{edge from parent node [swap] {$-b$}}
                      child{edge from parent node [] {$\phantom{-b}a$}}
                      edge from parent node [swap] {$c$}
                    }
                    edge from parent node [swap] {$d$}};
                  \node (Pabb) at (12em,-\rh) {}
                  child{node [dummy] {}
                    child{node [dummy] {}
                      child{edge from parent node [swap] {$-a\phantom{b}$}}
                      child{edge from parent node [] {$b$}}
                      edge from parent node [swap] {$c$}
                    }
                    edge from parent node [swap] {$d$}};
                  \draw[->]
                  (Pab)++(-1.1,1.8) -- ($(Paa)+(.2,-.2)$); 
                  \draw[->]
                  (Pab)++(1.1,1.8) -- ($(Tb)+(-.4,-.2)$); 
                  \draw[->]
                  (Pabb)++(-1.1,1.8) -- ($(Tbb)+(.4,-.2)$); 
                  \draw[->]
                  (Pabb)++(1.1,1.8) -- ($(Pa)+(-.2,-.2)$); 
                  \draw[->]
                  (TGb)++(-1.1,1.8) -- ($(Tbb)+(.2,-.2)$); 
                  \draw[->]
                  (TGb)++(1.1,1.8) -- ($(Tb)+(-.2,-.2)$); 
                  \draw
                  (TGb)++(-1.1,-.1) rectangle ++(2.2,1.8);
                  \draw
                  (Tbb)++(-1.33,-.1) rectangle ++(2.66,2.2);
                  \draw
                  (Tb)++(-1.33,-.1) rectangle ++(2.66,2.2);
                  \draw
                  (T)++(-1.33,-.2) rectangle ++(2.66,2.3);
            \end{tikzpicture}
      \end{equation}           
\end{example}

\section{Equivariant inner anodyne maps}\label{EQINNERAN SEC}

Much as in \cite[\S 2]{CM13a}, 
we need to show that the inclusions $Sc[T] \to \Omega[T]$, $T\in \Omega_G$ are $G$-inner anodyne.
In addition, parts of the equivariant dendroidal story are 
naturally described in terms of 
orbital $G$-inner horns $\Lambda^E_o[T]$ 
(rather than $G$-inner horns $\Lambda^E[T]$),
and one must hence also show that the inclusions
$\Lambda^E_o[T] \to \Omega[T]$
are $G$-inner anodyne.

In practice, the proofs of such results are long as well as somewhat repetitive, since they share many technical arguments.
In fact, dealing with the case of orbital horn inclusions requires using many of the arguments in the long proof of \cite[Thm 7.1]{Per18}.

As such, we split our technical analysis into two parts.
In \S \ref{CHAREDGE SEC} we prove Lemma \ref{CHAREDGE LEM} which  we call the \textit{characteristic edge lemma} and
which abstractly identifies sufficient conditions for a map to be $G$-inner anodyne
(see Remark \ref{RECOVER REM} for a comparison with previous results in the literature).
Then in \S \ref{HYPERSAT SEC} we deduce that the desired maps
are $G$-inner anodyne by applying Lemma \ref{CHAREDGE LEM},
and further establish Proposition \ref{HYPER PROP} which,
informally,
says that Segal core inclusions, $G$-inner horn inclusions and orbital $G$-inner horn inclusions can be used interchangeably in some contexts.

Lastly, \S \ref{GENEQOP SEC} briefly discusses \textit{colored genuine equivariant operads}
(which in the single color case were first introduced in \cite{BP17}), which play an important role in \S \ref{HMPTYGEN SEC}.

\subsection{The characteristic edge lemma} \label{CHAREDGE SEC}

\begin{definition}\label{CHAREDGE DEF}
	Let $T \in \Omega_G$, $A \subseteq \Omega[T]$ a subdendroidal set, and $\{U_i\}_{i \in I} \subseteq \mathsf{Face}(T)$ a subset.

	Given a set $\Xi^i$ of inner edges of $U_i$ and a subface $V \hookrightarrow U_i$, denote $\Xi_V^i = \Xi^i \cap \boldsymbol{E}^{\mathsf{i}}(V)$.

	Suppose further that the indexing set $I$ is a 
	finite $G$-poset. For each $i \in I$ denote
\[
	A_{<i} = A \cup \bigcup_{j<i} \Omega[U_j]
\]
	We say that $\{\Xi^i \subseteq \boldsymbol{E}^{\mathsf{i}}(U_i)\}$
	is a \textit{characteristic inner edge collection of $\{U_i\}$ with respect to $A$} if:
	\begin{itemize}
	\item[(Ch0)] $A$, $\{U_i\}$ and $\{\Xi^i\}$ are all $G$-equivariant, i.e. $g A = A$, $g U_i = U_{gi}$, $g \Xi^i = \Xi^{gi}$ as appropriate; 
	\item[(Ch1)] for all $i$, any outer face $V \simeq \bar{V}^{U_i}$
		of $U_i$ such that $\Xi_{V}^i = \emptyset$
		is contained in $A_{<i}$;
	\item[(Ch2)] for all $i$, any face
		$V \hookrightarrow U_i$ such that $(V-\Xi_V^i) \in A$
		is contained in $A_{<i}$;
	\item[(Ch3)] for all $j \not \geq i$, 
		all faces $V \hookrightarrow U_i$ such that 
		$(V-\Xi^i_V) \hookrightarrow U_j$
		are contained in $A_{<i}$.
	\end{itemize}
\end{definition}

\begin{remark}\label{XIIII REM}
If $g i \neq i$, then $i,g i$ are incomparable in $I$. Indeed, if $i<gi$ then $i<gi<g^2i<\cdots$ would violate antisymmetry, and likewise if $i>gi$.
Hence, (Ch3) applies when $j=gi$ for $gi\neq i$.

In particular, we assume throughout that if
$gi \neq i$ then $U_{gi} \neq U_i$,
or else $U_i$ would be in $A_{<i}$.
\end{remark}

\begin{remark}\label{SOMEMAIN REM}
In some of the main examples (see Propositions \ref{ORB_HORN_PROP} and \ref{REG_HORN_PROP}), there exists a $G$-equivariant set 
$\Xi$ of inner edges of $T$ such that $\Xi^i = \Xi \cap \boldsymbol{E}^{\mathsf{i}}(U_i)$.
	
We caution that, for fixed $A$ and $\{U_i\}$, our characteristic conditions are \textit{not} monotone on such $\Xi$ since increasing $\Xi$ makes (Ch1) more permissive while making (Ch2),(Ch3) more restrictive.
\end{remark}

\begin{lemma}\label{CHAREDGE LEM}
If $\{\Xi^i\}_{i \in I}$ is a characteristic inner edge collection of $\{U_i\}_{i\in I}$ with respect to $A$, then the map
	\begin{equation}\label{CHARLEM EQ}
		A \to A \cup \bigcup_{i \in I} \Omega[U_i]
	\end{equation}
is $G$-inner anodyne. 
In fact, \eqref{CHARLEM EQ} is built cellularly\footnote{Recall that a map $f$ is built cellularly from a class of maps $I$ if it can be obtained as a (possibly transfinite) composition of pushouts of maps in $I$ (but without retracts). See \cite[Def. 2.1.9]{Hov99},
\cite[Example 12.6.12]{Ri14}.
}
from $G$-inner horn inclusions $\Lambda^E[S] \to \Omega[S]$, $S \in \Omega_G$.
\end{lemma}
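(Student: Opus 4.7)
The plan is to construct \eqref{CHARLEM EQ} as a transfinite composition of pushouts of $G$-inner horn inclusions, built from a doubly-indexed filtration: an outer filtration over the $G$-poset $I$, and, within each $G$-orbit, an inner filtration over faces of $U_i$ ordered by their non-characteristic inner edges.

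First, I would choose a linear extension of the quotient poset $I/G$ (this exists since $I$ is finite and, by Remark \ref{XIIII REM}, elements of a common $G$-orbit are incomparable). For each orbit $[i]$ in order, with $H=\mathrm{Stab}_G(i)$, the remaining task reduces to showing $A_{<[i]}\to A_{<[i]}\cup G\cdot_H \Omega[U_i]$ is cellularly built from $G$-inner horns, where $A_{<[i]}$ denotes what has been constructed so far. By equivariance, each attachment of an $H$-orbit of faces of $U_i$ will be done as a pushout of $G\cdot_H \Lambda^{HE}[HV]=\Lambda^{GE}[GV]$ for appropriate $V$ and $E$, which is a genuine $G$-inner horn inclusion after applying \eqref{T_DECOMP_EQ}.

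Next, fixing the orbit $[i]$ and writing $B_i := \Omega[U_i]\cap A_{<[i]}$, I would construct the inclusion $B_i\hookrightarrow \Omega[U_i]$ by filtering the planar faces $V\hookrightarrow U_i$ not in $B_i$ by the statistic $p(V)=|\boldsymbol{E}^{\mathsf{i}}(V)\setminus \Xi^i|$, processing in order of increasing $p$. Two planar faces $V,V'$ with the same outer structure and with $p(V)=p(V')$ differ only by which characteristic inner edges are present; they form a \emph{characteristic chain} $\{V^{\max}-D\,:\,D\subseteq \Xi^i_{V^{\max}}\}$ with a distinguished top element $V^{\max}$. I would process each $H$-orbit of such chains simultaneously by a single pushout along $\Lambda^{H\Xi^i_{V^{\max}}}[HV^{\max}]\to \Omega[HV^{\max}]$; note that when $V^{\max}$ itself is outer in $U_i$, nonemptiness of $\Xi^i_{V^{\max}}$ is guaranteed by (Ch1), so the horn is well-defined.

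The verification that this is a clean pushout — i.e., that $\Omega[V^{\max}]$ intersects the currently-built subcomplex exactly in the horn $\Lambda^{\Xi^i_{V^{\max}}}[V^{\max}]$ — is where the three conditions enter. Faces $W\subsetneq V^{\max}$ with $V^{\max}-\Xi^i_{V^{\max}}\not\hookrightarrow W$ satisfy $p(W)<p(V^{\max})$ (if $W$ is an inner face missing a non-characteristic edge) or are proper outer subfaces, in which case their outer closures have strictly fewer non-characteristic inner edges or, if $\Xi^i_W=\emptyset$, lie in $A_{<i}$ by (Ch1); thus all such $W$ are already present by the inductive step. Conversely, faces $W$ with $V^{\max}-\Xi^i_{V^{\max}}\hookrightarrow W\hookrightarrow V^{\max}$ are the chain members themselves, and must not yet be present: (Ch2) rules out $W\subseteq A$ (since their characteristic collapses coincide with $V^{\max}-\Xi^i_{V^{\max}}$, which by the chosen order has not yet been added to $A$), and (Ch3) rules out $W\hookrightarrow U_j$ for any $j\not\geq i$.

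The main obstacle I expect is the bookkeeping in this last verification, specifically the sharpness of (Ch2): it constrains only $A$ rather than the growing $A_{<[i]}$, so one must carefully check that at the moment a chain is processed, none of its members have been introduced by an earlier step — this forces the primary statistic $p$, rather than a coarser inner-edge count, to be the right invariant to filter by, and it is (Ch3) together with the fact that characteristic collapses preserve $p$ that prevents collisions between chains belonging to distinct $U_i$'s.
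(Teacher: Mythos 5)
Your overall plan---an outer filtration over $I/G$ and an inner filtration that groups faces into ``characteristic chains'' $\{V^{\max}-D : D\subseteq \Xi^i_{V^{\max}}\}$, each processed by a single equivariant horn pushout---is exactly the structure of the paper's proof. The two proofs differ only in the ordering of the chains, and that difference is where the gap lies.

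The paper filters the chains (equivalently, the faces $V$ with $\Xi_V\neq\emptyset$ and $\Xi_V=\Xi_{\bar V}$) by the \emph{lexicographic} order on the pair $(\bar V, V)$: first by outer closure under inclusion, then by inclusion of inner faces. You instead filter by $p(V)=|\boldsymbol{E}^{\mathsf{i}}(V)\setminus\Xi^i|$. This is strictly coarser, and the claim ``proper outer subfaces $W$ of $V^{\max}$ have outer closures with strictly fewer non-characteristic inner edges, or $\Xi^i_W=\emptyset$'' is false. A proper outer face may lose \emph{only characteristic} inner edges. Concretely, take $V^{\max}$ with vertices $\epsilon\leq a_1$, $\epsilon\leq a_2$, $a_1\leq b_1$, $a_2\leq b_2$, $b_1 b_2\leq r$, with $\Xi^i=\{a_1,a_2\}$; the proper outer face $W'=V^{\max}_{a_1\leq r}$ (which ``opens up'' the stump $a_1$) has inner edges $\{a_2,b_1,b_2\}$, so $p(W')=2=p(V^{\max})$ and $\Xi^i_{W'}=\{a_2\}\neq\emptyset$, so neither branch of your dichotomy applies. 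Since $W'$ has the same $p$-value as $V^{\max}$, your filtration gives no reason $W'$'s chain is processed first, and if $V^{\max}$ is processed first the horn-pushout fails: $\Omega[V^{\max}]$ meets the current subcomplex in a set strictly smaller than $\Lambda^{\Xi^i_{V^{\max}}}[V^{\max}]$. The lex order on $(\bar V, V)$ avoids this because $\bar W'=W'\hookrightarrow V^{\max}=\bar V^{\max}$ forces $W'<V^{\max}$ regardless of $p$. Your closing paragraph reads $p$ as ``the right invariant to filter by,'' which is precisely the wrong diagnosis; the right invariant is the outer closure.

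There is a second, subtler point you do not address. When you attach a chain's $H$-orbit by a pushout of $G\cdot_K\bigl(\Lambda^{\Xi^i_W}[W]\to\Omega[W]\bigr)$, you must know that $K$---the isotropy of $W$ in the poset of faces---actually equals the $G$-isotropy of \emph{every} cell $W-D$, $D\subseteq\Xi^i_W$, you are attaching; otherwise the square is not a pushout. The paper's claim (c) establishes precisely this, using (Ch0) and (Ch3). Your writeup treats equivariance at the level ``pass to $G\cdot_H(\cdots)$'' without verifying this isotropy control, which is one of the genuinely new equivariant difficulties the lemma is designed to handle.
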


Recall that a subset $S \subseteq \mathcal{P}$ of a poset $\mathcal{P}$ is called \textit{convex}\footnote{Here our terminology follows that in \cite[\S 0]{Go91}. Such subsets are sometimes also called \textit{downward closed}.} if $s \in S$ and 
$p<s$ implies $p \in S$.

\begin{proof}
We start with the case of $I \simeq G/H$ transitive so that, abbreviating $U = U_{[e]}$, $\{U_i\}$ is the set of conjugates $gU$. 
We likewise abbreviate $\Xi = \Xi^{[e]}$ and
$\Xi_V = \Xi_V^{[e]}$ for $V \hookrightarrow U$.
Moreover, in this case one has $A_{<[g]}=A$ in (Ch1),(Ch2),(Ch3) and that $H$ is also the isotropy of $U$ in $\mathsf{Face}(T)$.

We write $\mathsf{Face}_{\Xi}^{lex}(U)$
for the $H$-poset of planar faces $V \hookrightarrow U$
such that $\Xi_V \neq \emptyset$ and $\Xi_V = \Xi_{\bar{V}}$
ordered as follows: 
$V \leq V'$ if either
	\begin{inparaenum}
		\item[(i)] $\bar{V} \hookrightarrow \bar{V}'$ and 
		$\bar{V} \neq \bar{V}'$ or
		\item[(ii)] $\bar{V} = \bar{V}'$ and
		$V \hookrightarrow V'$
	\end{inparaenum}
(alternatively, this is the lexicographic order of pairs $(\bar{V},V))$.
We note that here and in the remainder of the proof all outer closures are implicitly taken in $U$ (rather than $T$), i.e. 
$\bar{V}=\bar{V}^U$.

For any $H$-equivariant convex subset $C$ of $\mathsf{Face}_{\Xi}^{lex}(U)$ we write (for the identity $\Omega[gV]=g\Omega[V]$, see Remark \ref{FACEGACT REM})
\[
A_C = 
A \cup \bigcup_{g\in G,V \in C} \Omega[gV]= 
A \cup \bigcup_{g\in G,V \in C} g \Omega[V].
\]
It now suffices to show that whenever
$C \subseteq C'$
the map 
$A_C \to A_{C'}$ is built cellularly from 
$G$-inner horn inclusions
(indeed, setting $C=\emptyset$ and 
$C'=\mathsf{Face}_{\Xi}^{lex}(U)$ recovers \eqref{CHARLEM EQ}
when $I \simeq G/H$).

Without loss of generality we can assume that $C'$ is obtained from $C$ by adding the $H$-orbit of a single $W \hookrightarrow U$. Further, we may assume $W \not \in A_C$ or else $A_C=A_{C'}$.
Letting $K \leq H$ denote the isotropy of 
$W$ in $\mathsf{Face}_{\Xi}^{lex}(U)$ and regarding $W \in \Omega^{K}\subseteq \Omega_{K}$, we claim there is a pushout diagram
\begin{equation}\label{FIRPUSH EQ}
\begin{tikzcd}
	G \cdot_{K} \Lambda^{\Xi_{W}}[W] \ar{d}[swap]{\iota} \ar{r}
	\arrow[dr, phantom, "\ulcorner", very near start]&
	A_C \ar{d}
\\
	G \cdot_{K} \Omega[W] \ar{r}&
	A_{C'}
\end{tikzcd}
\end{equation}
where we note that the inner edge set $\Xi_{W}$ is $K$-equivariant
since $\Xi_W = \Xi \cap \boldsymbol{E}^{\mathsf{i}}(W)$ and $\Xi$ is $H$-equivariant by (Ch0).
The pushout \eqref{FIRPUSH EQ} will follow once we establish the following claims:
\begin{enumerate}
\item[(a)] all proper outer faces $V$ of $W$ are in $A_C$;
\item[(b)] an inner face $W - D$ of $W$ is in $A_C$ iff 
$D \not \subseteq \Xi_{W}$;
\item[(c)] 
the $G$-isotropy (i.e. the isotropy in $\mathsf{Face}(T)$)
of faces 
$W - D$, $D \subseteq \Xi_{W}$ is contained in $K$.
\end{enumerate}
To see why these claims suffice, write $P$ for the pushout of  \eqref{FIRPUSH EQ} with $A_{C'}$ removed. Surjectivity of $P \to A_{C'}$ is immediate from the assumptions on $C,C'$.
For injectivity, since $A_C \to A_{C'}$ is certainly injective, it suffices to check that the restriction of the bottom horizontal map in \eqref{FIRPUSH EQ} to the complement of the image of the left vertical map 
$\iota$ satisfies:
\begin{inparaenum}
\item[(i)] it is injective and;
\item[(ii)] its image does not overlap with the image of $A_C$. 
\end{inparaenum} 
Condition (ii) follows from (a),(b). As for (i), injectivity is automatic when further restricting to the intersection with an individual conjugate $\Omega[gW] = g\Omega[W]$, with (c) ensuring injectivity across different conjugate intersections.

To check (a), writing $\bar{V}=\bar{V}^U$ for the corresponding outer face of $U$, one has
	\[
	\Xi_V = \Xi \cap \boldsymbol{E}^{\mathsf{i}} (V) 
	= \Xi \cap \boldsymbol{E}^{\mathsf{i}}(W) \cap \boldsymbol{E}^{\mathsf{i}}(\bar{V})
	= \Xi \cap \boldsymbol{E}^{\mathsf{i}}(\bar{W}) \cap \boldsymbol{E}^{\mathsf{i}}(\bar{V})
	= \Xi \cap \boldsymbol{E}^{\mathsf{i}}(\bar{V})
	= \Xi_{\bar{V}}
	\]
where the second step follows from Lemma \ref{INNINT LEM}
(applied to $V \hookrightarrow W \hookrightarrow U$, 
$V \hookrightarrow \bar{V} \hookrightarrow U$)
and the third since by definition of
$\mathsf{Face}_{\Xi}^{lex}(U)$ it is $\Xi_{W} = \Xi_{\bar{W}}$.
Thus either $\Xi_V= \Xi_{\bar{V}} = \emptyset$ so that $\bar{V}\in A$ by (Ch1),
or $\Xi_V = \Xi_{\bar{V}} \neq \emptyset$
so that $V \in \mathsf{Face}_{\Xi}^{lex}(U)$ with $V<W$, and thus $V\in C$. In either case one has $V \in A_C$.

We now check the ``if'' direction of (b).
If $D \not \subseteq \Xi_{W}$
then $W' = W - (D \setminus \Xi_{W})$
is in $\mathsf{Face}_{\Xi}^{lex}(U)$
(since $\bar{W}' = \bar{W}$ and thus
$\Xi_{W'} = \Xi_{W}  = \Xi_{\bar{W}} = \Xi_{\bar{W}'}$)
and $W'<W$, and thus $W' \in A_C$.

For the ``only if'' direction of (b), 
note first that it suffices to consider $D = \Xi_W$.
The assumption $W \not \in A_C$ together with (Ch2) imply that
$W'=W-\Xi_{W}$ is not in $A$, and thus it remains to show that 
$W'$ is not a face of any $gV$ with $g\in G$, $V \in C$.
Suppose otherwise, i.e. $W' \hookrightarrow gV$.
If it were $g \not \in H$, 
then it would be $W' \hookrightarrow gV \hookrightarrow g U \neq U$, and (Ch3) would imply $W\in A$. Thus we need only consider $g\in H$, and since $C$ is $H$-equivariant, we can set $g=e$.
It now suffices to show that if $W' \hookrightarrow V$
then it must be $W \leq V$ in $\mathsf{Face}_{\Xi}^{lex}(U)$,
since by convexity of $C$ this would contradict $W \not \in C$.
Since $W' \hookrightarrow V$ implies 
$\bar{W} = \bar{W}' \hookrightarrow \bar{V}$,
the condition $W \leq V$ is automatic from the definition of $\leq$ unless $\bar{W} = \bar{V}$.
In this latter case, by definition of 
$\mathsf{Face}_{\Xi}^{lex}(U)$ the face $V$ must contain as inner edges all edges in 
$\Xi_V = \Xi_{\bar{V}} = \Xi_{\bar{W}} = \Xi_{W}$,
so that not only $W - \Xi_{W} = W' \hookrightarrow V$ but also $W \hookrightarrow V$. But then it is $W \leq V$ in either case, establishing the desired contradiction. 

We now show (c).
If $g(W-D) = W-D$ then $g(W - \Xi_W) \hookrightarrow U$,
and thus $W - \Xi_W \hookrightarrow g^{-1}U$,
so that by (Ch3) it must be $g \in H$ or else it would be $W \in A$.
Now suppose $h(W-D)=W-D$ with $h\in H$.
Since $\Xi$ is $H$-equivariant (by (Ch0)) and
$\Xi_{W-D} = \Xi_{W} \setminus D$ (due to $D \subseteq \Xi_{W}$) it follows that 
$h(W-\Xi_W) = W-\Xi_W$,
so that we may assume $D = \Xi_W$.
Now note that
$hW$, $h(W-\Xi_W)=W-\Xi_W$, $W$
are all faces of $U$ with a common outer closure $\bar{W}$.
Hence
$h\Xi_{W} = \Xi_{hW} \subseteq \Xi_{\bar{W}} = \Xi_{W}$, where the last step follows since
$W \in \mathsf{Face}_{\Xi}^{lex}(U)$, and by cardinality reasons it must in fact be $h \Xi_{W} = \Xi_{W}$. But then $hW,W$
have the same outer closure and the same inner edges, and thus 
$hW=W$, establishing (c).

Lastly, we address the case of general $I$.
For each $G$-equivariant convex subset $J$ of $I$, set
\[
	A_J = 
	A \cup \bigcup_{j \in J} \Omega[U_j].
\]
As before, it suffices to check that for all convex subsets
$J \subseteq J'$
the map $A_J \to A_{J'}$ is built cellularly from $G$-inner horn inclusions,
and again we can assume that $J'$ is obtained from $J$ by adding a single $G$-orbit $Gj$ of $I$.
By the $I$ transitive case, it now suffices to check that
$\{\Xi^{gj}\}_{gj \in Gj}$ is also a characteristic inner edge collection of $\{U_{g j}\}_{g j \in Gj}$ with respect to $A_J$.
(Ch0) is clear, and since by $G$-equivariance and convexity it is $A_{<gj} \subseteq A_J$,
the new (Ch1),(Ch2),(Ch3)
conditions follow from the original conditions
(for (Ch2), note that if 
$(V - \Xi^i_V) \in A_J$ but $(V - \Xi^i_V) \nin A$, then convexity of $J$ implies $(V - \Xi^i_V) \hookrightarrow U_j$ for some $j \not \geq i$, so that in this case the new (Ch2) follows from the original (Ch3)).
\end{proof}

\begin{remark}\label{CHAREDGE2 REM}
The requirement $A \subseteq \Omega[T]$ in Definition \ref{CHAREDGE DEF} can be relaxed.
Consider an inclusion $A \subseteq B$ with 
$B \in \mathsf{dSet}^G$,
a set of non-degenerate dendrices
$\{b_i \in B(U_i)\}_{i \in I}$
and a collection of edges
$\{\Xi^i \subseteq \boldsymbol{E}^{\mathsf{i}}(U_i)\}_{i \in I}$, 
with $I$ a finite $G$-poset.
Letting
$A_{<i} = A \cup \bigcup_{j<i} b_j\left(\Omega[U_j]\right)$,
suppose then that:
\begin{itemize}
	\item[(Ch0.1)] the maps $b_i \colon \Omega[U_i] \to B$ are injective away from $b_i^{-1}(A_{<i})$;
	\item[(Ch0.2)] $A$, $\{U_i\}$ ,$\{b_i\}$ and $\{\Xi^i\}$
	are all $G$-equivariant in the sense that:
	\begin{inparaenum}
		\item[(i)] $gA = A$;
		\item[(ii)] there are
		isomorphisms
		$U_i \xrightarrow{g} U_{g_i}$
		such that the composite $U_i \xrightarrow{g} U_{g_i} \xrightarrow{\bar{g}} U_{\bar{g} g_i}$ is $U_i \xrightarrow{\bar{g}g} U_{\bar{g} g_i}$ and 
		$U_i \xrightarrow{e} U_{ei}$ is the identity $U_i = U_{ei}$;
		\item[(iii)] the composites
		$\Omega[U_i] \xrightarrow{b_i} B \xrightarrow{g} B$
		and
		$\Omega[U_i]  \xrightarrow{g} \Omega[U_{gi}] \xrightarrow{b_{gi}} B$ coincide;
		\item[(iv)] $g \left(\Xi^i\right)=\Xi^{gi}$.
	\end{inparaenum}
\end{itemize}
The original conditions (Ch1),(Ch2),(Ch3) can now be reinterpreted by, 
for each face $\varphi_V \colon V \hookrightarrow U_i$, 
reinterpreting expressions such as ``$V$ contained in $A_{<i}$'', 
$(V-\Xi^i_V) \hookrightarrow U_j$
as $b_i(\varphi_V) \in A_{<i}$, $b_i(\varphi_{V-\Xi^i_V}) \in b_j(\Omega[U_j])$.

The proof of Lemma \ref{CHAREDGE LEM} now carries through mostly unchanged to show that the inclusion
\begin{equation}
	A \to A \cup \bigcup_{i \in I} b_i(\Omega[U_i])
\end{equation}
is $G$-inner anodyne (again built cellularly from $G$-inner horn inclusions).
Indeed, writing 
$A_C = 
A \cup \bigcup_{g\in G,V \in C} g b_{[e]}(\Omega[V])$,
the obvious analogues of (a),(b) in the proof
show that one can form the analogous diagram \eqref{FIRPUSH EQ} and that
$A_{C'} \setminus A_{C}$ is generated by the dendrices
$g b_i(\varphi_{W-D})$ for $D \subseteq \Xi_W$. 
That \eqref{FIRPUSH EQ} is indeed a pushout then follows from (Ch0.1), which ensures that all dendrices attached by each conjugate inclusion
$g\left(\Lambda^{\Xi_W}[W] \to \Omega[W]\right)$
are distinct, together with the analogue of (c), 
which ensures that all such conjugate inclusions
attach different dendrices.
As a technical note, precisely reformulating (c) requires accounting for the isotropy issue discussed in Remark \ref{FACEGACT REM}, and is thus slightly cumbersome. However, just as in first line of the proof of (c), it is immediate from (Ch3) that
$\Lambda^{\Xi_W}[W] \to \Omega[W]$
and $g\left(\Lambda^{\Xi_W}[W] \to \Omega[W]\right)$
could only possibly attach common dendrices if $g\in H$, so that the bulk of the argument in the proof of (c) concerns the $H$-isotropy in $\mathsf{Face}(W)$, 
and thus carries through with no noteworthy changes.
\end{remark}


\begin{remark} \label{RECOVER REM}
Lemma \ref{CHAREDGE LEM} readily recovers several arguments in the literature:
\begin{itemize}
\item[(i)] In \cite[\S 10]{Rez01} (also \cite[\S 6.2]{Rez10}), Rezk introduces the notion of \textit{covers}, which in our language are the simplicial subsets
$Sc[n] \subseteq A \subseteq \Delta[n]$
such that if $V \hookrightarrow [n]$ is in $A$ then so is the outer closure $\bar{V}^{[n]}$
(in words, $A$ is generated by outer faces).
Similarly, in the proof of \cite[Prop. 2.4]{CM13a}
Cisinski and Moerdijk use subcomplexes
that can be regarded as
\textit{dendroidal covers},
i.e. subcomplexes
$Sc[T] \subseteq A \subseteq \Omega[T]$
such that if $V$ is in $A$ then so is $\bar{V}^{T}$
(note that this definition extends unchanged to the equivariant context).
Lastly, the subcomplexes 
$\Omega[T] \cup_l \Omega[S] \subseteq \Omega[T \circ_l S]$
in the grafting result \cite[Lemma 5.2]{MW09},
and likewise for the equivariant analogue \cite[Prop. 6.19]{Per18}, are also dendroidal covers.

Lemma \ref{CHAREDGE LEM} implies
that any inclusion $A \to A'$ of $G$-equivariant (dendroidal) covers for $T\in \Omega_G$
is $G$-inner anodyne. 
Indeed, let $I=\mathsf{Face}_{A'}^{out}(T)$ be the $G$-poset of planar outer faces $V \hookrightarrow T$ contained in $A'$, ordered by inclusion, 
$\Xi = \boldsymbol{E}^{\mathsf{i}}(T)$ and $U_V = V$.
(Ch0) is clear, (Ch1) follows since 
$Sc(T) \subseteq A$, (Ch2) follows since $A$ is a cover and
(Ch3) follows since the $U_i$ are outer.

Alternatively, one can also use $I=\mathsf{Face}_{A',o}^{out}(T)$
for the $G$-trivial set of orbital outer faces 
$GV \hookrightarrow T$,
together with an \textit{arbitrary} total order (see Remark \ref{TWOPROOF REM} for a similar example).

Lastly, note that in the special case 
$\{U_i\}=\{T\}$, $\Xi=\boldsymbol{E}^{\mathsf{i}}(T)$,
(Ch1) says precisely $Sc[T] \subseteq A$.

\item[(ii)] In \cite[Lemma 9.7]{MW09}, Moerdijk and Weiss introduced a \textit{characteristic edge} condition that can be regarded as a special case of our characteristic edge collection condition as generalized in Remark \ref{CHAREDGE2 REM}, and which served as one of our main inspirations.

Therein, they work in the case of $B= \Omega[T] \otimes \Omega[S]$
a tensor product of (non-equivariant) representable dendroidal sets, in which case (Ch0.1) is easily verified (and (Ch0.2) is automatic).
In our notation, they then require that $I \simeq \**$ (so that (Ch3) is also automatic), 
the dendrex 
$b_{\**} \in \left(\Omega[T] \otimes \Omega[S]\right)(U_{\**})$ encodes a special type of subtree $U_{\**}$ of $\Omega[T] \otimes \Omega[S]$, which they call an \textit{initial segment},
and they further require that $\Xi^{\**} = \{\xi\}$ is a singleton, called the \textit{characteristic edge}.
Moreover, they then demand that $A$ should contain all outer faces of the subtree $U_{\**}$, from which (Ch1) follows, 
as well as the key characteristic condition 
\cite[Lemma 9.7]{MW09}(ii),
which coincides with (Ch2) in this specific setting.

Similarly, in \cite[Lemma 7.39]{Per18} the second author introduced a \textit{characteristic edge orbit} condition that generalizes that in \cite{MW09} to the equivariant context 
by letting $I \simeq G/H$
and the $\Xi^{[g]}=\Xi \cap \boldsymbol{E}^{\mathsf{i}}(U_{[g]})$ be determined by a $G$-edge orbit $\Xi \simeq G \xi$ (cf. Remark \ref{SOMEMAIN REM}).

However, both of the lemmas in \cite{MW09} and \cite{Per18}
have the drawback of needing to be used iteratively
(so that much effort therein is spent showing that this can be done) while Lemma \ref{CHAREDGE LEM} is designed so that a single use suffices for the natural applications.
Indeed, conditions (Ch1) and (Ch3), the first of which relaxes the requirement in \cite{MW09},\cite{Per18} that $A$ should contain all outer faces of the $U_i$, essentially provide abstract conditions under which the original characteristic edge arguments of \cite{MW09},\cite{Per18} can be iterated.
\end{itemize}
\end{remark}

\begin{example}\label{THM71 EX}
	As indicated above, Lemma \ref{CHAREDGE LEM} can be used to
	reorganize and streamline the rather long proofs of \cite[Thms 7.1 and 7.2]{Per18}. We illustrate this in the hardest case, that of \cite[Thm. 7.1(i)]{Per18}, which states that if
	$S,T \in \Omega_G$ are open (i.e. have no stumps) and
	$G \xi$ is an inner edge orbit of $T$ the maps
\begin{equation}\label{THM71 EQ}
	\partial \Omega[S] \otimes \Omega[T]
		\coprod_{\partial \Omega[S] \otimes \Lambda^{G\xi}[T]}
	\Omega[S] \otimes \Lambda^{G\xi}[T]
\to
	 \Omega[S] \otimes \Omega[T]
\end{equation}
are $G$-inner anodyne.

Given $S,T \in \Omega_G$, it is possible \cite[\S 7.1]{Per18} to define a $G$-equivariant broad poset $S \otimes T$
so that 
$\left(\Omega[S] \otimes \Omega[T]\right)(V) = hom(V,S \otimes T)$
where the hom-set is taken in broad posets.
Intuitively $S \otimes T$ is an object with edge set $\boldsymbol{E}(S) \times \boldsymbol{E}(T)$
and where each edge $(s,t)$ of $S \otimes T$
may, depending on whether $s \in S$, $t \in T$ are leaves or not,
admit two \textit{distinct} vertices: a $S$-vertex 
$(s,t)^{\uparrow S} = s^{\uparrow} \times t \leq (s,t)$
and a $T$-vertex
$(s,t)^{\uparrow T} = s \times t^{\uparrow} \leq (s,t)$.

To recover \cite[Thm. 7.1(i)]{Per18} from Lemma \ref{CHAREDGE LEM}, we first let $I = \mathsf{Max}(S \otimes T)$
be the $G$-poset of maximal subtrees $U \hookrightarrow S \otimes T$ 
(these are called \textit{percolation schemes} in \cite[\S 9]{MW09}), ordered lexicographically \cite[Def. 7.29]{Per18}.
As an example, let $G=\mathbb{Z}_{/2} = \{\pm 1\}$
and consider the $\mathbb{Z}_{/2}$-trees
\[%
	\begin{tikzpicture}[auto,grow=up, every node/.style = {font=\footnotesize}]%
	\begin{scope}[level distance = 2.1em]
	\tikzstyle{level 2}=[sibling distance=3em]%
	\tikzstyle{level 3}=[sibling distance=2.5em]%
		\node at (0,0) [font=\normalsize] {$S$}%
			child{node [dummy,fill=black] {}%
				child{
				edge from parent node [near end,swap] {$-1$}}%
				child{
				edge from parent node [near end]{$\phantom{-}1$}}%
			edge from parent node [swap] {$0$}};%
		\node at (4,0) [font=\normalsize] {$T$}%
			child{node [dummy] {}%
				child{node [dummy] {}
					child{
					edge from parent node[swap]{$-a\phantom{-}$}}
				edge from parent node [near end,swap] {$-\xi\phantom{-}$}}%
				child{node [dummy] {}
					child{
					edge from parent node{$\phantom{-}a$}}
				edge from parent node [near end]{$\phantom{-}\xi$}}%
			edge from parent node [swap] {$r$}};%
	\end{scope}
	\end{tikzpicture}%
\]%
We depict the $\mathbb{Z}_{/2}$-poset 
$\mathsf{Max}(S \otimes T)$
in Figure \ref{FIGURE} (note that $(s,t)$ is abbreviated as $t_s$).
In words, the maximal subtrees are built by starting with the
``double root'' $r_0$ and iteratively choosing between the available $S$ and $T$ vertices (along all upward paths) until the 
``double leaves'' $a_1,a_{-1},-a_1,-a_{-1}$ are reached.
The generating relations $U \leq U'$ in a generic
$\mathsf{Max}(S \otimes T)$
occur whenever $U$ contains an outer face $V$ shaped as on the left below and, by ``replacing'' $V$ with $V'$ as on the right, one obtains $U'$.
\begin{equation}\label{GENLEXREL EQ} 
	\begin{tikzpicture}[auto,grow=up, level distance = 2em,
	every node/.style={font=\footnotesize},
	dummy/.style = {circle,draw,inner sep=0pt,
	minimum size=1.75mm}]
	\begin{scope}
	\tikzstyle{level 2}=[sibling distance=10em]%
	\tikzstyle{level 3}=[sibling distance=2.5em]%
		\node at (0,0) [font=\normalsize]{$V$}
			child{node [dummy,fill=black] {}
				child{node [dummy] {}
					child{
					edge from parent node [swap,near end] {$c_2$}}
					child[level distance = 2.3em]{
					edge from parent node [swap, near end] {$b_2$}}
					child{
					edge from parent node [near end] {$a_2$}}
				edge from parent node [swap] {$e_2$}}
				child{node [dummy] {}
					child{
					edge from parent node [swap,near end] {$c_1$}}
					child[level distance = 2.3em]{
					edge from parent node [swap, near end] {$b_1$}}
					child{
					edge from parent node [near end] {$a_1$}}				
				edge from parent node {$e_1$}}
			edge from parent node [swap] {$e_3$}};
	\end{scope}
	\begin{scope}
	\tikzstyle{level 2}=[sibling distance=5em]%
	\tikzstyle{level 3}=[sibling distance=2em]%
		\node at (8,0) [font=\normalsize] {$V'$}
			child{node [dummy] {}
				child{node [dummy,fill=black] {}
					child{
					edge from parent node [swap,very near end] {$c_2$}}
					child{
					edge from parent node [very near end] {$c_1$}}
				edge from parent node [swap] {$c_3$}}
				child[level distance = 2.3em]{node [dummy,fill=black] {}
					child{
					edge from parent node [swap,very near end] {$b_2$}}
					child{
					edge from parent node [very near end] {$b_1$}}
				edge from parent node [swap] {$b_3$}}
				child{node [dummy,fill=black] {}
					child{
					edge from parent node [very near end,swap] {$a_2$}}
					child{
					edge from parent node [very near end] {$a_1$}}				
				edge from parent node {$a_3$}}
			edge from parent node [swap] {$e_3$}};
		\node at (4,1) [font=\large] {$\leq$};
	\end{scope}
	\end{tikzpicture}
\end{equation}
The claim that $\leq$ is indeed a partial order 
(at least if one of $S,T$ is open) is 
\cite[Prop. 7.31]{Per18}.
As an aside, we note that   
$V,V'$ above have a common inner face
$V-\{e_1,e_2\} = V'-\{a_3,b_3,c_3\}$,
which encodes an (universal!) example of a 
Boardman-Vogt relation (see \cite[\S 5.1]{MW07}).
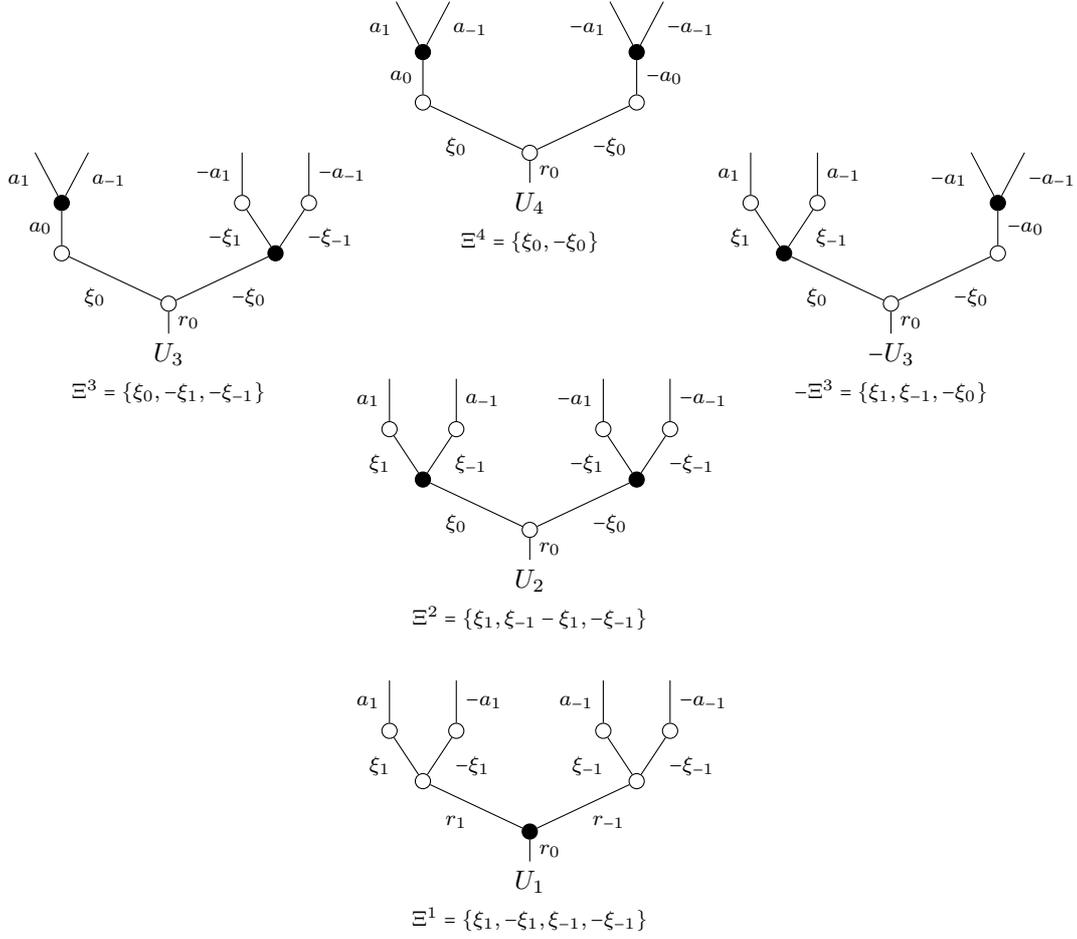
\begin{figure}[ht]
\[%
	\begin{tikzpicture}[auto,grow=up, every node/.style = {font=\footnotesize}]%
	\begin{scope}[level distance = 1.9em]
	\tikzstyle{level 2}=[sibling distance=8em]%
	\tikzstyle{level 3}=[sibling distance=2.5em]%
	\tikzstyle{level 4}=[sibling distance=2em]%
		\node at (0,-0.5) {$\Xi^1=\{\xi_1,-\xi_1,\xi_{-1},-\xi_{-1}\}$};%
		\node at (0,0) [font=\normalsize] {$U_1$}%
			child{node [dummy,fill=black] {}%
				child{node [dummy] {}%
					child{node [dummy] {}
						child{
						edge from parent node[swap]{$-a_{-1}$}}
					edge from parent node [near end,swap] {$-\xi_{-1}$}}%
					child{node [dummy] {}
						child{
						edge from parent node{$\phantom{-}a_{-1}$}}
					edge from parent node [near end]{$\phantom{-}\xi_{-1}$}}%
				edge from parent node [swap] {$r_{-1}$}}%
				child{node [dummy] {}%
					child{node [dummy] {}
						child{
						edge from parent node[swap]{$-a_1$}}
					edge from parent node [near end,swap] {$-\xi_1$}}%
					child{node [dummy] {}
						child{
						edge from parent node{$\phantom{-}a_1$}}
					edge from parent node [near end]{$\phantom{-}\xi_1$}}%
				edge from parent node {$r_{1}$}}%
			edge from parent node [swap] {$r_0$}};
		\node at (0,3.5) {$\Xi^2=\{\xi_1,\xi_{-1}-\xi_1,-\xi_{-1}\}$};%
		\node at (0,4) [font=\normalsize] {$U_2$}%
			child{node [dummy] {}%
				child{node [dummy,fill=black] {}%
					child{node [dummy] {}
						child{
						edge from parent node[swap]{$-a_{-1}$}}
					edge from parent node [near end,swap] {$-\xi_{-1}$}}%
					child{node [dummy] {}
						child{
						edge from parent node{$-a_{1}$}}
					edge from parent node [near end]{$-\xi_{1}$}}%
				edge from parent node [swap] {$-\xi_0$}}%
				child{node [dummy,fill=black] {}%
					child{node [dummy] {}
						child{
						edge from parent node[swap]{$a_{-1}\phantom{-}$}}
					edge from parent node [near end,swap] {$\xi_{-1}\phantom{-}$}}%
					child{node [dummy] {}
						child{
						edge from parent node{$\phantom{-}a_1$}}
					edge from parent node [near end]{$\phantom{-}\xi_1$}}%
				edge from parent node {$\xi_0$}}%
			edge from parent node [swap] {$r_0$}};
		\node at (-4.75,6.5) {$\Xi^3=\{\xi_0,-\xi_1,-\xi_{-1}\}$};%
		\node at (-4.75,7) [font=\normalsize] {$U_3$}%
			child{node [dummy] {}%
				child{node [dummy,fill=black] {}%
					child{node [dummy] {}
						child{
						edge from parent node[swap]{$-a_{-1}$}}
					edge from parent node [near end,swap] {$-\xi_{-1}$}}%
					child{node [dummy] {}
						child{
						edge from parent node{$-a_{1}$}}
					edge from parent node [near end]{$-\xi_{1}$}}%
				edge from parent node [swap] {$-\xi_0$}}%
				child{node [dummy] {}%
					child{node [dummy,fill=black] {}
						child{
						edge from parent node [swap,near end]{$a_{-1}\phantom{-}$}}
						child{
						edge from parent node [near end] {$\phantom{-}a_{1}$}}
					edge from parent node {$a_0$}}%
				edge from parent node {$\xi_0$}}%
			edge from parent node [swap] {$r_0$}};
		\node at (4.75,6.5) {$-\Xi^3=\{\xi_1,\xi_{-1},-\xi_0\}$};%
		\node at (4.75,7) [font=\normalsize] {$-U_3$}%
			child{node [dummy] {}%
				child{node [dummy] {}%
					child{node [dummy,fill=black] {}
						child{
						edge from parent node [swap,near end]{$-a_{-1}$}}
						child{
						edge from parent node [near end] {$-a_{1}$}}
					edge from parent node [swap] {$-a_0$}}%
				edge from parent node [swap] {$-\xi_0$}}%
				child{node [dummy,fill=black] {}%
					child{node [dummy] {}
						child{
						edge from parent node[swap]{$a_{-1}\phantom{-}$}}
					edge from parent node [near end,swap] {$\xi_{-1}\phantom{-}$}}%
					child{node [dummy] {}
						child{
						edge from parent node{$\phantom{-}a_1$}}
					edge from parent node [near end]{$\phantom{-}\xi_1$}}%
				edge from parent node {$\xi_0$}}%
			edge from parent node [swap] {$r_0$}};
		\node at (0,8.5) {$\Xi^4=\{\xi_0,-\xi_0\}$};%
		\node at (0,9) [font=\normalsize] {$U_4$}%
			child{node [dummy] {}%
				child{node [dummy] {}%
					child{node [dummy,fill=black] {}
						child{
						edge from parent node [swap,near end]{$-a_{-1}$}}
						child{
						edge from parent node [near end] {$-a_{1}$}}
					edge from parent node [swap] {$-a_0$}}%
				edge from parent node [swap] {$-\xi_0$}}%
				child{node [dummy] {}%
					child{node [dummy,fill=black] {}
						child{
						edge from parent node [swap,near end]{$a_{-1}\phantom{-}$}}
						child{
						edge from parent node [near end] {$\phantom{-}a_{1}$}}
					edge from parent node {$a_0$}}%
				edge from parent node {$\xi_0$}}%
			edge from parent node [swap] {$r_0$}};
	\end{scope}
	\end{tikzpicture}%
\]%
\caption{The $\mathbb{Z}_{/2}$-poset $\mathsf{Max}(S \otimes T)$ and characteristic edges $\Xi^i$}
\label{FIGURE}
\end{figure}

Returning to the task of proving that \eqref{THM71 EQ} is $G$-inner anodyne, we define $\Xi^U$, 
for each maximal subtree $U \hookrightarrow S \otimes T$,
to be the set of inner edges of $U$ of the form
$(g \xi)_s$ \textit{such that}
the vertex $(g \xi)_s^{\uparrow U} \leq (g \xi)_s$ in $U$
is a $T$-vertex (see Figure \ref{FIGURE}).
We now verify (Ch1),(Ch2),(Ch3).
We recall that, since $S,T$ are assumed open,
\cite[Lemma 7.19]{Per18} guarantees that,
for faces
$S' \hookrightarrow S$, $T' \hookrightarrow T$,
a factorization
$V \hookrightarrow S' \otimes T' \hookrightarrow S \otimes T$
exists iff the edges of $V$ are in 
$\boldsymbol{E}(S') \times \boldsymbol{E}(T')$.

For (Ch1), note first that there is an equivariant grafting decomposition
$T = T_{\not < G\xi} \amalg_{G \xi} T^{\leq G\xi}$, 
where $T_{\not < G\xi}$ contains the edges $t \in T$ such that
$\forall_{g \in G} t \not < g\xi$ (pictorially, this is a lower equivariant outer face of $T$) 
while $T^{\leq G\xi}$
contains the edges $t \in T$ such that
$\exists_{g \in G} t \leq g\xi$ (an upper equivariant outer face of $T$).
But one now readily checks that if
$V \hookrightarrow U$ is an \textit{outer} face such that
$\Xi^U_V = \emptyset$, then either 
$V \hookrightarrow S \otimes T_{\not < G\xi}$ or 
$V \hookrightarrow S \otimes T^{\leq G\xi}$, and thus $V \in A$.

For (Ch3), suppose $U_j \not \geq U_i$, 
$V \hookrightarrow U_i$ and
$(V - \Xi^{U_i}_V) \hookrightarrow U_j$.
Then it follows from \cite[Lemma 7.37]{Per18}
that there exists a generating relation $U_k < U_i$
such that $(V - \Xi^{U_i}_V) \hookrightarrow U_k$
(indeed, \cite[Lemma 7.37]{Per18} makes the claim that such a relation can be performed on an ``elementary subtree'' \cite[Def. 7.16]{Per18} or, in our notation, on the outer closure $\bar{V}^{U_i}$; and, via grafting, it is straightforward to extend such a $<$ relation for $\bar{V}^{U_i}$ to one for $U_i$ \cite[Rem. 7.30]{Per18}). But then, as one sees from \eqref{GENLEXREL EQ},
all edges $e$ of $U_i$ that are not in $U_k$ are topped by the $S$-vertex $e^{\uparrow S}\leq e$, and thus it is $e \not \in \Xi^{U_i}$. Therefore $V \hookrightarrow U_k$, as desired.

Lastly, for (Ch2), suppose $V \hookrightarrow U$ and 
$(V - \Xi^{U}_V) \in A$.
If it were 
$(V-\Xi^{U}_V) \hookrightarrow S \otimes \Lambda^{G \xi}[T]$, then it would also be 
$V \hookrightarrow S \otimes \Lambda^{G \xi}[T]$
since all edges of $\Xi^{U}_V$ have 
$T$-coordinate in $G\xi$.
Now consider the more interesting case
$(V - \Xi^{U}_V) \hookrightarrow S' \otimes T$
for some face $S' \hookrightarrow S$.
Then it will also be 
$V \hookrightarrow S' \otimes T$
unless there is at least one edge
$(g \xi)_s \in \Xi^{U}_V$ such that $s \not \in S'$.
But then since the outer closure $\bar{V}^U$ can have no leaf with $S$-coordinate $s$ (this would contradict $s \not \in S'$), 
by Lemma \ref{CUPCAP LEM} there exists some minimal outer face $U_{(g\xi)_s}^{<s}$ of $U$ with root $(g\xi)_s$ and such that its leaves have $S$-coordinate $<_d s$.
By minimality, one has that $U_{(g\xi)_s}^{<s} \hookrightarrow \bar{V}^U$ and that all inner edges of $U_{(g\xi)_s}^{<s}$ have
$S$-coordinate $s$.
Further, note that $U_{(g\xi)_s}^{<s}$
has at least one inner edge
(since by definition of $\Xi^U$ the vertex 
$(g \xi)_s^{\uparrow U} \leq (g \xi)_s$ is a $T$-vertex)
and that $V$ contains none of those inner edges (or else it would be $s \in S'$).
Thus by applying \cite[Lemma 7.34]{Per18}
to $U_{(g\xi)_s}^{<s}$ one obtains a maximal subtree $U' < U$
containing all edges of $U$ that are not inner edges of $U_{(g\xi)_s}^{<s}$.
But then $V \hookrightarrow U'$ and (Ch2) follows.
\end{example}

\begin{remark}
We briefly outline how to modify the example above to prove 
\cite[Thm 7.1(ii)]{Per18}, in which case some notable subtleties arise.
The result again states that \eqref{THM71 EQ} is $G$-inner anodyne, but now with one of $S,T$ allowed to have stumps while the other is required to be linear, i.e. of the form $G/H \cdot [n]$.

One again sets $I=\mathsf{Max}(S\otimes T)$, with maximal trees defined just as before, but some caution is needed.
To see why, note that if the black nodes $\bullet$ in \eqref{GENLEXREL EQ} are replaced with stumps then $V'$
becomes a subtree of $V$, so that not all maximal trees are maximal with regard to inclusion.

When $S$ has stumps and $T$ is linear this causes no issues and the proof above holds
(notably, it can now be 
$\Xi^U = \emptyset$, in which case (Ch1) demands $U \in A$,
as indeed follows from the argument).

However, when $S$ is linear and $T$ has stumps the proof above breaks down (more specifically, the tree $U_{(g\xi)_s}^{<s}$ that appears when arguing (Ch2) may now fail to have inner edges). The solution is then to \textit{reverse} the poset structure on 
$\mathsf{Max}(S\otimes T)$
and to modify the $\Xi^U$ to be those inner edges $(g \xi)_s$ such that
$(g \xi)_s \in t_s^{\uparrow T}$ for some $t_s$
(pictorially, this says that these are the lowermost edges with $T$-coordinate in $G\xi$, whereas before they were the uppermost ones). The arguments for (Ch1),(Ch3) then hold.
For (Ch2), only the argument for the interesting case of
$V- \Xi_V^U \hookrightarrow S' \otimes T$, $s \not \in S'$
changes. In this case, there is then a $\leq_d$-largest edge $t'_s$ such that $(g \xi)_s <_d t'_s$, where $s$ can not be the root of $S$ (or else it would be $s \in S'$). Pictorially, $t'_s$ looks like the edge $e_1 \in V$ in \eqref{GENLEXREL EQ} in the case where the $\bullet$ node is unary (since $S$ is assumed linear). But then since $V$ can not contain $t'_s$ there exists a maximal subtree $U' > U$ such that $V \hookrightarrow U'$,
and (Ch2) follows.

Lastly, we note that \cite[Thm. 7.2]{Per18} follows from a minor variant of the argument for \cite[Thm. 7.1(ii)]{Per18} when $S$ is linear.
\end{remark}

\subsection{Segal core, horn and orbital horn inclusions}\label{HYPERSAT SEC}

\begin{proposition}\label{ORB_HORN_PROP}
	Let $T \in \Omega_G$.
	For $G$-subsets  
	$\emptyset \neq F \subseteq E \subseteq \boldsymbol{E}^{\mathsf{i}}(T)$
	the inclusions
\begin{equation}\label{ORBHORNINC EQ}
	\Lambda_o^{E}[T] \to \Omega[T],\qquad
	\Lambda_o^{E}[T] \to \Lambda_o^{F}[T]
\end{equation}
	are $G$-inner anodyne.
\end{proposition}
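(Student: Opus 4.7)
The plan is to apply the characteristic edge Lemma~\ref{CHAREDGE LEM} to each of the two inclusions. The key preliminary is to identify the dendrices missing from $\Lambda_o^E[T]$: by Proposition~\ref{MINGFACT PROP}, a non-equivariant face $V$ of $T$ lies in $\Lambda_o^E[T]$ iff the orbital closure $GV$ does not contain $T-E$, i.e., iff $GV$ is not an orbital inner face $T - D^G$ with $D^G \subseteq E$. This missing set is strictly larger than the inner faces $T_j - D$ (for $D \subseteq E_j$): it also includes outer or mixed faces whose $G$-orbit covers enough of $T$ to force $GV = T$ or $T - D^G$ (for instance, in Example~\ref{HORNEX EX} the face $P_a = T - \{a\}$ is missing since $P_a$ together with $(-1)\cdot P_a = T - \{-a\}$ covers all of $T$, giving $GP_a = T$).

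For the first inclusion $\Lambda_o^E[T] \to \Omega[T]$, I would apply Lemma~\ref{CHAREDGE LEM} with $I$ indexing the missing planar faces as pairs $(j, U)$; $G$-action $g \cdot (j, U) = (gj, gU)$; a partial order that processes smaller faces first, ordered lexicographically first by the outer closure $\bar{U}^{T_j}$ and then by inner-edge subset (in the style of Example~\ref{THM71 EX}); and characteristic edges $\Xi^{(j, U)} = E_j \cap \boldsymbol{E}^{\mathsf{i}}(U)$ when this is nonempty. The verification of (Ch0)--(Ch3) uses outer closures in $T_j$ (Lemma~\ref{INNINT LEM}) together with $G$-equivariance of the orbital closure functor $G(-)$. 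The key step for (Ch1) is to show that for an outer subface $V$ of $U$ with $\Xi^{(j,U)}_V = \emptyset$, either the orbital closure $GV'$ of its outer closure $V' = \bar{V}^{T_j}$ is already a proper orbital face not containing $T-E$ (placing $V \in \Lambda_o^E[T]$), or an edge in $E_j \setminus U$-structure can be adjoined to yield a previously added face $T_j - D''$ containing $V$. For (Ch2) and (Ch3), analogous edge-extension arguments apply.

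For the second inclusion $\Lambda_o^E[T] \to \Lambda_o^F[T]$, the same framework applies with $I$ further restricted to those $(j, U)$ satisfying $T - F \not\hookrightarrow GU$ (so that $\Omega[GU]$ lives in the target $\Lambda_o^F[T]$). The characteristic edges $\Xi^{(j,U)}$ can be refined to $(E_j \setminus F_j) \cap \boldsymbol{E}^{\mathsf{i}}(U)$ when non-empty.

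The main obstacle will be twofold: first, the careful enumeration of the missing faces (the outer/mixed ones are subtle, since whether $GV$ contains $T-E$ depends delicately on how $G$ permutes $V$'s leaves); second, ensuring that the characteristic edge $\Xi^{(j, U)}$ is non-empty whenever needed, or else that the boundary case $V = U$ with $\Xi = \emptyset$ is already in $A_{<i}$. The latter is arranged by the lexicographic order so that when $U = T_j - E_j$ (the extreme case), the face $U$ already embeds in previously added $T_j - D$ for smaller $D$ in the index.
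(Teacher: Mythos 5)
Your proposal takes a genuinely different — and, I think, unworkable — route from the paper's.

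The paper applies Lemma~\ref{CHAREDGE LEM} in the simplest possible way. After first reducing to $T \in \Omega^G \subseteq \Omega_G$ (a step your proposal skips, but which is needed since otherwise the tree components of $T$ are not closed under the $G$-action used in (Ch0)), the paper takes $I = \{\ast\}$ a singleton, $U_{\ast} = T$, and $\Xi^{\ast} = E$. The entire filtration over faces is then done \emph{inside} the proof of the lemma, where it runs over the poset $\mathsf{Face}^{lex}_{\Xi}(T)$. Conditions (Ch0) and (Ch3) are vacuous for a singleton $I$, (Ch1) follows from Remark~\ref{GOUT REM} plus $E\neq\emptyset$ (an outer face $V$ with $\Xi_V=\emptyset$ has $GV$ an outer orbital face with no inner edge orbit in $E$, so $GV\neq T$, so $GV$ cannot equal $T-E'$), and (Ch2) follows from Remark~\ref{GINNER REM}. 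That is the whole argument. Your proposal instead takes $I$ to be the full set of missing planar faces $(j,U)$, with $\Xi^{(j,U)} = E_j\cap\boldsymbol{E}^{\mathsf{i}}(U)$ and a lexicographic order. This is re-implementing, externally, exactly the internal filtration that Lemma~\ref{CHAREDGE LEM} was designed to perform for you; as Remark~\ref{RECOVER REM}(ii) emphasizes, the point of the lemma is that a single use with large $U_i$ suffices, so the $U_i$ should be taken as \emph{large} as possible.

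Beyond being redundant, your version has a concrete gap. The characteristic set $\Xi^{(j,U)}$ can be empty, most notably for the missing inner face $U = T_j - E_j$ (missing since $G(T_j-E_j)=T-E$). Then (Ch1) applied to $V=U$ forces $T_j-E_j\in A_{<(j,T_j-E_j)}$, and you claim the order arranges this by processing ``smaller faces first'' with a later embedding into a ``previously added $T_j-D$''. But these two constraints pull in opposite directions: $T_j - E_j$ is the smallest-degree missing inner face, so ``smaller first'' puts it \emph{before} every $T_j-D$ with $D\subsetneq E_j$; and the lemma's own lex order on $(\bar V, V)$ likewise puts inner faces with fewer inner edges earlier. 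Either way there is no previously-added $T_j-D$ to embed into. The paper avoids this entirely: in its internal filtration, $T_j-E_j$ has $\Xi_V=\emptyset$, so it is excluded from $\mathsf{Face}^{lex}_{\Xi}$ and is attached only as a codimension face of larger $W$'s inside the pushouts \eqref{FIRPUSH EQ}. Separately, for $\Lambda^E_o[T]\to\Lambda^F_o[T]$ the paper does \emph{not} restrict the same big index set but uses a different, tailor-made choice: $I=(E\setminus F)/G$ with an arbitrary total order, $U_{Ge}=T-Ge$, and $\Xi^{Ge}=F$. Taking $\Xi^{Ge}=F$ (not $E\setminus F$) is what makes (Ch1) work there; your proposed refinement $\Xi^{(j,U)} = (E_j\setminus F_j)\cap\boldsymbol{E}^{\mathsf{i}}(U)$ is not the right set.
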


\begin{proof}
We are free to assume that $T \in \Omega^G \subseteq \Omega_G$. Indeed, otherwise writing $T = G \cdot_H T_{\**}$, where $T_{\**} \in \Omega^H$ is a fixed component and 
$E_{\**}=E \cap \boldsymbol{E}^{\mathsf{i}}(T_{\**})$, $F_{\**} = F \cap \boldsymbol{E}^{\mathsf{i}}(T_{\**})$,
the maps in \eqref{ORBHORNINC EQ} are
$G \cdot_H 
\left( \Lambda_o^{E_{\**}}[T_{\**}] \to \Omega[T_{\**}]\right)$
and
$G \cdot_H 
\left( \Lambda_o^{E_{\**}}[T_{\**}] \to \Lambda_o^{F_{\**}}[T_{\**}]\right)$.

In the $\Lambda_o^{E}[T] \to \Omega[T]$ case we apply
Lemma \ref{CHAREDGE LEM} with $I = \{\**\}$ a singleton and
\[
	\Xi^{\**} = E, \qquad 
	U_{\**} = T,\qquad
	A=\Lambda_o^{E}[T].
\]
It remains to check the characteristic conditions in Definition \ref{CHAREDGE DEF}.
(Ch0) and (Ch3) are clear.

Note that for $V\hookrightarrow T$ it is $V \not \in A$ iff 
$GV = T-E'$ for some $G$-subset
$E' \subseteq E$.

For (Ch1), the condition $\Xi_{V} = \emptyset$
says that none of the inner edges of $V$ are in $E$
and thus, by Remark \ref{GOUT REM},
that the orbital outer face $G V$ contains none of the edge orbits in $E$ as inner edge orbits. Since $E \neq \emptyset$, the orbital outer face $GV$ is not $T$ itself, 
and hence $A=\Lambda_o^{E}[T]$ contains $V$.

For (Ch2), note that if $V \not \in A$, i.e., 
$GV = T - E'$ for $E' \subseteq E$, then Remark \ref{GINNER REM} implies that
$G(V-\Xi_V) = T - E''$ for $E'\subseteq E'' \subseteq E$,
and thus also $(V-\Xi_V) \not \in A$.

In the $\Lambda_o^{E}[T] \to \Lambda_o^{F}[T]$ case 
we instead apply Lemma \ref{CHAREDGE LEM} with $I = (E \setminus F)/G$, with an 
\textit{arbitrary} choice of total order, and 
(writing elements of $(E \setminus F)/G$ as orbits $Ge \subseteq E \setminus F$)
\[
	\Xi^{Ge} = F, \qquad 
	U_{G e}= T - Ge, \qquad
	A=\Lambda_o^{E}[T].
\]
Note that the $U_{Ge}$ are the orbital inner faces $T - Ge$ for $Ge \subseteq E \setminus F$, and thus the map
\eqref{CHARLEM EQ}
in Lemma \ref{CHAREDGE LEM} is indeed $\Lambda_o^{E}[T] \to \Lambda_o^{F}[T]$.
Further, we are free to abbreviate $\Xi = \Xi^{Ge}$ and $\Xi_V = \Xi^{Ge}_V$, since
$\Xi^{Ge}$ is independent of $G e$.
We again check the characteristic conditions. (Ch0) is clear.

For (Ch1), note that for an outer face 
$V \hookrightarrow U_i$, and writing $\bar{V} = \bar{V}^T$,
Lemma \ref{INNINT LEM} implies
$\boldsymbol{E}^{\mathsf{i}}(V) = 
\boldsymbol{E}^{\mathsf{i}}(U_i) \cap \boldsymbol{E}^{\mathsf{i}}(\bar{V})$
and hence since $\Xi_{U_i} = F = \Xi$ the 
hypothesis $\Xi_{V} = \emptyset$ in (Ch1) implies it is also
$\Xi_{\bar{V}} = \emptyset$.
Hence just as before $G\bar{V}$ is an orbital outer face other than $T$, hence $V$ is in $A=\Lambda_o^{E}[T]$.
The argument for (Ch2) is identical to the one in the
$\Lambda_o^{E}[T] \to \Omega[T]$ case.
Lastly, (Ch3) follows since	if $V \not \in A$, so that
$GV = T - E'$ and $G(V - \Xi_V) = T-E'-F'$ with
$E' \subseteq E$, $F' \subseteq F$,
then $GV \hookrightarrow T-Ge$ iff $G(V - \Xi_V) \hookrightarrow T-Ge$
(by fullness of $T-Ge$; see Remark \ref{INNFULLORB REM})
and thus $V \hookrightarrow T-Ge$ iff $V - \Xi_V \hookrightarrow T-Ge$.
\end{proof}

\begin{example}
Keeping the setup in Example \ref{HORNEX EX}, we consider the inclusion $\Lambda_o^{Gb}[T] \to \Omega[T]$.
Denoting the leftmost tree in the middle row of \eqref{HORN_EX_FIG} by $S$, the intersection of the 
$G$-poset in \eqref{HORN_EX_FIG} with the $G$-poset 
$\mathsf{Face}^{lex}_{\Xi}(U)=
\mathsf{Face}^{lex}_{Gb}(T)$
specified by the proofs of Proposition \ref{ORB_HORN_PROP} and Lemma \ref{CHAREDGE LEM} is the $G$-poset
$S \to T \leftarrow -S$.
The argument in those proofs then shows that $\Lambda_o^{Gb}[T] \to \Omega[T]$ is built cellularly by attaching
$G \cdot \left(\Lambda^b[S] \to \Omega[S]\right)$
followed by $\Lambda^{Gb}[T] \to \Omega[T]$.
\end{example}

\begin{proposition}\label{REG_HORN_PROP}
Let $T \in \Omega_G$.
For $G$-subsets 
$\emptyset \neq F \subseteq E \subseteq 
\boldsymbol{E}^{\mathsf{i}}(T)$
the inclusions
\begin{equation}
	\Lambda^{E}[T] \to \Lambda^{F}[T]
\end{equation}
are $G$-inner anodyne.
\end{proposition}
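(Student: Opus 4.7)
The plan is to mirror the strategy of Proposition~\ref{ORB_HORN_PROP} by applying Lemma~\ref{CHAREDGE LEM}. The structure of the argument is dictated by the following observation: a face $V$ of $T$ lying in $\Lambda^F[T]$ but not in $\Lambda^E[T]$ is necessarily an inner face of the form $V = T - D'$ with $D' \subseteq E$ and $D' \not\subseteq F$, and such a $V$ visibly factors through $T - D$ for $D := D' \cap (E \setminus F) \neq \emptyset$. So, as in the opening paragraph of the preceding proof, I would first reduce to the case $T \in \Omega^G \subseteq \Omega_G$ so that every inner face $T - D$ with $D \subseteq \boldsymbol{E}^{\mathsf{i}}(T)$ is unambiguously defined.

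With $A = \Lambda^E[T]$, I would let the indexing family $I$ consist of the nonempty subsets $D \subseteq E \setminus F$ (with $G$-action $D \mapsto gD$), equivariant faces $U_D := T - D$, and characteristic edges $\Xi^D := F$ (which is contained in $\boldsymbol{E}^{\mathsf{i}}(U_D)$ because $F \cap D = \emptyset$). Crucially, the partial order on $I$ should be reverse inclusion, $D_1 \leq D_2 \Leftrightarrow D_1 \supseteq D_2$, which is manifestly $G$-equivariant. A short combinatorial check confirms that under these choices the union $A \cup \bigcup_{D \in I} \Omega[U_D]$ is exactly $\Lambda^F[T]$, so Lemma~\ref{CHAREDGE LEM} will yield the conclusion.

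Condition (Ch0) is then immediate. For (Ch1), a proper outer face $V \subsetneq U_D$ must drop either the root or a leaf of $T$, and hence cannot receive $T - E$ (whose root and leaves are those of $T$) as a face, so $V \in \Lambda^E[T]$; the remaining case $V = U_D$ is excluded by the hypothesis $\Xi^D_V = F \neq \emptyset$. For (Ch2), if $V \notin \Lambda^E[T]$ then $T - E \hookrightarrow V$, which forces $V = T - D'''$ for some $D''' \subseteq E$ with $D''' \supseteq D$; a routine computation then gives $V - \Xi^D_V = T - (D''' \cup F)$, and since $D''' \cup F \subseteq E$ this still contains $T - E$, contradicting $V - \Xi^D_V \in A$. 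For (Ch3), the assumption $D' \not\geq D$ unpacks, under reverse inclusion, to $D' \not\subseteq D$, and combining $V \hookrightarrow T - D$ with $V - \Xi^D_V \hookrightarrow T - D'$ and $D' \cap F = \emptyset$ yields $\boldsymbol{E}(V) \cap (D \cup D') = \emptyset$, so $V \hookrightarrow T - (D \cup D')$.

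The main subtlety, and the step I would pause on most, is the direction of the partial order. In (Ch3) one produces a face map into $T - (D \cup D')$ with $D \cup D' \supsetneq D$, and this must translate into $D \cup D' < D$ in the indexing poset so that $T - (D \cup D')$ already appears in $A_{<D}$. Reverse inclusion accomplishes exactly this, whereas forward inclusion would place $D \cup D'$ \emph{above} $D$ and cause (Ch3) to fail; once this design choice is correct, the rest of the verification falls out by direct edge-set bookkeeping.
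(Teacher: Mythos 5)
Your choice of data to feed into Lemma~\ref{CHAREDGE LEM} is exactly the paper's: $I$ the nonempty subsets of $E \setminus F$ under reverse inclusion, $U_{E'} = T - E'$, $\Xi^{E'} = F$, $A = \Lambda^E[T]$; the identification $A \cup \bigcup_{E'} \Omega[U_{E'}] = \Lambda^F[T]$ and your verifications of (Ch0), (Ch2), and (Ch3) are correct and match the paper's up to presentation. The gap is in (Ch1), where you assert that a proper outer face $V \subsetneq U_{E'}$ ``must drop either the root or a leaf of $T$.'' This is false whenever $T$ has stumps. Concretely, let $T$ be the linear closed tree with edges $r,c,a,b$ and vertices $r^{\uparrow}=c$, $c^{\uparrow}=a$, $a^{\uparrow}=b$, $b^{\uparrow}=\epsilon$ (so $b$ is a stump and $T$ has no leaves), and take $F=\{b\}$, $E=\{b,c\}$, $E'=\{c\}$. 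Then $V=(T-c)_{b \leq r}$ is a proper planar outer face of $U_{E'}=T-c$ (one of the outer faces that ``remove a stump,'' cf.\ Remark~\ref{INNFULL REM}): its edge set is all of $\{r,a,b\}$ but $b$ is a leaf rather than a stump, and it satisfies the (Ch1) hypothesis since $\Xi_V^{E'}=\{b\}\cap\boldsymbol{E}^{\mathsf{i}}(V)=\{b\}\cap\{a\}=\emptyset$. Yet $V$ contains the root of $T$ and, vacuously, every leaf of $T$, so your premise fails and the step ``hence cannot receive $T-E$ as a face'' is not justified as written.

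The conclusion $V\in\Lambda^E[T]$ is still true, so the fix is local. The paper's argument (given in Proposition~\ref{ORB_HORN_PROP} and cited again here) applies Lemma~\ref{INNINT LEM} to deduce $\Xi_{\bar{V}^T}=\emptyset$ from $\Xi_V=\emptyset$, so that the planar outer face $\bar{V}^T$ of $T$ misses all of $F$ and hence $\bar{V}^T\neq T$; since every $T-D_0$ with $D_0\subseteq E$ has outer closure $T$, none can factor through $\bar{V}^T$, whence $V\in\Lambda^E[T]$. Alternatively one can argue via outer closures directly: if $T-D_0\hookrightarrow V$ then $\bar{V}^T=T$, making $V$ simultaneously an inner face of $T$ and an outer face of $U_{E'}$, which forces $V=U_{E'}$ and contradicts properness. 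Either repair closes the gap, but note that the paper's route uses the hypothesis $\Xi_V=\emptyset$ in an essential way rather than (as in your write-up) only to rule out $V=U_{E'}$.
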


\begin{proof}
Arguing just as at the beginning of Proposition \ref{ORB_HORN_PROP}, we may assume $T \in \Omega^G \subseteq \Omega_{G}$.

We now apply Lemma \ref{CHAREDGE LEM} with 
$I = \mathcal{P}_0(E\setminus F)$
the poset of non-empty subsets $\emptyset \neq E' \subseteq (E \setminus F)$, ordered by \textit{reverse inclusion}, and
\[
	\Xi^{E'} = F, \qquad
	U_{E'} = T - E', \qquad
	A=\Lambda^{E}[T].
\]
We again need to verify the characteristic conditions,
and as in the previous result we abbreviate
$\Xi = \Xi^{E'}$, $\Xi_V = \Xi^{E'}_V$.
(Ch0) is clear. (Ch1) follows as in the previous proof, except noting that $\bar{V}$ (rather than $G \bar{V}$) is not $T$.
(Ch2) follows since $V \in A$ iff $V-\Xi_V \in A$.
Similarly,
(Ch3) follows since 
$V \hookrightarrow T-E'$ iff $(V-\Xi_V) \hookrightarrow T-E'$
and since if
$V \hookrightarrow T-E',V \hookrightarrow T-E''$
then 
$V \hookrightarrow T-(E' \cup E'')$.
\end{proof}

\begin{remark}\label{TWOPROOF REM}
	By specifying to the non-equivariant case $G = \**$
	the previous results yield two distinct proofs
	that inclusions of non-equivariant horns
	$\Lambda^{E}[T] \to \Lambda^{F}[T]$
	are inner anodyne,
	with the first proof using $I = E \setminus F$ (with an arbitrary total order) and the second using 
	$I = \mathcal{P}_0(E \setminus F)$. 

	The discrepancy is explained as follows: 
	when $T$, $E$, $F$ are $G$-equivariant, showing that
	$\Lambda^{E}[T] \to \Lambda^{F}[T]$ 
	is $G$-inner anodyne requires a control of isotropies 
	not needed when showing that the underlying map is non-equivariant inner anodyne, and since this control is given by (Ch3), it is necessary to include in the $\{U_i\}$ the
	``intersections'' of $T-e$ and $T-ge$ for $e \in E \setminus F$. 
\end{remark}

The following repeats Remark \ref{RECOVER REM}(i),
but the alternative proof will simplify the proof of Proposition \ref{HYPER PROP}. 

\begin{proposition}\label{SCANOD PROP}
Let $T \in \Omega_G$. The inclusions $Sc[T] \to \Omega[T]$ are $G$-inner anodyne.
\end{proposition}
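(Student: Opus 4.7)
The plan is to apply Lemma \ref{CHAREDGE LEM} with the following setup: $A = Sc[T]$, $I = \mathsf{Face}^{out}(T)$ the $G$-poset of all planar outer faces $V \hookrightarrow T$ ordered by inclusion, $U_V = V$, and $\Xi^V = \boldsymbol{E}^{\mathsf{i}}(V)$. Since $T$ is trivially a planar outer face of itself we have $T \in I$, so $A \cup \bigcup_{V \in I} \Omega[V] = \Omega[T]$ and the lemma will immediately yield the result. In effect this is just the special case of the dendroidal covers statement sketched in Remark \ref{RECOVER REM}(i) applied to $Sc[T] \subseteq \Omega[T]$, both of which are (trivially) $G$-equivariant dendroidal covers of $T$.

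The uniform observation underlying all four characteristic conditions is that, whenever $W \hookrightarrow V$ is a face of an outer face $V \hookrightarrow T$, every inner edge of $W$ is automatically inner in $V$, and hence inner in $T$. Consequently $\Xi^V_W = \boldsymbol{E}^{\mathsf{i}}(W)$, so $W - \Xi^V_W$ is just the corolla (or stick) with root $r_W$ and leaves $\underline{l}_W$, whose outer closure $\bar{W}^T$ is the unique planar outer face of $T$ carrying that root and those leaves (cf.\ Proposition \ref{UNIQUEFACT PROP}). With this, (Ch0) is immediate from $G$-equivariance; for (Ch1), an outer face of $V$ with $\Xi^V_W = \emptyset$ has no inner edges, so is a stick or single-corolla outer face of $T$, hence lies in $Sc[T] = A$. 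For (Ch2), the hypothesis $W - \boldsymbol{E}^{\mathsf{i}}(W) \in Sc[T]$ forces $\underline{l}_W \leq r_W$ to be a direct vertex (or edge) of $T$, so that $\bar{W}^T$ is itself a Segal core face and $\Omega[W] \subseteq \Omega[\bar{W}^T] \subseteq Sc[T] \subseteq A_{<V}$. For (Ch3), the factorization through $W - \Xi^V_W$ means the relation $\underline{l}_W \leq r_W$ holds in $V'$; combined with the uniqueness of outer faces with fixed root and leaves, this forces $\bar{W}^T \hookrightarrow V'$, and likewise $\bar{W}^T \hookrightarrow V$ via the outer-inner factorization of $W \hookrightarrow V$, so $\bar{W}^T$ lies below both $V$ and $V'$ in $I$. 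Since $V' \not\geq V$ rules out $\bar{W}^T = V$, we obtain $\bar{W}^T < V$, whence $\Omega[W] \subseteq \Omega[\bar{W}^T] \subseteq A_{<V}$.

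The main subtlety to check carefully is the ``top'' case $W = V$ in (Ch2) and (Ch3) when $V$ itself has inner edges, which is precisely where the superficially problematic situation $V \notin A_{<V}$ threatens to occur. For (Ch2), if $V - \boldsymbol{E}^{\mathsf{i}}(V) \in Sc[T]$, then the uniqueness of outer faces with given root and leaves would force $V = \bar{V}^T$ to coincide with the one-vertex corolla $\underline{l}_V \leq r_V$, contradicting the presence of inner edges; so the hypothesis of (Ch2) vacuously fails in this case. An analogous argument handles (Ch3) when $W = V$: the existence of $V - \boldsymbol{E}^{\mathsf{i}}(V) \hookrightarrow V'$ would likewise force $V \hookrightarrow V'$, contradicting $V' \not\geq V$. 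Both of these vacuous-failure arguments ultimately rest on the key structural fact (Proposition \ref{UNIQUEFACT PROP}) that a planar outer face of $T$ is uniquely determined by its root and leaves, which is what makes this particular choice of $\Xi^V$ compatible with all of (Ch1)--(Ch3) simultaneously.
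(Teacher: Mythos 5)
Your proof is correct and rests on the same key tool, Lemma~\ref{CHAREDGE LEM}, but with different input data than the paper. You take $I = \mathsf{Face}^{out}(T)$, $U_V = V$, $\Xi^V = \boldsymbol{E}^{\mathsf{i}}(V)$, which is exactly the general ``dendroidal covers'' argument from the first paragraph of Remark~\ref{RECOVER REM}(i) applied to the cover inclusion $A = Sc[T] \subseteq \Omega[T]$. The paper's own proof instead uses the singleton choice $I = \{*\}$, $U_* = T$, $\Xi^* = \boldsymbol{E}^{\mathsf{i}}(T)$ highlighted in the last line of that remark; this trivializes (Ch3) (there is no $j$) and collapses (Ch1) to the tautology $Sc[T] \subseteq A$, so the paper's proof is essentially a one-line invocation of the remark. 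Your larger poset $I$ is what forces genuine work on (Ch3), which you handle correctly: the key points you supply --- that a planar outer face of $T$ is uniquely determined by its root and leaf tuple, and (implicitly) that outer face maps are closed under composition, so that the outer face $V'_{\underline{l}_W \leq r_W}$ of $V'$ is indeed $\bar{W}^T$ --- are the substance of the remark's terse ``(Ch3) follows since the $U_i$ are outer.'' One minor slip: when $T \in \Omega_G$ has several tree components $T_i$, it is the components $T_i$ (not $T$ itself) that belong to $\mathsf{Face}^{out}(T) \subseteq \mathsf{Face}(T)$; the needed identity $A \cup \bigcup_{V\in I}\Omega[V] = \Omega[T]$ still holds since $\bigcup_i \Omega[T_i] = \Omega[T]$, so this does not affect the argument, and the issue evaporates entirely if you first reduce to $T \in \Omega^G$ as the paper does.
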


\begin{proof}
We may yet again assume $T \in \Omega^G \subseteq \Omega_G$. 
Now apply Lemma \ref{CHAREDGE LEM} with 
$I=\**$, $U_{\**} = T$, $\Xi^{\**} = \boldsymbol{E}^{\mathsf{i}}(T)$.
The conditions (Ch0),(Ch1),(Ch2),(Ch3) follow as in 
Remark \ref{RECOVER REM}(i).
\end{proof}

\begin{remark}\label{FACCES REM}
All $G$-inner horn inclusions attached in the proof of the characteristic edge lemma, Lemma \ref{CHAREDGE LEM}, correspond to $G$-trees whose non-equivariant components are faces of the $U_i$. Moreover, when $I \simeq G/H$ has a transitive $G$-action, the last horn inclusion attached (corresponding to the maximum of $\mathsf{Face}^{lex}_{\Xi}(U)$) is
$G \cdot_H \left( \Lambda^{\Xi}[U] \to \Omega[U] \right)$, while all other horn inclusions used are of the form
$G \cdot_K (\Lambda^{\Xi_W}[W] \to \Omega[W])$ for some $W$ with $|W| < |U|$.
\end{remark}

\begin{corollary}\label{REGGENHORN COR}
$G$-inner horn inclusions
$\Lambda^{E}[T] \to \Omega[T]$
are built cellularly from generating horn inclusions
$\Lambda^{Ge}[S] \to \Omega[S]$.
\end{corollary}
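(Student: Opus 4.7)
The plan is to argue by strong induction on $|T|$, with the finite group acting on the tree allowed to vary. First I would reduce to the case $T \in \Omega^G$: if $T \simeq G \cdot_H T_{\ast}$ with $T_{\ast} \in \Omega^H$, then \eqref{T_DECOMP_EQ} identifies $\Lambda^E[T] \to \Omega[T]$ with $G \cdot_H\bigl(\Lambda^{E_{\ast}}[T_{\ast}] \to \Omega[T_{\ast}]\bigr)$; assuming inductively that the latter is built cellularly from $H$-generating horn inclusions $\Lambda^{He'}[S'] \to \Omega[S']$, applying $G \cdot_H(-)$ yields inclusions of the form $\Lambda^{Ge'}[G \cdot_H S'] \to \Omega[G \cdot_H S']$, again by \eqref{T_DECOMP_EQ}, which are $G$-generating. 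Hence we may assume $T \in \Omega^G$.

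For such $T$, if $E$ consists of a single $G$-orbit the inclusion is already generating and there is nothing to prove. Otherwise, I would choose any $G$-orbit $Ge \subseteq E$ and factor
\[
      \Lambda^E[T] \To \Lambda^{Ge}[T] \To \Omega[T],
\]
where the second map is itself a generating $G$-inner horn inclusion. For the first map, Proposition \ref{REG_HORN_PROP} (applied with $F = Ge$) constructs $\Lambda^E[T] \to \Lambda^{Ge}[T]$ cellularly via Lemma \ref{CHAREDGE LEM} using the indexing set $I = \mathcal{P}_0(E \setminus Ge)$ and $U_{E'} = T - E'$ (with $E' \neq \emptyset$). By Remark \ref{FACCES REM}, each attached $G$-inner horn inclusion has the form $G \cdot_K\bigl(\Lambda^{\Xi_W}[W] \to \Omega[W]\bigr)$, where $W \in \Omega^K$ is a planar face of some $T - E'$; crucially, since $E' \neq \emptyset$, one has $|W| \leq |T - E'| < |T|$.

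The induction then closes: applying the hypothesis to the $K$-tree $W$ expresses each $\Lambda^{\Xi_W}[W] \to \Omega[W]$ cellularly in terms of $K$-generating horn inclusions, and $G \cdot_K(-)$ preserves cellular attachments while converting $K$-generating horn inclusions into $G$-generating ones, again by \eqref{T_DECOMP_EQ}. The main bookkeeping point is checking via Remark \ref{FACCES REM} that every attached horn is on a tree of strictly smaller vertex count, which follows precisely from the restriction $E' \neq \emptyset$ built into Proposition \ref{REG_HORN_PROP}'s indexing; beyond this, the argument is essentially routine.
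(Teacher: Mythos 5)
Your proof is correct and follows the same route as the paper: factor $\Lambda^E[T] \to \Lambda^{Ge}[T] \to \Omega[T]$, observe via Proposition \ref{REG_HORN_PROP} and Remark \ref{FACCES REM} that the first map is built cellularly from $G$-inner horns on $G$-trees whose nonequivariant components are faces of some $T-E'$ with $E'\neq\emptyset$, hence of strictly smaller vertex count, and close by induction. The paper keeps the group $G$ fixed and applies the inductive hypothesis directly to the $G$-trees $G\cdot_K W$; you instead apply it to the $K$-tree $W$ and push forward along $G\cdot_K(-)$, which, as you correctly note, is a left adjoint and converts $K$-generating horns $\Lambda^{Ke''}[S'']$ into $G$-generating horns $\Lambda^{Ge''}[G\cdot_K S'']$ by \eqref{T_DECOMP_EQ}. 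Both organizations work. The one caveat is your opening reduction to $T\in\Omega^G$: as phrased (``assuming inductively that the latter is built cellularly from $H$-generating horn inclusions''), this reads as an application of the induction hypothesis to $(H,T_*)$, but $|T_*|$ is unchanged under the passage $(G,T)\rightsquigarrow(H,T_*)$, so an induction on $|T|$ alone does not cover it. This is easily repaired: the reduction is better stated as a without-loss-of-generality device, valid because proving the claim for all finite groups $G$ and all $T\in\Omega^G\subset\Omega_G$ suffices (any $T\simeq G\cdot_H T_*$ has its horn equal to $G\cdot_H$ of the corresponding $H$-horn of $T_*\in\Omega^H$). Alternatively you may skip the reduction entirely, since the filtration argument of Proposition \ref{REG_HORN_PROP} makes sense for any $T\in\Omega_G$; in that case the faces $W$ in Remark \ref{FACCES REM} are faces of tree components of $T-E'$ and the induction on component vertex count closes just as before.
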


\begin{proof}
	The proof is by induction on $|T_{\**}|$ for $T_{\**}\in \Omega$ a tree component (cf. Remark \ref{DEGREE REM}). As in the proofs of Propositions \ref{ORB_HORN_PROP} and \ref{REG_HORN_PROP} one is free to assume $T \in \Omega^G$.	
	A choice of edge orbit $Ge$ in $E$ yields a factorization
	$\Lambda^{E}[T] \to \Lambda^{Ge}[T] \to \Omega[T]$,
	hence we need only show that 
	$\Lambda^{E}[T] \to \Lambda^{Ge}[T]$ is built cellularly from generating horns. 
	But this is immediate from the induction hypothesis,
	Remark \ref{FACCES REM}, and the proof of Proposition \ref{REG_HORN_PROP}, since all
	$U_{i}$ therein satisfy $|U_i|<|T|$.
\end{proof}

Following the discussion preceding \cite[Prop. 3.6.8]{HHM16},
a class of normal monomorphisms of $\mathsf{dSet}^G$
(or, more generally, a subclass of the cofibrations in a model category) is called
\textit{hypersaturated} if it is saturated (i.e. it is closed under
pushouts, transfinite composition and retracts) and satisfies the following additional cancellation property: 
if $f,g$ are normal monomorphisms (or, more generally, cofibrations)
\begin{equation}\label{CANCEL_EQ}
A \xrightarrow{f} B \xrightarrow{g} C
\end{equation}
such that both $f$ and $gf$ are in the class, then so is $g$.

The following is an equivariant generalization of 
\cite[Props. 2.4 and 2.5]{CM13a}.

\begin{proposition}\label{HYPER PROP}
The following sets of maps generate the same hypersaturated class:
\begin{itemize}
\item the generating $G$-inner horn inclusions
$\Lambda^{Ge} [T] \to \Omega[T]$ for $T \in \Omega_G$ and 
$e \in \boldsymbol{E}^{\mathsf{i}}(T)$; 
\item the $G$-inner horn inclusions
$\Lambda^{E} [T] \to \Omega[T]$ for $T \in \Omega_G$ and 
$G$-subset $\emptyset \neq E \subseteq \boldsymbol{E}^{\mathsf{i}}(T)$; 
\item the orbital $G$-inner horn inclusions
$\Lambda^{E}_o [T] \to \Omega[T]$ for $T \in \Omega_G$ and 
$G$-subset $\emptyset \neq E \subseteq \boldsymbol{E}^{\mathsf{i}}(T)$; 
\item the $G$-Segal core inclusions
$Sc[T] \to \Omega[T]$ for $T \in \Omega_G$.
\end{itemize}
\end{proposition}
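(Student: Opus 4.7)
The plan is to leverage the previous results of this section to obtain one circle of containments almost for free, and then to close the remaining inclusions via induction on $|T|$ (Remark \ref{DEGREE REM}) combined with the cancellation property of hypersaturated classes. Writing $\overline{\mathcal C}$ for the hypersaturated class generated by $\mathcal C$, Corollary \ref{REGGENHORN COR} immediately yields $\overline{\mathcal H} = \overline{\mathcal H_{gen}}$, while Propositions \ref{ORB_HORN_PROP} and \ref{SCANOD PROP} give $\overline{\mathcal H_o}, \overline{\mathcal{S}c} \subseteq \overline{\mathcal H_{gen}}$. What remains is therefore to show that every generating horn $\Lambda^{Ge}[T] \to \Omega[T]$ lies in both $\overline{\mathcal H_o}$ and $\overline{\mathcal{S}c}$, which I would handle by induction on $|T|$, the base case $|T| \leq 1$ being vacuous.

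For the orbital side, the first step is to verify the inclusion of subcomplexes $\Lambda^{Ge}_o[T] \subseteq \Lambda^{Ge}[T]$: a planar face $U \hookrightarrow T$ with $T_i - E_i \hookrightarrow U$ has its edge set's $G$-orbit containing all non-$Ge$ edges of $T$, so the orbital closure $GU$ must equal either $T$ or $T - Ge$ (the only orbital extensions of $T - Ge$), and neither lies in $\Lambda^{Ge}_o[T]$. This yields the factorization $\Lambda^{Ge}_o[T] \hookrightarrow \Lambda^{Ge}[T] \hookrightarrow \Omega[T]$. Re-examining the proof of Proposition \ref{ORB_HORN_PROP} (its application of Lemma \ref{CHAREDGE LEM} with $\Xi = Ge$), the terminal element of the lex order on $\mathsf{Face}_{Ge}^{lex}(T)$ is $V = T$, attached via the very horn $\Lambda^{Ge}[T] \to \Omega[T]$, and a case analysis on $\bar U^T$ identifies the intermediate stage just before this last attachment as $\Lambda^{Ge}[T]$ itself. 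Hence $\Lambda^{Ge}_o[T] \to \Lambda^{Ge}[T]$ is built cellularly from horn inclusions $G \cdot_K (\Lambda^{\Xi_W}[W] \to \Omega[W])$ with $|W| < |T|$, which by the inductive hypothesis (combined with Corollary \ref{REGGENHORN COR} to reduce the non-generating horns on these smaller $W$ to generating ones) lie in $\overline{\mathcal H_o}$. Since the composite $\Lambda^{Ge}_o[T] \to \Omega[T]$ is in $\mathcal H_o \subseteq \overline{\mathcal H_o}$, cancellation delivers $\Lambda^{Ge}[T] \to \Omega[T] \in \overline{\mathcal H_o}$.

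For the Segal case the plan is entirely analogous, but through the longer factorization
\[
	Sc[T] \hookrightarrow \Lambda^{\boldsymbol{E}^{\mathsf{i}}(T)}[T] \hookrightarrow \Lambda^{Ge}[T] \hookrightarrow \Omega[T].
\]
The first piece realizes the intermediate stage of Proposition \ref{SCANOD PROP}'s filtration stopped just before its terminal $V = T$ attachment: since taking $\Xi = \boldsymbol{E}^{\mathsf{i}}(T)$ forces the condition $\Xi_V = \Xi_{\bar V}$ to reduce to $V = \bar V$, the poset $\mathsf{Face}^{lex}_{\Xi}(T)$ consists of outer faces only, and the stage is exactly $Sc[T] \cup \bigcup_{V \subsetneq T \text{ outer}} \Omega[V] = \Lambda^{\boldsymbol{E}^{\mathsf{i}}(T)}[T]$; this piece is built cellularly from horns on trees of size $< |T|$. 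The second piece $\Lambda^{\boldsymbol{E}^{\mathsf{i}}(T)}[T] \to \Lambda^{Ge}[T]$ is exactly Proposition \ref{REG_HORN_PROP} applied with $F = Ge$, $E = \boldsymbol{E}^{\mathsf{i}}(T)$, whose proof builds it cellularly from horns on the strictly smaller trees $T - E'$, $\emptyset \neq E' \subseteq \boldsymbol{E}^{\mathsf{i}}(T) \setminus Ge$. By induction both pieces lie in $\overline{\mathcal{S}c}$, so $Sc[T] \to \Lambda^{Ge}[T] \in \overline{\mathcal{S}c}$, and cancellation against the composite $Sc[T] \to \Omega[T] \in \mathcal{S}c$ produces $\Lambda^{Ge}[T] \to \Omega[T] \in \overline{\mathcal{S}c}$.

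The main obstacle in the plan will be the bookkeeping required to identify the intermediate stages of the two filtrations with the claimed horns. The Segal identification is the easier of the two, being essentially immediate from unpacking the condition $\Xi_V = \Xi_{\bar V}$ for $\Xi = \boldsymbol{E}^{\mathsf{i}}(T)$. The orbital identification requires a more delicate case analysis on $\bar U^T$: once a face $U \in \Lambda^{Ge}[T]$ has been located inside either $\Lambda^{Ge}_o[T]$ (when $G\bar U^T \neq T, T-Ge$) or inside $\Omega[V]$ for some $V \in \mathsf{Face}^{lex}_{Ge}(T) \setminus \{T\}$, the coverage is complete; the key control on when these two regimes apply is supplied by Remark \ref{GOUT REM}, which ensures that the orbital closure $G\bar U^T$ has no $Ge$-inner-edge whenever $\bar U^T$ does not, and so cannot equal $T$ in that regime.
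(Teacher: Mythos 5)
Your proposal is correct and takes essentially the same approach as the paper: the same outer chain of inclusions furnished by Corollary \ref{REGGENHORN COR} and Propositions \ref{ORB_HORN_PROP}, \ref{SCANOD PROP}, and the same closing argument by induction on $|T_*|$ combined with the cancellation property, using the factorizations $\Lambda^{Ge}_o[T] \to \Lambda^{Ge}[T] \to \Omega[T]$ and $Sc[T] \to \Lambda^{\boldsymbol{E}^{\mathsf{i}}(T)}[T] \to \Lambda^{Ge}[T] \to \Omega[T]$ via the filtrations from the characteristic-edge-lemma proofs. The only cosmetic difference is that you restrict to generating horns $Ge$ from the outset and verify the intermediate-stage identifications somewhat more explicitly, whereas the paper works with general $G$-subsets $E$ and delegates the bookkeeping to Remark \ref{FACCES REM}.
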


In the following proof we refer to the hypersaturation
of the orbital horn (resp. Segal core) inclusions as the orbital (resp. Segal) hypersaturation.

\begin{proof}
	That the first two hypersaturations coincide is clear from Corollary \ref{REGGENHORN COR}.

	The fact that $G$-inner horn inclusions generate the maps in the orbital and Segal hypersaturations has been established in Proposition \ref{ORB_HORN_PROP} and Remark \ref{RECOVER REM}(i), Proposition \ref{SCANOD PROP}.
	
	To see that the $G$-inner horn inclusions are in the orbital hypersaturation, we again argue by induction on $|T_{\**}|$, with the base cases those where $\Lambda^{E} [T]=\Lambda^{E}_o [T]$.
	Recalling that in the proof of Proposition \ref{ORB_HORN_PROP}
	one sets
	$I=\**$, $U_{\**} = T$ and $\Xi^{\**} = E$,
	Remark \ref{FACCES REM} implies that in the factorization
	$\Lambda_o^E[T] \to \Lambda^E[T] \to \Omega[T]$
	the first map $\Lambda_o^E[T] \to \Lambda^E[T]$ is built cellularly out of $G$-horns with $|S_{\**}| < |T_{\**}|$.
	But then the induction hypothesis says that 
	$\Lambda_o^E[T] \to \Lambda^E[T]$ is in the orbital hypersaturation, and by the cancellation property \eqref{CANCEL_EQ} so is $\Lambda^E[T] \to \Omega[T]$.

	For the claim that the $G$-inner horn inclusions are in the Segal hypersaturation, note that Proposition \ref{SCANOD PROP} sets
	$I=\**$, $U_{\**} = T$, $\Xi^{\**} = \boldsymbol{E}^{\mathsf{i}}(T)$.
	Therefore, arguing exactly as above for the factorization
	$Sc[T] \to \Lambda^{\boldsymbol{E}^{\mathsf{i}}(T)}[T] \to \Omega[T]$,
	one obtains by induction on $|T_{\**}|$ that
	$\Lambda^{\boldsymbol{E}^{\mathsf{i}}(T)}[T] \to \Omega[T]$
	is in the Segal hypersaturation. 
	But now letting $\emptyset \neq E \subseteq \boldsymbol{E}^{\mathsf{i}}(T)$ be any $G$-subset and considering the factorization
	$\Lambda^{\boldsymbol{E}^{\mathsf{i}}(T)}[T] \to 
	\Lambda^{E}[T] \to
	\Omega[T]$ the induction hypothesis applies to the cells of
	$\Lambda^{\boldsymbol{E}^{\mathsf{i}}(T)}[T] \to \Lambda^{E}[T]$
	(just as in Corollary \ref{REGGENHORN COR}),
	which is thus also in the Segal hypersaturation.
	But by the cancellation property, so is 
	$\Lambda^{E}[T] \to \Omega[T]$, finishing the proof.
\end{proof}

\begin{remark}
      \label{ORBHYPER_REM}
	The identification between orbital subcomplexes 
	$\bigcup_i \Omega[S_i] \subseteq \Omega[T]$ and subcomplexes of 
	$\bigcup_i \Omega[S_i/G] \subseteq \Omega[T/G]$
	described in Remark \ref{ORB_HORN_REM}
	is compatible with attaching horn inclusions.
	As such, non-equivariant results concerning horns in $\Omega$
	imply the analogue results for orbital horns in $\Omega_G$. For example, mimicking \cite[Lemma 5.1]{MW09}, one has pushouts	
\begin{equation}\label{ORBHORNPUSH EQ}
\begin{tikzcd}
	\Lambda^{E - G e}_o[T - Ge] \arrow[r] \arrow[d]
	\arrow[dr, phantom, "\ulcorner", very near start] &
	\Lambda^{E}_o[T] \arrow[d]
\\
	\Omega[T - G e] \arrow[r]&
	\Lambda^{E - G e}_o[T]
\end{tikzcd}
\end{equation}	
which imply the orbital horn analogue of 
Corollary \ref{REGGENHORN COR}.
It is worth noting that while setting $G=\**$ in 
Corollary \ref{REGGENHORN COR} does recover
\cite[Lemma 5.1]{MW09}, 
the analogue of the pushouts \eqref{ORBHORNPUSH EQ}
does not hold for (non-orbital) $G$-inner horns,
so that the proof of Corollary \ref{REGGENHORN COR}
(see also the original proof in \cite[Prop. 6.17]{Per18})
is intrinsically harder when $G \neq \**$,
due to isotropy concerns (this is closely related to the two proofs discussed in Remark \ref{TWOPROOF REM}).

Similarly, \cite[Props. 2.4 and 2.5]{CM13a} 
(or Remark \ref{RECOVER REM}(i) and Proposition \ref{HYPER PROP} when $G=\**$)
imply that the Segal core inclusions 
$Sc[T] \to \Omega[T]$
are built cellularly from the orbital horn inclusions
$\Lambda^E_o[S] \to \Omega[S]$, and that the two classes have the same hypersaturation. 

	We note that this last observation indicates an alternate route for proving Proposition \ref{HYPER PROP}
	(which the authors considered in early versions of this work)
	without making direct use of the characteristic edge lemma machinery.
	Namely, following the considerations above, the main missing claim is the first part of Proposition \ref{ORB_HORN_PROP}, stating that the inclusions
	$\Lambda^E_o[T] \to \Omega[T]$ are $G$-inner anodyne, and this latter claim is not too hard to prove directly. 
	Indeed, while the proof does require some of the ideas in the proof of Lemma \ref{CHAREDGE LEM},
	many of the subtler arguments in that proof
	become trivial when $I=\**$ is a singleton, as is the case in Proposition \ref{ORB_HORN_PROP}.
\end{remark}

\begin{remark}\label{DUMBHYPER REM}
Since for each $T \in \Omega_G$ the cellular decomposition of 
$\partial \Omega[T]$ is built by attaching boundary inclusions 
$\partial \Omega[S] \to \Omega[S]$ with $|S_{\**}|<|T_{\**}|$, it readily follows that the sets
\[
\{\partial \Omega[T] \to \Omega[T] \colon T \in \Omega_G\}
\qquad
\{\emptyset \to \Omega[T] \colon T \in \Omega_G\}
\]
have the same hypersaturation. Similarly, the following sets also have the same hypersaturation.
\[
\{\partial \Omega[T] \to \Omega[T] \colon T \in \Omega_G,T \neq G/H \cdot \eta\}
\qquad
\{ \coprod_{e \in \boldsymbol{E}(T)} \Omega[\eta] \to \Omega[T] \colon T \in \Omega_G\} 
\]
\end{remark}

We end this section with some necessary remarks about hypersaturations of \textit{simplicial} horns.

\begin{remark}\label{SLICE REM}
	Setting $G=e$ and restricting to the overcategory
	$\mathsf{dSet}_{/\eta} \simeq \mathsf{sSet}$ 
	(where as usual we abbreviate $\Omega[\eta]$ as just $\eta$; see Notation \ref{DELTAOMEGA NOT}),
	Proposition \ref{HYPER PROP}
	recovers the well known claim that 
	the hypersaturation of the simplicial inner horn inclusions
	$\{\Lambda^i[m] \to \Delta[m] \colon 0< i < m\}$
	coincides with the hypersaturation of the simplicial Segal core inclusions
	$\{Sc[m] \to \Delta[m]\colon m \geq 0\}$.
\end{remark}

\begin{remark}\label{HYPERSATKAN REM}
	We will use of a variant of the previous remark for the hypersaturation of \textit{all} simplicial horns.
	Namely, we claim that the hypersaturation of all simplicial horns 
	$\{\Lambda^i[m] \to \Delta[m] \colon 0 \leq i \leq m,0<m\}$
	matches the hypersaturation of all vertex inclusion maps
	$\{\Delta[0] \to \Delta[m]\}$.
	
	Call the latter hypersaturation $S$. 
	One easily checks that the maps $\{0\} \to Sc[m]$ are in $S$, so that by cancellation so are the maps 
	$Sc[m] \to \Delta[m]$ and hence by Remark \ref{SLICE REM} so are all inner horn inclusions. 
	Moreover, for left horns $\Lambda^0[m]$
	the maps $\{0\} \to \Lambda^0[m]$ are built cellularly
	from left horn inclusions
	$\Lambda^0[k] \to \Delta[k]$ with $k<m$
	(in join notation (see \cite[\S 1.2.8]{Lur09} or \cite[\S 7.4]{Per18}),
	$\{0\} \to \Lambda^0[m]$ is 
	$\Delta[0] \star (\emptyset \to \partial \Delta[m-1])$, and the filtration follows from the cellular filtration of $\partial \Delta[m-1]$).
	But hence by induction and the cancellation property all left horn inclusions 
	$\Lambda^0[m] \to \Delta[m]$
are in $S$. The case of right horn inclusions $\Lambda^m[m] \to \Delta[m]$ is dual.
\end{remark}

\begin{remark}\label{ANHYPER REM}
The smallest hypersaturated class containing the inner horn inclusions and the left horn inclusion
$\Lambda^0[2] \to \Delta[2]$ 
in fact contains all left horn inclusions
$\Lambda^0[m] \to \Delta[m]$ 
for $m \geq 2$.
Indeed, this follows inductively from the left diagram
below since the bottom map is inner while the top and left maps are given by the center and right pushout diagrams.
\begin{equation}
\begin{tikzcd}
	\Lambda^{0,1}[m] \ar{r} \ar{d} &
	\Lambda^{0}[m] \ar{d}
& &
	\Lambda^{0}[m-1] \ar{r} \ar{d} \arrow[dr, phantom, "\ulcorner", very near start]&
	\Lambda^{0,1}[m] \ar{d}
&
	\Lambda^{0}[m-1] \ar{r} \ar{d} \arrow[dr, phantom, "\ulcorner", very near start]&
	\Lambda^{0,1}[m] \ar{d}
\\
	\Lambda^{1}[m] \ar{r} & \Delta[m] 
& &
	\Delta[m-1] \ar{r}[swap]{d^1} &
	\Lambda^0[m] 
&
	\Delta[m-1] \ar{r}[swap]{d^0} &
	\Lambda^1[m] 
\end{tikzcd}
\end{equation}
The case of right horn inclusions is dual.
\end{remark}

\begin{remark}\label{CONTGR REM}
Write 
$\widetilde{[m]} = 
(0 \rightleftarrows 1 
\rightleftarrows \cdots 
\rightleftarrows m)$
for the contractible groupoid on objects $0,1,\cdots,m$. Note that the $k$-simplices of $\widetilde{[m]}$
are encoded as strings $a_0 a_1 \cdots a_k$
with $a_{i} \in \{0,1,\cdots,m\}$, and that a simplex is non-degenerate iff $a_{i-1}\not = a_{i}, 1 \leq i \leq k$.
We claim that the maps
\begin{equation}\label{INVER EQ}
	\Delta[m] = N [m] \xrightarrow{012\cdots m} N \widetilde{[m]},\quad m \geq 1
\end{equation}
are built cellularly out of left horn inclusions $\Lambda^{0}[k] \to \Delta[k]$ with $k\geq 2$.

Indeed, we show a little more. We say a subcomplex 
$A \subseteq N \widetilde{[m]}$ is \textit{$0$-stable}
if a $k$-simplex $\underline{a}$ is in $A$ iff the $(k+1)$-simplex $0\underline{a}$ is.
We claim that any inclusion $A \to A'$ of $0$-stable subcomplexes is built cellularly from left horn inclusions $\Lambda^{0}[k] \to \Delta[k]$ with $k\geq 1$.
Indeed, it suffices to check this when $A'$ attaches as little as 
possible to $A$, and $0$-stability guarantees that in that case the only two non-degenerate simplices in $A \setminus A'$
have the form 
$\underline{a}$ and $0\underline{a}$
(note that $\underline{a}$ can not start with a $0$).
But then $A\to A'$ is a pushout of 
$\Lambda^{0}[k+1] \to \Delta[k+1]$ where $k$ is the dimension of $\underline{a}$.

The desired claim follows by noting that both the domain and codomain of \eqref{INVER EQ} are $0$-stable and that the  inclusions 
$\Lambda^0[1] \to \Delta[1]$ are unneeded since \eqref{INVER EQ} is an isomorphism on $0$-simplices.
\end{remark}

\subsection{Genuine equivariant operads}\label{GENEQOP SEC}

Recall that categories can be identified with their nerves, since the \textit{nerve functor}
$N \colon \mathsf{Cat} \to \mathsf{sSet}$, 
given by $N\mathcal C (n) = \mathsf{Cat}([n], \mathcal C)$, 
is fully faithful.
Moreover, the essential image of the nerve is characterized as those simplicial sets
with the strict right lifting property 
against the inner horn inclusions \cite[Prop. 1.1.2.2]{Lur09}
(here \textit{strict} means that the usual lifts are unique).

More generally, 
one has a similar operadic story. Any tree $U \in \Omega$ has a naturally associated colored operad 
$\Omega(U) \in \mathsf{Op}$ \cite[\S 3]{MW07}, and
\cite[Prop. 5.3 and Thm. 6.1]{MW09} show that the operadic nerve
$N \colon \mathsf{Op} \to \mathsf{dSet}$, 
given by $N\mathcal{O} (U) = \mathsf{Op}(\Omega(U), \mathcal {O})$,
is again fully faithful
with essential image the dendroidal sets with the strict right lifting property against dendroidal inner horn inclusions.
Moreover, \cite[Cor. 2.6]{CM13a} provides an alternative characterization via strict lifts against Segal core inclusions. The equivalence between these two characterizations is an observation concerning the notion of hypersaturation discussed in the previous section, as follows.


In the next result, note that we need not assume that the maps in
\eqref{CANCEL_EQ} are cofibrations. More precisely,
we slightly modify the notion of ``hypersaturation'' by using the cancellation property 
``if $f$ and $gf$ are in the class, then so is $g$'' without further  requirements on $f,g$. Alternatively, this means that normal monomorphisms/cofibrations are replaced with the class of all maps.

\begin{proposition}\label{HYPERLP PROP}
      If two classes $\mathcal{C},\mathcal{D}$
      of maps in a category
      have the same hypersaturation, then
      the two classes of maps with the strict right lifting property against $\mathcal C$ and $\mathcal D$ coincide.
\end{proposition}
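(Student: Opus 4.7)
The plan is to show that for a fixed map $p$, the class $\mathcal{S}_p$ of all maps against which $p$ has the strict right lifting property is itself hypersaturated (in the modified sense of the remark preceding the proposition). Once this is established, the proposition is automatic: if $p$ has strict RLP against $\mathcal{C}$ then $\mathcal{C} \subseteq \mathcal{S}_p$, whence the hypersaturation of $\mathcal{C}$ is contained in $\mathcal{S}_p$, which by hypothesis equals the hypersaturation of $\mathcal{D}$, and thus $\mathcal{D} \subseteq \mathcal{S}_p$. So the proof reduces to checking the four closure conditions for $\mathcal{S}_p$.

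Closure under pushouts, transfinite composition and retracts is the standard argument for right lifting classes, with the only added observation that the uniqueness of lifts is also preserved by these operations (for pushouts this is by the universal property; for transfinite composition by lifting against each map in the sequence; for retracts by composing with the retract structure). The one genuinely new verification is the cancellation property: given $A \xrightarrow{f} B \xrightarrow{g} C$ with $f, gf \in \mathcal{S}_p$, we must show $g \in \mathcal{S}_p$.

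For existence of a lift in a square $(h \colon B \to X, k \colon C \to Y)$ against $g$, apply the strict RLP against $gf$ to the precomposed square to obtain $\ell \colon C \to X$ with $p\ell = k$ and $\ell g f = h f$. To verify the remaining condition $\ell g = h$, observe that both $h$ and $\ell g$ are lifts in the square $(hf, kg)$ against $f$: the map $h$ satisfies $ph = kg$ and $hf = hf$, while $\ell g$ satisfies $p\ell g = kg$ and $\ell g f = hf$. By the uniqueness half of the strict RLP of $p$ against $f$, we conclude $\ell g = h$. Uniqueness of $\ell$ then follows from its uniqueness as a lift against $gf$. The main (and essentially only) obstacle is this cancellation step, where one must carefully exploit \emph{both} existence and uniqueness halves of the strict lifting condition for $f$ and $gf$.
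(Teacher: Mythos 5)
Your proof is correct and follows the same strategy as the paper: reduce to showing that the class of maps against which a fixed map has the strict right lifting property is hypersaturated, dispose of the saturation closures by noting the standard arguments preserve uniqueness, and handle cancellation by producing a lift against $gf$ and then invoking the \emph{uniqueness} half of the strict RLP against $f$ to verify it also solves the lifting problem against $g$. The paper's notation differs slightly (it writes $r$ for your $p$, and $p,q,H$ for your $h,k,\ell$), but the argument is the same.
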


\begin{proof}
	It suffices to check that the hypersaturation closure conditions are compatible with strict right lifting properties.
The claims concerning pushouts, transfinite compositions and retracts follow from the easy observation that the proofs of the analogue claims for the usual right lifting property \cite[Lemma 11.1.4]{Ri14} are compatible with the uniqueness requirement.


We thus address only the cancellation property \eqref{CANCEL_EQ}. Suppose that $r$ has the strict right lifting property against $f$ and $gf$, and consider a lifting problem as on the left below.
\begin{equation}
\begin{tikzcd}[column sep = 8em]
	B \ar{r}{p} \ar{dd}[swap]{g}& 
	X \ar{dd}{r}
&
	A \ar{r} \ar{d}[swap]{f} &
	X \ar{d}{r}
\\
&&
	|[alias=B]|
	B \ar{d}[swap]{g}
	\ar{ru}{p} &
	|[alias=Y]|
	Y \ar[equal]{d}
\\
	C \ar{r}[swap]{q} \ar[dashed]{ruu} & Y
&
	C \ar{r}[swap]{q}
	\ar[dashed]{ruu}[swap, near start]{\exists! H}&
	Y
\arrow[from=B, to=Y,crossing over]
\end{tikzcd}
\end{equation}
By assumption, there is a unique lift $H$ for the outer square on the right, and we claim that $H$ is also the unique lift for the left square.
Noting that
$pf = Hgf$
and 
$rp = qg =  r H g$
it follows that both $p$ and $Hg$ are lifts for the top square in the right diagram, so that by the uniqueness assumption it is
$p = Hg$. This shows that $H$ is also in fact a lift for the left square.
Uniqueness follows since any lift of the left square induces a lift of the outer right square.
\end{proof}
Roughly speaking, our goal in this section is that of describing those presheaves with the 
strict right lifting property against any of the classes of maps in Proposition \ref{HYPER PROP},
which we call genuine equivariant operads.
However, some care is needed.
Namely, it is essential to work with the category
$\mathsf{dSet}_G = \mathsf{Set}^{\Omega_G^{op}}$
of \textit{genuine $G$-dendroidal sets}
rather than with the category
$\mathsf{dSet}^G = \mathsf{Set}^{\Omega^{op} \times G}$
of $G$-dendroidal sets, i.e. it is essential to work with presheaves that are evaluated on 
$G$-trees $T \in \Omega_G$
rather than non-equivariant trees $U \in \Omega$
(the motivation for this is given in 
Remark \ref{DESTIAL REM} below).
To relate these presheaf categories, note that the fully faithful inclusion
$\upsilon \colon \Omega \times G^{op} \to \Omega_G$
given by $U \mapsto G \cdot U$ induces an adjunction
(note that $\upsilon_{\**}$ is itself fully faithful, being a right Kan extension along a fully faithful functor \cite[Cor. 1.4.5]{Ri14})
\begin{equation}\label{DSETG_EQ}
	\upsilon^{\**}: \dSet_G \rightleftarrows \dSet^G : \upsilon_{\**}
\end{equation}
Explicitly, one has 
$\upsilon_{\**}X(T) \simeq X(T_{\**})^H \simeq
\mathsf{dSet}^G(\Omega[T],X)$,
where $T \simeq G \cdot_H T_{\**}$ for 
$T_{\**} \in \Omega^H$,
and the $H$-action on 
$X(T_{\**})$ is defined diagonally, i.e. by the composites
$X(T_{\**}) \xrightarrow{X(h^{-1})}
X(T_{\**}) \xrightarrow{h} X(T_{\**})$.

\begin{remark}\label{UPSPUSHMON REM}
$\upsilon_{\**}$ does not preserve arbitrary colimits, due to the presence of fixed points in its formula. Nonetheless, $\upsilon_{\**}$ does preserve pushouts where one leg is a monomorphism.
\end{remark}

\begin{remark}\label{TWOYON REM}
Mimicking the notation in \S \ref{EQDENDSETS SEC},
we write $\Omega_G[-] \colon \Omega_G \to \mathsf{dSet}_G$
for the Yoneda embedding.
On the other hand, in 
\S \ref{EQDENDSETS SEC} we extended the notation $\Omega[-]$
to obtain a functor $\Omega[-] \colon \Omega_G \to \mathsf{dSet}^G$.
These two ``representable functors'' are related by 
$\Omega_G[T] \simeq \upsilon_{\**} \Omega[T]$.
\end{remark}

The following definition is then the main purpose of this section.

\begin{definition}\label{GEN_OP_DEF}
	$Z \in \dSet_G$ is called a \textit{genuine equivariant operad} if
	$Z$ has the strict right lifting property against the images under $\upsilon_{\**}$
	of the Segal core inclusions, i.e. against the maps
	\begin{equation}\label{GGIOP_EQ}
		\upsilon_{\**}\left(Sc[T] \to \Omega[T] \right),
		\qquad
		T \in \Omega_G.
	\end{equation}
Equivalently, by Propositions \ref{HYPER PROP} and \ref{HYPERLP PROP}, one may replace Segal core inclusions with either orbital $G$-inner horn inclusions or $G$-inner horn inclusions.
\end{definition}

\begin{example}\label{STRICTLIFT EX}
	To illustrate the role of the strict lifting condition against the maps in \eqref{GGIOP_EQ},
	consider the $G$-tree $T$ in \eqref{TWOREP EQ},
	along with the subgroup $K = \langle -1 \rangle$ therein
	and the orbital faces $R_1,R_2,S$ in Example \ref{ORBFACE EX}. 
	The strict lifting condition then says that the left map in	
\begin{equation}\label{NORM_COMP_GEN_EQ}
\begin{tikzcd}
	Z(R_1) \times_{Z(G/K \cdot \eta)} Z(R_2)  &
	Z(T) \ar{l}[swap]{\simeq} \ar{r} &
	Z(S)&
\end{tikzcd}
\end{equation}
is an isomorphism, so that $T$ induces a 
\textit{composition map}
$
	Z(R_1) \times_{Z(G/K \cdot \eta)} Z(R_2)  \to
	Z(S)
$.
Here we note that $R_1,R_2,S$ are \textit{$G$-corollas}
(i.e. $G$-trees with a single $G$-vertex).
Informally, one then thinks of the $Z(C)$, where
$C$ ranges over the $G$-corollas,
as the mapping sets of the genuine equivariant operad $Z$,
so that the strict lifting conditions equip these mapping sets with associative and unital composition maps.

We caution, however, that this is not quite the whole story,
since the composition maps need also be compatible with the presheaf structure, which is more complex in the equivariant context. 
More explicitly, non-equivariantly one needs only compatibility with the symmetric group actions,
reflecting the fact that all (non-degenerate) maps between corollas are symmetry isomorphisms. 
But in the equivariant context $G$-corollas are also related via \textit{quotient maps}
(such as the map in \eqref{QUOTMAP EQ}),
which induce subtler compatibility conditions.
Nonetheless, our intended application in \S \ref{EDSS_SEC}
will not require an explicit discussion of these additional compatibilities.
\end{example}

\begin{remark}\label{NORMMAP REM}
Consider a single colored $G$-operad $\mathcal{O}$ on the category of sets
(i.e. an operad with a $G$-action commuting with all structure)
and a finite $H$-set $A$ for some subgroup $H \leq G$.
Write $\Gamma_A \leq G \times \Sigma_{|A|}$
for the graph of the homomorphism 
$H \to \Sigma_{|A|}$ encoding $A$.
We then abbreviate 
$\mathcal{O}(A)^H = \mathcal{O}(|A|)^{\Gamma_A}$,
and call this the 
\textit{set of $A$-norm maps of $\mathcal{O}$}.
This name comes from the fact that, for each $\mathcal{O}$-algebra $R$,
$\mathcal{O}(A)^H$ indexes operations
$N^A R \to R$, where the
\textit{$A$-norm object} $N^A R$ denotes
$R^{\times |A|}$ together with the twisted $H$-action
given by the graph subgroup $\Gamma_A \leq G \times \Sigma_{|A|}$.

Letting $T,R_1,R_2,S \in \Omega_G$ and $H,K,L\leq G$ again be as in
\eqref{TWOREP EQ} and Example \ref{ORBFACE EX},
the diagram of hom sets 
(recall that $\Omega(T)$ denotes the colored operad generated by $T$
\cite[\S 3]{MW07},\cite[Rem. 4.4, Ex. 4.6]{Per18})
\begin{equation}\label{NORM_COMP_OP_EQ}
\begin{tikzcd}
	\mathsf{Op}^G(\Omega(R_1), \mathcal{O})\times 
	\mathsf{Op}^G(\Omega(R_2), \mathcal{O})  &
	\mathsf{Op}^G(\Omega(T), \mathcal{O}) \ar{r} \ar{l}[swap]{\simeq}&
	\mathsf{Op}^G(\Omega(S), \mathcal{O})&
\end{tikzcd}
\end{equation}
can be interpreted, after unpacking notation (see \cite[\S 4.3]{Per18}),
as a composition of norm maps
\begin{equation}\label{NORM_COMP_EQ}
\O(H/K)^H \times \O(K/L \amalg K/K)^K \to \O(H/L \amalg H/K)^H
\end{equation}
The diagrams \eqref{NORM_COMP_GEN_EQ} for genuine operads $Z$ can then be regarded as abstracting the diagrams \eqref{NORM_COMP_OP_EQ} for $G$-operads $\O$,
though with two key differences. 
The more obvious difference is the fact that 
\eqref{NORM_COMP_OP_EQ} features no analogue of
the $Z(G/K \cdot \eta)$ term, 
though this is simply since we chose $\O$ to be single colored.
The subtler, and more crucial, difference is the fact that the terms in \eqref{NORM_COMP_GEN_EQ}
need not be described by fixed point sets as in 
\eqref{NORM_COMP_EQ}.

Therefore, one can regard genuine equivariant operads as objects that mimic the composition combinatorics of the  norm maps in a (regular) equivariant operad, while relaxing the fixed point conditions.
In fact, the reader of \cite{BP17} may recognize this as the informal description of genuine equivariant operads given in the introduction to that work,
though our current formal setting is rather different.
The connection between the two settings is as follows.
There is a nerve functor
\begin{equation}\label{NG EQ}
N_G \colon \Op_G \to \dSet_G, \qquad N_G\P(T) = \Op_G(\Omega_G(T), \P)
\end{equation}
where $\Op_G$ denotes a colored generalization of the genuine equivariant operads of \cite{BP17}
and $\Omega_G(T)$ denotes $\Omega(T)$ upon the standard inclusion
$\upsilon_{\**} \colon \Op^G \to \Op_G$.
Moreover, $N_G$ is fully faithful and its essential image are the genuine equivariant operads in the sense of Definition \ref{GEN_OP_DEF}.
However, we do not presently require these facts, and thus delay their proof to a sequel.
\end{remark}

We end this section by explaining why it is that genuine $G$-dendroidal sets $\mathsf{dSet}_G$, rather than 
$G$-dendroidal sets $\mathsf{dSet}^G$,
must be used in Definition \ref{GEN_OP_DEF}.

\begin{remark}\label{DESTIAL REM}
Suppose $X \in \mathsf{dSet}^G$ has the strict right lifting property against all Segal core inclusions
$Sc[T] \to \Omega[T]$, $T \in \Omega_G$.
By specifying to the cases of $T \simeq G \cdot T_{\**}$
the free $G$-trees,
\eqref{T_DECOMP_EQ} implies that,
after forgetting the $G$-action so as to regard $X$ as an object in  $\mathsf{dSet}$,
$X$ has the strict lifting property
against the inclusions 
$Sc[T_{\**}] \to \Omega[T_{\**}]$, $T_{\**} \in \Omega$.
But the strict lifting properties with respect to all other $G$-trees $T \in \Omega_G$ are now automatic.
Indeed, writing $T \simeq G \cdot_H T_{\**}$
for some $T_{\**} \in \Omega^H$ one has that by \eqref{T_DECOMP_EQ}
$G$-equivariant lifts against 
$G \cdot_H \left(Sc[T_{\**}] \to \Omega[T_{\**}]\right)$
are the same as
$H$-equivariant lifts against 
$Sc[T_{\**}] \to \Omega[T_{\**}]$.
Consider now the following diagram,
where $\phi$ is a $H$-equivariant map and 
$\Phi$ is the unique non-equivariant lift.
\begin{equation}
\begin{tikzcd}[row sep = 3em]
	Sc[T_*] \arrow[d] \arrow[r, "h"] &
	Sc[T_*] \arrow{r}{\phi} \arrow[d] &
	X \ar{r}{h^{-1}}&
	X
\\
	\Omega[T_*] \arrow[r, "h"'] &
	\Omega[T_*]
	\arrow[dashed]{ru}[swap]{\Phi}
\end{tikzcd}
\end{equation}
Then $h^{-1} \Phi h$ is a lift for the composite lifting problem, but since $h^{-1} \phi h = \phi$, that composite lifting problem in fact coincides with the middle lifting problem, so that strictness implies it is also
$h^{-1} \Phi h = \Phi$.
In other words, $\Phi$ is in fact also the unique 
$H$-equivariant lift.

In summary, we have shown that if we had instead used $\mathsf{dSet}^G$ in Definition \ref{GEN_OP_DEF},
then non-free $G$-trees would be superfluous, 
so that by \cite[Thm. 6.1]{MW09}
the $X \in \mathsf{dSet}^G$ with such a lifting property
would be simply the nerves of $G$-operads.
To see why this is an unsatisfactory situation we recall a fundamental basic example.
The category $\mathsf{Top}^G$ of $G$-spaces admits two main equivariant notions of weak equivalence:
the fine/genuine equivalences, which care about all fixed point spaces, and the coarse/na\"ive equivalences, 
which care only about the total spaces.
However, this distinction vanishes when working in the discrete setting of $G$-sets $\mathsf{Set}^G$,
unless one instead works with 
$G$-coefficient systems $\mathsf{Set}^{\mathsf{O}_G^{op}}$ (recall that $\mathsf{O}_G$ is the \textit{orbit category} formed by the $G$-sets $G/H$ for $H \leq G$ and $G$-equivariant maps between them).
Similarly, the category $\mathsf{sOp}^G$ 
of $G$-simplicial operads
admits two natural notions of weak equivalence, one which cares about the spaces of norm maps for
all $H$-sets $A$ and one which cares only about the spaces of norm maps for trivial $H$-sets
(which are simply the usual multiplication).
However, this distinction vanishes when working in the discrete setting of $G$-dendroidal sets $\mathsf{dSet}^G$,
unless one works instead with genuine 
$G$-dendroidal sets $\mathsf{dSet}_G$.
\end{remark}


\section{Quillen equivalences}\label{QUIEQ SEC}

Our main goal in this section is to prove Theorems \ref{INC0AGJ THM} and \ref{ANOQUEQUIV THM},
which jointly establish the Quillen equivalence of three model categories:
the category of equivariant dendroidal sets $\mathsf{dSet}^G$
with the ``$G$-$\infty$-operad'' model structure of \cite[Thm. 2.1]{Per18};
the category of equivariant dendroidal spaces
$\mathsf{sdSet}^G$ with the
``complete equivariant dendroidal Segal space'' model structure in \S \ref{CEDSS SEC} and;
the category of equivariant preoperads $\mathsf{PreOp}^G$
with the ``equivariant Segal operad'' model structure in \S \ref{PREOP SEC}.

Our perspective will be that these Quillen equivalences are best understood in light of the observation from \cite[Thm. 6.6]{CM13a},
which says that the complete dendroidal Segal space model structure
on $\mathsf{sdSet} = \mathsf{dSet}^{\Delta^{op}}$ can be obtained via two distinct left Bousfield localization procedures.
%
%
As such, we will find it helpful to first discuss an abstract framework for 
such ``joint left Bousfield localizations'',
and then \textit{define} the model structure on 
$\mathsf{sdSet}^G$ within that framework
(Definition \ref{JOINREED DEF}),
with the analogues of the original definitions in
\cite{CM13a} recovered a posteriori (Remark \ref{RECOVDEF REM}).

\subsection{Joint left Bousfield localizations}\label{JOINBOUS SEC}

Throughout we assume familiarity with the theory of left Bousfield localizations as in \cite{Hir03}.

\begin{proposition}\label{COMBMODSTR PROP}
	Suppose that a category $\mathcal{C}$
	admits two model structures $\mathcal{C}_1=(C,W_1,F_1)$ and $\mathcal{C}_2 = (C,W_2,F_2)$
	with a common class of cofibrations $C$,
	and assume further that both model structures are cofibrantly generated and admit left Bousfield localizations with respect to any set of maps.

	Then $\mathcal{C}_1$, $\mathcal{C}_2$ have a smallest joint left Bousfield localization 
	$\mathcal{C}_{1,2}=(C,W,F)$ and:
\begin{itemize}
	\item[(i)] $c \in \mathcal{C}$ is $\mathcal{C}_{1,2}$-fibrant iff it is simultaneously 
	$\mathcal{C}_{1}$-fibrant and $\mathcal{C}_{2}$-fibrant;
	\item[(ii)] for $\mathcal{C}_{1,2}$-fibrant  
	$c,d \in \mathcal{C}$ one has that $c\to d$ is in $W$ iff it is in $W_1$ iff it is in $W_2$.
\end{itemize}
\end{proposition}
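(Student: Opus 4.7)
The plan is to exhibit $\mathcal{C}_{1,2}$ concretely as the left Bousfield localization $L_{J_2}\mathcal{C}_1$, where $J_2$ is a set of generating trivial cofibrations of $\mathcal{C}_2$; its existence is guaranteed by the standing hypothesis. Since left Bousfield localization preserves cofibrations, the common class $C$ is unchanged. A quick saturation argument shows that the resulting class $W$ of weak equivalences contains both $W_1$ and $W_2$: $W \supseteq W_1$ by construction, and any $\mathcal{C}_2$-trivial cofibration is built from $J_2$ under pushouts, transfinite composition and retracts (hence lies in $W$), while any $\mathcal{C}_2$-trivial fibration has RLP against $C$ and is thus a $\mathcal{C}_{1,2}$-trivial fibration, so by factorization and two-out-of-three all of $W_2 \subseteq W$.

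For (i), a $\mathcal{C}_{1,2}$-fibrant object is by definition a $\mathcal{C}_1$-fibrant object which is $J_2$-local. The direction ``$\mathcal{C}_2$-fibrant $\Rightarrow$ $J_2$-local'' is immediate: a $\mathcal{C}_2$-fibrant $c$ has strict RLP against the $\mathcal{C}_2$-trivial cofibrations $J_2$, so the induced maps of homotopy mapping spaces are trivial Kan fibrations. The converse is the key step: I would invoke the standard result (Hirschhorn, Prop.~4.2.4 and Thm.~17.7.7) that for a $\mathcal{C}_1$-fibrant object $c$, $J_2$-locality is equivalent to strict RLP against $J_2$. Combined with a small object argument using the generating set $J_2$, this upgrades to RLP against all of $W_2 \cap C$, i.e.~to $\mathcal{C}_2$-fibrancy.

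For (ii), the classical Bousfield theorem (Hirschhorn, Thm.~3.2.13) asserts that a map between $L_{J_2}\mathcal{C}_1$-fibrant objects lies in $W$ iff it lies in $W_1$, giving one of the two desired characterizations. To also recover the characterization via $W_2$, I would repeat the construction symmetrically by forming $L_{J_1}\mathcal{C}_2$: the same arguments show it has cofibrations $C$, contains $W_1 \cup W_2$ among its weak equivalences, and has the same fibrant objects as $\mathcal{C}_{1,2}$. Since a model structure is determined by its cofibrations together with the class of weak equivalences between its fibrant objects, this forces $L_{J_1}\mathcal{C}_2 = L_{J_2}\mathcal{C}_1$, and applying the classical theorem to this second presentation yields agreement with $W_2$.

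Finally, minimality of $\mathcal{C}_{1,2}$ among joint localizations is the universal property of $L_{J_2}\mathcal{C}_1$: any joint localization $\mathcal{D}$ has cofibrations $C$ and trivial cofibrations containing $J_2$, so the identity factors as a left Quillen functor $L_{J_2}\mathcal{C}_1 \to \mathcal{D}$, whence $W \subseteq W_{\mathcal{D}}$. The main obstacle in the whole argument is the ``$J_2$-local + $\mathcal{C}_1$-fibrant $\Rightarrow$ strict RLP against $J_2$'' step, which relies on having enough homotopy-mapping-space machinery; this is built into whichever framework (left proper combinatorial, or left proper cellular) underlies the standing hypothesis that all set-indexed left Bousfield localizations of $\mathcal{C}_1$ and $\mathcal{C}_2$ exist.
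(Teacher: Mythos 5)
Your overall strategy matches the paper's: exhibit the joint localization concretely as $L_{J_2}\mathcal{C}_1$, establish that $\mathcal{C}_1$ and $\mathcal{C}_2$ fibrancy together characterize joint fibrancy, and invoke the local Whitehead theorem for (ii). But two steps go through more roundabout routes than necessary, and one of them has a gap as written.

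For (i), the direction you flag as the ``key step'' ($\mathcal{C}_1$-fibrant $+$ $J_2$-local $\Rightarrow$ $\mathcal{C}_2$-fibrant) is being done the hard way. You invoke the equivalence between $J_2$-locality and strict RLP against $J_2$, which requires the homotopy-mapping-space machinery to behave well and is the kind of thing one would want to check carefully against Hirschhorn's hypotheses. The direct argument is much simpler and purely combinatorial: a $\mathcal{C}_{1,2}$-fibrant object has RLP against the class $C \cap W$ of joint trivial cofibrations; your own saturation argument shows $W \supseteq W_2$, hence $C\cap W \supseteq C \cap W_2$, so RLP against $C\cap W$ immediately gives RLP against $C\cap W_2$, i.e.\ $\mathcal{C}_2$-fibrancy. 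No locality-versus-lifting translation is needed. (The paper's phrasing for the easy direction, ``$\mathcal{C}_2$-fibrant $\Rightarrow$ $J_2$-local,'' is also slightly cleaner than yours: every model-structure-fibrant object is local with respect to that model structure's weak equivalences, and $J_2 \subseteq W_2$.)

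For (ii), to get the $W_2$ characterization you need to know $L_{J_1}\mathcal{C}_2 = L_{J_2}\mathcal{C}_1$, and your justification is circular as written. You invoke ``a model structure is determined by its cofibrations together with the weak equivalences between fibrant objects,'' but at that point you only know both localizations have cofibrations $C$, both have $W_1\cup W_2$ among their weak equivalences, and both have the same fibrant objects --- none of which pins down the class of weak equivalences between fibrant objects in both model structures to be equal, which is precisely what you would need to apply the criterion. The correct route is the one you sketch separately under ``minimality'': by the universal property of left Bousfield localization, since the identity $\mathcal{C}_1 \to L_{J_1}\mathcal{C}_2$ is left Quillen and $J_2$ lands in the weak equivalences of $L_{J_1}\mathcal{C}_2$, the identity $L_{J_2}\mathcal{C}_1 \to L_{J_1}\mathcal{C}_2$ is left Quillen; by symmetry the identity in the other direction is also left Quillen, and two mutually inverse left Quillen identity functors force the model structures to coincide. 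You should plug that universal-property argument into the proof of (ii) rather than the ``determined by cofibrations and weak equivalences between fibrant objects'' criterion.
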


\begin{proof}
The joint localized model structure $\mathcal{C}_{1,2}$ can be obtained by either left Bousfield localizing $\mathcal{C}_{1}$ with regard to the generating trivial cofibrations of $\mathcal{C}_{2}$ or vice-versa. Indeed, denoting the first localization in the previous sentence by $L_2 \mathcal{C}_1$ and the vice-versa localization by $L_1 \mathcal{C}_2$,
the identity functors 
$\mathcal{C}_1 \to L_1 \mathcal{C}_2$ and
$\mathcal{C}_2 \to L_2 \mathcal{C}_1$
are left Quillen and thus, by the universal property of left Bousfield localizations \cite[Prop. 3.3.18]{Hir03} (cofibrant approximations \cite[Def. 8.1.2]{Hir03} cause no issue since $\mathcal{C}_1,\mathcal{C}_2$ have the same cofibrations), so are the identity functors
$L_2 \mathcal{C}_1 \to L_1 \mathcal{C}_2$ and 
$L_1 \mathcal{C}_2 \to L_2 \mathcal{C}_1$. This implies that $L_2 \mathcal{C}_1$ and $L_1 \mathcal{C}_2$ indeed coincide.
	
For (i), the claim that joint fibrant objects are fibrant in both of the original model structures follows since $C \cap W$ contains both $C \cap W_1$ and $C \cap W_2$ (in fact, this shows that $F \subseteq F_1 \cap F_2$).
The converse claim follows from the observation that fibrant objects in any model structure are already local with respect to the weak equivalences in that same model structure.

Lastly, (ii) follows from the local Whitehead theorem \cite[Thm. 3.2.13]{Hir03}, stating that 
the local equivalences between local objects match the
initial weak equivalences.
\end{proof}

The prototypical example of Proposition \ref{COMBMODSTR PROP} is given by the category 
$\mathsf{ssSet} \simeq \mathsf{Set}^{\Delta^{op} \times \Delta^{op}}$
of bisimplicial sets together with the two possible Reedy structures over the Kan model structure on $\mathsf{sSet}$.
Explicitly, writing the levels of 
$X \in \mathsf{ssSet}$ as $X_n(m)$
one can either form a Reedy model structure with respect to the 
\textit{horizontal index $m$}
or with respect to the 
\textit{vertical index $n$}.

In either case, the generating cofibrations are then given by the maps
\[
	\left( \partial \Delta[n] \to \Delta[n] \right)
\square
	\left( \partial \Delta[m] \to \Delta[m] \right),
	\qquad n,m\geq 0,
\]
where $\square$ denotes the pushout product (see, for example, \cite[11.1.7]{Ri14}).

Further, in the horizontal Reedy model structure the generating trivial cofibrations are the maps
\begin{equation}\label{GTRCOHOR EQ}
	\left( \Lambda^i[n] \to \Delta[n] \right)
\square
	\left( \partial \Delta[m] \to \Delta[m] \right),
\qquad n \geq 1, n \geq i \geq 0, m\geq 0, 
\end{equation}
while for the vertical Reedy model structure the generating trivial cofibrations are the maps
\begin{equation}\label{GTRCOVER EQ}
	\left( \partial \Delta[n] \to \Delta[n] \right)
\square
	\left( \Lambda^j[m] \to \Delta[m] \right),
\qquad n\geq 0, m\geq 1,m\geq j \geq 0.
\end{equation}

We caution the reader about a possible hiccup with the terminology: 
the weak equivalences for the horizontal Reedy structure are the 
\textit{vertical equivalences},
i.e. maps inducing Kan equivalences of simplicial sets
$X_{\bullet}(m) \to Y_{\bullet}(m)$
for each $m \geq 0$, and dually for the vertical Reedy structure.

\begin{notation}\label{UNIQUELIM NOT}
	Given a fixed $X \in \mathsf{ssSet}$ we will also write
	$X_{(-)} \colon \mathsf{sSet}^{op} \to \mathsf{sSet}$
	for the unique limit preserving functor such that
	$X_{\Delta[n]} = X_n$.
	Explicitly, $X_K(m) = \mathsf{sSet}(K,X(m))$.
	
        
        Note that for maps $K\to L$ and $A\to B$ in $\mathsf{sSet}$, $X$ has the right lifting property against
        $(K \to L) \square (A\to B)$ iff
        $X_L \to X_K$ has the right lifting property against $A\to B$
        (see Remark \ref{TWOVARADJ REM}).
\end{notation}


In the next result we refer to the localized model structure given by Proposition \ref{COMBMODSTR PROP} as the 
\textit{joint Reedy model structure} and
we write $\delta^{\**} \colon \mathsf{ssSet} \to \mathsf{sSet}$
for the diagonal functor.

\begin{proposition}\label{SSSETJREE PROP}
	Suppose that $X, Y \in \mathsf{ssSet}$ are horizontal Reedy fibrant. Then:
\begin{itemize}
	\item[(i)] for each fixed $n$ all vertex maps $X_{n} \to X_{0}$ are trivial Kan fibrations in $\mathsf{sSet}$;
	\item[(ii)] any vertical Reedy fibrant replacement $\tilde{X}$ of $X$ is fibrant in the joint Reedy model structure;
	\item[(iii)] a map $X \to Y$ is a joint weak equivalence
	iff it is a horizontal weak equivalence 
	iff $X_0 \to Y_0$ is a Kan equivalence in $\mathsf{sSet}$;
	\item[(iv)] the canonical map $X_{0} \to \delta^{\**}(X)$
	(with levels $X_0(n) \to X_n(n)$ induced by degeneracies) is a Kan equivalence in $\mathsf{sSet}$. 
\end{itemize}
\end{proposition}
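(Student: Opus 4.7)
My plan is to establish (i) directly from the two-variable adjunction, prove (iv) as the key technical lemma using (i), and then derive (iii) and (ii) through interleaved arguments that combine (i), (iv), and Proposition \ref{COMBMODSTR PROP}(ii).

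For (i), horizontal Reedy fibrancy of $X$ unpacks via the two-variable adjunction of Notation \ref{UNIQUELIM NOT} into the statement that $X_{\Delta[n]} \to X_{\Lambda^i[n]}$ is a trivial Kan fibration in $\mathsf{sSet}$ for every horn inclusion $\Lambda^i[n] \to \Delta[n]$. Since the class of maps against which $X_{L} \to X_{K}$ has the right lifting property is closed under retracts, pushouts and transfinite compositions, this extends by saturation to the statement that $X_{L} \to X_{K}$ is a trivial Kan fibration whenever $K \to L$ is a trivial cofibration in $\mathsf{sSet}_{\text{Kan}}$. The vertex inclusion $\{0\} \to \Delta[n]$ is a monomorphism between contractible simplicial sets, hence a trivial cofibration, so $X_n = X_{\Delta[n]} \to X_{\{0\}} = X_0$ is a trivial Kan fibration.

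For (iv), the content of (i) says that every structure map of the simplicial object $X_\bullet \colon \Delta^{op} \to \mathsf{sSet}$ is a Kan equivalence to $X_0$, so $X_\bullet$ is essentially constant at $X_0$. By the standard realization/diagonal lemma for Reedy fibrant simplicial objects in $\mathsf{sSet}$, the diagonal $\delta^*(X)$ models the homotopy colimit of $X_\bullet$, which for an essentially constant diagram is computed by the value $X_0$ with the canonical inclusion as comparison map. Concretely, I would argue by induction on the horizontal-skeleton filtration $\mathrm{sk}_k X$: the base case $\mathrm{sk}_0 X = cX_0$ satisfies $\delta^*(c X_0) = X_0$, and each attaching cell $\mathrm{sk}_k X \hookrightarrow \mathrm{sk}_{k+1} X$ produces on diagonals a pushout whose new summand is (by (i) and horizontal Reedy fibrancy, applied to the relevant matching object) a contractible addition, yielding a Kan equivalence at each stage.

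For (iii), the direction $X_0 \to Y_0$ Kan equivalence $\Rightarrow$ joint weak equivalence is immediate from (i) and two-out-of-three: each $X_n \to Y_n$ becomes a Kan equivalence, so $X \to Y$ is a vertical Reedy weak equivalence, hence a joint weak equivalence. For the converse as well as the characterization via horizontal weak equivalences, apply (ii) to produce vertical Reedy fibrant replacements $\tilde X, \tilde Y$ which are joint fibrant; then Proposition \ref{COMBMODSTR PROP}(ii) collapses all three notions of weak equivalence on $\tilde X, \tilde Y$, and (i) applied to $\tilde X, \tilde Y$ expresses the resulting common condition as $\tilde X_0 \to \tilde Y_0$ being a Kan equivalence, which by (i) again pulls back to $X_0 \to Y_0$ being a Kan equivalence. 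For (ii), the claim that $\tilde X$ remains horizontal Reedy fibrant is verified by checking that the matching condition $\tilde X_{\Delta[n]} \to \tilde X_{\Lambda^i[n]}$ is a trivial Kan fibration; this uses (iv) (applied to $X$) together with the fact that vertical Reedy trivial cofibrations interact compatibly with the horizontal matching data, since the vertex maps controlled by (i) remain trivial fibrations after vertical fibrant replacement.

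The main obstacle is (iv): organizing the skeletal induction so that each attachment step on the diagonal is manifestly a Kan equivalence, which requires careful use of (i) and of horizontal Reedy fibrancy to ensure the relevant matching objects behave correctly. A secondary delicacy is in (ii), where one must verify that no cellular attachment along a vertical Reedy generating trivial cofibration can destroy the horizontal matching property established for $X$.
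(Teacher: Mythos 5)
Your proof of (i) is correct and matches the paper's argument (the paper just observes directly that the pushout products $(\Delta[0]\to\Delta[n])\square(\partial\Delta[m]\to\Delta[m])$ are horizontal Reedy trivial cofibrations, whereas you rederive this through a saturation argument). Your proof of (iii) is also essentially the paper's, modulo the dependency order.

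For (iv), your invocation of the realization lemma is morally the same as the paper's more economical argument: the paper simply notes that $\delta^{\**}$ is left Quillen for either Reedy structure, that all objects of $\mathsf{ssSet}$ are cofibrant, and that by (i) (together with $2$-out-of-$3$) the map $X_0 \to X$ from the simplicially constant object is a weak equivalence in the vertical Reedy structure; hence $\delta^{\**}$ carries it to a Kan equivalence, and $\delta^{\**}(X_0) = X_0$. Your ``concrete'' skeletal-induction version is not a proof as stated: you write ``horizontal-skeleton filtration'' but compute $\mathrm{sk}_0 X = cX_0$, which only holds for the vertical skeleton, and the claim that each attaching step ``produces on diagonals a pushout whose new summand is a contractible addition'' is exactly the content that a real argument must supply. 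It is cleaner to stop at the left Quillen observation, which subsumes the realization lemma.

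The genuine gap is in (ii). You correctly observe that vertical Reedy fibrancy of $\tilde X$ gives that $\tilde X_L \to \tilde X_K$ is a Kan fibration for every monomorphism $K\to L$, and that the vertex maps $\tilde X_n \to \tilde X_0$ remain trivial fibrations after vertical fibrant replacement (this follows from (i) plus $2$-out-of-$3$, since $X\to\tilde X$ is a levelwise equivalence in $n$). But from there you must upgrade from vertex inclusions $\{0\}\to\Delta[n]$ to arbitrary horn inclusions $\Lambda^i[n]\to\Delta[n]$ in order to verify the RLP against \eqref{GTRCOHOR EQ}. That upgrade is not automatic: it is exactly what Remark \ref{HYPERSATKAN REM} supplies, namely that the hypersaturation of the vertex inclusions $\{\Delta[0]\to\Delta[m]\}$ coincides with that of all simplicial horns, combined with the observation (as in Remark \ref{HYPERMODEL REM}) that the class of monomorphisms $K\to L$ for which $\tilde X_L \to \tilde X_K$ is a \emph{trivial} Kan fibration is hypersaturated. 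Your phrase ``vertical Reedy trivial cofibrations interact compatibly with the horizontal matching data'' papers over this step, and your citation of (iv) here is spurious — (iv) concerns the diagonal $\delta^{\**}(X)$ and plays no role in establishing horizontal Reedy fibrancy of $\tilde X$. Without the hypersaturation argument (or an equivalent replacement), (ii) is not proved.
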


\begin{proof}
(i) follows since the trivial cofibrations for the horizontal Reedy structure include all the maps of the form
$(\Delta[0] \to \Delta[n]) \square (\partial \Delta[m] \to \Delta[m])$.

For (ii), the fact that $\tilde{X}$ is vertical fibrant
implies that for any monomorphism $K \to L$ in $\mathsf{sSet}$
the induced map $\tilde{X}_L \to \tilde{X}_K$ is a Kan fibration. 
Since $X \to \tilde{X}$ is a horizontal equivalence,
(i) implies that all vertex maps
$\tilde{X}_{n} \to \tilde{X}_{0}$
are trivial Kan fibrations, so that by
Remark \ref{HYPERSATKAN REM} one has that 
$\tilde{X}_L \to \tilde{X}_K$ is a trivial Kan fibration whenever
$K \to L$ is anodyne
(since the $K \to L$ with this property are hypersaturated; see Remark \ref{HYPERMODEL REM} for a similar argument).
Therefore, $\tilde{X}$ also has the right lifting property against \eqref{GTRCOHOR EQ}. That is, $\tilde{X}$ is also horizontal fibrant, as desired.


The first ``iff'' in (iii) follows from (ii) since the localizing maps 
$X \to \tilde{X}$, $Y \to \tilde{Y}$
are horizontal equivalences
while the second ``iff'' in (iii) follows from (i).

For (iv), note first that 
$\delta^{\**} \colon \mathsf{ssSet} \to \mathsf{sSet}$
is left Quillen for either the horizontal or vertical Reedy structures (and thus also for the joint Reedy structure).
But noting that all objects in $\mathsf{ssSet}$ are cofibrant, and regarding 
$X_{0}$ as a bisimplicial set that is vertically constant, 
the claim follows by noting that by (i) the map
$X_{0} \to X$ is a horizontal weak equivalence in $\mathsf{ssSet}$.
\end{proof}

\begin{corollary}\label{WEAKDIAG COR}
	A map $f\colon X \to Y$ in $\mathsf{ssSet}$ is a joint equivalence iff it induces a Kan equivalence on diagonals
	$\delta^{\**}(X) \to \delta^{\**}(Y)$ in $\mathsf{sSet}$.
\end{corollary}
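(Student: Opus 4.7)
The plan is to deduce the corollary from the four parts of Proposition \ref{SSSETJREE PROP} together with standard Ken Brown type arguments, reducing everything to the horizontal Reedy fibrant case where part (iv) applies.

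First I would handle the forward implication by showing that $\delta^{\**} \colon \mathsf{ssSet} \to \mathsf{sSet}$ is left Quillen from the joint Reedy structure to the Kan model structure on $\mathsf{sSet}$. Since $\delta^{\**}$ preserves monomorphisms it preserves joint cofibrations, and to see it preserves joint trivial cofibrations it suffices to check that it sends the generating trivial cofibrations of each of the two Reedy structures to Kan trivial cofibrations. Applying $\delta^{\**}$ to the generators in \eqref{GTRCOHOR EQ} and \eqref{GTRCOVER EQ} yields Leibniz products of the form $(\Lambda^i[n] \to \Delta[n]) \square (\partial \Delta[m] \to \Delta[m])$ (and symmetrically) inside $\mathsf{sSet}$, which are classically Kan trivial cofibrations. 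Since every bisimplicial set is joint cofibrant, Ken Brown's lemma then implies $\delta^{\**}$ sends joint equivalences to Kan equivalences.

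For the converse, suppose $\delta^{\**}(f)$ is a Kan equivalence. I would choose horizontal Reedy fibrant replacements $X \to X'$ and $Y \to Y'$ and a lift $f' \colon X' \to Y'$. Since horizontal Reedy equivalences are joint equivalences (the joint structure being a Bousfield localization of the horizontal one), the vertical legs of the resulting square are joint equivalences, so by 2-out-of-3 it suffices to prove $f'$ is a joint equivalence. Moreover, by the forward direction already established, the maps $\delta^{\**}(X) \to \delta^{\**}(X')$ and $\delta^{\**}(Y) \to \delta^{\**}(Y')$ are Kan equivalences, so 2-out-of-3 again gives that $\delta^{\**}(f')$ is a Kan equivalence.

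Finally, the punchline is to reduce from diagonals to $0$-rows. Since $X'$ and $Y'$ are horizontal Reedy fibrant, Proposition \ref{SSSETJREE PROP}(iv) applied to both of them shows that the canonical maps $X'_0 \to \delta^{\**}(X')$ and $Y'_0 \to \delta^{\**}(Y')$ are Kan equivalences. Combined with the fact that $\delta^{\**}(f')$ is a Kan equivalence, 2-out-of-3 yields that $X'_0 \to Y'_0$ is a Kan equivalence, and then Proposition \ref{SSSETJREE PROP}(iii) concludes that $f'$, and hence $f$, is a joint equivalence. The only subtle step is verifying that $\delta^{\**}$ is left Quillen for the joint structure, but this is essentially just the two-variable adjointness of Leibniz products combined with the standard fact about anodyne times monomorphism in $\mathsf{sSet}$, so no significant obstacle arises.
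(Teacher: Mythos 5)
Your proof is correct and follows essentially the same route as the paper's: reduce to the horizontal Reedy fibrant case using that $\delta^{\**}$ preserves weak equivalences (every object being cofibrant), and then combine parts (iii) and (iv) of Proposition~\ref{SSSETJREE PROP}. The paper does not argue the two implications separately: once you have reduced to fibrant $X',Y'$, the same chain of equivalences ``$f'$ is a joint equivalence $\iff$ $X'_0 \to Y'_0$ is a Kan equivalence $\iff$ $\delta^{\**}(X') \to \delta^{\**}(Y')$ is a Kan equivalence'' (from (iii) and (iv) respectively) delivers both directions at once, so the separate Ken Brown argument for the forward implication, while harmless, is redundant.

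There is also a small gap in that forward argument as written: the claim that to see $\delta^{\**}$ preserves joint trivial cofibrations ``it suffices to check that it sends the generating trivial cofibrations of each of the two Reedy structures to Kan trivial cofibrations'' is not literally correct. The joint trivial cofibrations are \emph{not} the saturation of the union of \eqref{GTRCOHOR EQ} and \eqref{GTRCOVER EQ}; a Bousfield localization has strictly more trivial cofibrations than the saturation of the old generating set together with the localizing maps. The clean fix is the universal property of left Bousfield localization \cite[Prop.~3.3.18]{Hir03}: $\delta^{\**}$ is left Quillen from the horizontal Reedy structure, all objects are cofibrant, and $\delta^{\**}$ sends the localizing maps in \eqref{GTRCOVER EQ} to trivial cofibrations, so $\delta^{\**}$ descends to a left Quillen functor from the joint structure. (Alternatively, one sidesteps the issue entirely by noting, as the paper does, that for the corollary you only need $\delta^{\**}$ left Quillen from the \emph{horizontal} Reedy structure, which is classical, plus the fibrant reduction.)
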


\begin{proof}
	Since horizontal Reedy fibrant replacement maps
	$X \to \tilde{X}$ are vertical equivalences they are also	
	diagonal equivalences (since $\delta^{\**}$ is left Quillen), 
	so one reduces to the case of $X,Y$ horizontal Reedy fibrant. The result now follows by combining
	parts (iii) and (iv) of Proposition \ref{SSSETJREE PROP}.
\end{proof}

\begin{corollary}\label{SSETSSETADJ COR}
	The adjunction (where $\mathsf{ssSet}$ has the joint Reedy model structure)
\[
	\delta_{!}\colon \mathsf{sSet} 
		\rightleftarrows 
	\mathsf{ssSet} \colon \delta^{\**}
\]
is a Quillen equivalence.
Moreover, given $f\colon X \to Y$ in $\mathsf{ssSet}$, 
\begin{itemize}
\item $\delta^{\**} (f)$ is a Kan fibration in $\mathsf{sSet}$ if 
 $f$ has the right lifting property
against both sets of maps in
\eqref{GTRCOHOR EQ} and \eqref{GTRCOVER EQ}.
\item $\delta^{\**}(X)$ is a Kan complex in $\mathsf{sSet}$ if $X$ has the right lifting property against all maps in \eqref{GTRCOHOR EQ} 
as well as the maps in \eqref{GTRCOVER EQ}
with $m \geq 2$ (and dually for \eqref{GTRCOHOR EQ} with $n \geq 2$, all maps in \eqref{GTRCOVER EQ}).
\end{itemize}
\end{corollary}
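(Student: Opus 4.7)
The strategy is to derive the Quillen equivalence from the joint-weak-equivalence characterization in Corollary \ref{WEAKDIAG COR}, and to derive the second bullet from the adjunction identity that $\delta^{\**}(f)$ has the RLP against a simplicial map $A\to B$ iff $f$ has the RLP against the bisimplicial map $\delta_!(A\to B)$.

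For the Quillen equivalence, I will first verify that $\delta_!\dashv\delta^{\**}$ is a Quillen adjunction. Cofibrations in the joint Reedy structure are monomorphisms (inherited from either Reedy structure) and $\delta_!$ preserves monomorphisms by inspection. Since every object of $\sSet$ with the Kan model structure is cofibrant, Ken Brown's lemma reduces trivial-cofibration preservation to showing $\delta_!$ preserves all Kan equivalences. By Corollary \ref{WEAKDIAG COR} this follows if $\delta^{\**}\delta_!$ preserves Kan equivalences, which by 2-out-of-3 reduces to showing the unit $K\to\delta^{\**}\delta_! K$ is itself a Kan equivalence. On representables this is the diagonal $\Delta[k]\to\Delta[k]\times\Delta[k]$, a Kan equivalence between contractible simplicial sets; the general case follows by a cellular induction, using that $\delta^{\**}$ has both adjoints (hence $\delta^{\**}\delta_!$ is cocontinuous), combined with the standard pushout-of-weak-equivalences-along-cofibrations lemma. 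For the Quillen equivalence itself, $\delta^{\**}$ reflects weak equivalences on all objects by Corollary \ref{WEAKDIAG COR}, and the derived unit on cofibrant $K$ agrees up to Kan equivalence with the underived unit just shown to be an equivalence.

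For the second part, it suffices by the adjunction to exhibit each $\delta_!(\Lambda^i[k]\to\Delta[k])$ in the saturation of the specified bisimplicial pushout-products. Writing $\Delta[k,k]$ for the bisimplicial representable $\delta_!\Delta[k]$, the subcomplex $\delta_!\Lambda^i[k]\hookrightarrow\Delta[k,k]$ consists of those bisimplices $(\alpha,\beta)$ with $\mathrm{Im}(\alpha)\cup\mathrm{Im}(\beta)$ missing some $j\in[k]\setminus\{i\}$. I will realize the inclusion as an iterated pushout of maps of the forms $(\Lambda^{i'}[n']\to\Delta[n'])\square(\partial\Delta[m']\to\Delta[m'])$ and $(\partial\Delta[n']\to\Delta[n'])\square(\Lambda^{j'}[m']\to\Delta[m'])$ via a combined skeletal filtration of $\Delta[k,k]$ in both the horizontal and vertical directions. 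In the Kan fibration case, where all such pushout-products are available, the construction goes through directly.

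The Kan complex statement is the main technical obstacle: the restriction $m\geq 2$ (respectively $n\geq 2$) excludes the ``vertex-inclusion'' pushout-products $(\partial\Delta[n]\to\Delta[n])\square(\Delta[0]\to\Delta[1])$ (respectively their transposes). However, the allowed horn pushout-products in the other direction still include the analogous $n=1$ (respectively $m=1$) cases, namely $(\Delta[0]\to\Delta[1])\square(\partial\Delta[m]\to\Delta[m])$ as $n=1$, $i\in\{0,1\}$ instances of \eqref{GTRCOHOR EQ}, which supply substitute vertex inclusions in the other direction. The hard part will be arranging the cellular decomposition of $\delta_!(\Lambda^i[k]\to\Delta[k])$ so that each attachment uses horn data only in the direction where it is available, which I expect to accomplish by a careful inductive construction exploiting the symmetry between horizontal and vertical directions at the level of saturations; the dual variant of the hypothesis is handled symmetrically.
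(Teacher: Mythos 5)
The Quillen-equivalence half of your argument is sound and follows essentially the paper's route (reducing everything via Corollary \ref{WEAKDIAG COR} to showing the unit $K \to \delta^{\**}\delta_! K$ is a Kan equivalence, then cellular induction), although your invocation of Ken Brown's lemma is superfluous: since $\delta_!$ preserves cofibrations and every object of $\sSet$ is cofibrant, ``$\delta_!$ preserves all weak equivalences'' already implies ``$\delta_!$ preserves trivial cofibrations'' directly. The paper deliberately takes a harder route here (cellularly filtering $\delta_!(\Lambda^i[n]\to\Delta[n])$ rather than invoking Remark \ref{HYPERSATKAN REM}) precisely because that filtration is the mechanism behind the ``moreover'' claims, which is where your proposal falls short.

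For the ``moreover'' claims you have correctly reduced the problem to exhibiting $\delta_!(\Lambda^i[n]\to\Delta[n])$ cellularly from the available pushout-products, and you have correctly identified the obstruction: for the Kan-complex case the restriction $m\geq 2$ on \eqref{GTRCOVER EQ} removes the vertex-inclusion pushout-products. But at this point you write that you ``expect to accomplish'' the decomposition ``by a careful inductive construction exploiting the symmetry'' — that is a statement of intent, not a proof, and the symmetry you invoke does not by itself tell you which horn direction to use in which cell. The missing idea is the specific factorization
\[
\delta_! \Lambda^i[n] \longrightarrow \Lambda^i[n] \times \Delta[n] \longrightarrow \Delta[n]^{\times 2},
\]
together with the filtration of the first arrow by convex subsets $C \subseteq \mathsf{Face}_{\supsetneq\{i\}}$ as in \eqref{ANOTHERCONV EQ}, each step of which is a pushout of a pushout-product as in \eqref{LAMBDAFPUSH EQ}. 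The key observation — which carries the $m\geq 2$ claim — is that the unique step forcing \eqref{GTRCOVER EQ} rather than \eqref{GTRCOHOR EQ} is the case $F=[n]$ (where $\Lambda^{F}[n]=\emptyset$), and in that step the relevant map is $\Lambda^i[n]\times(\Lambda^i[n]\to\Delta[n])$, which lies in the saturation of \eqref{GTRCOVER EQ} with $m=n$; so as long as $n\geq 2$, the bound is met. The remaining case $n=1$ is handled separately and trivially: $\delta_!(\Lambda^i[1]\to\Delta[1]) = (\{i\}^{\times 2}\to\Delta[1]^{\times 2})$ admits a degenerate lift into any $X$. Your observation that the $n=1$ instances of \eqref{GTRCOHOR EQ} give substitute vertex inclusions is true but not the way the paper routes around the problem — and on its own it does not produce the cellular decomposition, because you still need to know \emph{which} step of the filtration needs the missing cells and to check that it happens to avoid them. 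Without identifying the $F=[n]$ step and the $\Lambda^i[n]\times(\Lambda^i[n]\to\Delta[n])$ form it takes, the second bullet is unproven.
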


Note that the first ``moreover'' claim 
is not quite formal, since the maps in \eqref{GTRCOHOR EQ},\eqref{GTRCOVER EQ} are not known to be generating trivial cofibrations for the joint model structure on $\mathsf{ssSet}$.

\begin{proof}
	Recall that $\delta_!$ is the unique colimit preserving functor such that 
	$\delta_!(\Delta[n])=\Delta[n] \times \Delta[n]$.

Throughout the proof we write $F$ for both a non-empty subset $\emptyset \neq F \subseteq \{0,1,\cdots,n\}$ and the corresponding face $F \to [n]$ in $\Delta$, and let $\Delta F \subseteq \Delta[n]$ denote the associated subpresheaf.

Moreover, note that for any simplex
$x \in \Delta[n]^{\times 2}$ there is a smallest pair of faces
$F_1^x, F_2^x$ such that
$x \in \Delta F_1^x \times \Delta F_2^x$, and thus also a smallest 
$F^x$ such that $x \in (\Delta F^x)^{\times 2}$,
namely $F^x = F^x_1 \cup F^x_2$.

	To see that $\delta_!$ preserves cofibrations 
	it is enough to show that 
	$\delta_{!}\left( \partial \Delta[n] \to \Delta[n]\right)$
	is a monomorphism for all $n\geq 0$.
	Since
	$\partial \Delta[n] = \mathop{\colim}_{\text{faces }F \neq [n]}\Delta F$, this map is
	$\left(\mathop{\colim}_{\text{faces }F \neq [n]}\Delta F^{\times 2}\right) \to \Delta[n]^{\times 2}$, which is a monomorphism since:
\begin{inparaenum}
	\item[(i)] for any face $F$ the map $\Delta F^{\times 2} \to \Delta[n]^{\times 2}$ is a monomorphism;
	\item[(ii)] in the colimit defining $\delta_{!} \left(\partial \Delta[n]\right)$ any simplex $x \in \Delta F^{\times 2}$ is identified with the simplex $x \in (\Delta F^x)^{\times 2}$.
\end{inparaenum}

The claim that $\delta_!$ preserves trivial cofibrations follows easily from Remark \ref{HYPERSATKAN REM} together with Corollary \ref{WEAKDIAG COR}, but here we give a harder argument needed to establish the stronger ``moreover'' claims.
Namely, we will argue that the maps
$\delta_! \left( \Lambda^i[n] \to \Delta[n]\right)$
are built cellularly out of the maps in 
\eqref{GTRCOHOR EQ}, \eqref{GTRCOVER EQ}.
One has a factorization
\[
	\delta_! \Lambda^i[n] \to
	\Lambda^i[n] \times \Delta[n] \to \Delta[n]^{\times 2}
\]
where the second map is clearly built cellularly out of the maps in 
\eqref{GTRCOHOR EQ}, and we claim that 
the first map is likewise built cellularly out of the maps in \eqref{GTRCOVER EQ}.
Writing $\mathsf{Face}_{\supsetneq \{i\}}$ for the poset
of faces of $[n]$ strictly containing $\{i\}$, this first map 
can be factored as a sequence of maps of the form
\begin{equation}\label{ANOTHERCONV EQ}
\delta_! \Lambda^i[n] \cup \bigcup_{G \in C} \Lambda^i[n] \times \Delta G
\to
\delta_! \Lambda^i[n] \cup \bigcup_{G \in C'} \Lambda^i[n] \times \Delta G
\end{equation}
for convex
$C \subseteq C' \subseteq \mathsf{Face}_{\supsetneq \{i\}}$ such that $C' = C \amalg \{F\}$ for some face $F\supsetneq \{i\}$.
Note that a simplex 
$x \in \Lambda^i[n] \times \Delta F$
will be in $\bigcup_{G \in C} \Lambda^i[n] \times \Delta G$ iff 
$F_2^x \not \supseteq F-\{i\}$, 
and that if 
$F_2^x \supseteq F-\{i\}$
then $x \in \delta_! \Lambda^i[n]$ 
iff $F^x=F_1^x \cup F_2^x \not \supseteq [n] - \{i\}$
iff $F_1^x \not \supseteq [n] - F$,
it follows that \eqref{ANOTHERCONV EQ} is a pushout of the map
\begin{equation}\label{LAMBDAFPUSH EQ}
	\left(\Lambda^{F}[n] \to \Lambda^i[n] \right)
		\square
	\left(\Lambda^{i}F \to \Delta F \right)
\end{equation}
where $\Lambda^{F}[n]$ is the union of those $\Delta H$ with $H \not \supseteq [n] - F$ (this is consistent with the horn notation in \S \ref{EQDENDSETS SEC})
and $\Lambda^i F$ the union of the $\Delta H$ with $F \supseteq H \not \supseteq F - \{i\}$.
Most maps in \eqref{LAMBDAFPUSH EQ}
can in fact be built from either \eqref{GTRCOHOR EQ} or \eqref{GTRCOVER EQ}, but there is one crucial exception:
for $F=[n]$ it is $\Lambda^F[n]=\emptyset$, 
so only \eqref{GTRCOVER EQ} can be used.
The first ``moreover'' claim now follows. For the second ``moreover'' claim, note that any $X \in \mathsf{ssSet}$ admits (degenerate) liftings against 
$\delta_!\left(\Lambda^i[1] \to \Delta[1]\right) =
 \left(\{i\}^{\times 2} \to \Delta[1]^{\times 2}\right)$,
and that the only step in the filtration of 
$\delta_!\left( \Lambda^i[n] \to \Delta[n]\right)$ for $n\geq 2$
requiring \eqref{GTRCOVER EQ} (i.e. the case $F=[n]$ in \eqref{LAMBDAFPUSH EQ})
uses the map
$\Lambda^i[n] \times \left(\Lambda^i[n] \to \Delta[n] \right)$.

Lastly, the Quillen equivalence condition 
is that for all $X \in \mathsf{sSet}$ and joint fibrant
$Y \in \mathsf{ssSet}$ a map
$X \to \delta^{\**} Y$ is a weak equivalence iff 
$\delta_!X \to Y$ is. Factoring the former map as
$X \to \delta^{\**} \delta_! X \to \delta^{\**} Y$,
by Corollary \ref{WEAKDIAG COR}
this reduces to showing
that the unit maps $X \to \delta^{\**} \delta_! X$
are weak equivalences. This latter claim follows by cellular induction on $X$, since those pushouts attaching cells are homotopy pushouts (due to $\mathsf{sSet}$ being left proper).
\end{proof}

\begin{remark}\label{HYPERSIMPL REM}
	Just as in the proof of Proposition \ref{SSSETJREE PROP}, hypersaturations simplify the lifiting conditions
	in the previous result. 
	
	Indeed,	$X \to Y$ is a vertical fibration (i.e. it has the lifting property against \eqref{GTRCOVER EQ})
	iff, for each monomorphism $K \to L$ in $\mathsf{sSet}$,
	$X_L \to X_K \times_{Y_K} Y_L$
	is a Kan fibration.
	The lifting property against \eqref{GTRCOHOR EQ}
	then requires that 
	$X_L \to X_K \times_{Y_K} Y_L$ is a trivial Kan fibration
	when $K \to L$ is a horn inclusion. 
	But a straightforward argument (cf. Remark \ref{HYPERMODEL REM}) shows that the $K \to L$ with this property are hypersaturated,
	so that by 
Remark \ref{HYPERSATKAN REM} it suffices to check that the maps $X_n \to X_0 \times_{Y_0} Y_n$, induced by maps $[0] \to [n]$, are trivial Kan fibrations.

	Similarly, if $X$ is vertical fibrant (i.e. $X$ has the lifting property against \eqref{GTRCOVER EQ}) then, by 
	Remarks \ref{SLICE REM} and \ref{ANHYPER REM}, to check that $X$ has the lifting property against \eqref{GTRCOHOR EQ} for $n\geq 2$
	it suffices to check that the maps
	$X_n \to X_{Sc[n]}$, $n\geq 0$, 
	$X_2 \to X_{\Lambda^0[2]}$,
	$X_2 \to X_{\Lambda^2[2]}$ are trivial Kan fibrations.
\end{remark}

\begin{remark}
The adjunction 
$
	\delta^{\**}\colon \mathsf{ssSet}
		\rightleftarrows 
	\mathsf{sSet} \colon \delta_{\**}
$
can also be shown to be a Quillen equivalence.
\end{remark}

\subsection{Complete equivariant dendroidal Segal spaces}
\label{CEDSS SEC}

We now turn to our main application of Proposition \ref{COMBMODSTR PROP}, the category 
$\mathsf{sdSet}^G = \mathsf{Set}^{\Delta^{op} \times \Omega^{op} \times G}$
of $G$-equivariant simplicial dendroidal sets.

Since $\Delta$ is a (usual) Reedy category the model structure on $\mathsf{dSet}^G$ 
in \cite[Thm. 2.1]{Per18} induces 
a model structure on $\mathsf{sdSet}^G$
that we will refer to as the \textit{simplicial Reedy model structure}.

On the other hand, in the context of Definition \ref{GENRED DEF},
$\Omega^{op} \times G$ is a generalized Reedy category such that the families $\{\mathcal{F}_{U}^{\Gamma}\}_{U \in \Omega}$
of $G$-graph subgroups are Reedy-admissible 
(see Example \ref{GGRAPHREEDY EX})
and hence, using the underlying 
Kan model structure on $\mathsf{sSet}$, 
Theorem \ref{REEDYADM THM} yields
a model structure on $\mathsf{sdSet}^G$
that we will refer to as the \textit{equivariant dendroidal Reedy model structure}, 
or simply as the \textit{dendroidal Reedy model structure} for the sake of brevity.

Throughout, we will write the levels of 
$X \in \mathsf{sdSet}^G$ as 
$X_n(U)$ for $n\geq 0$, $U \in \Omega$.
We now extend Notation \ref{UNIQUELIM NOT}.
Note that the representable functor of
$U \in \Omega \times G^{op}$ is given by $\Omega[G \cdot U] = G \cdot \Omega[U]$.

\begin{notation}\label{UNILIMDEN NOT}
	Given a fixed $X \in \mathsf{sdSet}^G$ we will also write
\[
	X(-)\colon \left(\mathsf{dSet}^G \right)^{op} \to \mathsf{sSet},\qquad 
	X_{(-)} \colon \mathsf{sSet}^{op} \to \mathsf{dSet}^{G}
\]
	for the unique limit preserving functors such that
	$X(\Omega[G \cdot U]) = X(U)$, $X_{\Delta[n]} = X_n$.
	
	Explicitly, $\left(X(A)\right)_n = \mathsf{dSet}^G(A,X_n)$
	and $X_K(U) = \mathsf{sSet}(K,X(U))$.
%
  
Moreover,
for fixed $J \in \mathsf{dSet}^{G}$
we define $X^J \in \mathsf{sdSet}^G$ by 
$X^J(U) = X(\Omega[G\cdot U] \otimes J)$,
	where $\otimes$ is the tensor product of dendroidal sets (see Example \ref{THM71 EX} for an informal definition or \cite[\S 9]{MW09},\cite[\S 7]{Per18} for an in-depth discussion).
\end{notation}

\begin{remark}\label{TWOVARADJ REM}
The notation just defined fits into a 
two-variable adjunction 
(cf. \cite[\S 10.1]{Ri14})
\[
	\mathsf{sSet} \times \mathsf{dSet}^G
	\xrightarrow{\times} \mathsf{sdSet}^G
\qquad
	\mathsf{sdSet}^G \times \left(\mathsf{dSet}^G\right)^{op}
	\xrightarrow{(-)(-)} \mathsf{sSet}
\qquad
	\mathsf{sdSet}^G \times \left(\mathsf{sSet}\right)^{op}
	\xrightarrow{(-)_{(-)}} \mathsf{dSet}^G
\]
For our purposes, the relevance of this is that for maps
$K \to L$ in $\mathsf{sSet}$,
$A \to B$ in $\mathsf{dSet}^G$ and
$X \to Y$ in $\mathsf{sdSet}^G$ the three induced lifting problems shown below are all equivalent.
\begin{equation}
	\begin{tikzcd}
		K \times B \coprod_{K \times A} L \times A \ar{r} \ar{d} & X \ar{d}
	&
		K \ar{r} \ar{d} & X(B) \ar{d}
	&
		A \ar{r} \ar{d} & X_L \ar{d}
	\\
		L \times B \ar{r} \ar[dashed]{ru} &  Y
	&
		L \ar{r} \ar[dashed]{ru} & X(A) \times_{Y(A)} Y(B)
	&
		B \ar{r} \ar[dashed]{ru} & X_K \times_{Y_K} Y_L
	\end{tikzcd}
\end{equation}
\end{remark}

\begin{remark}\label{HYPERMODEL REM}
Fix $X \in \mathsf{sdSet}^G$ and suppose that,
for all normal monomorphisms $A\to B$ in $\mathsf{dSet}^G$,
$X(B) \to X(A)$ is a Kan fibration in $\mathsf{sSet}$.
Then the class of normal monomorphisms $A \to B$
such that $X(B) \to X(A)$
is a \emph{trivial} Kan fibration is hypersaturated. 
Indeed, saturation follows from properties of trivial fibrations  while \eqref{CANCEL_EQ} follows from $2$-out-of-$3$ for Kan equivalences.
A similar remark holds if
$X_L \to X_K$ is 
a fibration in $\mathsf{dSet}^G$
for all monomorphisms $K\to L$ in $\mathsf{sSet}$.
\end{remark}



\begin{notation}\label{JM NOT}
Writing
	$\widetilde{[m]} = 
	(0 \rightleftarrows 
	1 \rightleftarrows \cdots
	\rightleftarrows m)$
for the contractible groupoid with objects 
$0,1,\cdots,m$ (cf. Remark \ref{CONTGR REM}),
we write $J^m \in \mathsf{sSet}$ for the nerve
\[
J^m = N \widetilde{[m]}
\]
and abbreviate $J = J^1$.
Moreover, we slightly abuse notation by also writing $J^m \in \mathsf{dSet}^G$ for the corresponding dendroidal set 
(under the standard inclusion $\iota_! \colon \mathsf{sSet} \to \mathsf{dSet}$; see Notation \ref{DELTAOMEGA NOT}) with the trivial $G$-action.
In fact, we use $J^m \in \mathsf{dSet}^G$ by default and the few  exceptions,
Proposition \ref{SESP PROP},
the end of
Proposition \ref{JDDK PROP}
and
Theorem \ref{COMPIFFDK THM},
should be clear from context.

Lastly, we make a notational remark that may avoid confusion. $J^m$ will always feature in expressions $X(J^m)$, $X^{J^m}$ (or slight variants) as defined in Notation \ref{UNILIMDEN NOT}. As such, $J^{m}$ as $m$ changes should be thought of as ``varying in the dendroidal direction'' (which corresponds to the horizontal simplicial direction $m$), never in the (vertical) simplicial direction $n$.
\end{notation}

\begin{remark}\label{JREEDYCOF REM}
$J^{\bullet} \in \left(\mathsf{sSet}\right)^{\Delta}$ 
(and thus $J^{\bullet} \in \left(\mathsf{dSet}^G\right)^{\Delta}$) is a Reedy cofibrant cosimplicial object. 

To see this, we mimic the notation (and argument) in the proof of
Corollary \ref{SSETSSETADJ COR} by writing 
$F$ for both a subset $\emptyset \neq F \subseteq \{0,1,\cdots,m\}$
and face $F \to [m]$, as well as $\widetilde{F} \to \widetilde{[m]}$ for the natural subgroupoid.
Further, note that for any simplex $x \in N\widetilde{[m]}$, which can be identified with a non-empty string on $\{0,1,\cdots,n\}$, there is a smallest face $F^x \to [m]$ such that 
$x \in N\widetilde{F^x}$, namely $F^x$ is the set of ``letters'' of $x$.
The latching map $L_m J^{\bullet} \to J^m$ is then
$
\left(\mathop{\colim}_{\text{faces }F \neq [m]} N \widetilde{F}\right) \to N\widetilde{[m]}
$
which is monomorphism since:
\begin{inparaenum}
	\item[(i)] for any face $F$ the map $N \widetilde{F} \to N\widetilde{[m]}$ is a monomorphism;
	\item[(ii)] any simplex $x \in N \widetilde{F}$ is identified with the simplex $x \in N \widetilde{F^x}$ in the colimit.
\end{inparaenum}
\end{remark}

\begin{proposition}
      \label{SDSETJRCOF PROP}
	Both the simplicial and dendroidal Reedy model structures on 
	$\mathsf{sdSet}^G$ have generating cofibrations given by the maps
\begin{equation}\label{JOINTCOF EQ}
	\left(\partial \Delta [n] \to \Delta[n]\right)
		\square
	\left(\partial \Omega[T] \to \Omega[T]\right),
	\qquad
	n\geq 0, T \in \Omega_G.
\end{equation}
  Further, the dendroidal Reedy structure has as generating trivial cofibrations the maps
\begin{equation}\label{DENDTRIVCOF EQ}
	\left(\Lambda^i [n] \to \Delta[n]\right)
		\square
	\left(\partial \Omega[T] \to \Omega[T]\right),
	\qquad
 	n\geq 1, n\geq i \geq 0, T \in \Omega_G.
\end{equation}
while the simplicial Reedy structure has as generating trivial cofibrations the maps
\begin{equation}\label{SIMPTRIVCOF EQ}
	\left(\partial \Delta [n] \to \Delta[n]\right)
		\square
	\left(A \to B\right),
	\qquad
	n\geq 0
\end{equation}
for $\{A \to B\}$ a set of generating trivial cofibrations of
$\mathsf{dSet}^G$.
\end{proposition}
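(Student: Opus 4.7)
The proof strategy is to recognize that the two model structures on $\mathsf{sdSet}^G$ are, in each case, instances of general Reedy-theoretic constructions, and to unpack what these constructions produce on representables.

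For the simplicial Reedy structure, the plan is to invoke the standard Reedy model structure on simplicial objects in a cofibrantly generated model category: here, $\mathsf{sdSet}^G$ is regarded as simplicial objects in $\mathsf{dSet}^G$, and the model structure on $\mathsf{dSet}^G$ from Theorem \ref{DSETGMODEL THM} has as generating cofibrations the boundary inclusions $\partial \Omega[T] \to \Omega[T]$ for $T \in \Omega_G$ (since these generate the normal monomorphisms, cf. the discussion preceding Theorem \ref{DSETGMODEL THM}). Classical Reedy theory then yields generating (trivial) cofibrations via pushout products $(\partial \Delta[n] \to \Delta[n]) \square (-)$ of the boundary maps of $\Delta$ with generating (trivial) cofibrations of $\mathsf{dSet}^G$, where the $\square$ uses the cartesian tensor $\sSet \times \mathsf{dSet}^G \to \mathsf{sdSet}^G$ of Remark \ref{TWOVARADJ REM}. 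This directly yields \eqref{JOINTCOF EQ} and \eqref{SIMPTRIVCOF EQ}.

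For the dendroidal Reedy structure, the plan is to apply Theorem \ref{REEDYADM THM} of Appendix \ref{EQREED AP} to the generalized Reedy category $\Omega^{op} \times G$ with the Reedy-admissible families of $G$-graph subgroups (cf. Example \ref{GGRAPHREEDY EX}), valued in the Kan model structure on $\sSet$. That theorem produces generating (trivial) cofibrations as pushout products of appropriate equivariant Reedy boundary inclusions of representables with generating (trivial) cofibrations of $\sSet$, namely $\partial \Delta[n] \to \Delta[n]$ and $\Lambda^i[n] \to \Delta[n]$. The key identification is that the equivariant Reedy boundary of the representable at $T$ (using the $G$-graph subgroup family) coincides with $\partial \Omega[T] \to \Omega[T]$ for $T \in \Omega_G$; this is where the indexing by $G$-trees appears, because the $G$-graph subgroups precisely parametrize orbits of non-equivariant trees under $G$-actions. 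This yields \eqref{JOINTCOF EQ} again for cofibrations and \eqref{DENDTRIVCOF EQ} for trivial cofibrations.

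The two descriptions of generating cofibrations are then literally the same set \eqref{JOINTCOF EQ}, by symmetry of the pushout product $\square$. The main bookkeeping obstacle is the second bullet: verifying that the equivariant generalized Reedy boundary of the representable at $T \in \Omega_G$, as built from the $G$-graph subgroup Reedy-admissible family in $\Omega^{op} \times G$, agrees with $\partial \Omega[T]$ as defined in \S \ref{EQDENDSETS SEC}. This is essentially a matter of tracing through the definitions in the appendix and noting that the colimit over planar faces $\mathsf{Face}(T)$ used to define $\partial \Omega[T]$ is precisely the latching object for this generalized Reedy data; once this identification is in hand, both generating-cofibration descriptions collapse to \eqref{JOINTCOF EQ} and the two generating-trivial-cofibration descriptions differ only because the two structures use different underlying model categories ($\mathsf{dSet}^G$ versus $\sSet$).
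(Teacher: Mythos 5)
Your proposal follows the same route as the paper: standard Reedy theory for the simplicial direction, and the equivariant generalized Reedy theory of Appendix \ref{EQREED AP} for the dendroidal direction, with the key step being the identification of the $\Gamma$-quotiented skeleton of the representable at $U$ (for $\Gamma$ a graph subgroup encoding an $H$-action) with $\partial\Omega[G\cdot_H U]\to\Omega[G\cdot_H U]$. One small imprecision: you cite Theorem \ref{REEDYADM THM} as the source of the explicit generating sets, but that theorem only establishes the \emph{existence} of the $\{\mathcal F_r\}$-Reedy model structure; the cofibrant generation statement you then correctly paraphrase is Proposition \ref{REEDYCOFGEN PROP}, and the boundary identification you flag as the ``main bookkeeping obstacle'' is precisely what Example \ref{REEDYCOFGEN EX} carries out (via $\mathsf{sk}_{|U|-1}\Omega[U]=\partial\Omega[U]$, commuting skeleta with the free $G$-action, and \eqref{T_DECOMP_EQ}). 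Your outline of that computation is right, so the argument goes through as you describe.
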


\begin{proof}
The claims concerning the simplicial Reedy structure are standard. For the dendroidal Reedy model structure, the result follows by Proposition \ref{REEDYCOFGEN PROP} and Example \ref{REEDYCOFGEN EX}. 
%
\end{proof}

We call the saturation of the maps in \eqref{JOINTCOF EQ} the class of \textit{normal monomorphisms} of $\mathsf{sdSet}^G$.

\begin{remark}\label{MAPSPACE REM}
$\mathsf{sdSet}^G$ is simplicially tensored and cotensored via the pointwise 
formulas $(K \times X)(U) = K \times X(U)$ and
$\left(X^K\right)(U) = X(U)^K$
for $K \in \mathsf{sSet}, X \in \mathsf{sdSet}^G$, 
and hence becomes a simplicial category with mapping spaces
$\left(\underline{\mathsf{sdSet}}^G(A,X)\right)_{\bullet} = \mathsf{sdSet}^G(\Delta[\bullet] \times A,X)$.

The generating sets \eqref{JOINTCOF EQ},\eqref{DENDTRIVCOF EQ} then show that these mapping spaces make the dendroidal Reedy model structure into a simplicial model structure, though the same does not hold for the simplicial Reedy model structure
(since the pushout product of an anodyne map in $\mathsf{sSet}$ with a map in \eqref{JOINTCOF EQ} will be in the saturation of \eqref{DENDTRIVCOF EQ} rather than in that of \eqref{SIMPTRIVCOF EQ}).
\end{remark}

\begin{definition}\label{JOINREED DEF}
The \textit{joint Reedy model structure} on $\mathsf{sdSet}^G$ is the smallest joint left Bousfield localization (cf. Proposition \ref{COMBMODSTR PROP}) of the dendroidal and simplicial Reedy model structures.
\end{definition}

\begin{remark}\label{JOINREED REM}
Historically, the joint Reedy model structure has been called the \textit{Rezk model structure} and its fibrant objects have been called 
\textit{complete (dendroidal) Segal spaces}.
This is because most discussions in the literature 
\cite{Rez01,CM13a}
prefer to first introduce the Segal space model structure on
$\mathsf{ssSet}$/$\mathsf{sdSet}$, which is an intermediate localization of the 
horizontal/dendroidal Reedy model structure.
In this work we first focus on the properties of the Rezk model structure that are consequences of the joint perspective, 
postponing the Segal space perspective to \S \ref{EDSS_SEC}.
\end{remark}

\begin{proposition}\label{JOINTFIBCHAR PROP}
The joint fibrant objects $X \in \mathsf{sdSet}^G$ have the following equivalent characterizations:
\begin{itemize}
	\item[(i)] $X$ is both simplicial Reedy fibrant and dendroidal Reedy fibrant;
	\item[(ii)] $X$ is simplicial Reedy fibrant and all the simplicial structure maps 
	$X_0 \to X_n$ are weak equivalences in $\mathsf{dSet}^{G}$;
	\item[(iii)] $X$ is dendroidal Reedy fibrant and all the natural maps
\begin{equation}\label{JOINTFIBCHAR EQ}
	X\left(\Omega[T]\right) \to X\left(Sc[T]\right)
\qquad \text{and} \qquad
	X\left(\Omega[T]\right) \to X(\Omega[T]\otimes J)
\end{equation}
for $T \in \Omega_G$ are Kan equivalences in $\mathsf{sSet}$.
\end{itemize}
\end{proposition}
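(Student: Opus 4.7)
My plan is to establish each equivalence via a hypersaturation reduction, using the two-variable adjunction of Remark \ref{TWOVARADJ REM} to translate lifting conditions in $\mathsf{sdSet}^G$ into lifting conditions in $\mathsf{dSet}^G$ or $\mathsf{sSet}$. The equivalence between joint fibrancy and (i) is immediate from Proposition \ref{COMBMODSTR PROP}(i), since the joint Reedy model structure was defined (Definition \ref{JOINREED DEF}) as the smallest joint left Bousfield localization of the simplicial and dendroidal Reedy model structures. The remaining two equivalences isolate, given one Reedy fibrancy, the extra content needed to promote it to the other.

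For (i)$\Leftrightarrow$(ii), I would assume $X$ is simplicial Reedy fibrant and observe via Proposition \ref{SDSETJRCOF PROP} and Remark \ref{TWOVARADJ REM} that dendroidal Reedy fibrancy amounts to $X_{\Delta[m]} \to X_{\Lambda^i[m]}$ being a trivial fibration in $\mathsf{dSet}^G$ for every horn $\Lambda^i[m] \to \Delta[m]$. Simplicial Reedy fibrancy already forces $X_L \to X_K$ to be a fibration in $\mathsf{dSet}^G$ for every monomorphism $K \to L$ in $\mathsf{sSet}$, so the remaining content is the weak-equivalence condition. Arguing as in Remark \ref{HYPERMODEL REM}, the class of monomorphisms $K \to L$ for which $X_L \to X_K$ is a weak equivalence in $\mathsf{dSet}^G$ is hypersaturated (saturation is routine, and cancellation uses $2$-out-of-$3$ together with the Reedy fibrancy). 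Remark \ref{HYPERSATKAN REM} then identifies the hypersaturation of the horn inclusions with that of the vertex inclusions $\Delta[0] \to \Delta[m]$, so the condition is equivalent to $X_m \to X_0$ being trivial fibrations, which via $2$-out-of-$3$ applied to the composites $X_0 \to X_m \to X_0 = \mathrm{id}$ is in turn equivalent to $X_0 \to X_m$ being weak equivalences. The converse direction runs these reductions backward.

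For (i)$\Leftrightarrow$(iii) the argument dualizes. Assuming $X$ is dendroidal Reedy fibrant, simplicial Reedy fibrancy amounts (by Proposition \ref{SDSETJRCOF PROP} and Remark \ref{TWOVARADJ REM}) to $X(B) \to X(A)$ being a trivial Kan fibration for every generating trivial cofibration $A \to B$ of $\mathsf{dSet}^G$. Dendroidal Reedy fibrancy already supplies the Kan-fibrancy, so the content is Kan equivalence, and exactly as in Remark \ref{HYPERMODEL REM} the class of normal monomorphisms $A \to B$ for which $X(B) \to X(A)$ is a Kan equivalence is hypersaturated. The generating trivial cofibrations of the $G$-$\infty$-operad model structure of Theorem \ref{DSETGMODEL THM} split into $G$-inner horn inclusions and $J$-type cofibrations: by Proposition \ref{HYPER PROP} the hypersaturation of the former coincides with that of the Segal core inclusions $Sc[T] \to \Omega[T]$, while the latter correspond to the inclusions $\Omega[T] \to \Omega[T] \otimes J$ appearing in \eqref{JOINTFIBCHAR EQ}.

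The principal obstacle I anticipate is identifying a generating set for the trivial cofibrations of $\mathsf{dSet}^G$ whose hypersaturation is precisely that of $\{Sc[T] \to \Omega[T]\} \cup \{\Omega[T] \to \Omega[T] \otimes J\}$. Proposition \ref{HYPER PROP} handles the $G$-inner anodyne half cleanly, but the $J$-type part requires unwinding the construction of the $G$-$\infty$-operad model structure of Theorem \ref{DSETGMODEL THM}; once in hand, however, the hypersaturation machinery deployed above applies uniformly.
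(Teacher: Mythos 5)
Your treatment of (i) and of the equivalence (i)$\Leftrightarrow$(ii) follows the paper closely and is correct: reducing to $X_L \to X_K$ being a trivial fibration for anodyne $K\to L$, then using hypersaturation and Remark~\ref{HYPERSATKAN REM} to pass to vertex inclusions, then $2$-out-of-$3$, is exactly the paper's argument.

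The equivalence (i)$\Leftrightarrow$(iii) has a genuine gap, and to your credit you name it yourself: you need to know that the trivial cofibrations of $\mathsf{dSet}^G$ are generated (up to hypersaturation) by the $G$-inner horn inclusions together with the $J$-type maps. But this is not available. The model structure of Theorem~\ref{DSETGMODEL THM}, like the original Cisinski--Moerdijk one, is constructed without producing an explicit generating set of trivial cofibrations; the horn inclusions and $J$-maps characterize fibrant objects and fibrations \emph{between fibrant objects}, not trivial cofibrations in general. So "unwinding the construction" will not deliver what you want. The paper circumvents this: instead of trying to identify generating trivial cofibrations of $\mathsf{dSet}^G$, it uses the two-variable adjunction (Remark~\ref{TWOVARADJ REM}) in the \emph{other} direction to translate right lifting against \eqref{SIMPTRIVCOF EQ} into the statement that $X_L \to X_K$ is a fibration in $\mathsf{dSet}^G$ for every monomorphism $K\to L$ in $\mathsf{sSet}$. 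Taking $K = \emptyset$ forces every $X_L$ to be fibrant, so these are automatically fibrations \emph{between fibrant objects}, and now one may invoke the known lifting characterization of such fibrations (from \cite[Prop.~8.8]{Per18} and \cite[\S 8.1]{Per18}) against the set \eqref{FIBBFIB EQ}. Translating back through the adjunction, the condition becomes that $X(B)\to X(A)$ is a trivial Kan fibration for $A\to B$ in \eqref{FIBBFIB EQ}, and only at that point does the hypersaturation machinery (Remark~\ref{HYPERMODEL REM}, Proposition~\ref{HYPER PROP}, Remark~\ref{DUMBHYPER REM}) apply to reduce to Segal core and $J$-tensor maps. Your final paragraph flags the right obstruction, but without this detour through fibrations between fibrant objects the step is not justified.
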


\begin{proof}
(i) simply repeats Proposition \ref{COMBMODSTR PROP}(i). In the remainder we write $K \to L$ for a generic monomorphism in 
$\mathsf{sSet}$
and $A \to B$ for a generic normal monomorphism in $\mathsf{dSet}^G$.

For (ii), note that $X$ is simplicial fibrant iff 
$X_L \to X_K$ is always a fibration in $\mathsf{dSet}^G$. 
And $X$ will also have the right lifting property against \eqref{DENDTRIVCOF EQ} iff 
$X_L \to X_K$ is a trivial fibration whenever $K \to L$ is anodyne. But by Remark \ref{HYPERSATKAN REM}
it suffices to consider the vertex inclusions $\Delta[0] \to \Delta[n]$.
The claim now follows from 2-out-of-3 applied to the composites $X_0 \to X_n \to X_0$.

For (iii), note first that $X$ is dendroidal fibrant iff $X(B) \to X(A)$ is always a Kan fibration.
Further, $X$ will have the right lifting property against \eqref{SIMPTRIVCOF EQ} iff 
$X(B) \to X(A)$ is a trivial Kan fibration whenever $A\to B$ is a generating trivial cofibration of $\mathsf{dSet}^G$.
By adjunction, this is equivalent to 
$X_L \to X_K$ always being a fibration in $\mathsf{dSet}^G$.
Further, since for $K=\emptyset$ it is $X_{K}=\**$, this requires $X_L$ to always be fibrant. In other words, $X$ has the right lifting property against \eqref{SIMPTRIVCOF EQ} iff
$X_L \to X_K$ is always a fibration between fibrant objects in $\mathsf{dSet}^G$.
By the characterizations of fibrant objects and of fibrations between fibrant objects in $\mathsf{dSet}^G$ (see \cite[Prop. 8.8]{Per18} and the beginning of \cite[\S 8.1]{Per18}), it suffices to check that the maps $X_L \to X_K$ have the right lifting property against the maps 
($\square^{\otimes}$ denotes the pushout product with respect to
$\otimes$)
\begin{equation}\label{FIBBFIB EQ}
	\Lambda^{G e} [T] \to \Omega[T],
	\phantom{1}
	T \in \Omega_G, e \in \boldsymbol{E}^{\mathsf{i}}(T)
\quad
	\left( \partial\Omega[T] \to \Omega[T]\right) \square^{\otimes} \left( \{i\} \to J\right),
	\phantom{1}
	T \in \Omega_G, i = \{0,1\}
\end{equation}
(note that the case $K=\emptyset$ shows that all the $X_L$ are fibrant).

 
 It now suffices to check that $X(B) \to X(A)$ is a trivial Kan fibration when $A\to B$ is in \eqref{FIBBFIB EQ}.
To finish the proof, we use Remark \ref{HYPERMODEL REM}
combined with Proposition \ref{HYPER PROP}
and Remark \ref{DUMBHYPER REM}.
%
\end{proof}

\begin{remark}\label{RECOVDEF REM}
The characterizations of joint fibrant objects in Proposition \ref{JOINTFIBCHAR PROP} show that, to obtain the joint Reedy structure, 
one needs not localize the dendroidal Reedy structure with respect to \emph{all generating trivial cofibrations}
of the simplicial Reedy structure, and vice-versa. 

Indeed, since all objects appearing in 
\eqref{JOINTFIBCHAR EQ}
can be regarded as mapping spaces (cf. Remark \ref{MAPSPACE REM}), 
one needs only localize the dendroidal Reedy model structure with respect to the maps
\begin{equation}\label{RECOVDEN REM}
Sc[T] \to \Omega[T], \qquad \Omega[T] \otimes J \to \Omega[T], \qquad T \in \Omega_G,
\end{equation}
recovering 
the definition of the \emph{dendroidal Rezk model category} 
from \cite[Defs. 5.4 and 6.2]{CM13a}.

Similarly, Proposition \ref{JOINTFIBCHAR PROP}(ii) shows that one needs only localize the simplicial Reedy model structure with respect to the maps
$(\Delta[0] \to \Delta[n])\square (\partial \Omega[T] \to \Omega[T])$ (since then for $X$ locally fibrant the maps $X_n \to X_0$ must be trivial fibrations in $\mathsf{dSet}^G$), and by Remark \ref{DUMBHYPER REM}
it is also enough to localize the vertex maps 
$\Omega[T] \to \Delta[n] \times \Omega[T]$, which is the same as localizing 
the projection maps
\[\Delta[n] \times \Omega[T] \to \Omega[T],
\qquad n\geq 0, T \in \Omega_G\]
recovering
the definition of the \emph{locally constant model category} 
from \cite[Def. 4.6]{CM13a}.

Lastly, the fact that the two localizations just described coincide recovers \cite[Thm. 6.6]{CM13a}.
      %
      %
\end{remark}

We now obtain the following partial analogue of Proposition \ref{SSSETJREE PROP}. Note that the equivalences in the simplicial Reedy model structure are the dendroidal equivalences and vice versa.

\begin{corollary}\label{SDSETG COR}
	Suppose that $X, Y \in \mathsf{sdSet}^G$ are dendroidal Reedy fibrant. Then:
\begin{itemize}
	\item[(i)] for all $n$ the vertex maps $X_{n} \to X_{0}$ are trivial fibrations in $\mathsf{dSet}^G$;
	\item[(ii)] any simplicial Reedy fibrant replacement $\tilde{X}$ of $X$ is fibrant in the joint Reedy model structure;
	\item[(iii)] a map $X \to Y$ is a joint weak equivalence
	iff it is a dendroidal weak equivalence iff 
	$X_0 \to Y_0$ is an equivalence in $\mathsf{dSet}^G$;
	\item[(iv)] regarding $X_0$ as a simplicially constant object in $\mathsf{sdSet}^G$, the map $X_0 \to X$ is a dendroidal equivalence, and thus a joint equivalence. 
\end{itemize}
\end{corollary}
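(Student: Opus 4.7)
The plan is to follow the template of Proposition \ref{SSSETJREE PROP}, using part (i) as the foundation for the other claims.

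For (i), I would unfold dendroidal Reedy fibrancy of $X$ via Remark \ref{TWOVARADJ REM}: it is equivalent to $X(B) \to X(A)$ being a Kan fibration for every normal monomorphism $A \to B$ in $\mathsf{dSet}^G$. Since the vertex inclusions $\Delta[0] \to \Delta[n]$ are anodyne (Remark \ref{HYPERSATKAN REM}), $X(B) \to X(A)$ has the right lifting property against them. Re-adjoining this last statement yields that $X_n \to X_0$ has right lifting against every normal monomorphism $A \to B$, i.e.\ is a trivial fibration in $\mathsf{dSet}^G$.

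For (ii), I would invoke Proposition \ref{JOINTFIBCHAR PROP}(ii) to reduce joint fibrancy of $\tilde X$ to simplicial Reedy fibrancy (given) plus the simplicial structure maps $\tilde X_0 \to \tilde X_n$ being weak equivalences in $\mathsf{dSet}^G$. Since the replacement map $X \to \tilde X$ is a simplicial Reedy equivalence, hence a levelwise weak equivalence in $\mathsf{dSet}^G$, and since the maps $X_0 \to X_n$ are weak equivalences by part (i) (each is a section of the trivial fibration $X_n \to X_0$), a 2-out-of-3 argument on the commuting square relating $X_0 \to X_n$ and $\tilde X_0 \to \tilde X_n$ yields the result.

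For (iii), I would reduce to $\tilde X, \tilde Y$ via simplicial Reedy fibrant replacement: since simplicial Reedy equivalences are joint equivalences (as the joint structure is a left Bousfield localization of the simplicial Reedy structure), the replacement maps are joint equivalences, so by 2-out-of-3 and part (ii) the first ``iff'' follows from Proposition \ref{COMBMODSTR PROP}(ii) applied to the joint-fibrant $\tilde X, \tilde Y$. The second ``iff'' uses (i): in the commuting square with vertical maps $X_n \to X_0$ and $Y_n \to Y_0$ (both trivial fibrations in $\mathsf{dSet}^G$), 2-out-of-3 shows $X_n \to Y_n$ is a weak equivalence for all $n$ iff $X_0 \to Y_0$ is. Finally, for (iv), part (i) immediately implies each simplicial structure map $X_0 \to X_n$ is a weak equivalence in $\mathsf{dSet}^G$, so $X_0 \to X$ (with $X_0$ simplicially constant) is levelwise a weak equivalence in $\mathsf{dSet}^G$, i.e.\ a simplicial Reedy equivalence, hence a joint equivalence.

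There is no real conceptual obstacle; the only delicate point is the bookkeeping of the two competing adjoint interpretations of fibrancy in $\mathsf{sdSet}^G$ (dendroidal Reedy fibrancy is a Kan-fibration condition in $\mathsf{sSet}$, whereas simplicial Reedy fibrancy is a fibration condition in $\mathsf{dSet}^G$), and this is handled systematically by Remark \ref{TWOVARADJ REM}.
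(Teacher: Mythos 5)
Your proof is correct and follows essentially the same approach as the paper's; the paper's proof of (i) simply notes that dendroidal Reedy fibrancy gives lifting against $(\Delta[0]\to\Delta[m])\square(\partial\Omega[T]\to\Omega[T])$, which is the same observation you arrive at after walking through the adjunctions of Remark \ref{TWOVARADJ REM}, and parts (ii)--(iv) are handled exactly as you describe (2-out-of-3 plus Propositions \ref{JOINTFIBCHAR PROP}(ii) and \ref{COMBMODSTR PROP}(ii)). The only cosmetic quibble is that you cite Remark \ref{HYPERSATKAN REM} for the claim that $\Delta[0]\to\Delta[n]$ is anodyne; that remark is about hypersaturations rather than saturations, so strictly speaking what you want is the standard fact that vertex inclusions are anodyne (a filtration by left horn inclusions, which the remark's proof does sketch), but the substance is fine.
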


\begin{proof}
	The proof adapts that of Proposition \ref{SSSETJREE PROP}.
	(i) follows since $X$ then has the right lifting property with respect to all maps 
	$(\Delta[0] \to \Delta[m]) \square (\partial \Omega[T] \to \Omega[T])$. (ii) follows from (i) and the characterization in  Proposition \ref{JOINTFIBCHAR PROP} (ii). The first ``iff'' in (iii) follows from (ii) since the simplicial fibrant replacement maps 
	$X \to \tilde{X}$ are dendroidal equivalences
	and the second ``iff'' in (iii) follows from (i).
	(iv) follows from (i).
\end{proof}

\begin{theorem}\label{INC0AGJ THM}
	The constant/$0$-th level adjunction
	\[
	c_!\colon 
	\mathsf{dSet}^G \rightleftarrows \mathsf{sdSet}^G
	\colon (-)_0
	\]
	where $\mathsf{sdSet}^G$ is given the Rezk/joint Reedy model structure,
	is a Quillen equivalence.
\end{theorem}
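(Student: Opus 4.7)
The plan is to establish the Quillen adjunction and then verify the Quillen equivalence criterion directly via Corollary~\ref{SDSETG COR}, with the key observation being that $c_!$ of any $\mathsf{dSet}^G$-fibrant object is joint Reedy fibrant.

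For the Quillen adjunction, $c_!$ preserves cofibrations because $c_!(\partial \Omega[T] \to \Omega[T])$ coincides with $(\emptyset \to \Delta[0]) \square (\partial \Omega[T] \to \Omega[T])$, which is a generating cofibration of $\mathsf{sdSet}^G$ in view of \eqref{JOINTCOF EQ} (case $n=0$, where $\partial \Delta[0]=\emptyset$). Moreover, $c_!$ preserves all weak equivalences: for $f\colon A \to B$ a weak equivalence in $\mathsf{dSet}^G$, $c_!(f)$ is level-wise equal to $f$, hence a level-wise weak equivalence, i.e., a simplicial Reedy weak equivalence. Since the joint Reedy structure is a Bousfield localization of the simplicial Reedy structure (Definition~\ref{JOINREED DEF}), $c_!(f)$ is also a joint Reedy weak equivalence. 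In particular, $c_!$ preserves trivial cofibrations.

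For the Quillen equivalence, I will check that for cofibrant $A \in \mathsf{dSet}^G$ and joint fibrant $X \in \mathsf{sdSet}^G$, a map $f\colon A \to X_0$ is a weak equivalence iff its adjoint $\tilde f\colon c_!(A) \to X$ is a joint weak equivalence. Factor $\tilde f$ as $c_!(A) \xrightarrow{c_!(f)} c_!(X_0) \xrightarrow{\varepsilon} X$, where $\varepsilon$ is the counit; at each level $n$, $\varepsilon$ is the structure map $X_0 \to X_n$, so $\varepsilon$ coincides with the canonical map of Corollary~\ref{SDSETG COR}(iv), which is a dendroidal (hence joint) weak equivalence since $X$ is dendroidal Reedy fibrant by Proposition~\ref{JOINTFIBCHAR PROP}(i). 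By 2-out-of-3, it thus suffices to show that $c_!(f)$ is a joint weak equivalence iff $f$ is a $\mathsf{dSet}^G$ weak equivalence.

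I reduce this to Corollary~\ref{SDSETG COR}(iii). Since $X$ joint fibrant forces $X_0$ to be $\mathsf{dSet}^G$-fibrant (from the matching map at $n=0$ in the simplicial Reedy structure), fibrantly replace $j\colon A \hookrightarrow \hat A$ in $\mathsf{dSet}^G$ and lift $f$ through $j$ to obtain $\hat f\colon \hat A \to X_0$ with $\hat f j = f$. A direct inspection using Proposition~\ref{JOINTFIBCHAR PROP} shows that $c_!(B)$ is joint Reedy fibrant whenever $B$ is $\mathsf{dSet}^G$-fibrant: the simplicial Reedy matching maps of the constant diagram $c_!(B)$ are identities (for $n \geq 1$) or the projection $B \to *$ (for $n=0$), both fibrations, while the structure maps $c_!(B)_0 \to c_!(B)_n$ are identities, verifying Proposition~\ref{JOINTFIBCHAR PROP}(ii). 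Hence both $c_!(\hat A)$ and $c_!(X_0)$ are joint fibrant, and $c_!(j)$ is a joint trivial cofibration by the preservation results above. Writing $c_!(f) = c_!(\hat f) c_!(j)$ and applying 2-out-of-3, $c_!(f)$ is a joint equivalence iff $c_!(\hat f)$ is; Corollary~\ref{SDSETG COR}(iii) identifies this with $\hat f$ being a $\mathsf{dSet}^G$ weak equivalence, which by 2-out-of-3 applied to $f = \hat f j$ is equivalent to $f$ being a weak equivalence. The main technical content sits entirely in Corollary~\ref{SDSETG COR} and Proposition~\ref{JOINTFIBCHAR PROP}; the rest is routine adjunction manipulation.
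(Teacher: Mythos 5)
Your proof is correct and rests on exactly the same two pillars the paper uses (Proposition~\ref{JOINTFIBCHAR PROP} to recognize joint fibrancy, Corollary~\ref{SDSETG COR} to test joint equivalences at level~$0$), but the factorization lives in a different place. The paper factors $c_!(A) \to X$ directly in $\mathsf{sdSet}^G$ as a simplicial Reedy trivial cofibration followed by a fibration, then applies Proposition~\ref{JOINTFIBCHAR PROP}(ii) to the middle term; verifying its hypothesis there requires a short $2$-out-of-$3$ argument to see that the simplicial structure maps of the replacement are still equivalences. You instead split off the counit (justified via Corollary~\ref{SDSETG COR}(iv)) and fibrantly replace $A$ inside $\mathsf{dSet}^G$, trading on the cleaner observation that $c_!$ sends any $\mathsf{dSet}^G$-fibrant object to a joint fibrant one — cleaner because the structure maps of a constant object are literally identities, so Proposition~\ref{JOINTFIBCHAR PROP}(ii) applies with nothing to check. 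Your route is a step longer, but the fibrancy verification is more transparent, and it isolates the useful fact that $c_!$ already preserves fibrant objects (so the derived unit and counit can be computed without replacing $c_!(A)$ at all when $A$ is fibrant). Both arguments are routine once Corollary~\ref{SDSETG COR} is in hand.
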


\begin{proof}
	It is clear that the constant functor $c_!$ preserves both normal monomorphisms and all weak equivalences, hence the adjunction is Quillen. 
	Consider any map $c_!(A) \to X$ with $X$ joint fibrant and perform a ``trivial cofibration followed by fibration'' factorization as on the left
\[
c_!(A) \overset{\sim}{\rightarrowtail} \widetilde{c_!(A)} \twoheadrightarrow X
	\qquad
A \xrightarrow{\sim} \widetilde{c_!(A)}_0 \to X_0
\]
	for the simplicial Reedy model structure. 
	Proposition \ref{JOINTFIBCHAR PROP}(ii) now implies that 
	$\widetilde{c_!(A)}$ is in fact joint fibrant
	and thus that the leftmost composite is a joint equivalence iff $\widetilde{c_!(A)} \to X$ is a dendroidal equivalence in $\mathsf{sdSet}^G$ iff $\widetilde{c_!(A)}_0 \to X_0$ is an equivalence in  $\mathsf{dSet}^G$ iff the rightmost composite is an equivalence in $\mathsf{dSet}^G$.
\end{proof}

\begin{remark}\label{CONCRECOM REM}
      Given a $G$-$\infty$-operad $X \in \mathsf{dSet}^G$,
      one can obtain an explicit model for $\widetilde{c_{!} (X)}$
      as the object $X^{J^{\bullet}} \in \mathsf{sdSet}^G$,
      where $J^{m}$ was defined in Notation \ref{JM NOT}
      and $X^{J^m}\in \mathsf{dSet}^G$ is defined as in 
      Notation \ref{UNILIMDEN NOT}.
      Indeed, since $J^{\bullet}$ is a Reedy cofibrant cosimplicial object in $\mathsf{dSet}^G$ (cf. Remark \ref{JREEDYCOF REM}),
      one has that $X^{J^{\bullet}} \in \mathsf{sdSet}^G$
      is simplicial fibrant.
      Hence, by Proposition \ref{JOINTFIBCHAR PROP}(ii)
      $c_{!}(X) \to X^{J^{\bullet}}$
      will be a joint fibrant replacement provided that it is a dendroidal equivalence.
      But this follows from \cite[Cor. 8.21]{Per18},
      which implies that the maps $X^{J^m} \to X^{J^0}=X$
      are trivial fibrations in $\mathsf{dSet}^G$
      (formally, \cite[Cor. 8.21]{Per18} says that $\upsilon_{\**}(X^{J^m}) = X^{(J^m)} \to \upsilon_{\**}(X)$
      is a trivial fibration in $\mathsf{dSet}_G$, which is an equivalent statement, as noted at the end of the proof of \cite[Thm. 8.22]{Per18}).
%
\end{remark}

\subsection{Equivariant Segal operads}\label{PREOP SEC}

Recall that the category $\mathsf{PreOp}$
of \textit{pre-operads} is the full subcategory
$\mathsf{PreOp} \subset \mathsf{sdSet}$
of those $X$ such that $X(\eta)$ is a discrete simplicial set.
Writing $\gamma^{\**}$ for the inclusion one has left and right adjoints $\gamma_!$ and $\gamma_{\**}$
\begin{equation}
\begin{tikzcd}[column sep =5em]
	\mathsf{PreOp}^G \ar{r}[swap]{\gamma^{\**}} 
	&
	\mathsf{sdSet}^G
	\ar[bend right]{l}[swap,midway]{\gamma_{!}}
	\ar[bend left]{l}{\gamma_{\**}}
\end{tikzcd}
\end{equation}
described as follows \cite[\S 7]{CM13a}:
$\gamma_{!}X (U) = X(U)$ if $U \not \in \Delta$
while $\gamma_{!}X ([n])$ for $[n] \in \Delta$ is given by the pushout on the left below; 
$\gamma_{\**}X(U)$ is given by the pullback on the right below.
\begin{equation}\label{GAMMASTAR_EQ}
\begin{tikzcd}
	X(\eta) \ar{r} \ar{d} \arrow[dr, phantom, "\ulcorner", very near start]  &
	\pi_0 X(\eta) \ar{d}
&& 
	\gamma_{\**}X(U) \ar{r} \ar{d} & X(U) \ar{d}
\\
	X([n]) \ar{r} & \gamma_! X([n]) 
&&
	\prod_{\boldsymbol{E}(U)} X_0(\eta) \ar{r} &
	\prod_{\boldsymbol{E}(U)} X(\eta)
	\arrow[lu, phantom, "\lrcorner", very near start]
\end{tikzcd}
\end{equation}

\begin{remark}\label{GAMMASH REM}
Any monomorphism $A \to B$ in $\mathsf{sdSet}^G$
such that $A(\eta) \to B(\eta)$ is an isomorphism
induces a pushout square
\begin{equation}\label{GAMMASH EQ}
\begin{tikzcd}
	A \ar{r} \ar{d} \arrow[dr, phantom, "\ulcorner", very near start]&
	\gamma^{\**}\gamma_! A \ar{d}
\\
	B \ar{r} & \gamma^{\**}\gamma_! B 
\end{tikzcd}
\end{equation}
\end{remark}

The assignment
$U \mapsto \prod_{\boldsymbol{E}(U)} Y(\eta)$
is the $0$-coskeleton of $Y$ in the dendroidal Reedy direction
(see \eqref{SKELADJ EQ}).
To avoid confusion with the coskeleton in the simplicial direction,
and since $\eta$ is the only tree of degree $0$, 
we denote this dendroidal coskeleton by $\mathsf{csk}_{\eta} Y$. We then have the following.

\begin{proposition}\label{CSKETALT PROP}
Let $X \in \mathsf{sdSet}^G$. Then:
\begin{itemize}
	\item[(i)] if $X \in \mathsf{sdSet}^G$ is dendroidal Reedy fibrant
then so is $\gamma^{\**}\gamma_{\**} X$;
	\item[(ii)] regarding $X_0$ as a simplicially constant object of $\mathsf{sdSet}^G$, the left square below is a pullback;
	\item[(iii)] if $A\to A'$ is a map in $\mathsf{dSet}^G$
	such that $A(\eta) \simeq A'(\eta)$,
	the right square below is a pullback.
\end{itemize}
\[
\begin{tikzcd}
	\gamma^{\**} \gamma_{\**}X \ar{r} \ar{d} & X \ar{d}
&&
	\gamma^{\**} \gamma_{\**}X \ar{d} \ar{r}(A') &
	X(A')\ar{d}
\\
	\mathsf{csk}_{\eta} X_0 \ar{r} &
	\mathsf{csk}_{\eta} X
	\arrow[lu, phantom, "\lrcorner", very near start]
&&
	\gamma^{\**} \gamma_{\**}X \ar{r}(A) &
	X(A) \arrow[lu, phantom, "\lrcorner", very near start]
\end{tikzcd}
\]
\end{proposition}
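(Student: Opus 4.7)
The proof proceeds in the order (ii), (iii), (i), since each part will follow from the previous.

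For (ii), since limits in $\mathsf{sdSet}^G$ are computed pointwise, it suffices to verify the pullback levelwise. For $U \neq \eta$, we have $\mathsf{csk}_{\eta}Y(U) = \prod_{\boldsymbol{E}(U)} Y(\eta)$, so the square at $U$ becomes precisely the defining pullback \eqref{GAMMASTAR_EQ} of $\gamma_{\**}X(U)$. For $U = \eta$, both $\mathsf{csk}_{\eta}X(\eta) = X(\eta)$ and $\mathsf{csk}_{\eta}X_0(\eta) = X_0(\eta)$, so the square reduces to the pullback of the inclusion $X_0(\eta) \hookrightarrow X(\eta)$ along the identity on $X(\eta)$, whose pullback is $X_0(\eta) = \gamma^{\**}\gamma_{\**}X(\eta)$.

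For (iii), the evaluation functors $Y \mapsto Y(A)$ and $Y \mapsto Y(A')$ from Notation \ref{UNILIMDEN NOT} preserve limits. Applying them to the pullback square of (ii) yields two pullback squares in $\mathsf{sSet}$ whose corners are the values of $\gamma^{\**}\gamma_{\**}X$, $X$, $\mathsf{csk}_{\eta}X$, and $\mathsf{csk}_{\eta}X_0$ at $A$ and at $A'$. Since $\mathsf{csk}_{\eta}Y$ depends only on $Y(\eta)$, the hypothesis $A(\eta) \simeq A'(\eta)$ yields isomorphisms $\mathsf{csk}_{\eta}X(A) \simeq \mathsf{csk}_{\eta}X(A')$ and $\mathsf{csk}_{\eta}X_0(A) \simeq \mathsf{csk}_{\eta}X_0(A')$; pasting the two pullback squares along these isomorphisms gives the pullback square claimed in (iii).

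For (i), set $Z := \gamma^{\**}\gamma_{\**}X$. We must check that for each $T \in \Omega_G$ the matching map $Z(\Omega[T]) \to Z(\partial\Omega[T])$ is a Kan fibration. When $T$ is not of the form $G/H\cdot\eta$, every edge of $T$ lies in some proper face, so $\partial\Omega[T](\eta) \to \Omega[T](\eta)$ is an isomorphism and (iii) applies to the boundary inclusion. The resulting pullback square exhibits $Z(\Omega[T]) \to Z(\partial\Omega[T])$ as the pullback of $X(\Omega[T]) \to X(\partial\Omega[T])$, which is a Kan fibration by the dendroidal Reedy fibrancy of $X$; hence so is its pullback. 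In the remaining case $T = G/H\cdot\eta$, we have $\partial\Omega[T] = \emptyset$ and $Z(\Omega[T]) = X_0(\eta)^H$ is a discrete simplicial set, so the matching map to the point is trivially a Kan fibration.

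The main obstacle will be to verify that this pullback argument fully respects the Reedy-admissibility refinements of Theorem \ref{REEDYADM THM}, including the $G$-graph subgroup fixed-point conditions on matching maps. This should follow routinely from naturality of the pullback construction with respect to the equivariant structure, since taking graph-subgroup fixed points commutes with forming pullbacks, but care is needed in aligning these fixed-point comparisons with the pullback formed in (ii).
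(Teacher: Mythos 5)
Your treatment of parts (ii) and (iii) matches the paper's. For (ii), both you and the paper reduce to the pointwise formula $(\mathsf{csk}_{\eta}Y)(U) = \prod_{\boldsymbol{E}(U)} Y(\eta)$ and compare with the defining pullback \eqref{GAMMASTAR_EQ}; your separate treatment of $U=\eta$ is unnecessary since the formula already specializes to $Y(\eta)$ there. For (iii), both you and the paper apply the limit-preserving evaluation functor and use that $(\mathsf{csk}_{\eta}Y)(B)$ depends only on $B(\eta)$ --- the paper just records the explicit formula $(\mathsf{csk}_{\eta}Y)(B) = \bigl(\prod_{B(\eta)} Y(\eta)\bigr)^G$ where you argue more informally.

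Your proof of (i) takes a genuinely different route that is also correct. You verify directly that $Z(\Omega[T]) \to Z(\partial\Omega[T])$ is a Kan fibration for every $T \in \Omega_G$, applying (iii) to the boundary inclusion in the non-stick case (where $\partial\Omega[T](\eta) \to \Omega[T](\eta)$ is indeed an isomorphism) and handling sticks $G/H\cdot\eta$ separately. The paper instead argues conceptually: since $X$ is dendroidal fibrant the natural map $X \to \mathsf{csk}_{\eta}X$ is a dendroidal fibration with fibrant target, so by the pullback from (ii) the map $\gamma^{*}\gamma_{*}X \to \mathsf{csk}_{\eta}X_0$ is a fibration, and it then only remains to observe that $\mathsf{csk}_{\eta}X_0$ is dendroidal fibrant, which reduces to its $\eta$-matching map being $X_0(\eta) \to *$. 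Both arguments are comparable in length; yours avoids the formal coskeletal considerations in exchange for a case split and a direct check against generating trivial cofibrations.

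Your closing paragraph is more cautious than it needs to be. The worry about the $G$-graph subgroup conditions of Theorem \ref{REEDYADM THM} is already resolved by your choice of characterization: Proposition \ref{SDSETJRCOF PROP} identifies the generating trivial cofibrations $(\Lambda^i[n]\to\Delta[n])\square(\partial\Omega[T]\to\Omega[T])$ for $T \in \Omega_G$, and Remark \ref{TWOVARADJ REM} shows that having the right lifting property against these is equivalent to $Z(\Omega[T]) \to Z(\partial\Omega[T])$ being a Kan fibration for every $T \in \Omega_G$. Since $T$ ranges over all $G$-trees --- that is, over all underlying trees together with all $G$-graph subgroup actions --- the fixed-point conditions are already encoded in the lifting criterion you check, and nothing further needs aligning.
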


\begin{proof}
(ii) is immediate from the observation that
$\left(\mathsf{csk}_{\eta} Y\right)(U)=
\prod_{\boldsymbol{E}(U)} Y(\eta)$.
Moreover, it readily follows that
for $B \in \mathsf{dSet}^G$ it is
$(\mathsf{csk}_{\eta} Y)(B)=
\left(\prod_{B(\eta)} Y(\eta) \right)^G$, where $(-)^G$ denotes fixed points for the conjugation action. Hence, since $(-)(B)$ preserves pullbacks, 
(iii) follows from (ii).

For (i), formal considerations imply that if $X$ is dendroidal (Reedy) fibrant then the map 
$X \to \mathsf{csk}_{\eta} X$ is a dendroidal fibration
(and $\mathsf{csk}_{\eta} X$ is dendroidal fibrant).
Hence, the result will follow provided that 
$\mathsf{csk}_{\eta} X_0$ is also dendroidal fibrant.
But since $\mathsf{csk}_{\eta} X_0$ is $\eta$-coskeletal, 
it suffices to check that the $\eta$-matching map
$(\mathsf{csk}_{\eta} X_0)(\eta) \to 
M_{\eta}(\mathsf{csk}_{\eta} X_0)$
is a $G$-fibration in $\mathsf{sSet}^G$.
But this is simply $X_0(\eta) \to \**$ regarded as a map of constant simplicial sets, and the result follows.
\end{proof}

\begin{notation}
In the remainder of the section we write $\mathcal{I}'$ for the set of maps
\begin{equation}\label{BOUNDRED EQ}
	\left( \partial \Delta[n] \to \Delta [n] \right)
\square
	\left( \partial \Omega[T] \to \Omega [T] \right),
\qquad
	n\geq 0, T\in \Omega_G, T \not = G/H \cdot \eta.
\end{equation}
Further, we note that Remark \ref{GAMMASH REM} applies to these maps.
\end{notation}

In what follows, we say a map in $\mathsf{PreOp}^G$ is a normal monomorphism if it is one in $\mathsf{sdSet}^G$.

\begin{lemma}\label{GENSET LEM}
	The normal monomorphisms in $\mathsf{PreOp}^G$ are the saturation of the set of maps
$\{\emptyset \to G/H\cdot \eta \colon H \leq G\} \cup \gamma_! (\mathcal{I}')$.
\end{lemma}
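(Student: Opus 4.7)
The plan is to verify both inclusions between the saturation $\mathcal{S}$ of the listed set and the class $\mathcal{N}$ of normal monomorphisms in $\mathsf{PreOp}^G$. For $\mathcal{S} \subseteq \mathcal{N}$, the maps $\emptyset \to G/H \cdot \eta$ are patently normal monomorphisms. For $f \in \mathcal{I}'$, the assumption $T \not\simeq G/H \cdot \eta$ forces $\partial \Omega[T](\eta) = \boldsymbol{E}(T) = \Omega[T](\eta)$, so $f$ is an isomorphism on the $\eta$-level; Remark \ref{GAMMASH REM} then realizes $\gamma^{*}\gamma_!(f)$ as a pushout of $f$ in $\mathsf{sdSet}^G$, exhibiting it as a normal monomorphism, and hence so is $\gamma_!(f)$ by the definition of normality in $\mathsf{PreOp}^G$.

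For the reverse inclusion, I would factor an arbitrary normal monomorphism $f \colon A \to B$ in $\mathsf{PreOp}^G$ as $A \to A' \to B$, where $A'$ agrees with $B$ at $\eta$ and with $A$ at every other $U \in \Omega$. Letting $e_!^{\eta} \colon \Set^G \to \mathsf{PreOp}^G$ denote the left adjoint to evaluation at $\eta$ (so $e_!^{\eta}(G/H) \simeq G/H \cdot \eta$), the object $A'$ is realized as the pushout $A \cup_{e_!^{\eta}(A(\eta))} e_!^{\eta}(B(\eta))$. Decomposing the $G$-set $B(\eta) \setminus A(\eta)$ into orbits $\coprod_i G/H_i$ realizes $A \to A'$ as a transfinite composition of pushouts of the generators $\emptyset \to G/H_i \cdot \eta$, and hence places it in $\mathcal{S}$.

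The main obstacle is the residual factor $A' \to B$, which is a normal monomorphism that is the identity at the $\eta$-level; the claim is that it lies in the saturation of $\gamma_!(\mathcal{I}')$. Since $\gamma_! \gamma^{*} = \Id$ and $\gamma_!$ preserves colimits, it suffices to check that $\gamma^{*}(A') \to \gamma^{*}(B)$ lies in the saturation of $\mathcal{I}'$ in $\mathsf{sdSet}^G$. For this I would invoke the cellular decomposition of normal monomorphisms in $\mathsf{sdSet}^G$ implicit in Proposition \ref{SDSETJRCOF PROP} and made explicit by the generalized Reedy structure of Appendix \ref{EQREED AP}: every such map is a transfinite composition of pushouts of cells of the form $(\partial \Delta[n] \to \Delta[n]) \square (\partial \Omega[T] \to \Omega[T])$, which may be organized to be attached in order of increasing dendroidal dimension so that each attaching pushout remains a subpresheaf of the codomain. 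Cells with $T \simeq G/H \cdot \eta$ attach non-degenerate dendrices only at the $\eta$-level, so since $\gamma^{*}(A') \to \gamma^{*}(B)$ is the identity there, no such cells are required, and every attaching cell must lie in $\mathcal{I}'$, completing the argument.
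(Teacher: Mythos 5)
Your proof is correct and takes essentially the same route as the paper's: both rely on the cellular filtration of the underlying map $\gamma^{*}(A\to B)$ in $\mathsf{sdSet}^G$ (using the generating cofibrations from Proposition \ref{SDSETJRCOF PROP}), the observation that discreteness of $B(\eta)$ rules out any cell of the form $(\partial\Delta[n]\to\Delta[n])\square(\varnothing\to\Omega[G/H\cdot\eta])$ with $n\geq 1$, and the fact that $\gamma_!$ preserves colimits together with $\gamma_!\gamma^{*}\simeq\mathrm{Id}$. The one real departure is that you split the filtration explicitly into an $\eta$-stage $A\to A'$ followed by the rest, whereas the paper treats the filtration as a single transfinite composition; that is a harmless reorganization.

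There is, however, a genuine slip in your description of $A'$. You claim $A'$ ``agrees with $B$ at $\eta$ and with $A$ at every other $U\in\Omega$,'' but no such presheaf exists: for a linear tree $[n]$ with $n\geq 1$ the degeneracy $[n]\to\eta$ would induce a structure map $A'(\eta)=B(\eta)\to A'([n])=A([n])$, and the degeneracies of the new colors $B(\eta)\setminus A(\eta)$ have nowhere to go in $A([n])$. The object you actually construct, $A'=A\cup_{e_!^{\eta}(A(\eta))}e_!^{\eta}(B(\eta))$, is instead the coproduct $A\amalg\coprod_i G/H_i\cdot\eta$, and so at a linear tree $[n]$ one has $A'([n])=A([n])\amalg\coprod_i G/H_i$, not $A([n])$. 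This does not damage your argument, since the only properties you actually use are that $A'(\eta)=B(\eta)$ and that $A\to A'$ is built by attaching the cells $\varnothing\to G/H_i\cdot\eta$, both of which hold for the constructed $A'$; but the stated description should be corrected.
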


\begin{proof}
	If $A \to B$ in $\mathsf{PreOp}^G$ is a normal monomorphism, then using the cellular filtration in $\mathsf{sdSet}^G$ one can write 
	$\gamma^{\**}(A \to B)$ as a transfinite composition of pushouts of maps in 
	$\{\emptyset \to G/H\cdot \eta\} \cup \mathcal{I}'$, 
	and hence $\gamma_!\gamma^{\**}(A \to B) \simeq (A \to B)$ can similarly be written as a transfinite composition of pushouts of maps in 
	$\{\emptyset \to G/H\cdot \eta\} \cup \gamma_!\left(\mathcal{I}'\right)$.
	The remaining claim that the maps in $\{\emptyset \to G/H\cdot \eta\} \cup \gamma_!(\mathcal{I}')$ are normal monomorphisms follows from the pushouts \eqref{GAMMASH EQ}.
\end{proof}

\begin{lemma}\label{TRIVFIB LEM}
	Any map in $\mathsf{PreOp}^G$ which has the right lifting property against all normal monomorphisms in $\mathsf{PreOp}^G$
	is a joint equivalence in $\mathsf{sdSet}^G$.
\end{lemma}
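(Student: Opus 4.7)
The plan is to unpack the hypothesis via the $\gamma_! \dashv \gamma^*$ adjunction, reduce to a statement about $0$-simplices in $\mathsf{dSet}^G$, and then promote the result to the desired joint equivalence in $\mathsf{sdSet}^G$.

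Suppose $f: X \to Y$ in $\mathsf{PreOp}^G$ has the right lifting property against all normal monomorphisms. By Lemma \ref{GENSET LEM} and the adjunction, this hypothesis splits into: (a) the map $X(\eta)^H \to Y(\eta)^H$ is surjective for every subgroup $H \leq G$; and (b) $\gamma^*(f)$ has the right lifting property against every map in $\mathcal{I}'$ in $\mathsf{sdSet}^G$. Specializing (b) to $n = 0$ and combining with (a) (using $\partial \Omega[G/H\cdot\eta] = \emptyset$), the induced map $X_0 \to Y_0$ in $\mathsf{dSet}^G$ has the right lifting property against every generating cofibration $\partial\Omega[T] \to \Omega[T]$, $T \in \Omega_G$. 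Hence $X_0 \to Y_0$ is a trivial fibration in $\mathsf{dSet}^G$, and in particular a weak equivalence in the $G$-$\infty$-operad model structure of Theorem \ref{DSETGMODEL THM}. Moreover, combining (a) and (b) directly (the $T = G/H\cdot\eta$ case using that a surjection of discrete simplicial sets is automatically a Kan fibration) shows that $\gamma^*(f)$ is a dendroidal Reedy fibration in $\mathsf{sdSet}^G$.

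The main obstacle is to promote the $\mathsf{dSet}^G$-equivalence $X_0 \to Y_0$ to a joint equivalence of $\gamma^*(f)$ in $\mathsf{sdSet}^G$. The approach uses the Quillen equivalence $c_! \dashv (-)_0$ of Theorem \ref{INC0AGJ THM} combined with Corollary \ref{SDSETG COR}. Concretely, choose dendroidal Reedy fibrant replacements $X \to X^\flat$, $Y \to Y^\flat$ in $\mathsf{sdSet}^G$ fitting into a commutative square with $\gamma^*(f)$ and an induced map $\tilde f: X^\flat \to Y^\flat$; by 2-out-of-3 it suffices to show $\tilde f$ is a joint equivalence. Corollary \ref{SDSETG COR}(iv) applied to the fibrant $X^\flat$ and $Y^\flat$ gives that the counit maps $c_!(X^\flat_0) \to X^\flat$ and $c_!(Y^\flat_0) \to Y^\flat$ are dendroidal equivalences, which, being the derived counits of the Quillen equivalence $c_! \dashv (-)_0$, translate (after suitable cofibrant normalization in $\mathsf{dSet}^G$) to weak equivalences $X_0 \to X^\flat_0$ and $Y_0 \to Y^\flat_0$ in $\mathsf{dSet}^G$. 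Combining with the equivalence $X_0 \to Y_0$ established above via 2-out-of-3 in $\mathsf{dSet}^G$ yields that $X^\flat_0 \to Y^\flat_0$ is a weak equivalence in $\mathsf{dSet}^G$; Corollary \ref{SDSETG COR}(iii) applied to the dendroidal Reedy fibrant pair $X^\flat, Y^\flat$ then produces the joint equivalence $\tilde f$, and hence by a final 2-out-of-3 the original $\gamma^*(f)$ is a joint equivalence, as required.
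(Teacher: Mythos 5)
The reduction you carry out at the start is sound: $X_0\to Y_0$ is indeed a trivial fibration in $\mathsf{dSet}^G$, and $\gamma^{\**}(f)$ is indeed a dendroidal Reedy fibration. But the promotion step has a gap. You take dendroidal Reedy fibrant replacements $\gamma^{\**}X \to X^\flat$ and $\gamma^{\**}Y \to Y^\flat$ and assert, invoking the derived counit of $c_!\dashv(-)_0$, that the induced maps $X_0\to X^\flat_0$ and $Y_0 \to Y^\flat_0$ are weak equivalences in $\mathsf{dSet}^G$. This is not justified. The replacement map $\gamma^{\**}X\to X^\flat$ is a dendroidal Reedy trivial cofibration, meaning it induces Kan equivalences $\gamma^{\**}X(U)\to X^\flat(U)$ for each $U\in\Omega$; but a Kan equivalence may change the set of $0$-simplices arbitrarily, so nothing is asserted about $X_0(U)\to X^\flat_0(U)$. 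Corollary~\ref{SDSETG COR}(iv) does give that $c_!(X^\flat_0)\to X^\flat$ is a joint equivalence, but to deduce from this that $X_0\to X^\flat_0$ is a $\mathsf{dSet}^G$-equivalence you would need the counit $c_!(X_0)\to \gamma^{\**}X$ itself to be a joint equivalence, which is the derived-counit statement at the \emph{non-fibrant} object $\gamma^{\**}X$ and hence unavailable.

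In fact your own argument can be completed much more directly, without any fibrant replacement. The observation you made for $n=0$ works for every $n$: since $\emptyset\to\Delta[n]$ is built cellularly from the boundary inclusions $\partial\Delta[m]\to\Delta[m]$, right lifting against $\mathcal{I}'$ already yields right lifting of $\gamma^{\**}(f)$ against $\Delta[n]\times(\partial\Omega[T]\to\Omega[T])$ for all $n\geq 0$ and all $T\neq G/H\cdot\eta$; combined with observation (a) and the fact that $X(\eta)$, $Y(\eta)$ are discrete (so that $X_n(\eta)^H=X_0(\eta)^H$), this shows that every level $X_n\to Y_n$ is a trivial fibration in $\mathsf{dSet}^G$. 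Hence $\gamma^{\**}(f)$ is a weak equivalence in the simplicial Reedy model structure, which is contained in the class of joint weak equivalences by Proposition~\ref{COMBMODSTR PROP}. For comparison, the paper's own proof takes a different route entirely, adapting \cite[Lemma 8.12]{CM13a}: one multiplies with a simplicially constant normalization $E_\infty\to\**$ to reduce to the case of normal $X,Y$, then uses the lifting hypothesis once to produce a section $s\colon Y\to X$ and once more, against the normal monomorphism $X\amalg X\to X\otimes J$, to produce a homotopy $sp\sim\mathrm{id}_X$, exhibiting $p$ as a homotopy equivalence and hence a joint equivalence.
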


\begin{proof}
We simply adapt the proof of \cite[Lemma 8.12]{CM13a} mutatis mutandis. 

Choose a normalization $E_{\infty}$ of $\**$ in 
$\mathsf{dSet}^G$, i.e. a normal object such that 
$E_{\infty} \to \**$ is a trivial fibration. 
Regarding $E_{\infty}$ as a simplicially constant object in $\mathsf{sdSet}^G$, a map $X\to Y$ in $\mathsf{PreOp}^G$ will have the right lifting property against all normal monomorphisms iff so does 
$E_{\infty}\times (X\to Y)$, so that one is free to assume that $X,Y$ are normal.

One is thus free to pick a section $s\colon Y \to X$
of $p\colon X\to Y$ and,
regarding $J \in \mathsf{dSet}^G$ as a simplicially constant object of $\mathsf{sdSet}^G$,
our assumption yields the lift below, showing that $p$ is a homotopy equivalence (the implicit claim that $X \otimes J$ is a cylinder object follows from \cite[Prop. 7.25]{Per18}, which implies $X \amalg X \to X \otimes J$ is a normal monomorphism, and \cite[Thm. 7.1]{Per18}, which implies the inclusions $X \to X \otimes J$ are $G$-inner anodyne on each simplicial level, thus dendroidal equivalences and hence joint equivalences).
\begin{equation}
\begin{tikzcd}[column sep = 4em]
	X \amalg X \ar{r}{(id_X,sp)} \ar{d} &
	X \ar{d}{p}
\\
	X \otimes J \ar[dashed]{ru} \ar{r} & Y
\end{tikzcd}
\end{equation}
\end{proof}

\begin{theorem}\label{PREOPMOD THM}
	The category $\mathsf{PreOp}^G$ of $G$-preoperads has a model structure such that
	\begin{itemize}
		\item the cofibrations are the normal monomorphisms;
		\item the weak equivalences are the maps 
		that become Rezk/joint equivalences when regarded as maps in 
		$\mathsf{sdSet}^G$.
	\end{itemize}
\end{theorem}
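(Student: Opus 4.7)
The plan is to invoke Jeff Smith's recognition principle for cofibrantly generated model structures on locally presentable categories. Recall that this states: for a locally presentable $\mathcal C$, a set $I$ of morphisms, and a class $W$ closed under $2$-out-of-$3$ and retracts, there is a cofibrantly generated model structure with weak equivalences $W$ and cofibrations $\mathrm{cof}(I)$ provided that (i) $W$ is an accessible class, (ii) $I\text{-inj} \subseteq W$, and (iii) $W \cap \mathrm{cof}(I)$ is closed under pushouts and transfinite composition. I apply this with $I = \{\emptyset \to G/H \cdot \eta\}_{H \leq G} \cup \gamma_!(\mathcal{I}')$ the set furnished by Lemma \ref{GENSET LEM}, and $W$ the prescribed class of maps $f$ in $\mathsf{PreOp}^G$ such that $\gamma^{\ast}(f)$ is a joint equivalence in $\mathsf{sdSet}^G$.

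The easy hypotheses are dispatched as follows. First, $\mathsf{PreOp}^G$ is locally presentable, being a reflective subcategory of the presheaf category $\mathsf{sdSet}^G$ via the adjunction $\gamma_! \dashv \gamma^{\ast}$. The class $W$ inherits $2$-out-of-$3$, retract closure, and accessibility directly from the corresponding properties of joint equivalences in the (combinatorial) joint Reedy model structure on $\mathsf{sdSet}^G$, so (i) holds. Hypothesis (ii) is precisely the content of Lemma \ref{TRIVFIB LEM}.

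The essential point is hypothesis (iii). The crucial observation is that $\gamma^{\ast}$, being the middle of the triple adjunction $\gamma_! \dashv \gamma^{\ast} \dashv \gamma_{\ast}$, preserves all colimits. Combined with Remark \ref{GAMMASH REM} (which applies to the maps in $\mathcal{I}'$ by construction), one verifies that $\gamma^{\ast}$ sends each element of $I$ to a normal monomorphism in $\mathsf{sdSet}^G$: the maps $\emptyset \to G/H \cdot \eta$ lie in the generating set \eqref{JOINTCOF EQ}, while each $\gamma^{\ast}\gamma_!(A \to B)$ is exhibited by \eqref{GAMMASH EQ} as a pushout of the normal monomorphism $A \to B$. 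By colimit preservation, $\gamma^{\ast}$ therefore carries all of $\mathrm{cof}(I)$ into the normal monomorphisms of $\mathsf{sdSet}^G$. Hence for $f \in W \cap \mathrm{cof}(I)$, the map $\gamma^{\ast}(f)$ is a trivial cofibration in the joint Reedy model structure; since trivial cofibrations in any model category are closed under pushouts and transfinite compositions, and $\gamma^{\ast}$ preserves both, pushouts and transfinite compositions of $f$ in $\mathsf{PreOp}^G$ are sent by $\gamma^{\ast}$ to trivial cofibrations, hence remain in $W$. Membership in $\mathrm{cof}(I)$ is automatic by saturation.

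The main obstacle is the verification of hypothesis (iii): although conceptually clean, it relies on the somewhat special feature that $\gamma^{\ast}$ preserves \emph{all} colimits (not merely connected ones), a consequence of the triple adjunction, together with the saturation bookkeeping that ensures $\gamma^{\ast}(\mathrm{cof}(I))$ consists of normal monomorphisms. Once this is in hand, Smith's theorem immediately produces the model structure with the stated cofibrations and weak equivalences.
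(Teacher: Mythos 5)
Your proposal is correct and takes essentially the same approach as the paper: an application of Smith's recognition theorem with the same generating set $\{\emptyset \to G/H \cdot \eta\}_{H\leq G} \cup \gamma_!(\mathcal{I}')$ from Lemma \ref{GENSET LEM}, using Lemma \ref{TRIVFIB LEM} for $I\text{-inj}\subseteq W$ and accessibility of $W$ as the preimage of joint equivalences under $\gamma^{\**}$. You usefully spell out the closure of $W\cap\mathrm{cof}(I)$ under pushout and transfinite composition via the fact that $\gamma^{\**}$ preserves all colimits and sends $\mathrm{cof}(I)$ into normal monomorphisms, a step the paper dispatches by saying it is ``inherited from $\mathsf{sdSet}^G$'' without elaboration.
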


\begin{proof}
We repeat the proof of the non-equivariant analogue \cite[Thm. 8.13]{CM13a}, applying J. Smith's theorem \cite[Thm. 1.7]{Bek00} with the required set of generating cofibrations the 
set $\{\emptyset \to G/H\cdot \eta | H \leq G\} \cup \gamma_! (\mathcal{I}')$ given by Lemma \ref{GENSET LEM}.
Indeed, conditions c0 and c2 in \cite{Bek00} are inherited from 
$\mathsf{sdSet}^G$ and c1 follows from Lemma \ref{TRIVFIB LEM}.
The technical ``solution set'' condition c3 follows from 
\cite[Prop. 1.15]{Bek00} since weak equivalences are accessible, being the preimage by $\gamma^{\**}$ of the weak equivalences in 
$\mathsf{sdSet}^G$ 
(see \cite[Cor. A.2.6.5]{Lur09} and \cite[Cor. A.2.6.6]{Lur09}). 
\end{proof}

\begin{definition}
      \label{SEGALOP_DEF}
	Following \cite[Def. 5.5]{CM13b} (compare with \cite[Def. 8.1]{CM13a}), an equivariant pre-operad $X \in \mathsf{PreOp}^G$ is called an \textit{equivariant Segal operad} if the maps
	\[
	\gamma^{\**} X (T) \to \gamma^{\**} X (Sc(T))
	\]
are Kan equivalences in $\mathsf{sSet}$ for every $T \in \Omega_G$.
\end{definition}

\begin{remark}
      \label{SEGALOP_REM}
Combining Corollary \ref{FIB_PREOP_COR} with the characterization in Proposition \ref{DSSCHAR_PROP}
one obtains that the fibrant objects of $\mathsf{PreOp}^G$ are the equivariant Segal operads $X$ such that
$\gamma^{\**} X$ is dendroidal Reedy fibrant.
\end{remark}

\begin{theorem}\label{ANOQUEQUIV THM}
The adjunction
\[
	\gamma^{\**} \colon \mathsf{PreOp}^G	
\rightleftarrows
	\mathsf{sdSet}^G \colon \gamma_{\**}
\]
is a Quillen equivalence.
\end{theorem}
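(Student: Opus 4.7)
The strategy is to apply the following Quillen-equivalence criterion: given a Quillen adjunction $L \colon \mathcal{C} \rightleftarrows \mathcal{D} \colon R$ in which $L$ reflects all weak equivalences and the counit $\epsilon_Y \colon LRY \to Y$ is a weak equivalence for every fibrant $Y$, the adjunction is a Quillen equivalence. Indeed, for cofibrant $X$ and fibrant $Y$, any map $f \colon LX \to Y$ factors under adjunction as $\epsilon_Y \circ L(f^{\flat})$, so by $2$-out-of-$3$ together with the reflection of weak equivalences, $f$ is a weak equivalence iff its adjunct $f^{\flat} \colon X \to RY$ is. In our situation the Quillen adjunction itself is formal: by Theorem \ref{PREOPMOD THM}, the cofibrations and weak equivalences in $\mathsf{PreOp}^G$ are \emph{defined} via $\gamma^{\**}$, so $\gamma^{\**}$ preserves cofibrations and both preserves and reflects all weak equivalences. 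The proof thus reduces to the single claim that for every fibrant $Y \in \mathsf{sdSet}^G$ the counit $\gamma^{\**}\gamma_{\**} Y \to Y$ is a Rezk equivalence.

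To verify this I would first observe that both objects are dendroidal Reedy fibrant: $Y$ as a Rezk-fibrant object, via Proposition \ref{JOINTFIBCHAR PROP}(i), and $\gamma^{\**}\gamma_{\**} Y$ via Proposition \ref{CSKETALT PROP}(i). Corollary \ref{SDSETG COR}(iii) then applies and reduces the claim to checking that the induced map $(\gamma^{\**}\gamma_{\**} Y)_0 \to Y_0$ is a weak equivalence in $\mathsf{dSet}^G$.

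Finally, a direct evaluation of the pullback formula \eqref{GAMMASTAR_EQ} at simplicial level zero yields
\[
(\gamma_{\**} Y)(U)_0 \;=\; Y_0(U) \times_{\prod_{\boldsymbol{E}(U)} Y_0(\eta)} \prod_{\boldsymbol{E}(U)} Y_0(\eta) \;=\; Y_0(U),
\]
because the comparison map $\prod Y_0(\eta) \to \prod Y(\eta)$ is the identity in dimension zero. Thus the counit is literally an identity at level zero, and the proof concludes. No real obstacle arises along this route: essentially all the substantive work has been carried out by the preceding results --- the clever definition of the model structure in Theorem \ref{PREOPMOD THM}, which forces $\gamma^{\**}$ to reflect weak equivalences; the pullback description of $\gamma^{\**}\gamma_{\**}$ and its preservation of dendroidal Reedy fibrancy from Proposition \ref{CSKETALT PROP}; and the level-zero detection of Rezk equivalences between dendroidal Reedy fibrant objects in Corollary \ref{SDSETG COR}(iii).
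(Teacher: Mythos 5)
Your proof is correct and follows essentially the same route as the paper's: reduce via the tautological reflection of weak equivalences by $\gamma^{\**}$ to showing the counit at fibrant objects is a Rezk equivalence, deduce dendroidal Reedy fibrancy of both sides from Proposition \ref{CSKETALT PROP}(i), and conclude via Corollary \ref{SDSETG COR}(iii) together with the identification $(\gamma^{\**}\gamma_{\**}Y)_0 = Y_0$. You merely spell out the Quillen-equivalence criterion and the level-zero pullback computation in more detail than the paper, which treats both as immediate observations.
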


\begin{proof}
	It is tautological that the left adjoint $\gamma^{\**}$
	preserves and detects cofibrations and weak equivalences,
	so it suffices to show that for all fibrant
	$X \in \mathsf{sdSet}^G$
	the counit map 
	$\gamma^{\**} \gamma_{\**} X \to X$
	is a weak equivalence. 
	But by
	Proposition \ref{CSKETALT PROP}(i) both
	$\gamma^{\**} \gamma_{\**} X$ and $X$
	are dendroidal fibrant, 
	so that the result follows from 
	Corollary \ref{SDSETG COR}(iii) together with the observation that $\left(\gamma^{\**} \gamma_{\**} X\right)_0 = X_0$.
\end{proof}

\section{Equivariant dendroidal Segal spaces}
\label{EDSS_SEC}

As outlined in the introduction, one of the main aims of our overall project is to show that the model structures on 
$\mathsf{sdSet}^G$ and $\mathsf{PreOp}^G$
defined in \S \ref{CEDSS SEC} and \S \ref{PREOP SEC}
are Quillen\ equivalent to a suitable model structure on the category 
$\mathsf{sOp}^G$ of (colored) $G$-operads.
However, our present description of the weak equivalences in 
$\mathsf{sdSet}^G$ and $\mathsf{PreOp}^G$
is rather different from the description of the desired weak equivalences in $\mathsf{sOp}^G$,
which are the Dwyer-Kan equivalences, 
characterized by fully faithfulness and essential surjectivity requirements.

As such, our goal in this final main section is to prove
Theorem \ref{COMPIFFDK THM},
which states that weak equivalences between fibrant objects in either of 
$\mathsf{sdSet}^G$, $\mathsf{PreOp}^G$
do indeed admit a Dwyer-Kan type description.
Moreover, in Corollary \ref{FIB_PREOP_COR} we also characterize the fibrant objects
of $\mathsf{PreOp}^G$
(this independently extends a result that first appeared in Bergner's work \cite{Ber07}).

To do so, it is useful to consider yet another model structure on the category $\mathsf{sdSet}^G$,
whose fibrant objects are the so called
\textit{equivariant dendroidal Segal spaces},
and which ``interpolate''
between the fibrant objects 
in the categories $\mathsf{sdSet}^G$ and $\mathsf{PreOp}^G$ (see Remark \ref{INTERP REM} for a precise statement).

\subsection{The homotopy genuine operad and Dwyer-Kan equivalences}\label{HMPTYGEN SEC}

\begin{definition}
      \label{HMPTYGEN DEF}
	The \textit{equivariant Segal space model structure} on the category $\mathsf{sdSet}^G$, which we denote 
	$\mathsf{sdSet}^G_S$, 
	is the left Bousfield localization of the dendroidal Reedy model structure with respect to the equivariant Segal core inclusions 
	(see Remark \ref{RECOVDEF REM}, more specifically \eqref{RECOVDEN REM})
\[
	Sc[T] \to \Omega[T], \qquad T \in \Omega_G.
\]
\end{definition}

\begin{notation}\label{FIB_PREOP_NOT}
We will refer to the fibrant objects of
$\mathsf{sdSet}^G_S$
as \textit{equivariant dendroidal Segal spaces}, 
or just \textit{dendroidal Segal spaces}.
Further, a pre-operad $X \in \mathsf{PreOp}^G$ is called \textit{fibrant}
if $\gamma^{\**}X$ is a dendroidal Segal space
(for now this is just terminology,
foreshadowing Corollary \ref{FIB_PREOP_COR}).
\end{notation}

The following is the equivariant analogue of \cite[Cor. 5.6]{CM13a}, and is an immediate consequence of Proposition \ref{HYPER PROP} and Remark \ref{MAPSPACE REM}.
\begin{proposition}
      \label{DSSCHAR_PROP}
      Suppose $X \in \mathsf{sdSet}^G$ is dendroidal Reedy fibrant. The following are equivalent:
      \begin{enumerate}[label = (\roman*)]
      \item $X$ is an equivariant dendroidal Segal space;
      \item $X(\Omega[T]) \to X(Sc[T])$ is a trivial Kan fibration for all $T \in \Omega_G$;
	\item $X(\Omega[T]) \to X(\Lambda^{Ge}[T])$ is a trivial Kan fibration for all $T \in \Omega_G$, $e \in \boldsymbol E^{\mathsf{i}}(T)$;
      \item $X(\Omega[T]) \to X(\Lambda^E[T])$ is a trivial Kan fibration for all $T \in \Omega_G$, $G$-subsets $\varnothing \neq E \subseteq \boldsymbol E^{\mathsf{i}}(T)$;
      \item $X(\Omega[T]) \to X(\Lambda_o^E[T])$ is a trivial Kan fibration for all $T \in \Omega_G$, $G$-subsets $\varnothing \neq E \subseteq \boldsymbol E^{\mathsf{i}}(T)$.
      \end{enumerate}
\end{proposition}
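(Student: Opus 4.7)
The proof will follow directly by combining the hypersaturation result of Proposition \ref{HYPER PROP} with the ``mapping space'' perspective on dendroidal Reedy fibrancy.

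The plan is as follows. First, I would unpack (i) via the universal property of left Bousfield localization: since $\mathsf{sdSet}^G_S$ is obtained from the dendroidal Reedy model structure by localizing at the Segal core inclusions $Sc[T] \to \Omega[T]$, and since $X$ is already dendroidal Reedy fibrant, $X$ is an equivariant dendroidal Segal space iff it is local with respect to these inclusions. By Remark \ref{MAPSPACE REM}, the mapping spaces in the simplicial dendroidal Reedy model structure are computed by $\underline{\mathsf{sdSet}}^G(A,X) \simeq X(A)$ (up to the evident identification), so locality translates into the requirement that $X(\Omega[T]) \to X(Sc[T])$ be a Kan equivalence for every $T \in \Omega_G$. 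On the other hand, dendroidal Reedy fibrancy of $X$ implies that $X(B) \to X(A)$ is a Kan fibration for every normal monomorphism $A \to B$ in $\mathsf{dSet}^G$, so ``Kan equivalence'' can be upgraded to ``trivial Kan fibration''. This proves (i) $\Leftrightarrow$ (ii).

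Next I would establish the equivalence of (ii)--(v) via a single hypersaturation argument. By Proposition \ref{HYPER PROP}, the four classes of maps
\[
\{Sc[T] \to \Omega[T]\}, \quad \{\Lambda^{Ge}[T] \to \Omega[T]\}, \quad \{\Lambda^{E}[T] \to \Omega[T]\}, \quad \{\Lambda_o^{E}[T] \to \Omega[T]\}
\]
generate the same hypersaturated class. By Remark \ref{HYPERMODEL REM}, since $X$ is dendroidal Reedy fibrant, the collection of normal monomorphisms $A \to B$ such that $X(B) \to X(A)$ is a trivial Kan fibration in $\mathsf{sSet}$ is itself hypersaturated. Consequently, this collection contains one of the four sets above iff it contains all of them, giving the equivalence of (ii), (iii), (iv), (v).

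I do not anticipate any genuine obstacle here; the proof is essentially formal once Propositions \ref{HYPER PROP} and Remark \ref{HYPERMODEL REM} are in hand. The only mild subtlety worth noting explicitly is the translation step between ``locality'' and ``the map $X(\Omega[T]) \to X(Sc[T])$ is a trivial fibration'', which relies on $X$ being dendroidal Reedy fibrant (so that the relevant maps are already fibrations, hence trivial fibrations as soon as they are weak equivalences) together with the simplicial enrichment from Remark \ref{MAPSPACE REM}.
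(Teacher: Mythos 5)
Your proposal is correct and follows exactly the route the paper intends: the paper omits the proof, stating only that the proposition "is an immediate consequence of Proposition \ref{HYPER PROP} and Remark \ref{MAPSPACE REM}," and you have filled in precisely those details (the locality translation via the simplicial enrichment for (i) $\Leftrightarrow$ (ii), and Proposition \ref{HYPER PROP} together with the hypersaturation observation of Remark \ref{HYPERMODEL REM} for the remaining equivalences).
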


\begin{proposition}\label{DSSFIB_PROP}
If $X \in \mathsf{sdSet}^G$ is a dendroidal Segal space, then
$\gamma_{\**}X \in \mathsf{PreOp}^G$ is fibrant.
\end{proposition}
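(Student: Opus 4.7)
The plan is to verify the two conditions for $\gamma^{\**}\gamma_{\**}X$ to be a dendroidal Segal space given in Proposition \ref{DSSCHAR_PROP}, namely that it is dendroidal Reedy fibrant and that the Segal core evaluation maps
\[
(\gamma^{\**}\gamma_{\**}X)(\Omega[T]) \to (\gamma^{\**}\gamma_{\**}X)(Sc[T]),
\qquad T \in \Omega_G,
\]
are trivial Kan fibrations in $\mathsf{sSet}$. By Notation \ref{FIB_PREOP_NOT}, this is exactly what is required for $\gamma_{\**}X$ to be fibrant in $\mathsf{PreOp}^G$.

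The first condition is immediate from Proposition \ref{CSKETALT PROP}(i), which states that $\gamma^{\**}\gamma_{\**}$ preserves dendroidal Reedy fibrant objects, and any dendroidal Segal space $X$ is in particular dendroidal Reedy fibrant by definition.

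For the second condition, I would invoke Proposition \ref{CSKETALT PROP}(iii) applied to the inclusion $A = Sc[T] \hookrightarrow \Omega[T] = A'$. The hypothesis that $A(\eta) \simeq A'(\eta)$ is satisfied here, since both $Sc[T]$ and $\Omega[T]$ have the same underlying edge set (namely $\boldsymbol{E}(T)$ as a $G$-set); this is essentially by construction, as the Segal core contains every stick subtree $\eta$ of $T$. The resulting pullback square
\[
\begin{tikzcd}
(\gamma^{\**}\gamma_{\**}X)(\Omega[T]) \ar{r} \ar{d} & X(\Omega[T]) \ar{d}
\\
(\gamma^{\**}\gamma_{\**}X)(Sc[T]) \ar{r} & X(Sc[T])
\arrow[lu, phantom, "\lrcorner", very near start]
\end{tikzcd}
\]
then exhibits the left vertical map as a pullback of the right vertical map, which is a trivial Kan fibration by Proposition \ref{DSSCHAR_PROP}(ii) applied to the dendroidal Segal space $X$. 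Since trivial Kan fibrations are closed under pullback, the desired conclusion follows.

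The argument is essentially formal once the right tools are lined up, so I do not anticipate any serious obstacle; the only subtlety is to recognize that Proposition \ref{CSKETALT PROP}(iii) is perfectly tailored to the inclusion $Sc[T] \hookrightarrow \Omega[T]$ because Segal cores, by design, introduce no new edges.
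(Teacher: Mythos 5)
Your proof is correct and takes essentially the same route as the paper: Proposition \ref{CSKETALT PROP}(i) for dendroidal Reedy fibrancy of $\gamma^{\**}\gamma_{\**}X$, and Proposition \ref{CSKETALT PROP}(iii) applied to $Sc[T] \hookrightarrow \Omega[T]$ (valid since $Sc[T](\eta) = \Omega[T](\eta)$) to exhibit the Segal map for $\gamma^{\**}\gamma_{\**}X$ as a pullback of the trivial Kan fibration $X(\Omega[T]) \to X(Sc[T])$. You have merely spelled out the pullback-stability step that the paper leaves implicit.
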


\begin{proof}
	By Proposition \ref{CSKETALT PROP}(i) $\gamma_{\**}X$ is dendroidal fibrant. And, since 
	$Sc[T](\eta)=\Omega[T](\eta)$,
	Proposition \ref{CSKETALT PROP}(iii) shows that 
	$(\gamma^{\**}\gamma_{\**}X)(\Omega[T]) \to 
	(\gamma^{\**} \gamma_{\**}X)(Sc[T])$
	is a trivial Kan fibration.
\end{proof}


To define fully faithfulness for dendroidal Segal spaces, we now
discuss mapping spaces. Non-equivariantly, these are indexed by tuples of objects and described using corollas \cite[3.6]{CM13a}.
Equivariantly, these are indexed by ``equivariant tuples'' of objects, as suggested by $G$-corollas.


\begin{notation}\label{GCOR NOT}
Given subgroups $H_i \leq G$, $0\leq i \leq k$ such that
$H_0 \geq H_i$, $1 \leq i \leq k$ we write
$C_{\amalg_i H_0/H_i}$ for the $G$-corolla (well defined up to isomorphism)
whose orbital representation is
\[
\begin{tikzpicture}
[grow=up,auto,level distance=2.3em,every node/.style = {font=\footnotesize},dummy/.style={circle,draw,inner sep=0pt,minimum size=1.75mm}]
	\node at (0,0) {}
		child{node [dummy] {}
			child{
			edge from parent node [swap,near end] {$G/H_k$} node [name=Kn] {}}
			child{
			edge from parent node [near end] {$G/H_1$}
node [name=Kone,swap] {}}
		edge from parent node [swap] {$G/H_0$}
		};
		\draw [dotted,thick] (Kone) -- (Kn) ;
\end{tikzpicture}
\]
Writing $C_n$ for the non-equivariant corolla with $n$ leaves, we note that
$C_{\amalg_i H_0/H_i} \simeq 
G \cdot_{H_0} C_{\Sigma_i |H_0/H_i|}$,
where $C_{\Sigma_i |H_0/H_i|}$ is regarded as a (non-equivariant) corolla together with the obvious $H_0$-action.
\end{notation}

\begin{definition}\label{PROF DEF}
	Let $X\in \mathsf{dSet}^G$ be a $G$-$\infty$-operad.
	A \textit{$G$-profile on $X$} is a map
\[
	\partial \Omega[C] \to X
\]
	for some $G$-corolla $C$. 
	More explicitly, a $G$-profile is described by the following data:
	\begin{itemize}
	\item subgroups $H_i \leq G$, $0\leq i \leq k$ such that
		$H_0 \geq H_i$ for $1 \leq i \leq k$;
	\item objects $x_i \in X(\eta)^{H_i}$ for $0 \leq i \leq k$.
	\end{itemize}
	To simplify notation, we denote a $G$-profile as 
	$(x_1,\cdots,x_k;x_0)$, and refer to it as a 
	\textit{$C$-profile on $X$}.
Further, for $X \in \mathsf{sdSet}^G_S$ a dendroidal Segal space we define a \textit{$C$-profile} on $X$ 
as a $C$-profile on $X_0$.
\end{definition}

\begin{definition}\label{MAPSPACESEG DEF}
Given a dendroidal Segal space $X \in \mathsf{sdSet}^G_S$
and a $C$-profile $(x_1,\cdots,x_k ; x_0)$
on $X$ 
we define the space of maps 
$X(x_1,\cdots,x_k ; x_0) \in \mathsf{sSet}$ via the pullback square
\[
\begin{tikzcd}[column sep=4em]
	X(x_1,\cdots,x_k;x_0) \ar{r} \ar[->>]{d}&
	X(\Omega[C]) \ar[->>]{d}
\\
	\Delta[0] \ar{r}[swap]{(x_1,\cdots,x_k;x_0)} &
	\prod_{0\leq i \leq k} X(\eta)^{H_i}
	\arrow[lu, phantom, "\lrcorner", very near start]
\end{tikzcd}
\]
\end{definition}

To discuss essential surjectivity, and adapting \cite[8.8]{CM13a},
we associate to each equivariant dendroidal Segal space $X$ a discretized algebraic structure $ho(X)$, which in the equivariant setting is a genuine equivariant operad (cf. Definition \ref{GEN_OP_DEF}).

\begin{definition}\label{HMTPYGEN DEF}
	Let $X \in \mathsf{sdSet}^G$ be a dendroidal Segal space.
	The \textit{homotopy genuine operad} 
	$ho(X)\in \mathsf{dSet}_G$ is defined
by
	\[
	ho(X) = \pi_0 \left( \upsilon_{\**} \left( \gamma_{\**}X \right) \right),
    \]
where $\upsilon_{\**}$  and $\gamma_{\**}$ are defined in \eqref{DSETG_EQ} and \eqref{GAMMASTAR_EQ}.
\end{definition}

\begin{proposition}
      \label{HMTPYGEN PROP}
For any dendroidal Segal space $X \in \mathsf{sdSet}^G$ one has that 
$ho(X) \in \mathsf{dSet}_G$ is a genuine equivariant operad. 
\end{proposition}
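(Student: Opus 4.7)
The plan is to verify the strict right lifting property from Definition \ref{GEN_OP_DEF} against the maps $\upsilon_{\**}(Sc[T] \to \Omega[T])$ for each $T \in \Omega_G$. Writing $Z = \upsilon_{\**}(\gamma_{\**}X)$ so that $ho(X) = \pi_0^{\Delta} Z$, and using that $\upsilon_{\**}$ is fully faithful, I identify $Z(\upsilon_{\**}A) = (\gamma_{\**}X)(A)$ for any $A \in \mathsf{dSet}^G$; in particular this applies to $A = \Omega[T]$ and $A = Sc[T]$.

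The key input is Proposition \ref{DSSFIB_PROP}: since $\gamma_{\**}X$ is a fibrant pre-operad, Proposition \ref{DSSCHAR_PROP} (applied to $\gamma^{\**}\gamma_{\**}X$) ensures that the Segal map $(\gamma_{\**}X)(\Omega[T]) \to (\gamma_{\**}X)(Sc[T])$ is a trivial Kan fibration, whence $\pi_0$ yields a bijection $\pi_0(\gamma_{\**}X)(\Omega[T]) \xrightarrow{\sim} \pi_0(\gamma_{\**}X)(Sc[T])$.

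To promote this into the desired strict-lifting statement, I identify the two relevant Hom sets. For $\Omega[T]$, Yoneda gives $\mathsf{dSet}_G(\upsilon_{\**}\Omega[T], ho(X)) = ho(X)(T) = \pi_0 Z(T) = \pi_0 (\gamma_{\**}X)(\Omega[T])$. For $Sc[T]$, I use that $\upsilon_{\**}$ preserves pushouts along monomorphisms (Remark \ref{UPSPUSHMON REM}) and that $Sc[T]$ is assembled via such pushouts from its Segal face orbits $G \cdot_{H_V} V$, so that $\upsilon_{\**}Sc[T]$ is the corresponding colimit of representables $\Omega_G[G \cdot_{H_V} V]$. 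Hence
\[ \mathsf{dSet}_G(\upsilon_{\**}Sc[T], ho(X)) = \lim_{[V]} ho(X)(G \cdot_{H_V} V) = \lim_{[V]} \pi_0 (\gamma_{\**}X)(V)^{H_V}, \]
while separately $(\gamma_{\**}X)(Sc[T]) = \lim_{[V]} (\gamma_{\**}X)(V)^{H_V}$, both limits being over the Segal decomposition diagram.

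The main technical step is to show that $\pi_0$ commutes with this Segal limit. This will rest on three facts: the edge pieces $(\gamma_{\**}X)(\eta)^{H_e} \subseteq X_0(\eta)$ are discrete, by construction of $\gamma_{\**}$ on $\eta$; the corolla fixed-point pieces $(\gamma_{\**}X)(C_v)^{H_v}$ are Kan complexes, by the graph-subgroup admissibility of Example \ref{GGRAPHREEDY EX} together with the dendroidal Reedy fibrancy of $\gamma_{\**}X$ (Proposition \ref{CSKETALT PROP}(i)); and the corolla-to-edge restriction maps are Kan fibrations by the same Reedy fibrancy. The Segal limit is then a finite pullback over a discrete base whose legs are Kan fibrations, and the long exact sequence of a Kan fibration identifies $\pi_0$ of each fiber with the fiber of $\pi_0$, so $\pi_0$ commutes with the limit. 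With these identifications in place, the $\pi_0$-bijection supplied by the trivial Kan fibration is precisely the required strict-lifting bijection for $ho(X)$, completing the proof.
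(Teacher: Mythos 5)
Your proposal is correct and takes essentially the same route as the paper's proof: both reduce to the preoperad case via Proposition~\ref{DSSFIB_PROP}, identify the $\Hom$ sets via Yoneda together with Remark~\ref{UPSPUSHMON REM}, invoke the Segal trivial Kan fibration to obtain the $\pi_0$-bijection for $\Omega[T]$ versus $Sc[T]$, and exploit the discreteness of the edge pieces of a preoperad to make $\pi_0$ commute with the Segal-core limit. The only notable difference is organizational: the paper runs an induction on equivariant grafting decompositions $T = R \amalg_{Ge} S$, treating one pullback over a discrete $X(\Omega[Ge])$ at a time, whereas you unwind the whole Segal limit in one pass; note that your invocations of Kan complexes, Kan fibrations, and the long exact sequence are superfluous, since discreteness of the base alone already makes a strict pullback commute with $\pi_0$ with no fibrancy hypothesis needed.
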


\begin{proof}
By Proposition \ref{DSSFIB_PROP} we are free to assume 
$X$ is in $\mathsf{PreOp}^G \subset \mathsf{sdSet}^G$, so we suppress $\gamma_{\**}$ from the notation throughout.

The required strict lifting condition is equivalent to the 
 maps
$\mathsf{dSet}_G\left(\upsilon_{\**}\Omega[T], ho(X)\right)
\to
\mathsf{dSet}_G\left(\upsilon_{\**}Sc[T], ho(X)\right)$
being isomorphisms.
On the other hand, the Segal condition for $X$ says that the maps 
$X(\Omega[T]) \to X(Sc[T])$
are trivial Kan fibrations, so that the maps 
$\pi_0\left(X(\Omega[T])\right) \to \pi_0\left(X(Sc[T])\right)$
are isomorphisms. 
Hence, noting that
for $A \in \mathsf{dSet}^G$
there is a natural transformation
(letting $\bullet$ highlight the index with regards to which the $\pi_0$ are computed, and using that $\upsilon_{\**}$ is fully faithful)
\begin{equation}\label{HOGENEQOP EQ}
\begin{split}
	\pi_0\left( X_{\bullet}(A) \right)=
	\pi_0 \left(
	\mathsf{dSet}^G(A,X_{\bullet})
	\right)
= &
	\pi_0 \left(
	\mathsf{dSet}_G(\upsilon_{\**} A,\upsilon_{\**} X_{\bullet})
	\right)
\to 
\\
\to &
	\mathsf{dSet}_G(\upsilon_{\**} A,\pi_0 \upsilon_{\**} X_{\bullet})
=
	\mathsf{dSet}_G(\upsilon_{\**} A,ho(X_{\bullet}))
\end{split}
\end{equation}
it is enough to show that \eqref{HOGENEQOP EQ}
is an isomorphism when $A$ is of the form $\Omega[T]$ or $Sc[T]$.
The case of $A=\Omega[T]$
is tautological since, by Remark \ref{TWOYON REM},
$\mathsf{dSet}_G(\upsilon_{\**}\Omega[T],Y) = Y(T)$
for any $Y \in \mathsf{dSet}_G$. 

It remains to tackle the case $A=Sc[T]$.
We argue by induction on the number of $G$-vertices of $T$.
The base cases of $T$ either a $G$-corolla or a single $G$-edge orbit are automatic since then $Sc[T] = \Omega[T]$.
Otherwise, choosing an equivariant grafting decomposition
$T = R \amalg_{Ge} S$ (cf. \cite[(5.18) and Prop. 6.19]{Per18})
one has a pushout decomposition
$Sc[T] \simeq Sc[R] \amalg_{\Omega[Ge]} Sc[S]$
so that, by induction, it suffices to check that both sides of 
\eqref{HOGENEQOP EQ} turn this pushout into a pullback.
For the right side this follows since by 
Remark \ref{UPSPUSHMON REM}
one has the analogue decomposition
$\upsilon_{\**} Sc[T] \simeq \upsilon_{\**} Sc[R] \amalg_{\upsilon_{\**}\Omega[Ge]} \upsilon_{\**} Sc[S]$.
For the left side this follows since $\pi_0$
preserves the pullback decomposition
$X(Sc[T]) \simeq X(Sc[R]) \times_{X(\Omega[Ge])} X(Sc[S])$ due to
$X(\Omega[Ge])$ being discrete.
\end{proof}

Recall that $\mathsf{O}_G$ denotes the \textit{orbit category} with objects the $G$-sets $G/H$ for $H \leq G$ and arrows the $G$-equivariant maps between them. In the following, we use the natural inclusion
$\Omega \times \mathsf{O}_G \to \Omega_G$
given by 
$(T,G/H) \mapsto G/H \cdot T$.

\begin{remark}
      Writing $\iota \colon \Delta \to \Omega$ and
      $\iota_G \colon \Delta \times \mathsf{O}_G \to \Omega \times \mathsf{O}_G \to \Omega_G$
      for the (composite) inclusions,
        unpacking definitions shows that $\iota_G^{\**}ho(X)$ is the $G$-coefficient system of nerves of categories $\left(\iota_G^{\**}ho(X)\right)(G/H)$ for $H \leq G$,
        which are the simplicial sets with $m$-simplices
	\begin{equation}\label{HOCOEFF EQ}
              ho(X)\left(G/H \cdot [m]\right) =
              \pi_0 \left( \left(\gamma_{\**}X([m])\right)^H\right) = 
              \pi_0 \left( \iota^{\**} \gamma_{\**}
              \left(X^H\right)\right)(m) = 
              ho\left(\iota^{\**}\left(X^H\right)\right)(m),
        \end{equation}
        where the second identity follows from
        the discussion following \eqref{DSETG_EQ},
        and the second $ho$ is the analogue of Definition \ref{HMTPYGEN DEF} for simplicial Segal spaces \cite[\S 5.5]{Rez01}.
\end{remark}

\begin{definition}\label{DKEQUIV DEF}
	A map $f \colon X \to Y$ of equivariant dendroidal Segal spaces is called 
\begin{itemize}
	\item \textit{fully faithful} if for all $G$-corollas $C$ and $C$-profiles $(x_1,\cdots,x_n;x_0)$ on $X$ the maps
\[
	X(x_1,\cdots,x_k;x_0) \to
	Y\left(f(x_1),\cdots,f(x_k);f(x_0)\right)
\]
are Kan equivalences in $\mathsf{sSet}$;
	\item \textit{essentially surjective} if
              the map $\iota_G^{\**}ho(X) \to \iota_G^{\**}ho(Y)$
              of $G$-coefficient systems of categories is levelwise essentially surjective;
	\item a \textit{DK-equivalence} if it is both fully faithful and essentially surjective.
\end{itemize}
\end{definition}

\begin{remark}\label{ONLYPREOP REM}
Definitions \ref{MAPSPACESEG DEF}, \ref{HMTPYGEN DEF} and \ref{DKEQUIV DEF} depend only on the 
fibrant pre-operads $\gamma_{\**}X,\gamma_{\**}Y$,
since $X(x_1,\cdots,x_k;x_0) = \gamma_{\**}X(x_1,\cdots,x_k;x_0)$.
In fact, for each $G$-corolla $C$
one has a decomposition
\[
	ho(X)(C)=
	\coprod_{\text{$C$-profiles }(x_1,\cdots,x_k;x_0)}
	\pi_0 \left( X(x_1,\cdots,x_k;x_0) \right)
\]
so that, given $\varphi \in X_0(x_1,\cdots,x_k;x_0)$
we will write $[\varphi] \in ho(X)(C)$
for the corresponding class.
\end{remark}

\begin{remark}
	One can extend the previous definitions to $G$-$\infty$-operads $X,Y \in \mathsf{dSet}^G$
	by applying them to the dendroidal Segal spaces
	$X^{J^{\bullet}},Y^{J^{\bullet}} \in \mathsf{sdSet}^G$
	(cf. Remark \ref{CONCRECOM REM}). 
\end{remark}

\begin{remark}\label{DKCOM REM} 
Given a map $X \to Y$ between dendroidal Segal spaces and $T \in \Omega_G$, consider the diagram
\begin{equation}\label{DKCOM EQ}
\begin{tikzcd}[row sep=17]
	X(\Omega[T]) \ar{r} \ar[->>]{d}[swap]{\sim}&
	Y(\Omega[T]) \ar[->>]{d}{\sim}
\\
	X(Sc[T]) \ar{r} \ar[->>]{d}&
	Y(Sc[T]) \ar[->>]{d}
\\
	\underset{[e_i] \in \boldsymbol{E}_G(T)} {\prod} \left(X(\eta)\right)^{H_i} \ar{r} &
	\underset{[e_i] \in \boldsymbol{E}_G(T)} {\prod} \left(Y(\eta)\right)^{H_i}
\end{tikzcd}
\end{equation}
where $H_i \leq G$ is the isotropy of $e_i \in \boldsymbol{E}(T)$.
Since the fibers of the bottom vertical maps are products of spaces of maps for $G$-profiles and the top vertical maps are trivial fibrations by the Segal conditions, 
$X \to Y$ is fully faithful iff \eqref{DKCOM EQ} 
induces Kan equivalences between the fibers of the vertical composite maps 
(for the ``if'' claim, let 
 $T$ be a $G$-corolla, cf. \cite[Prop. 5.7]{CM13a}).
%
\end{remark}

This remark readily implies the following, which is the analogue of  
\cite[Cor. 5.10]{CM13a}.

\begin{corollary}\label{DKCOM COR} 
Let $f\colon X \to Y$ in $\mathsf{sdSet}_G$ be a map between dendroidal Segal spaces. Then:
\begin{enumerate}[label=(\roman*)]
\item if $f$ is a simplicial equivalence then $f$ is fully faithful;
\item if $f$ is fully faithful, then $f$ is also a simplicial equivalence iff
the maps $X(\eta)^H \to Y(\eta)^H, H\leq G$, are Kan equivalences.
\end{enumerate}
\end{corollary}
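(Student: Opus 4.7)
The plan is to deduce both parts from diagram \eqref{DKCOM EQ} together with Remark \ref{DKCOM REM}, by examining the vertical composites and their fibers. Two preliminary observations are needed. First, the vertical composites in \eqref{DKCOM EQ} are Kan fibrations: the top segments are trivial Kan fibrations by the Segal characterization (Proposition \ref{DSSCHAR_PROP}), while the bottom segments arise from applying $X(-)$ to the normal monomorphism $\bigsqcup_{[e]} \Omega[G/H_e\cdot\eta] \hookrightarrow \Omega[T]$ and are thus Kan fibrations by dendroidal Reedy fibrancy of $X$ and $Y$. Second, for a fixed profile of edge objects (i.e., a $0$-simplex of the bottom product), the fiber of the vertical composite over $X$ decomposes, using the Segal condition and an inductive equivariant grafting decomposition $T = R \amalg_{Ge} S$ as in the proof of Proposition \ref{HMTPYGEN PROP}, as a product $\prod_{v} X(x^{\mathrm{in}}_v ; x^{\mathrm{out}}_v)$ of mapping spaces indexed by the $G$-vertices of $T$.

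For part (i), assume $f$ is a simplicial equivalence, so that the top horizontal map $X(\Omega[T]) \to Y(\Omega[T])$ is a Kan equivalence for every $T \in \Omega_G$. Specializing $T = G/H\cdot\eta$ yields Kan equivalences $X(\eta)^H \to Y(\eta)^H$, so the bottom horizontal map of \eqref{DKCOM EQ} is also a Kan equivalence. By two-out-of-three on the trivial Kan fibrations at the top, the middle horizontal map is a Kan equivalence as well. Applying the long exact sequence of homotopy fibers (or equivalently a five-lemma argument for Kan fibrations between fibrations) to the vertical composites now shows that the induced maps on fibers are Kan equivalences. Taking $T$ to be a $G$-corolla $C$, these fibers are exactly the mapping spaces of Definition \ref{MAPSPACESEG DEF}, and Remark \ref{DKCOM REM} identifies $f$ as fully faithful.

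For part (ii), the implication ``$\Rightarrow$'' is immediate: simplicial equivalence applied to $\Omega[G/H\cdot\eta]$ gives the Kan equivalence $X(\eta)^H \to Y(\eta)^H$. For the converse, assume $f$ is fully faithful and each $X(\eta)^H \to Y(\eta)^H$ is a Kan equivalence. We must show that $X(\Omega[T]) \to Y(\Omega[T])$ is a Kan equivalence for all $T \in \Omega_G$; by the trivial fibrations at the top of \eqref{DKCOM EQ}, it suffices to verify this for the middle horizontal map $X(Sc[T]) \to Y(Sc[T])$. The bottom horizontal map is a Kan equivalence by hypothesis, and the induced map on each fiber is the product of mapping-space maps $X(x_1,\ldots,x_k;x_0) \to Y(f(x_1),\ldots,f(x_k);f(x_0))$ over the $G$-vertices of $T$, each of which is a Kan equivalence by full faithfulness. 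The standard five-lemma argument for the Kan fibrations provided by the vertical composites then yields that $X(Sc[T]) \to Y(Sc[T])$, and hence $X(\Omega[T]) \to Y(\Omega[T])$, is a Kan equivalence.

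The main technical point is the fiber decomposition into a product of mapping spaces, since this is what connects the abstract fully faithfulness hypothesis back to the whole of $X(\Omega[T])$. This is proved by induction on the number of $G$-vertices of $T$: for $G$-corollas it is the defining pullback of Definition \ref{MAPSPACESEG DEF}, and the inductive step uses the pushout $Sc[T] \simeq Sc[R] \amalg_{\Omega[Ge]} Sc[S]$ together with dendroidal Reedy fibrancy, which ensures that $X(-)$ converts this pushout into a (homotopy) pullback compatible with fiber extraction over edge profiles.
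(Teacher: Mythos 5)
Your proposal is correct and takes essentially the same route as the paper: the paper compresses the entire proof into the phrase ``This remark readily implies the following,'' with Remark \ref{DKCOM REM} already recording that fully faithfulness is detected by the induced maps on fibers of the vertical composites in \eqref{DKCOM EQ}, and your proof simply unpacks that remark via the standard fibration-sequence (five-lemma) argument together with the observation that $X(\Omega[G/H\cdot\eta]) = X(\eta)^H$. The one minor imprecision is that the normal monomorphism giving the bottom fibration is $\coprod_{[e]}\Omega[G/H_e\cdot\eta] \hookrightarrow Sc[T]$ rather than into $\Omega[T]$, but this does not affect the argument.
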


\begin{remark}\label{HOMOLIFTS REM}
	In what follows, we will repeatedly use the observation that, for $X\to Y$ a trivial Kan fibration
	in $\mathsf{sSet}$, any two lifts  of the form below are homotopic.
\[
\begin{tikzcd}[row sep =1.5em]
	&
	X \ar[->>]{d}{\sim}
\\
	A \ar[dashed]{ru} \ar{r} &
	Y 
\end{tikzcd}
\]
\end{remark}

\begin{definition}\label{HEQUIV DEF}
	Let $X\in \mathsf{sdSet}^G$ be a dendroidal Segal space.

	For $H \leq G$, we call 
	$f \in X_0(\Omega[C_{H/H}]) = X_0([1])^H$ a 
	\textit{$H$-equivalence} 
	if $[f]$ is an isomorphism in the category
	$\iota_G^{\**} ho(X)(G/H) = ho\left(\iota^{\**}\left( X^H \right)\right)$.
\end{definition}

In what follows, and in analogy to \cite[\S 11.2]{Rez01},
we will need to understand the interaction between the homotopy genuine operad $ho(X)$ and the mapping spaces
$X(x_1,\cdots,x_n;x_0)$.

Suppose $C,D$ are $G$-corollas that can be grafted,
i.e. that $C$ has a leaf orbit and $D$ a root orbit both isomorphic to $G/H$. Denote this orbit as $G e$
and write $T= C \amalg_{G e} D$ for the grafted $G$-tree 
\cite[
(5.18) and Prop. 6.19]{Per18}. 
For any dendroidal Segal space $X$ one then has
$X(Sc[T]) \simeq X(\Omega[C]) \times_{X(\eta)^H} X(\Omega[D])$
and one can hence choose a section in the middle row below
\begin{equation}\label{HOMOTCIRC EQ}
\begin{tikzcd}
	\{\varphi\} \times X(z_1,\cdots,z_l;e)
	\ar[dashed]{rr}{\varphi \circ_{Ge} (-)}
&&
	X(z_1,\cdots,z_l,y_2,\cdots,y_k;x)
\\
	X(\Omega[C]) \times_{X(\eta)^H} X(\Omega[D]) \ar[bend left=17,dashed]{r}
	\ar[hookleftarrow]{u}
&
	X(\Omega[T]) \ar[->>]{l}{\sim} \ar[->>]{r}
&
	X(\Omega[T-Ge])
	\ar[hookleftarrow]{u}
\\
	X(e,y_2,\cdots,y_k;x) \times \{\psi\}
	\ar[hookrightarrow]{u}
	\ar[dashed]{rr}[swap]{(-)\circ_{Ge} \psi}
&&
	X(z_1,\cdots,z_l,y_2,\cdots,y_k;x)
	\ar[hookrightarrow]{u}
\end{tikzcd}
\end{equation}
thus defining maps 
$\varphi \circ_{Ge} (-)$ (resp. $(-)\circ_{Ge} \psi$)
for any choice of 
$\varphi \in X_0(e,y_2,\cdots,y_k;x)$
(resp. $\psi \in X_0(z_1,\cdots,z_l;e)$).

\begin{proposition}\label{GENOPHO PROP}
\begin{itemize}
	\item[(i)] the maps $\varphi \circ_{Ge} (-)$, $(-)\circ_{Ge} \psi$
are well defined up to homotopy;
	\item[(ii)] if $[\varphi]=[\bar{\varphi}]$ then 
the maps $\varphi \circ_{Ge} (-)$, $\bar{\varphi} \circ_{Ge} (-)$ are homotopic, and likewise for $[\psi] = [\bar{\psi}]$;
	\item[(iii)] $[\varphi \circ_{Ge} \psi]$
	depends only on $[\varphi]$, $[\psi]$;
	\item[(iv)] the homotopy classes of the maps $\varphi \circ_{Ge} (-)$, $(-)\circ_{Ge} \psi$ are natural with respect to maps $f\colon X \to Y$ between dendroidal Segal spaces.
\end{itemize}
\end{proposition}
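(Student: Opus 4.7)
The guiding principle throughout will be that all four claims reduce to invocations of Remark \ref{HOMOLIFTS REM}, since by Proposition \ref{DSSCHAR_PROP}(ii) the middle map $X(\Omega[T]) \twoheadrightarrow X(\Omega[C]) \times_{X(\eta)^H} X(\Omega[D]) = X(Sc[T])$ in \eqref{HOMOTCIRC EQ} is a trivial Kan fibration.

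For (i), two choices of section $s, s' \colon X(Sc[T]) \to X(\Omega[T])$ are two lifts of the identity against this trivial fibration, so by Remark \ref{HOMOLIFTS REM} they are homotopic via a homotopy $h \colon \Delta[1] \times X(Sc[T]) \to X(\Omega[T])$. Restricting $h$ to $\Delta[1] \times (\{\varphi\} \times X(z_1, \ldots, z_l; e))$ and postcomposing with $X(\Omega[T]) \to X(\Omega[T-Ge])$ exhibits a homotopy between the two candidates for $\varphi \circ_{Ge}(-)$; the $(-) \circ_{Ge} \psi$ case is dual.

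For (ii), fix a single section $s$ and a path $\alpha \colon \Delta[1] \to X(e, y_2, \ldots, y_k; x)$ from $\varphi$ to $\bar\varphi$. Form the map $\alpha \times \mathrm{id} \colon \Delta[1] \times X(z_1,\ldots,z_l; e) \to X(Sc[T])$ and lift the square
\begin{equation*}
\begin{tikzcd}[column sep = 5em]
\partial \Delta[1] \times X(z_1, \ldots, z_l; e) \ar{r}{(s(\varphi, -),\, s(\bar\varphi, -))} \ar{d} & X(\Omega[T]) \ar[->>]{d}{\sim} \\
\Delta[1] \times X(z_1, \ldots, z_l; e) \ar{r}[swap]{\alpha \times \mathrm{id}} \ar[dashed]{ru} & X(Sc[T])
\end{tikzcd}
\end{equation*}
against the trivial fibration; projecting this lift to $X(\Omega[T-Ge])$ gives the desired homotopy. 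The argument for the other variable is symmetric.

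For (iii), observe that $\varphi \circ_{Ge} \psi$ is obtained by evaluating the map $\varphi \circ_{Ge}(-)$ at $\psi$, so varying $\psi$ within its homotopy class changes $\varphi \circ_{Ge} \psi$ within its homotopy class, and (ii) takes care of varying $\varphi$. Finally for (iv), given $f \colon X \to Y$ with chosen sections $s_X, s_Y$ on the two sides, the composite $f \circ s_X$ is another section of the trivial fibration $Y(\Omega[T]) \twoheadrightarrow Y(Sc[T])$, hence homotopic to $s_Y$ by Remark \ref{HOMOLIFTS REM}; combined with naturality of the projection $Y(\Omega[T]) \to Y(\Omega[T-Ge])$, this yields the required homotopy $f \circ (\varphi \circ_{Ge} (-)) \simeq (f\varphi) \circ_{Ge} f(-)$, and dually for the other variable. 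No single step is a genuine obstacle; the only mild subtlety is bookkeeping the homotopies so that each claim follows from a single application of the trivial-fibration lifting property rather than an iterated construction.
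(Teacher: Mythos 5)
Your overall approach matches the paper's: everything reduces to Remark \ref{HOMOLIFTS REM} applied to the trivial Kan fibration $X(\Omega[T]) \twoheadrightarrow X(Sc[T])$. However, there are two precise points where the argument as written has a gap or an error.

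First, in (i) and (ii) you produce homotopies whose target is $X(\Omega[T-Ge])$ and declare them to be ``homotopies between the two candidates,'' but the candidates land in the fiber $X(z_1,\ldots,z_l,y_2,\ldots,y_k;x) \subseteq X(\Omega[T-Ge])$, and a homotopy in the ambient space is not automatically a homotopy in that fiber. The paper explicitly addresses this at the outset by observing that all sections of (and homotopies over) $X(\Omega[T]) \twoheadrightarrow X(Sc[T])$ are compatible with the projection to $X(\partial\Omega[T-Ge])$: since $\partial\Omega[T-Ge] \subseteq Sc[T]$ as subcomplexes of $\Omega[T]$, the restriction map $X(\Omega[T]) \to X(\partial\Omega[T-Ge])$ factors through $X(Sc[T])$, and any homotopy supplied by Remark \ref{HOMOLIFTS REM} is constant after composing with $X(\Omega[T]) \to X(Sc[T])$. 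This is what guarantees the homotopy stays in the fiber; your argument needs this sentence.

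Second, in (iv) the claim that ``$f \circ s_X$ is another section of the trivial fibration $Y(\Omega[T]) \twoheadrightarrow Y(Sc[T])$, hence homotopic to $s_Y$'' is incorrect as stated: $f \circ s_X$ has source $X(Sc[T])$ while $s_Y$ has source $Y(Sc[T])$, so neither is $f \circ s_X$ a section of that fibration nor can it be compared directly to $s_Y$. The correct formulation, which is what the paper's square encodes, is that $f \circ s_X$ and $s_Y \circ f'$ (where $f' \colon X(Sc[T]) \to Y(Sc[T])$ is the induced map) are both lifts of $f'$ against $Y(\Omega[T]) \twoheadrightarrow Y(Sc[T])$; Remark \ref{HOMOLIFTS REM} then applies to these two lifts. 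As a smaller comment, your lifting argument for (ii) is more elaborate than necessary---composing $\alpha \times \mathrm{id}$ directly with a fixed section $s$ already yields the desired homotopy, which is what the paper's one-line remark points at.
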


\begin{proof}
	Noting that all possible middle row sections in \eqref{HOMOTCIRC EQ} (and homotopies between them)
	are necessarily compatible with the projections to 
$X(\partial \Omega[T-Ge])$, 
	(i) follows from Remark \ref{HOMOLIFTS REM}.
	The middle row in \eqref{HOMOTCIRC EQ}
	gives the necessary homotopies for (ii). 
	(iii) is immediate from (ii).
Lastly, (iv) follows from Remark \ref{HOMOLIFTS REM}
applied to the two
diagonal $\nearrow$
paths in
\[
\begin{tikzcd}[column sep =13]
	X(\Omega[T]) \ar{r} \ar[->>]{d}{\sim} &
	Y(\Omega[T]) \ar[->>]{d}{\sim}
\\
	X(\Omega[C]) \underset{X(\eta)^H}{\times} X(\Omega[D])
	\ar[bend left=30,dashed]{u} \ar{r}
&
	Y(\Omega[C]) \underset{Y(\eta)^H}{\times} Y(\Omega[D])
	\ar[bend left=30,dashed]{u}
\end{tikzcd}
\]
\end{proof}

We will now show that the operations   
$\varphi \circ_{Ge} (-)$, $(-)\circ_{Ge} \psi$
satisfy the obvious compatibilities one expects, but we will find it convenient to first package these compatibilities into a common format. In the categorical case (corresponding to linear trees), there are three types of ``associativity'' compatibilities, corresponding to homotopies
\[
	\varphi \circ \left( \psi \circ (-) \right)
		\sim
	(\varphi \circ \psi) \circ (-) 
\qquad
	\varphi \circ \left( (-)  \circ \psi \right)
		\sim
	\left( \varphi \circ (-) \right) \circ \psi 
\qquad
	\left( (-) \circ \varphi \right) \circ \psi 
		\sim
	(-) \circ (\varphi \circ \psi) 
\]	
but in the operadic case there are instead five cases, corresponding to the different possible roles of the nodes in 
$G$-trees $T$ with exactly three $G$-vertices, whose \textit{orbital} representation falls into one of the two cases illustrated below.
\[%
	\begin{tikzpicture}[auto,grow=up, every node/.style = {font=\scriptsize,inner sep=1pt},
	dummy/.style = {circle,draw,inner sep=0pt,minimum size=1.75mm}]%
	\begin{scope}[level distance = 1.6em]
	\tikzstyle{level 2}=[sibling distance=4em]%
	\tikzstyle{level 3}=[sibling distance=2em]%
	\tikzstyle{level 4}=[sibling distance=1em]%
		\node at (0,0) [font=\normalsize]{$T$}%
			child{node [dummy] {}%
				child[level distance = 1.5em]{node {}}%
				child[level distance = 2.2em]{node [dummy] {}%
					child[level distance = 1.3em]{}%
					child[level distance = 2.2em,sibling distance=1.4em]{node [dummy] {}
						child[level distance = 1.6em]
						child[level distance = 1.6em]
					edge from parent node [swap,very near end] {$Gf$}}%
					child[level distance = 2.2em,sibling distance=1.4em]{}%
					child[level distance = 1.3em]{}%
				edge from parent node [swap]{$Ge$}}%
				child[level distance = 1.5em]{node {}}%
			};%
	\end{scope}
	\begin{scope}[level distance = 1.7em]
	\tikzstyle{level 2}=[sibling distance=2.75em]%
	\tikzstyle{level 3}=[sibling distance=1.25em]%
		\node at (8,0) [font=\normalsize] {$T$}%
			child{node [dummy] {}%
				child[level distance = 1.2em]{node {}}%
				child[sibling distance=2em,level distance = 2.2em]{node [dummy] {}%
					child[level distance = 1.6em]{}%
					child[level distance = 1.8em]{}%
					child[level distance = 1.6em]{}%
				edge from parent node [swap,near end] {$Gf$}}%
				child[sibling distance=2em,level distance = 2.2em]{}%
				child[level distance = 1.2em]{node [dummy] {}%
					child[level distance = 1.6em]{}%
					child[level distance = 1.6em]{}%
				edge from parent node {$Ge$}}%
			};%
	\end{scope}
	\end{tikzpicture}%
\]%
Since all these compatibilities can be simultaneously encoded in terms of such trees, we will refer to all types of compatibility simply as \textit{associativity}.
As noted pictorially above, such a $G$-tree $T$
has exactly two inner edge orbits $Ge$ and $Gf$.
In the next result, we write $T[Ge]$ (resp. $T[Gf]$) for the orbital outer face of $T$ with $Ge$ (resp. $Gf$) as its single inner edge orbit. 
\[%
	\begin{tikzpicture}[auto,grow=up, every node/.style = {font=\scriptsize,inner sep=1pt},
	dummy/.style = {circle,draw,inner sep=0pt,minimum size=1.75mm}]%
	\begin{scope}[level distance = 1.6em]
	\tikzstyle{level 2}=[sibling distance=4em]%
	\tikzstyle{level 3}=[sibling distance=2em]%
	\tikzstyle{level 4}=[sibling distance=1em]%
		\node at (-1.675,0) [font=\normalsize]{$T[Ge]$}%
			child{node [dummy] {}%
				child[level distance = 1.5em]{node {}}%
				child[level distance = 2.2em]{node [dummy] {}%
					child[level distance = 1.3em]{}%
					child[level distance = 2.2em,sibling distance=1.4em]{
					}%
					child[level distance = 2.2em,sibling distance=1.4em]{}%
					child[level distance = 1.3em]{}%
				edge from parent node [swap]{$Ge$}}%
				child[level distance = 1.5em]{node {}}%
			};%
	\end{scope}
	\begin{scope}[level distance = 1.6em]
	\tikzstyle{level 2}=[sibling distance=2em]%
	\tikzstyle{level 3}=[sibling distance=1em]%
		\node at (1.675,0) [font=\normalsize]{$T[Gf]$}%
			child[level distance = 2.2em]{node [dummy] {}%
				child[level distance = 1.3em]{}%
				child[level distance = 2.2em,sibling distance=1.4em]{node [dummy] {}
					child[level distance = 1.6em]
					child[level distance = 1.6em]
				edge from parent node [swap,very near end] {$Gf$}}%
				child[level distance = 2.2em,sibling distance=1.4em]{}%
				child[level distance = 1.3em]{}%
			};%
	\end{scope}
	\begin{scope}[level distance = 1.7em]
	\tikzstyle{level 2}=[sibling distance=2.75em]%
	\tikzstyle{level 3}=[sibling distance=1.25em]%
		\node at (6.375,0) [font=\normalsize] {$T[Ge]$}%
			child{node [dummy] {}%
				child[level distance = 1.2em]{node {}}%
				child[sibling distance=2em,level distance = 2.2em]{
				}%
				child[sibling distance=2em,level distance = 2.2em]{}%
				child[level distance = 1.2em]{node [dummy] {}%
					child[level distance = 1.6em]{}%
					child[level distance = 1.6em]{}%
				edge from parent node {$Ge$}}%
			};%
		\node at (9.675,0) [font=\normalsize] {$T[Gf]$}%
			child{node [dummy] {}%
				child[level distance = 1.2em]{node {}}%
				child[sibling distance=2em,level distance = 2.2em]{node [dummy] {}%
					child[level distance = 1.6em]{}%
					child[level distance = 1.8em]{}%
					child[level distance = 1.6em]{}%
				edge from parent node [swap,near end] {$Gf$}}%
				child[sibling distance=2em,level distance = 2.2em]{node {}}%
				child[level distance = 1.2em]{
				}%
			};%
	\end{scope}
	\end{tikzpicture}%
\]%

\begin{proposition}\label{ASSOC PROP}
	The operations
	$\varphi \circ_{Ge} (-)$, $(-)\circ_{Ge} \psi$
	satisfy all associativity conditions with respect to 
	$G$-trees with three $G$-vertices.
	Further, if $C=C_{H/H}$ and $\varphi = s(e)$ is the degeneracy on $e$, then $\varphi \circ_{Ge} (-)$ is homotopic to the identity, and similarly for 
	$D=C_{H/H}$ and $\psi = s(e)$.
\end{proposition}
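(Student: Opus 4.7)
The plan is to deduce both associativity and the unit law from the trivial Kan fibration property $X(\Omega[S]) \twoheadrightarrow X(Sc[S])$ (Proposition \ref{DSSCHAR_PROP}(ii)) together with Remark \ref{HOMOLIFTS REM}, which says lifts against a trivial Kan fibration are unique up to homotopy.

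For associativity, let $T$ be a three-vertex $G$-tree with inner edge orbits $Ge, Gf$ and let $\underline{\alpha} \in X_0(Sc[T])$ be a composable triple of operations. Choose a lift $\Phi \in X_0(\Omega[T])$, which exists by the trivial Kan fibration property, and let $R$ be the root corolla of $T$, realized as the orbital inner face $R = T - (Ge \cup Gf)$. The element $\Phi|_R \in X_0(\Omega[R])$ is then a canonical candidate for the ``total composition'' of $\underline{\alpha}$. I claim each parenthesization of the threefold composition is homotopic to $\Phi|_R$.

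Consider the order ``first compose along $Gf$, then along $Ge$''. The restriction $\Phi|_{T[Gf]}$ is a Segal lift of the restriction of $\underline{\alpha}$ to the two vertices of $T[Gf]$, so by Proposition \ref{GENOPHO PROP}(i) the first composition is homotopic to $\beta := \Phi|_{R_1}$, where $R_1 := T[Gf] - Gf$ is the root corolla of $T[Gf]$. The key combinatorial observation is that the orbital inner face $T - Gf$ has exactly two $G$-vertices, and the vertex corolla arising from ``merging'' the two vertices of $T[Gf]$ is canonically identified with $R_1$. In particular, the two factorizations of the underlying face
\[
R_1 \hookrightarrow T[Gf] \hookrightarrow T
\qquad \text{and} \qquad
R_1 \hookrightarrow T - Gf \hookrightarrow T
\]
coincide (the first being inner-then-outer, the second outer-then-inner). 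It follows that $\Phi|_{T - Gf}$ is a Segal lift of the pair $(\beta, \alpha_{\mathrm{rem}}) \in X_0(Sc[T - Gf])$, where $\alpha_{\mathrm{rem}}$ denotes the remaining operation at the other vertex of $T - Gf$. The second composition, defined in \eqref{HOMOTCIRC EQ} by choosing any such Segal lift and projecting to the root corolla of $T - Gf$ (which equals $R$), is thus by Proposition \ref{GENOPHO PROP}(i) homotopic to $\Phi|_R$. The opposite order of parenthesization is treated identically by replacing $T[Gf], R_1, T - Gf$ with $T[Ge], R_2, T - Ge$. Hence both orders agree with $\Phi|_R$ up to homotopy.

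For the unit law, consider $T = C \amalg_{Ge} D$ with $C = C_{H/H}$ and $\varphi = s(e)$. The non-trivial degeneracy $\sigma \colon T \to D$ in $\Omega_G$ collapsing the unary vertex of $C$ induces $\sigma^{\ast} \colon X(\Omega[D]) \to X(\Omega[T])$, a section of the outer face map $X(\Omega[T]) \to X(\Omega[D])$. For any $\psi \in X_0(\Omega[D])$, the $D$-component of $\sigma^{\ast}(\psi)|_{Sc[T]}$ is $\psi$ by the section property, while the $C$-component equals $s(e) = \varphi$ because the composite $C \hookrightarrow T \xrightarrow{\sigma} D$ factors through $C \to \eta \to D$. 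Thus $\sigma^{\ast}(\psi)$ is a Segal lift of $(\varphi, \psi)$. Projecting along the inner face $T - Ge \hookrightarrow T$, which under the natural identification $T - Ge \simeq D$ becomes the identity (one verifies directly that $T - Ge \hookrightarrow T \xrightarrow{\sigma} D$ is this isomorphism), yields $\sigma^{\ast}(\psi)|_{T - Ge} = \psi$ on the nose. Hence by Proposition \ref{GENOPHO PROP}(i), $\varphi \circ_{Ge} \psi \sim \psi$ for all $\psi$, so $\varphi \circ_{Ge} (-)$ is homotopic to the identity. The dual case of $\psi = s(e)$ with $D = C_{H/H}$ follows symmetrically.

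The main obstacle is the combinatorial identification of $R_1$ as a vertex corolla of $T - Gf$ together with the commutativity of the two factorizations of $R_1 \to T$. These identifications follow from the unique factorization in Proposition \ref{INNOUTORB PROP} applied to the common underlying face, combined with direct inspection in the two combinatorial shapes of three-vertex $G$-trees (the ``serial'' and ``parallel'' cases displayed before the statement). With these identifications established, the remainder of the argument is a straightforward iterated application of the Segal condition and Remark \ref{HOMOLIFTS REM}.
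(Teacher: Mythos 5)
Your approach is correct in spirit but takes a genuinely different route from the paper. The paper's proof of associativity avoids choosing a specific lift: it builds the nine-object diagram \eqref{FOURSQ EQ} out of the intermediate ``covers'' $Sc_{T[Ge]}[T]$ and $Sc_{T[Gf]}[T]$, observes that the relevant inclusions are $G$-inner anodyne (so $X$ turns them into trivial Kan fibrations), and applies Remark \ref{HOMOLIFTS REM} once to the two composite paths $X(Sc[T]) \to X(\Omega[T-Ge-Gf])$ through that diagram. You instead pick a single lift $\Phi \in X_0(\Omega[T])$ and chase its restrictions along the various faces of $T$. This buys a conceptually vivid picture (both parenthesizations are ``$\Phi|_R$''), but at the cost of two things the paper's argument gets for free. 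First, the combinatorial identification on which your argument pivots -- that $R_1 \hookrightarrow T[Gf] \hookrightarrow T$ and $R_1 \hookrightarrow T-Gf \hookrightarrow T$ determine the same face of $T$ -- needs an actual verification of the edge images (and then a reminder that face maps are determined by their edge images, which does hold, but it is not simply a consequence of the uniqueness in Proposition \ref{INNOUTORB PROP}, which concerns the inner-then-outer ordering only). You acknowledge this as the ``main obstacle'' but largely wave at it. Second, and more substantively, you argue at the element level ($\underline{\alpha} \in X_0(Sc[T])$), while the associativity conditions in the Proposition are homotopies between \emph{maps} of simplicial sets (the fiber restrictions of $X(Sc[T]) \to X(\Omega[T-Ge-Gf])$); showing that the two parenthesizations of each $\underline\alpha$ agree up to homotopy only gives agreement on $\pi_0$ of the mapping spaces. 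The fix is standard -- apply Remark \ref{HOMOLIFTS REM} to the identity of $X(Sc[T])$ and sections, rather than to a vertex -- and is essentially what the paper does; but as written your argument does not give the map-level homotopy. Your treatment of the unit law via the degeneracy $\sigma \colon T \to D$ is essentially identical to the paper's.
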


\begin{proof}
We abbreviate
$Sc_{T[Ge]}[T] =
Sc[T] \amalg_{Sc[T[Ge]]} \Omega[T[Ge]] =
Sc[T] \amalg_{\Lambda^{Ge}_o[T[Ge]]} \Omega[T[Ge]]$,
which can be regarded as the union
$Sc[T] \cup \Omega[T[Ge]]$
of subcomplexes of $\Omega[T]$.
We now consider the following diagram, 
where all solid maps are Kan fibrations, 
and the maps labelled $\sim$ are trivial Kan fibrations
($Sc_{T[Ge]}[T]$ is a cover in the sense of Remark \ref{RECOVER REM}(i),
hence both maps $Sc[T] \to Sc_{T[Ge]}[T] \to \Omega[T]$ are $G$-inner anodyne), so that one can choose the indicated sections.
\begin{equation}\label{FOURSQ EQ}
\begin{tikzcd}[column sep = 2.5em,row sep = 2.5em]
	X(Sc[T]) 
	\ar[bend left=17,dashed]{r}
	\ar[bend right=30,dashed]{d}&
	X(Sc_{T[Ge]}[T])
	 \ar[->>]{l}{\sim} \ar[->>]{r} 
	\ar[bend right=30,dashed]{d}&
	X(Sc[T-Ge])
	\ar[bend right=30,dashed]{d}
\\
	X(Sc_{T[Gf]}[T]) \ar[->>]{u}[swap]{\sim} \ar[->>]{d}
	\ar[bend left=17,dashed]{r}& 
	X(\Omega[T]) \ar[->>]{l}{\sim} \ar[->>]{u}[swap]{\sim} \ar[->>]{r} 
	\ar[->>]{d}&
	X(\Omega[T-Ge]) \ar[->>]{u}[swap]{\sim} \ar[->>]{d}
\\
	X(Sc[T-Gf]) 
	\ar[bend left=17,dashed]{r}&
	X(\Omega[T-Gf]) \ar[->>]{r} \ar[->>]{l}{\sim}&
	X(\Omega[T-Ge-Gf])
\end{tikzcd}
\end{equation}
Since the desired associativity conditions now amount to the claim that the top right and left bottom composites 
$X(Sc[T]) \to X(\Omega[T-Ge-Gf])$
are homotopic, the associativity result follows from Remark \ref{HOMOLIFTS REM}.
For the ``further'' claim, note that by Remark \ref{HOMOLIFTS REM} one is free to modify \eqref{HOMOTCIRC EQ} so as to use any lift of the form below.
But then since the $G$-tree $T$ is degenerate on the $G$-corolla $D$, such a lift is given by the degeneracy operator and the result follows.
\[
\begin{tikzcd}[row sep =1.5em]
&
	X(\Omega[T]) \ar[->>]{d}{\sim}
\\
	\{s(e)\} \times X(z_1,\cdots,z_l;e) \ar[dashed]{ru} \ar{r}
&
	X(Sc[T])
\end{tikzcd}
\]
\end{proof}

\begin{remark}
	In the non-equivariant case the associativity and unit conditions in the previous result capture all the key compatibilities of the
	$\varphi \circ_{e} (-)$, $(-)\circ_{e} \psi$
	operations.
	However, in the equivariant case there are further 
	``compatibilities with quotients of $G$-trees'',
	which reflect the remarks in 
	Example \ref{STRICTLIFT EX}.
	Nonetheless, describing these extra compatibilities would require using $G$-trees with more than three $G$-vertices, and since such compatibilities are not needed for our present goals, we omit their discussion. 
\end{remark}



\begin{corollary}\label{26COR}
DK-equivalences between dendroidal Segal spaces satisfy 2-out-of-6, i.e. when in
$X \xrightarrow{f} 
Y \xrightarrow{g}
Z \xrightarrow{h} W$ the maps
$gf$ and $hg$ are DK-equivalences then so are
$f$, $g$, $h$, $hgf$.
\end{corollary}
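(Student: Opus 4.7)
The plan is to verify the two defining properties of a DK-equivalence for each of $f, g, h$ separately; the case of $hgf$ will then follow from closure of DK-equivalences under composition, which is routine ($2$-out-of-$3$ for Kan equivalences on the mapping-space side and composition of essentially surjective functors on the $\iota_G^*ho$ side).

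Essential surjectivity. A DK-equivalence induces, at each level $G/H$, an equivalence of categories $\iota_G^*ho(X)(G/H) \to \iota_G^*ho(Y)(G/H)$: essential surjectivity is Definition \ref{DKEQUIV DEF}, while fully faithfulness comes from applying $\pi_0$ to the Kan equivalences of mapping spaces on $C_{H/H}$-profiles via \eqref{HOCOEFF EQ}. Since $\iota_G^*ho(-)$ is functorial, the hypotheses make $\iota_G^*ho(gf)(G/H)$ and $\iota_G^*ho(hg)(G/H)$ equivalences of categories for every $G/H$, and the classical $2$-out-of-$6$ property for equivalences of categories then yields essential surjectivity for $f, g$, and $h$.

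Fully faithfulness of $f$. For any $C$-profile $(x_1,\ldots,x_k;x_0)$ on $X$, the tower
\[
      X(x_1,\ldots,x_k;x_0) \to Y(fx_1,\ldots,fx_k;fx_0) \to Z(gfx_1,\ldots,gfx_k;gfx_0) \to W(hgfx_1,\ldots,hgfx_k;hgfx_0)
\]
has both length-two composites Kan equivalences by hypothesis, so $2$-out-of-$6$ for Kan equivalences in $\mathsf{sSet}$ makes each of the three individual arrows a Kan equivalence. For $g$, consider an arbitrary $C$-profile $(y_1,\ldots,y_k;y_0)$ on $Y$. By the essential surjectivity of $f$ just established, for each $0 \leq i \leq k$ one may choose $x_i \in X_0(\eta)^{H_i}$ together with an $H_i$-equivalence $\xi_i \in Y_0([1])^{H_i}$ exhibiting an isomorphism $fx_i \cong y_i$ in $\iota_G^*ho(Y)(G/H_i)$. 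Suitable iterated compositions with the $\xi_i$ and their homotopy inverses, built from the operations of Proposition \ref{GENOPHO PROP}, produce a chain of homotopy equivalences $Y(fx_1,\ldots,fx_k; fx_0) \simeq Y(y_1,\ldots,y_k; y_0)$; applying the same construction to the images $g\xi_i$ (which remain $H_i$-equivalences, since $\iota_G^*ho(g)$ preserves isomorphisms) and invoking the naturality clause Proposition \ref{GENOPHO PROP}(iv) yields a square commuting up to homotopy with Kan-equivalence horizontal maps, which reduces the required Kan equivalence on $(y_\bullet;y_0)$ to the already-established one on $(fx_\bullet;fx_0)$. Fully faithfulness of $h$ is the same argument with $f$ replaced by $gf$.

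The crux is the claim that composition with an $H$-equivalence $\xi$ is a homotopy equivalence on mapping spaces. Choosing $\xi^{-1} \in Y_0([1])^H$ representing the inverse of $[\xi]$ in $\iota_G^*ho(Y)(G/H)$, one has $[\xi \circ_{Ge} \xi^{-1}] = [s(e)]$; then the three-vertex associativity and unitality statements of Proposition \ref{ASSOC PROP}, together with the homotopy invariance in Proposition \ref{GENOPHO PROP}(ii), give
\[
      \xi \circ_{Ge}(\xi^{-1} \circ_{Ge} \psi) \sim (\xi \circ_{Ge} \xi^{-1}) \circ_{Ge} \psi \sim s(e) \circ_{Ge} \psi \sim \psi
\]
and dually, so $\xi \circ_{Ge}(-)$ and $\xi^{-1} \circ_{Ge}(-)$ are mutually homotopy inverse; the same argument applies to $(-) \circ_{Ge} \xi$ at leaf positions. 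This lemma, which sits on top of the full apparatus of \S \ref{HMPTYGEN SEC}, is the principal technical obstacle, and once it is in place the remaining $2$-out-of-$6$ arguments are routine applications of $2$-out-of-$6$ for equivalences of categories and for Kan equivalences.
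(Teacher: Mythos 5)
Your proposal is correct and follows the same route as the paper's proof: $2$-out-of-$6$ in $\mathsf{Cat}$ and $\mathsf{sSet}$ handles everything except fully faithfulness of $g$ and $h$ at $C$-profiles not hit by $f$, and that gap is closed by essential surjectivity of $f$ together with the fact that $\circ_{Ge}$-composition with an $H$-equivalence is a weak equivalence of mapping spaces. The paper's proof is terser and simply asserts the latter as a consequence of Proposition \ref{ASSOC PROP}; your expansion of that step — choosing $\xi^{-1}$, applying Proposition \ref{GENOPHO PROP}(ii),(iii) and the associativity/unitality in Proposition \ref{ASSOC PROP}, then invoking Proposition \ref{GENOPHO PROP}(iv) for the homotopy-commuting square — is precisely the argument the paper leaves implicit.
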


\begin{proof}
Applying the 2-out-of-6 properties in $\mathsf{sSet}$ and $\mathsf{Cat}$ to mapping spaces and homotopy categories $\iota_G^{\**}ho(-)$,
the only non obvious conditions are the fully faithfulness of $g,h$ for $C$-profiles not in the image of $f$. 
But since by Proposition \ref{ASSOC PROP} the maps
$j \circ_{Ge} (-)$, $(-)\circ_{Ge} j$
are weak equivalences when $j$ is a $H$-equivalence,
this last claim follows from essential surjectivity.
\end{proof}

Recall that by replacing $\mathsf{sdSet}^G$
with the simpler category $\mathsf{ssSet}$ in Definition \ref{HMPTYGEN DEF}
one recovers the Segal spaces of \cite{Rez01}.
The following roughly summarizes (and slightly refines)
\cite[Lemma 5.8, Theorem 6.2, Prop. 11.1, Lemma 11.10]{Rez01} in our setup.

Adapting Notation \ref{UNILIMDEN NOT}, for fixed $X \in \mathsf{ssSet}$ we write $X(-)\colon \mathsf{sSet}^{op} \to \mathsf{sSet}$ for the limit preserving functor such that
$X(\Delta[m]) = X(m)$.

\begin{proposition}\label{SESP PROP}
	Let $X \in \mathsf{ssSet}$ be a Segal space. Then:
\begin{itemize}
	\item[(i)] equivalences of $X$ define a subset of connected components
	$X^h(1) \subseteq X(1)$;
	\item [(ii)] the pullbacks
\begin{equation}\label{XHDEF EQ}
\begin{tikzcd}[row sep=1.5em]
	X^h(m) \ar{r} \ar{d} & X(m) \ar{d}
\\
	X^h(1) \times_{X(0)} \cdots \times_{X(0)} X^h(1) \ar{r} &
	X(1) \times_{X(0)} \cdots \times_{X(0)} X(1)
	\arrow[lu, phantom, "\lrcorner", very near start]
\end{tikzcd}
\end{equation}
define a Segal space $X^h \subseteq X$, consisting of a union of connected components at each level;
	\item[(iii)] the maps
	$X^h(2) \xrightarrow{(d_2,d_1)}
	X^h(\Lambda^0[2])$, 
	$X^h(2) \xrightarrow{(d_0,d_1)} 
	X^h(\Lambda^2[2])$
	are trivial fibrations;
	\item[(iv)] the maps $X(J^m) \to X({\Delta[m]}) = X(m)$ factor through weak equivalences 
	$X(J^m) \xrightarrow{\sim} X^h(m)$.
\end{itemize}
\end{proposition}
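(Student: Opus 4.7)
The proof follows Rezk's original arguments in \cite[\S 5, \S 11]{Rez01} for Segal spaces. The plan is as follows.

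For part (i), I would define $X^h(1) \subseteq X(1)$ as the set of $1$-simplices whose homotopy class in $ho(X)$ (the analogue of Definition \ref{HMTPYGEN DEF} applied to Segal spaces) is an isomorphism. Since being an isomorphism in $ho(X) = \pi_0\left(\iota^{\**}(X)\right)$ depends only on the path component of a simplex of $X(1)$, $X^h(1)$ is automatically a union of connected components.

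For part (ii), since the Segal map $X(m) \to X(1) \times_{X(0)} \cdots \times_{X(0)} X(1)$ is a weak equivalence of Kan complexes (by the Segal property together with horizontal Reedy fibrancy of $X$), and the bottom horizontal map in \eqref{XHDEF EQ} is an inclusion of a union of connected components, the pullback $X^h(m)$ is likewise a union of connected components of $X(m)$, and the restricted Segal map for $X^h$ is again a weak equivalence. This makes $X^h$ into a Segal space with $X^h(0) = X(0)$.

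For part (iii) — the main content — given a horn in $X^h(\Lambda^0[2])$ consisting of two equivalences $f\colon a\to b, g\colon a\to c$ with common source, I would pick a homotopy inverse $\bar f\colon b\to a$ to $f$, apply the Segal property to the composable pair $(\bar f, g)$ to obtain an edge $h = g\circ\bar f\colon b\to c$, and then use the associativity up to homotopy of Segal composition to witness $h\circ f \simeq g$, producing a $2$-simplex whose all three edges are equivalences. Contractibility of the fibers of $X^h(2) \to X^h(\Lambda^0[2])$ then follows from the contractibility of the space of homotopy inverses to a given equivalence combined with the essential uniqueness of Segal composition, exactly as in \cite[Lemma 11.10]{Rez01}. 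The case of $\Lambda^2[2]$ is dual.

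For part (iv), the factorization $X(J^m) \to X^h(m) \hookrightarrow X(m)$ exists because every edge of $J^m = N\widetilde{[m]}$ is invertible in the groupoid $\widetilde{[m]}$, so its image under any map to $X$ is an equivalence; in fact, the inclusion $X^h \subseteq X$ gives $X^h(J^m) = X(J^m)$. To promote this to a weak equivalence, recall from Remark \ref{CONTGR REM} that $\Delta[m] \to J^m$ is built cellularly from left horn inclusions $\Lambda^0[k] \to \Delta[k]$ with $k\geq 2$. By Remark \ref{ANHYPER REM}, together with the inner horns these generate (in the hypersaturated sense) all left horn inclusions starting from $\Lambda^0[2]\to\Delta[2]$ and the inner horns. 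The Segal property supplies trivial fibrations $X^h(\Delta[k]) \to X^h(\Lambda^i[k])$ for inner horns (via Remark \ref{SLICE REM} and Remark \ref{HYPERMODEL REM}), and part (iii) supplies the $\Lambda^0[2]$ case, so $X^h(-)$ sends every $\Lambda^0[k]\to\Delta[k]$ ($k\geq 2$), and hence also $\Delta[m]\to J^m$, to a trivial fibration. This yields $X(J^m) = X^h(J^m) \xrightarrow{\sim} X^h(\Delta[m]) = X^h(m)$, as desired.

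The main obstacle is clearly part (iii): it is the only step that is not a formal consequence of the Segal property combined with the hypersaturation machinery, requiring instead genuine manipulation of equivalences and homotopy inverses inside a Segal space.
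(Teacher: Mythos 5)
Your plan mirrors the paper's Rezk-based approach, and parts (ii), (iv) are essentially the paper's argument (for (iv) you correctly assemble Remarks \ref{CONTGR REM}, \ref{ANHYPER REM}, part (iii), and the observation that every edge of $J^m$ is sent to an equivalence so that $X(J^m)=X^h(J^m)$). For (iii), your explicit horn-filling is valid but takes a different route: the paper instead composes a chosen section of the trivial fibration $(d_2,d_0)\colon X^h(2)\to X^h(1)\times_{X(0)}X^h(1)$ with $(d_2,d_1)$, regards the composite as a map of Kan fibrations over $X^h(1)\times X(0)$, and identifies the induced map on fibers over $(f,z)$ with $(-)\circ f$ on mapping spaces, which is a weak equivalence since $f$ is an equivalence; $2$-out-of-$3$ then finishes, avoiding the bookkeeping of explicit fillers and homotopy inverses.

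There is, however, a genuine gap in part (i). You declare it ``automatic'' that ``being an isomorphism in $ho(X)$ depends only on the path component of a simplex of $X(1)$'', but that assertion is precisely the nontrivial content of (i). A path $H$ in $X(1)$ from $f$ to $f'$ will in general move the endpoints $d_0H,d_1H$ through $X(0)$, so $[f]$ and $[f']$ are morphisms between objects of $ho(X)$ which are \emph{a priori distinct}; moreover such a path need not lie in $\gamma_{\**}X(1)$ (whose $\pi_0$ is what actually computes $ho(X)(1)$), so the path alone does not identify $[f]$ with $[f']$. The paper's argument fills this gap: it shows that left-invertibility of $[f]$ is equivalent to the existence of a lift $p$ of the point $(f,s_0(x))$ against the Kan fibration $(d_2,d_1)\colon X(2)\twoheadrightarrow X(1)\times_{X(0)}X(1)$, and then uses the Kan lifting property to transport $p$ along $(H,s_0 d_1(H))\colon\Delta[1]\to X(1)\times_{X(0)}X(1)$, showing that $f'$ is also left-invertible; the dual handles right inverses. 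Without some such argument your conclusion that $X^h(1)$ is a union of connected components does not follow.
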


\begin{proof}
For (i), given 
$f \colon x \to y$ in $X_0(1)$
one has that $[f]$ has a left inverse iff there exists $p$
as on the left diagram below. But for any path $H$ between $f$ and $f'$ in $X(1)$, there is a lift in the right diagram
\begin{equation}\label{LEFTINV EQ}
\begin{tikzcd}[column sep =40]
	& X(2) \ar[->>]{d}{(d_2,d_1)}
&
	\{0\} \ar{r}{p} \ar[>->]{d}[swap]{\sim} &
	X(2) \ar[->>]{d}{(d_2,d_1)}
\\
	\{0\} \ar{r}[swap]{(f,s_0(x))} \ar{ru}{p} &
	X(1) \times_{X(0)} X(1)
&
	\Delta[1] \ar{r}[swap]{(H,s_0 d_1(H))} \ar[dashed]{ru} &
	X(1) \times_{X(0)} X(1)
\end{tikzcd}
\end{equation}
showing that $f'$ is also left-invertible. The situation for right inverses is identical, thus (i) follows.

For (ii), that $X^h$ is closed under the simplicial operators follows since equivalences are closed under composition.
Moreover, noting that \eqref{XHDEF EQ} can be reinterpreted as on the left below,
cellular induction yields the more general right pullbacks for all 
$K \in \mathsf{sSet}$.
\begin{equation}\label{XHGENDEF EQ}
\begin{tikzcd}[row sep=1.5em]
	X^h(\Delta[m]) \ar{r} \ar{d} &
	X(\Delta[m]) \ar{d}
&&
	X^h(K) \ar{r} \ar{d} &
	X(K) \ar{d}
\\
	X^h(\mathsf{sk}_1 \Delta[m]) \ar{r} &
	X(\mathsf{sk}_1 \Delta[m])
	\arrow[lu, phantom, "\lrcorner", very near start]
&&
	X^h(\mathsf{sk}_1 K) \ar{r} &
	X(\mathsf{sk}_1 K)
	\arrow[lu, phantom, "\lrcorner", very near start]
\end{tikzcd}
\end{equation}
Since 
$\mathsf{sk}_1 (\partial \Delta[m]) = 
\mathsf{sk}_1 \Delta[m]$
if $m \geq 2$
it follows that the maps
$X^h(m) \to X^h(\partial \Delta[m])$, $m \geq 2$
are Kan fibrations, and since the composite
$X^h(1) \to X(1) \to X(0) \times X(0)$
is clearly a Kan fibration, $X^h$ is indeed Reedy fibrant. 
The Segal condition for $X^h$ is obvious from the pullback \eqref{XHDEF EQ}.

For (iii), it suffices by symmetry to establish the first claim.
It is then enough to show that for any choice of section 
in the following diagram the top composite is a Kan equivalence.
\begin{equation}
\begin{tikzcd}[column sep = 60]
	X^h(1) \times_{X_0} X^h(1) \ar[bend left=17,dashed]{r} 
	\ar[->>]{rd}[swap]{(id,d_0)}&
	X^h(2) \ar[->>]{r}[swap]{(d_2,d_1)} \ar[->>]{l}{(d_2,d_0)}[swap]{\sim}&
	X^h(1) \times_{X_0} X^h(1)
	\ar[->>]{dl}{(id,d_0)}
\\
	& X^h(1) \times X(0)
\end{tikzcd}
\end{equation}
But this composite is a map of Kan fibrations over
$X^h(1) \times X(0)$ with the map between the fibers over 
$(f \colon x \to y,z)$
computing the map
$(-) \circ f \colon X^h(y;z) \to X^h(x;z)$,
which is a Kan equivalence since $f \in X_0^h(1)$ is an equivalence.
Thus the composite is a Kan equivalence, establishing (iii).

Lastly, for (iv) note first that (iii) says that $X^h$ is local with respect to the outer horn inclusions
$\Lambda^0[2] \to \Delta[2]$ and
$\Lambda^2[2] \to \Delta[2]$, 
and hence by Remarks 
\ref{ANHYPER REM} and \ref{CONTGR REM}
the maps 
$X^h(J^m) \to X^h(m)$ 
are Kan equivalences. 
It remains to show
$X^h(J^m) = X(J^m)$. We first focus on the case of $J=J^1$, for which we consider
the following diagrams, where maps out of $X(J)$ are labelled as $\partial_{\underline{a}}$ for $\underline{a}$ the associated simplex of $J$ (which is a string on $\{0,1\}$)
\[
\begin{tikzcd}[column sep = 20]
	X(J) \ar[bend left=19]{rr}{\partial_{00}} \ar{r}{\partial_{010}}
	\ar[->]{rd}[swap]{(\partial_{01},\partial_{10})}&
	X(2) \ar[->]{r}{d_1} \ar[->]{d}{(d_2,d_0)} &
	X(1) 
&
	X(J) \ar[bend left=19]{rr}{\partial_{11}} \ar{r}{\partial_{101}}
	\ar[->]{rd}[swap]{(\partial_{10},\partial_{01})}&
	X(2) \ar[->]{r}{d_1} \ar[->]{d}{(d_2,d_0)} &
	X(1) 
\\
	& X(1) \times_{X(0)} X(1) &
&
	& X(1) \times_{X(0)} X(1) &
\end{tikzcd}
\]
Since $\partial_{00},\partial_{11}$ are degenerate, for any $x \in X_0(J)$ one has that 
$\partial_{01}(x)$, $\partial_{10}(x)$ are inverse equivalences, as exhibited by $\partial_{010}(x), \partial_{101}(x)$
(cf. \eqref{LEFTINV EQ}).
This shows that $\partial_{01}, \partial_{10}$
factor through $X^h(1)$
so that
$X(J) \to X(\mathsf{sk}_1 J)$ factors through
$X^h(\mathsf{sk}_1 J)$.
But since all $1$-simplices of $J^m$ are in the image of some map $J \to J^m$, 
the map $X(J^m) \to X(\mathsf{sk}_1 J^m)$ likewise factors through
$X^h(\mathsf{sk}_1 J^m)$ for any $m$. 
The pullback \eqref{XHGENDEF EQ} now finishes the proof.
\end{proof}

\begin{remark}
The proof of (ii) shows that the inclusion $X^h \to X$ is a Reedy fibration.
\end{remark}

\begin{remark}\label{LAMBJREAL REM}
Writing $J\xrightarrow{ij}J^m$ for the map sending $0$ to $i$ and $1$ to $j$, Proposition \ref{SESP PROP}(iv) combined with the Segal condition for $X^h$ and 
Proposition \ref{SESP PROP}(iii)
shows that the maps
\[
      X\left(J^m \right) 
      \xrightarrow{\left(01,12,\cdots,(m-1)m\right)} X(J) \underset{X(0)}{\times} X(J) \underset{X(0)}{\times} \cdots \underset{X(0)}{\times} X(J),
\]
\[
      X\left(J^2\right) 
      \xrightarrow{\left(01,02 \right)} X(J) \underset{X(0)}{\times} X(J),
      \qquad \qquad
      X\left(J^2\right) 
      \xrightarrow{\left(12,02 \right)} X(J) \underset{X(0)}{\times} X(J),
\]
are trivial Kan fibrations.
\end{remark}

\subsection{Rezk completion and fibrant Segal operads}
\label{REZKCOMP SEC}

To prove the characterization of complete/joint equivalences in Theorem \ref{COMPIFFDK THM}
we will need to establish some technical properties of the completion 
$X \to \tilde{X}$
of a dendroidal Segal space $X \in \mathsf{sdSet}^G$,
which are given by Propositions \ref{JDDK PROP} and \ref{COMPLE PROP}.

We first need to discuss some preliminary constructions.
We will make use of a decomposition of the tensor product $[1] \otimes C$,
where $[1]$ is the $1$-simplex regarded as a $G$-trivial $G$-tree, and $C$ is a $G$-corolla 
(see Notation \ref{GCOR NOT}). Adapting the discussion
in Example \ref{THM71 EX}, $[1] \otimes C$ is the union of two maximal $G$-subtrees $C \star \eta$ and $\eta \star C$,
whose orbital representations are depicted below.
Explicitly, and noting that the edges (i.e. $\eta$-dendrices) of $\Omega[1] \otimes \Omega[C]$
are $\{0,1\} \times \boldsymbol{E}(C)$,
the tree
$C \star \eta$ (resp. $\eta \star C$)
has the edges $(i,e)$ such that
$i=0$ or $e\in \boldsymbol{E}(C)$ is a root 
(resp. $i=1$ or $e \in \boldsymbol{E}(C)$ is a leaf),
where we recall that in our convention $0$ is the leaf of $[1]$ while $1$ is the root.
\begin{equation}\label{OTIMESDECOMP EQ}
\begin{tikzpicture}
[grow=up,auto,level distance=2.3em,every node/.style = {font=\footnotesize},dummy/.style={circle,draw,inner sep=0pt,minimum size=1.75mm}]
	\node at (0,0) [font=\normalsize] {$C \star \eta$}
		child{node [dummy,fill=black] {}
			child{node [dummy] {}
				child{
				edge from parent node [swap,near end] {$G/H_k$} node [name=Kn] {}}
				child{
				edge from parent node [near end] {$G/H_1$}
node [name=Kone,swap] {}}
			edge from parent node [swap] {$G/H_0$}}
		edge from parent node [swap] {$G/H_0$}};
		\draw [dotted,thick] (Kone) -- (Kn) ;
	\node at (5,0) [font=\normalsize] {$\eta \star C$}
		child{node [dummy] {}
			child{node [dummy,fill=black] {}
				child{
				edge from parent node [swap] {$G/H_k$}}
			edge from parent node [swap,near end] {$G/H_k$} node [name=Kn] {}}
			child{node [dummy,fill=black] {}
				child{
				edge from parent node {$G/H_1$}}
			edge from parent node [near end] {$G/H_1$}
node [name=Kone,swap] {}}
		edge from parent node [swap] {$G/H_0$}
		};
		\draw [dotted,thick] (Kone) -- (Kn) ;
\end{tikzpicture}
\end{equation}
Moreover, just as in \eqref{GENLEXREL EQ}, 
$C \star \eta$ and $\eta \star C$ have a common orbital \emph{inner} face, which we denote simply by $C$
(since this face is canonically isomorphic to the original $C$),
leading to a decomposition
\begin{equation}
      \label{OTIMESOC_EQ}
      \Omega[1] \otimes \Omega[C]
      \simeq
      \Omega[C \star \eta] \coprod_{\Omega[C]} \Omega[\eta \star C]
\end{equation}
We note that this holds even if $k=0$, 
which is an exceptional case since then
$[1] \otimes C = C \star \eta$.

\begin{remark}
\eqref{OTIMESOC_EQ} is the (equivariant) dendroidal generalization of the familiar 
decomposition
      \[
            \Delta[1] \times \Delta[1] \simeq \Delta[2] \coprod_{\Delta[1]} \Delta[2]
      \]
\end{remark}

Proposition \ref{JDDK PROP} will make use of the cube diagram below, where 
$\partial \Omega[C] = \partial^l \Omega[C] \amalg \partial^r \Omega[C]$ is the decomposition of the edges of $C$ into leaves and roots. All maps are the inclusions determined by the decomposition \eqref{OTIMESOC_EQ}, but it seems worthwhile to be explicit regarding how 
$\partial^l \Omega[C]$ and $\partial^r \Omega[C]$
include into
$\Omega[1] \otimes \Omega[C]$. 
An edge $l$ in $\partial^l \Omega[C]$ includes as $(0,l)$ while an edge $r$ in $\partial^r \Omega[C]$ includes as $(1,r)$.
\begin{equation}\label{BIGCUBE EQ}
\begin{tikzcd}[column sep=1pt,row sep=10pt]
	\partial \Omega[C] \ar{rr} \ar{rd} \ar{dd} && 
	\Omega[1] \otimes \partial^l \Omega[C]  \amalg \partial^r \Omega[C] \ar{rd} \ar{dd}
\\
	&
	|[alias=WW]|
	\partial^l \Omega[C]  \amalg  \Omega[1] \otimes \partial^r \Omega[C] && 
	|[alias=RR]|
	\Omega [1] \otimes \partial \Omega[C] \ar{dd}
\\
	\Omega[C] \ar{rr} \ar{rd} &&
	\Omega[\eta \star C] \ar{rd}
\\
	&
	|[alias=LL]|
	\Omega[C \star \eta]  \ar{rr} &&
	\Omega[1] \otimes \Omega[C]  
\arrow[from=WW,to=RR,crossing over]
\arrow[from=WW,to=LL,crossing over]
\end{tikzcd}
\end{equation}
Moreover, note that this is a projective cofibrant cube (recall that $\chi \colon (0 \to 1)^{\times n} \to \mathsf{dSet}^G$
is projective cofibrant if the maps 
$\colim_{\underline{j} < \underline{i}} \chi_{\underline{j}} \to \chi_{\underline{i}}$
are normal monomorphisms
for each 
$\underline{i} \in (0 \to 1)^{\times n}$).
Indeed, since all maps in \eqref{BIGCUBE EQ} are monomorphisms of presheaves, we can regard all objects as subpresheaves of 
$\Omega[1] \otimes \Omega[C]$, so that projective cofibrancy is the same as \eqref{BIGCUBE EQ} being strongly cartesian, i.e., 
the four objects
$\partial \Omega[C]$, $\Omega[C]$,
$\Omega[1] \otimes \partial^l \Omega[C]  \amalg \partial^r \Omega[C] $ and 
$\partial^l \Omega[C]  \amalg  \Omega[1] \otimes \partial^r \Omega[C]$
being the intersection of the objects they map to.


Lastly, note that both of the horizontal faces of \eqref{BIGCUBE EQ}
are pushout squares.

\begin{proposition}\label{JDDK PROP}
	Let $X\in \mathsf{sdSet}^G$ be a dendroidal Segal space. 
	Then the map $X \to X^{J}$ is a $DK$-equivalence.
\end{proposition}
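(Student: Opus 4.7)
The plan is to verify the two defining conditions of a DK-equivalence for the map $f\colon X\to X^J$ induced by the contravariant functoriality of $X^{(-)}$ applied to the projection $J\to \Delta[0]=\eta$. Note that both vertex inclusions $\{0\},\{1\}\hookrightarrow J$ produce retractions $r_0,r_1\colon X^J\to X$ of $f$.

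For essential surjectivity, the identification $(X^J)^H=(X^H)^J$ combined with \eqref{HOCOEFF EQ} reduces, at each $H\le G$, to the corresponding statement for the simplicial Segal space $\iota^{\**}(X^H)$. There an element $y\in \iota^{\**}(X^H)_0(J)$ encodes an equivalence $y_0\leftrightarrow y_1$, and $fr_0(y)=\mathrm{id}_{y_0}$. By Proposition \ref{SESP PROP}(iv), $y$ lies in the invertible component $(\iota^{\**}X^H)^h(J)$, and the classical Rezk argument (using $J\to \Delta[1]$ and the pullback \eqref{XHGENDEF EQ}) produces a $1$-simplex of $((\iota^{\**}X^H)^J)_1$ witnessing $[y]=[\mathrm{id}_{y_0}]$ in the relevant homotopy category.

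For fully faithfulness, I fix a $G$-corolla $C=C_{\amalg_i H_0/H_i}$ with $C$-profile $\mathbf{x}=(x_1,\ldots,x_k;x_0)$. By Remark \ref{DKCOM REM}, it suffices to show that $f$ induces a Kan equivalence between the fibres over $\mathbf{x}$ of the vertical composites in \eqref{DKCOM EQ} at $T=C$. Applying $X(-)$ to the projective cofibrant cube \eqref{BIGCUBE EQ} (whose horizontal faces are the pushouts \eqref{OTIMESOC_EQ}) yields a cube of Kan fibrations by dendroidal Reedy fibrancy; the dendroidal Segal conditions on $X$ together with Remark \ref{HYPERMODEL REM} make the four Segal-core comparison maps trivial Kan fibrations, so the cube is homotopy strongly cartesian. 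This provides a Kan equivalence between $X(\Omega[C])$ and the appropriate fibre product with $\Omega[1]$ in place of $J$. To transfer from $\Omega[1]$ to $J$, I argue fibrewise over $\mathbf{x}$: since each component of $\mathbf{x}$ comes from an object (hence is interpreted as an identity edge in $X^J$), Proposition \ref{SESP PROP}(iv) applied to each $\iota^{\**}(X^{H_i})$ identifies the $J$-mapping spaces with the invertible part of the $\Omega[1]$-mapping spaces over $\mathbf{x}$, which is exactly what \eqref{BIGCUBE EQ} controls.

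The main obstacle is this last step: the cube \eqref{BIGCUBE EQ} directly governs only $\Omega[1]\otimes\Omega[C]$, whereas the relevant invertibility structure distinguishing $J$ from $\Omega[1]$ must be imported from Proposition \ref{SESP PROP}(iv), and one has to align these two inputs coherently across the various isotropy strata $H_i$ attached to the edges of $C$, making essential use of the fact that the profile $\mathbf{x}$ lies in the ``identity'' locus.
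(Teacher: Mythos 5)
Your overall plan matches the paper's: both exploit the projective cofibrant cube \eqref{BIGCUBE EQ} (with horizontal faces the pushout \eqref{OTIMESOC_EQ}) to control $X^{\Omega[1]}(\Omega[C])$ via the Segal conditions, both use Proposition \ref{SESP PROP}(iv) to bridge $\Omega[1]$ and $J$, and both reduce essential surjectivity to the simplicial case via \eqref{HOCOEFF EQ}. But the obstacle you flag at the end is a genuine gap in the fully-faithfulness part, and the paper resolves it by reversing the order of the two steps. Instead of first proving the cube statement and then trying to import the invertibility constraint from $J$ afterwards, the paper \emph{begins} by applying Proposition \ref{SESP PROP}(iv) to the auxiliary bisimplicial Segal space $X^{\Omega[T]} \in \mathsf{ssSet}$ (defined by $X^{\Omega[T]}(m)=X(\Omega[m]\otimes\Omega[T])$) to observe that each horizontal map $X^J(\Omega[T]) \to X^{\Omega[1]}(\Omega[T])$ is a \emph{homotopy monomorphism}, i.e.\ a weak equivalence onto a union of connected components. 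In the three-column diagram \eqref{BIGSQ EQ} the rightmost square therefore consists of homotopy monos, so a two-out-of-three argument on $\pi_0$ and components shows it suffices to prove that the \emph{composite} square in \eqref{BIGSQ EQ} induces equivalences on fibres --- which is purely the $\Omega[1]$-statement that the cube handles. There is then no remaining ``transfer from $\Omega[1]$ to $J$'' and no need to align invertibility data across the isotropy strata $H_i$, which is precisely the step you could not close. A secondary remark: the cube $X(\eqref{BIGCUBE EQ})=\eqref{BIGCUBEDU EQ}$ is not shown to be ``homotopy strongly cartesian''; the paper uses that its horizontal faces are \emph{strict} pullbacks (because $X$, being a presheaf, takes the pushouts \eqref{OTIMESOC_EQ} to pullbacks) together with the fibrancy of the vertical maps, so that the relevant \emph{fibres} form pullback squares. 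That is a more targeted claim than strong cartesianness of the whole cube.

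For essential surjectivity the ``classical Rezk argument'' you invoke is not a specific named lemma, and the intended $1$-simplex is not constructed. The paper makes the construction explicit: pick a map $F \colon J \times J \to \{0\} \times J$ with $F|_{\{0\}\times J}=\mathrm{id}$ and $F|_{\{1\}\times J}$ constant at $(0,0)$; then $X(F)\colon X^J(0) \to X^J(J)$ followed by the restriction $X^J(J) \to X^J(1) \rightrightarrows X^J(0)$ exhibits any object $y$ of $X^J$ as the source of a $1$-morphism in $X^J$ to the degenerate object $f(y_0)$, and this $1$-morphism is invertible by Proposition \ref{SESP PROP}(iv) because it lifts to $X^J(J)$. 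Your sketch has the right target (``$[y]=[\mathrm{id}_{y_0}]$'') but does not produce the witnessing element of $X^J([1])$; without something like $F$, appealing to the pullback \eqref{XHGENDEF EQ} alone does not yield it.
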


\begin{proof}
	Note first that for any $T \in \Omega_G$ the map
	$X^{J}(\Omega[T]) \to X^{\Omega[1]}(\Omega[T])$ can be rewritten as
	$\left(X^{\Omega[T]}\right)(J) \to
	\left(X^{\Omega[T]}\right)(\Delta[1])
	$,
	where $X^{\Omega[T]} \in \mathsf{ssSet}$ is defined 
	by $X^{\Omega[T]}(m) = X(\Omega[m] \otimes \Omega[T])$,
	with $[m]$ given the $G$-trivial action (i.e., we are restricting Notation \ref{UNILIMDEN NOT}).
	Since $X^{\Omega[T]}$
	is a (simplicial) Segal space (by adjunction together with \cite[Prop. 7.25, Thm. 7.1]{Per18}),
	Proposition \ref{SESP PROP}(iv) says that this map is 
	a weak equivalence onto a subset of components,
	i.e. a \textit{homotopy monomorphism}.
	Hence, for any $G$-corolla
	$C \simeq C_{\amalg_i H_0/H_i}$ the horizontal maps in 
	the right square below are homotopy monomorphisms.
\begin{equation}\label{BIGSQ EQ}
\begin{tikzcd}
	X(\Omega[C]) \ar{r} \ar[->>]{d} & 
	X^{J}(\Omega[C]) \ar{r} \ar[->>]{d} & 
	X^{\Omega[1]}(\Omega[C]) \ar[->>]{d}
\\
	\prod_{0\leq i\leq k} X(\eta)^{H_i} \ar{r}&
	\prod_{0\leq i\leq k} \left(X^{J}(\eta)\right)^{H_i} \ar{r} &
	\prod_{0\leq i\leq k} \left(X^{\Omega[1]}(\eta)\right)^{H_i}
\end{tikzcd}	
\end{equation}

Since fully faithfulness of $X \to X^{J}$
is the statement that the leftmost square in \eqref{BIGSQ EQ} induces weak equivalences on fibers, it suffices to show that so does the composite square.


Now note that
$X^{\Omega[1]}(\Omega[C]) = X\left(\Omega[1] \otimes \Omega[C]\right)$, so that \eqref{BIGCUBE EQ} induces the cube \eqref{BIGCUBEDU EQ} below with dual properties: this is an injective fibrant cube whose horizontal faces are pullback squares.
Moreover, we will find it convenient to slightly repackage these properties. Regarding the top and bottom faces of \eqref{BIGCUBEDU EQ} as objects in the category of square diagrams, the vertical maps in \eqref{BIGCUBEDU EQ} can then be collectively regarded as a projective fibration between projective fibrant pullback squares.
\begin{equation}\label{BIGCUBEDU EQ}
\begin{tikzcd}[column sep=1pt,row sep=10pt]
	X\left(\Omega[1] \otimes \Omega[C]\right) \ar[->>]{rr} \ar[->>]{rd} \ar[->>]{dd} && 
	X(\Omega[\eta \star C]) \ar[->>]{rd} \ar[->>]{dd}
\\
	&
	|[alias=WW]|
	X(\Omega[C \star \eta]) && 
	|[alias=RR]|
	X(\Omega[C]) \ar[->>]{dd}
\\
	\underset{i}{\prod} X([1])^{H_i} \ar[->>]{rr} \ar[->>]{rd} &&
	\left(\underset{i \neq 0}{\prod} X([1])^{H_i}\right)
	\times X(\eta)^{H_0} \ar[->>]{rd}
\\
	&
	|[alias=LL]|
	\left(\underset{i \neq 0}{\prod} X(\eta)^{H_i}\right)
	\times X([1])^{H_0}  \ar[->>]{rr} &&
	\underset{i}{\prod} X(\eta)^{H_i} 
\arrow[from=WW,to=RR,crossing over,->>]
\arrow[from=WW,to=LL,crossing over,->>]
\end{tikzcd}
\end{equation}
Noting next that fibers (in the category of square diagrams)
of a fibration between pullback squares are fibrant pullback squares, it follows that the 
fibers of the top left vertical map in \eqref{BIGCUBEDU EQ}
are homotopy pullbacks of the fibers of the remaining vertical maps.
And since this top left vertical map is the right vertical map in \eqref{BIGSQ EQ}, the desired claim that the total diagram in \eqref{BIGSQ EQ}
induces equivalences on fibers will follow provided that the same holds for the following total diagrams
(which are also the composites of \eqref{BIGSQ EQ} with the left and back faces of \eqref{BIGCUBEDU EQ}; compare with 
\cite[Lemma 12.4]{Rez01}),
where the horizontal maps are the obvious degeneracies.
\[
\begin{tikzcd}[column sep=17]
	X(\Omega[C]) \ar{r}{s} \ar[equal]{d}&
	X(\Omega[C \star \eta]) \ar[->>]{d}{\sim}
&
	X(\Omega[C]) \ar{r}{s} \ar[equal]{d}&
	X(\Omega[\eta \star C]) \ar[->>]{d}{\sim}
\\
	X(\Omega[C]) \ar{r}{s} \ar[->>]{d}&
	X(\Omega[C]) \underset{X(\eta)^{H_0}}{\times} X([1])^{H_0} \ar[->>]{d}
&
	X(\Omega[C]) \ar{r}{s} \ar[->>]{d} & \underset{i \neq 0}{\prod} X([1])^{H_i} 
	\underset{\underset{i \neq 0}{\prod} X(\eta)^{H_i}}{\times} X(\Omega[C]) \ar[->>]{d}
\\
	\underset{i}{\prod} X(\eta)^{H_i} \ar{r}[swap]{s} &
	\left(\underset{i \neq 0}{\prod} X(\eta)^{H_i}\right)
	\times X([1])^{H_0}
&
	\underset{i}{\prod} X(\eta)^{H_i} \ar{r}[swap]{s} &
	\left(\underset{i \neq 0}{\prod} X([1])^{H_i}\right)
	\times X(\eta)^{H_0}
\end{tikzcd}
\]
But this is clear from the fact that the top right vertical
maps in these diagrams are trivial Kan fibrations
(by the Segal condition for $X$) and the fact that the bottom squares are pullback squares.

Lastly, to check essential surjectivity, 
since $G$ acts trivially on $J$, equation \eqref{HOCOEFF EQ} yields
$\iota_G^{\**} ho\left( X^J \right)(G/H)=
ho\left(\iota^{\**}\left(X^J\right)^H\right)=
ho\left(\iota^{\**}\left(X^H\right)^J\right)$,
where $(-)^H$ denotes fixed points,
so we reduce to the case of $X \in \mathsf{ssSet}$ a (simplicial) Segal space.
Since $J$ is a contractible Kan complex,
one has a map 
$F \colon J \times J \to \{0\} \times J$
such that 
$F|_{\{0\}\times J} = id_{\{0\} \times J}$
and
$F|_{\{1\}\times J} = (0,0)$.
But noting that 
$X(J \times J) \to X(\{0\} \times J)$
can be written as 
$X^J(0) \to X^J(J)$, the composites below
show that any object in $\left(X^J\right)_0$ is equivalent to a 
degenerate object, which is thus in the image of $X\to X^J$.
\[
	X^J(0) \xrightarrow{X(F)} 
	X^{J}(J) \to
	X^J(1) \rightrightarrows X^J(0)
\]
%
\end{proof}

\begin{definition}
	Two maps $f,f'\colon X \rightrightarrows Y$ between dendroidal Segal spaces are called \textit{$J$-homotopic}, written $f \sim_J f'$, if
	there is a homotopy $H$ such that
	the two composites
	$X \xrightarrow{H} Y^J \rightrightarrows Y$
	are $f,f'$.
	
	Further, a map $f\colon X \to Y$ of dendroidal Segal spaces is called a \textit{$J$-homotopy equivalence} if there exists $g \colon Y \to X$
	such that $gf \sim_J id_X$, $fg \sim_J id_Y$.
\end{definition}

\begin{remark}
	For $f\sim_J f'$, Proposition \ref{JDDK PROP} and 2-out-of-3 applied to $X \xrightarrow{H} Y^J \rightrightarrows Y$ imply that $f$ is a DK-equivalence iff $f'$ is.
	Thus by 2-out-of-6 for DK-equivalences (cf. Corollary \ref{26COR}), $J$-homotopy equivalences are DK-equivalences.
\end{remark}

\begin{remark}\label{ALLXJK REM}
	Let $X$ be a dendroidal Segal space. All simplicial operators
	$X^{J^m} \to X^{J^{m'}}$
	(see Notation \ref{JM NOT})
	are induced by equivalences of groupoids $\widetilde{[m']} \to \widetilde{[m]}$, 
	implying that the operators 
	$X^{J^m} \to X^{J^{m'}}$
	are $J$-homotopy equivalences and hence also DK-equivalences.
\end{remark}

\begin{proposition}\label{COMPLE PROP}
Let $X \in \mathsf{sdSet}^G$ be a dendroidal Segal space. 
Then there is a complete dendroidal Segal space $\tilde{X}$
and complete/joint equivalence $X \to \tilde{X}$ such that
\begin{itemize}
	\item[(i)] $X \to \tilde{X}$ is 
	a DK-equivalence;
	\item[(ii)] $X_0(\eta) \to \tilde{X}_0(\eta)$ is an isomorphism.
\end{itemize}
\end{proposition}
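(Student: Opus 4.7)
My plan is to construct $\tilde{X}$ via a restricted application of the small object argument, using only those generating trivial cofibrations of the simplicial Reedy model structure (Proposition \ref{SDSETJRCOF PROP}) of the form
\[
(\partial \Delta[n] \to \Delta[n]) \square (A \to B), \qquad n \geq 1,
\]
where $A \to B$ ranges over a set of generating trivial cofibrations of $\mathsf{dSet}^G$. The resulting map $X \to \tilde{X}$ will be a simplicial Reedy trivial cofibration, hence in particular a joint equivalence.

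Property (ii) is then automatic: since $(\partial \Delta[n])_0 = (\Delta[n])_0$ for $n \geq 1$, each attached cell is an isomorphism at simplicial level $0$, so $X_0 \to \tilde{X}_0$ is an isomorphism of equivariant dendroidal sets. To see that $\tilde{X}$ is joint Reedy fibrant, Proposition \ref{JOINTFIBCHAR PROP}(ii) reduces the task to showing that $\tilde{X}$ is simplicial Reedy fibrant and that the vertex maps $\tilde{X}_n \to \tilde{X}_0$ are weak equivalences in $\mathsf{dSet}^G$. The missing RLP against the remaining $n = 0$ cells (which as constant simplicial objects reduce to $A \to B$ themselves) amounts to $\tilde{X}_0 = X_0$ being fibrant in $\mathsf{dSet}^G$, which holds since $X$ is dendroidal Reedy fibrant. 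For the equivalence condition, Corollary \ref{SDSETG COR}(i) applied to $X$ gives that $X_n \to X_0$ is a trivial fibration in $\mathsf{dSet}^G$; 2-out-of-3 applied along the weak equivalence $X_n \to \tilde{X}_n$ and the identity $X_0 = \tilde{X}_0$ then yields the desired equivalence $\tilde{X}_n \to \tilde{X}_0$.

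For (i), essential surjectivity is immediate: since $X_0(\eta) = \tilde{X}_0(\eta)$ as $G$-sets, the object sets of $\iota_G^* ho(X)$ and $\iota_G^* ho(\tilde{X})$ coincide at each level $G/H$. For full faithfulness, the plan is to invoke Corollary \ref{DKCOM COR}(i): since both $X$ and $\tilde{X}$ are now dendroidal Reedy fibrant and $X \to \tilde{X}$ is a joint equivalence, Corollary \ref{SDSETG COR}(iii) characterizes this map as a dendroidal weak equivalence (in the sense used there), which I expect to supply the hypothesis of Corollary \ref{DKCOM COR}(i).

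The main obstacle will be this last step. Verifying full faithfulness requires carefully reconciling the several notions of equivalence available in $\mathsf{sdSet}^G$ (dendroidal Reedy, simplicial Reedy, joint) for dendroidal Reedy fibrant objects, and confirming that Corollary \ref{DKCOM COR}(i) applies as needed. Should this identification prove subtle, a direct computation of mapping spaces (Definition \ref{MAPSPACESEG DEF}) leveraging the Segal pullbacks together with the equivalences $X_n \to \tilde{X}_n$ and the identity $X_0 = \tilde{X}_0$ would serve as a concrete fallback.
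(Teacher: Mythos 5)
Your construction via a restricted small object argument produces a simplicial Reedy trivial cofibration $X \to \tilde X$ with $X_0 = \tilde X_0$, and your argument that $\tilde X$ is joint fibrant (via Proposition \ref{JOINTFIBCHAR PROP}(ii) together with Corollary \ref{SDSETG COR}(i)) is essentially correct, modulo the small imprecision that $X_0$ being fibrant in $\mathsf{dSet}^G$ uses the Segal condition on $X$ (a trivial Kan fibration is surjective at every level), not merely dendroidal Reedy fibrancy. So parts (ii) and the completeness of $\tilde X$ go through. The gap, as you suspected, is in full faithfulness, and it is a genuine one: you cannot apply Corollary \ref{DKCOM COR}(i).

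The problem is a conflation of two distinct notions of equivalence. In the paper's terminology, a \emph{simplicial equivalence} is a weak equivalence in the \emph{dendroidal} Reedy model structure, i.e.\ the maps $X(U) \to Y(U)$ are Kan equivalences of simplicial sets for all $U \in \Omega$, with appropriate fixed points. A \emph{dendroidal equivalence} is a weak equivalence in the \emph{simplicial} Reedy model structure, i.e.\ the maps $X_n \to Y_n$ are equivalences in $\mathsf{dSet}^G$ for all $n$. These are different (see the caution preceding Corollary \ref{SDSETG COR}). Corollary \ref{SDSETG COR}(iii) gives you that $X \to \tilde X$ is a dendroidal equivalence, but Corollary \ref{DKCOM COR}(i) requires a \emph{simplicial} equivalence. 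Worse, the map $X \to \tilde X$ is genuinely \emph{not} a simplicial equivalence unless $X$ was already complete: completion changes the homotopy type of the moduli space $X(\eta)$ (this is the whole point of the completeness condition), so $X(\eta) \to \tilde X(\eta)$ fails to be a Kan equivalence in general.

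The fallback you mention (directly computing mapping spaces) also does not work in any straightforward way. The spaces $X(\Omega[C])$ have $n$-simplices $\mathsf{dSet}^G(\Omega[C], X_n)$, which is a \emph{strict} hom-set and is not a homotopy invariant of $X_n$. Knowing that $X_n \to \tilde X_n$ is a weak equivalence in $\mathsf{dSet}^G$ (even a trivial cofibration between fibrants) does not by itself let you compare $X(\Omega[C])$ with $\tilde X(\Omega[C])$, nor their fibers over the profile maps. This is precisely the obstruction the paper circumvents by taking the explicit model $\tilde X = \delta^{\**}(X^{J^{\bullet}})$ rather than an abstract fibrant replacement: the bisimplicial structure of $X^{J^{\bullet}}$ lets one identify the fibers of $\tilde X$ as diagonals of fibers of $X^{J^{\bullet}}$, which are shown to be joint Reedy fibrant in $\mathsf{ssSet}$, at which point Proposition \ref{SSSETJREE PROP}(iv) applied to those fibers yields full faithfulness. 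That specific structure is not available for the $\tilde X$ produced by a small object argument, so some genuinely different argument would be needed, and I do not see one that avoids effectively redoing the paper's analysis.
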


Our proof will adapt the construction of the completion functor in \cite[\S 14]{Rez01}.

Firstly, let 
$X^{J^{\bullet}} \in (\mathsf{sdSet}^G)^{\Delta^{op}}
= \mathsf{ssdSet}^G$
be the object whose $m$-th level
is $X^{J^m}$. 
Then, writing
$\delta^{\**} \colon \mathsf{ssdSet}^G \to \mathsf{sdSet}^G$
for the diagonal functor in the two simplicial directions
(adapting the functor in \S \ref{JOINBOUS SEC}), we set
$\tilde{X} = \delta^{\**}\left(X^{J^{\bullet}}\right)$ with the natural map $X \to \tilde{X}$ induced by degeneracies (cf. Proposition \ref{SSSETJREE PROP})(iv).

\begin{remark}\label{NOTTRANS REM}
For fixed $n$ and $K\in \mathsf{sSet}$, and regarding 
$X_n^{J^{\bullet}} \in \mathsf{sdSet}^G$,
one can translate between the 
$(-)_{(-)}$ and $(-)^{(-)}$ notations in Notation \ref{UNILIMDEN NOT} via
\[
	\left(X^{J^{\bullet}}_n\right)_K
=
	\left(X^{J^{\bullet}}_n\right)_{
	\left(\colim_{\Delta[k] \to K} \Delta[k]\right)}
=
	\lim_{
	\Delta[k] \to K
	} X^{J^k}_n
=
	X^{\left(\colim_{\Delta[k] \to K} J^k\right)}_n
\]
where $K \simeq \colim_{\Delta[k] \to K} \Delta[k]$ is the standard
decomposition of $K\in \mathsf{sSet}$ as a colimit of simplices.
In the special cases of $K$ either $\partial \Delta[m]$ or $Sc[m]$
one has (recall that $L_m$ is the latching object)
\[	\left(\colim_{\Delta[k] \to \partial \Delta[m]} J^k\right) \simeq L_m J^{\bullet}
\qquad
	\left(\colim_{\Delta[k] \to Sc[m]} J^k\right) \simeq 
	J \amalg_{\Delta[0]} \cdots \amalg_{\Delta[0]} J
\]
\end{remark}

\begin{proof}[Proof of Proposition \ref{COMPLE PROP}]

Most of the proof will be spent showing that 
$\tilde{X} = \delta^{\**}\left(X^{J^{\bullet}}\right) \in \mathsf{sdSet}^G$ is dendroidal Reedy fibrant
by establishing the lifting conditions needed to apply 
Corollary \ref{SSETSSETADJ COR}.

Since $J^{\bullet}$ is a Reedy cofibrant cosimplicial object (cf. Remark \ref{JREEDYCOF REM}),
it follows from \cite[Prop 7.25]{Per18} that 
the maps (where $\square^{\otimes}$ denotes the pushout product with regard to $\otimes$)
\[
X \left( \left(L_m J^{\bullet} \to J^m \right) \square^{\otimes} \left( \partial \Omega[T] \to \Omega[T] \right) \right)\]
are Kan fibrations. Remark \ref{NOTTRANS REM} then implies that
$X^{J^{\bullet}} \in \mathsf{ssdSet}^{G}$
has the right lifting property against all maps
\begin{equation}\label{LIFTBASIC EQ}
	\left( \Lambda^i[n] \to \Delta[n] \right)
\square
	\left( \partial \Delta[m] \to \Delta[m] \right)
\square
	\left( \partial \Omega[T] \to \Omega[T] \right),
	\qquad n \geq 1, n \geq i \geq 0, m\geq 0, T \in \Omega_G.
\end{equation}
We next claim that 
$X^{J^{\bullet}} \in \mathsf{ssdSet}^{G}$
also has the lifting property against the two sets of maps
\begin{equation}\label{LIFTETA EQ}
	\left( \partial \Delta[n] \to \Delta[n] \right)
\square
	\left( \Lambda^j [m] \to \Delta[m] \right)
\times
	\Omega[G/H \cdot \eta],
	\quad n \geq 0, m\geq 2, m \geq j \geq 0, H \leq G
\end{equation}
\begin{equation}\label{LIFTCSK EQ}
	\left( \partial \Delta[n] \to \Delta[n] \right)
\square
	\left( \Lambda^j [m] \to \Delta[m] \right)
\square
	\left( \partial \Omega[T] \to \Omega[T] \right),
	\quad n \geq 0, m\geq 1, m \geq j \geq 0, T \in \Omega_G \setminus \{G/H \cdot \eta\}
\end{equation}
Note that, just as in Remark \ref{HYPERMODEL REM}, one can replace the set of maps
$\Lambda^j[m] \to \Delta[m]$, $\partial\Omega[T] \to \Omega[T]$
appearing in \eqref{LIFTETA EQ} and  \eqref{LIFTCSK EQ}
with any set which has the same hypersaturation.

For \eqref{LIFTETA EQ}, by Remarks \ref{SLICE REM} and \ref{ANHYPER REM}
one needs only consider the 
inclusions 
$Sc[m] \to \Delta[m]$ and
$\Lambda^0[2] \to \Delta[2]$,
$\Lambda^2[2] \to \Delta[2]$.
But the claimed lifting condition against \eqref{LIFTETA EQ} then amounts to 
Remark \ref{LAMBJREAL REM}
applied to each of the simplicial Segal spaces $\iota^{\**}\left(X^H\right), H\leq G$ (also, see Remark \ref{NOTTRANS REM}).
For \eqref{LIFTCSK EQ}, by Remarks \ref{DUMBHYPER REM} and \ref{HYPERSATKAN REM}
one needs only show that there are lifts against the maps  
\[
\left(\partial \Delta[n] \to \Delta[n]\right)
\square
\left(\Delta[0] \to \Delta[m]\right) 
\square 
\left( \coprod_{e \in \boldsymbol{E}(T)} \Omega[\eta] \to \Omega[T]\right).
\]
But lifts against these maps are equivalent to the claim that in the square
\begin{equation}\label{MOREMORE EQ}
\begin{tikzcd}
	X^{J^m}(\Omega[T]) \ar[->>]{r} \ar[->>]{d}&
	X(\Omega[T]) \ar[->>]{d}
\\
	\underset{[e_i] \in \boldsymbol{E}_G(T)} {\prod} \left(X^{J^m}(\eta)\right)^{H_i} \ar[->>]{r} &
	\underset{[e_i] \in \boldsymbol{E}_G(T)} {\prod} 
	X(\eta)
	^{H_i}
\end{tikzcd}
\end{equation}
the map from 
$X^{J^m}(\Omega[T])$
to the pullback of the other terms is a trivial Kan fibration. That it is a Kan fibration follows from the lifting condition against \eqref{LIFTBASIC EQ}, and that it is a Kan equivalence follows since Remarks \ref{ALLXJK REM} and \ref{DKCOM REM} imply 
\eqref{MOREMORE EQ} induces Kan equivalences between 
fibers.

We 
finally show that 
$\tilde{X} = \delta^{\**} \left(X^{J^{\bullet}} \right)$
is dendroidal 
 fibrant, i.e. that 
$\left(\tilde{X}(\Omega[T]) \to \tilde{X}(\partial\Omega[T])\right) = 
\delta^{\**} \left( X^{J^{\bullet}}(\Omega[T]) \to X^{J^{\bullet}}(\partial \Omega[T])\right)$
is a Kan fibration for any $T \in \Omega_G$.
%
When $T= G/H \cdot \eta$ the target is the terminal object so this
follows from the lifting conditions against \eqref{LIFTBASIC EQ},\eqref{LIFTETA EQ}
and the second ``moreover'' condition in Corollary \ref{SSETSSETADJ COR}.
For $T \neq G/H \cdot \eta$ this follows from the lifting conditions against \eqref{LIFTBASIC EQ},\eqref{LIFTCSK EQ} and the first ``moreover'' condition in Corollary \ref{SSETSSETADJ COR}.

To see that $\tilde{X}$ is a complete Segal space, note that the natural  map
$X_0^{J^{\bullet}} \to
\delta^{\**}\left( X^{J^{\bullet}} \right)
= \tilde{X}$
is a dendroidal Reedy equivalence by Proposition \ref{SSSETJREE PROP}(iv)
so that, since $X_0^{J^{\bullet}}$ is a complete Segal space by Remark \ref{CONCRECOM REM}, so is $\tilde{X}$.

For the remaining claim that 
$X = X^{J^{0}} \to 
\delta^{\**}\left( X^{J^{\bullet}} \right)
= \tilde{X}$
is a DK equivalence,
fully faithfulness is the claim that this map induces equivalences on the fibers over
$\prod_{[e_i]} \left(X(\eta)\right)^{H_i}$
for each $T\in \Omega_G$.
But the fibers of 
$\tilde{X} = \delta^{\**}\left( X^{J^{\bullet}} \right) $
are diagonals of the fibers of 
$X^{J^{\bullet}}$ over 
$\prod_{[e_i]} \left(X^{J^{\bullet}}(\eta)\right)^{H_i}$
for each $T\in \Omega_G$,
and the lifting conditions against 
\eqref{LIFTBASIC EQ},\eqref{LIFTCSK EQ}
imply that these fibers are joint Reedy fibrant in $\mathsf{ssSet}$, 
so fully faithfullness now follows from Proposition \ref{SSSETJREE PROP}(iv) applied to these fibers.
Essential surjectivity is trivial since $X \to \tilde{X}$ is the identity on objects.
\end{proof}

\begin{remark}
      \label{COMPLE REM}
A notable difference between the proof of Proposition \ref{COMPLE PROP}
and the arguments in \cite[\S 14]{Rez01} being adapted is that 
\cite{Rez01} did not establish the analogue of our main fibrancy claim
 when $X \in \mathsf{ssSet}$ is a Segal space, namely that $\delta^{\**} \left(X^{J^{\bullet}} \right)$ is horizontal Reedy fibrant in $\mathsf{ssSet}$.
Instead, \cite{Rez01} defines $\tilde{X}$ as the horizontal fibrant replacement of $\delta^{\**} \left(X^{J^{\bullet}} \right)$
and the analogue of \eqref{MOREMORE EQ} is used to conclude that the fibers of 
$\delta^{\**} \left(X^{J^{\bullet}} \right)$ 
are homotopy fibers, and thus preserved by the replacement. 
As such, the lifting conditions against 
\eqref{LIFTETA EQ} are technically not essential for the proof, serving only to obtain the neat observation that 
$\delta^{\**} \left(X^{J^{\bullet}} \right)$ 
needs not be replaced.
\end{remark}

\begin{theorem}\label{COMPIFFDK THM}
	A map $f\colon X \to Y$ of dendroidal Segal spaces is a complete/joint equivalence iff it is a DK-equivalence.
\end{theorem}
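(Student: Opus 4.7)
The plan is to reduce to the case of complete dendroidal Segal spaces. Using Proposition \ref{COMPLE PROP}, for any $f \colon X \to Y$ of dendroidal Segal spaces we form a commutative square with completion maps $X \to \tilde X$ and $Y \to \tilde Y$; both completion maps are simultaneously complete/joint equivalences (by construction) and DK-equivalences (by Proposition \ref{COMPLE PROP}(i)). Applying $2$-out-of-$3$ for joint equivalences and $2$-out-of-$6$ for DK-equivalences (Corollary \ref{26COR}) to this square, one sees that $f$ is a joint equivalence (respectively a DK-equivalence) iff $\tilde f$ is. Hence one may assume throughout that $X$ and $Y$ are complete.

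For the direction ``complete/joint equivalence implies DK-equivalence'' in this setting: between complete Segal spaces (which are joint fibrant), Proposition \ref{COMBMODSTR PROP}(ii) identifies joint equivalences with dendroidal Reedy equivalences, so $X(A) \to Y(A)$ is a Kan equivalence for every normal $A \in \mathsf{dSet}^G$. The pullback in Definition \ref{MAPSPACESEG DEF} immediately yields fully faithfulness, and essential surjectivity follows from identity \eqref{HOCOEFF EQ}: the Kan equivalences $X(\eta)^H \to Y(\eta)^H$ are $\pi_0$-surjective, and between complete simplicial Segal spaces path components in degree zero coincide with isomorphism classes in the homotopy category.

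For the converse, Corollary \ref{DKCOM COR}(ii) reduces the task, using fully faithfulness, to showing that $X(\eta)^H \to Y(\eta)^H$ is a Kan equivalence for each $H \leq G$. Fixing $H$, we restrict attention to the simplicial Segal spaces $\iota^\**(X^H), \iota^\**(Y^H) \in \mathsf{ssSet}$. Specializing the completeness condition \eqref{JOINTFIBCHAR EQ} to the $G$-trees $T = G/H \cdot [m]$ (noting $\Omega[T] \otimes J \simeq G/H \cdot ([m] \times J)$) gives Kan equivalences $\iota^\**(X^H)(m) \to \iota^\**(X^H)(m \times J)$, establishing that $\iota^\**(X^H)$ is a complete simplicial Segal space, and similarly for $Y$. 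Fully faithfulness of $f$, restricted to $G$-corollas of the form $C_{H/H, \ldots, H/H}$, translates to fully faithfulness of $\iota^\**(X^H) \to \iota^\**(Y^H)$; essential surjectivity of $f$ at the orbit $G/H$ translates, via \eqref{HOCOEFF EQ}, to essential surjectivity of the same restricted map. Invoking Rezk's classical theorem (or an equivariant reformulation directly using Proposition \ref{SESP PROP} together with the completeness-based identification of $H$-equivalences inside $X([1])^H$) concludes that $\iota^\**(X^H) \to \iota^\**(Y^H)$ is a levelwise Kan equivalence, and evaluating at level zero gives the required Kan equivalence $X(\eta)^H \to Y(\eta)^H$.

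The main obstacle is this final hard direction: the delicate point is verifying that the equivariant DK-equivalence data restricts cleanly to each simplicial Segal space $\iota^\**(X^H)$, including the matching of completeness conditions across all subgroups simultaneously. A purely equivariant proof (rather than reduction to Rezk's theorem) requires extending the analysis of Proposition \ref{SESP PROP} to handle $H$-fixed ``spaces of equivalences'' and to track how fully faithfulness controls fiber equivalences over the equivariant ``space of objects''; this introduces additional bookkeeping but no fundamentally new ideas.
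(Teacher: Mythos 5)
Your proof proposal is correct and follows essentially the same route as the paper: reduce to the complete case via Proposition~\ref{COMPLE PROP}(i) and $2$-out-of-$3$/$2$-out-of-$6$, use Proposition~\ref{COMBMODSTR PROP}(ii) to identify joint equivalences between fibrant objects with simplicial equivalences, and apply Corollary~\ref{DKCOM COR} to reduce the converse to showing $X(\eta)^H \to Y(\eta)^H$ is a Kan equivalence for each $H$, which is then dispatched by restricting to the simplicial Segal spaces $\iota^{\**}(X^H) \to \iota^{\**}(Y^H)$ and invoking Rezk's classical result. The caveat you raise at the end about ``matching completeness conditions across subgroups'' is not in fact an obstacle — as you yourself note, the completeness condition of \eqref{JOINTFIBCHAR EQ} specializes directly to each $T = G/H \cdot [m]$, and the restricted maps inherit the DK-equivalence data for $C_{H/H}$-profiles without issue — so the paper simply treats this reduction as immediate before writing out the simplicial argument in full for the reader's convenience.
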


\begin{proof}
By Proposition \ref{COMPLE PROP}(i) we can assume that $X,Y$ are complete/joint fibrant, so that by Proposition \ref{COMBMODSTR PROP}(ii)
complete/joint equivalences coincide with simplicial equivalences. But then by Corollary \ref{DKCOM COR}
it remains only to show that if $f$ is a DK-equivalence then the maps
$X(\eta)^H \to Y(\eta)^H, H \leq G$ are Kan equivalences.
Since the induced maps 
$\iota^{\**}\left(X^H\right) \to \iota^{\**}\left(Y^H\right)$
in $\mathsf{ssSet}$
are DK-equivalences of simplicial Segal spaces, this last claim is immediate from the simplicial analogue \cite[Thm. 7.7]{Rez01}, but we nonetheless include a full argument for
the sake of
completeness.

As such, assuming that 
$f\colon X \to Y$ in $\mathsf{ssSet}$
is a DK-equivalence of complete simplicial Segal spaces, it remains to show $X(0)\to Y(0)$ is a Kan equivalence.
The completion condition states that
$Z(J) \xrightarrow{\sim} Z(0)$ for $Z=X,Y$ are Kan equivalences, so that the fibers of the left diagram below
are weakly equivalent to the homotopy fibers of the right diagram,
i.e. the loop space of $Z(0)$ at $z$.
\begin{equation}\label{WHATEV EQ}
\begin{tikzcd}[column sep =40]
	& Z(J) \ar[->>]{d}
&
	& Z(0) \ar{d}
\\
	\Delta[0] \ar{r}{(z,z)} &
	Z(0) \times X(0)
&
	\Delta[0] \ar{r}{(z,z)} &
	Z(0) \times Z(0)
\end{tikzcd}
\end{equation}
Therefore, since $Z(J) \to Z(1)$ is a homotopy monomorphism (cf. Proposition \ref{SESP PROP}(i)(iv)),
fully faithfulness implies that 
$X(0) \to Y(0)$
induces isomorphisms on homotopy groups.
Injectivity of $X(0) \to Y(0)$
on components is similar 
($z,z'\in Z_0(0)$ are in the same component iff the $(z,z')$ fiber in \eqref{WHATEV EQ} is non-empty) 
while essential surjectivity implies surjectivity on components
(the equivalences $Z(0) \xrightarrow{\sim} Z(J) \xrightarrow{\sim} Z^h(1)$ 
imply both vertex maps $Z^h(1) \rightrightarrows Z(0)$ are equivalences, 
and hence $z,z'\in Z_0(0)$ are isomorphic in $ho(Z)$ iff they are connected in $Z(0)$).

It follows that $X(0) \to Y(0)$ is indeed a Kan equivalence, as required.
\end{proof}

\begin{remark}\label{COMPLENOR REM}
It is clear from the construction of the completion $\tilde{X}$
in Proposition \ref{COMPLE PROP}
that the map $X \to \tilde{X}$
is a monomorphism. 
However, it seems unlikely that this is a 
\emph{normal} monomorphism (unless $X$ is assumed normal).
To address this while preserving the properties in Proposition \ref{COMPLE PROP}, 
note that the identification 
$X_0(\eta) = \tilde{X}_0(\eta)$
means that one can perform a
``cofibration followed by trivial fibration'' factorization
\[
X \rightarrowtail \hat{X} \overset{\sim}{\twoheadrightarrow}  \tilde{X}
\]
in the dendroidal Reedy model structure in such a way that 
$X_0(\eta) = \hat{X}_0(\eta) = \tilde{X}_0(\eta)$. Indeed, this follows by performing the small object argument against \eqref{JOINTCOF EQ} with the maps $\emptyset \to \Delta[0] \times \Omega[G/H \cdot \eta], H\leq G$ omitted (note that $\hat{X} \to \tilde{X}$ will still have the lifting property against the omitted maps).
The claims that $\hat{X}$ is complete and $X \to \hat{X}$ is a DK-equivalence are inherited from the analogous properties of $\tilde{X}$.
\end{remark}

\begin{corollary}\label{FIB_PREOP_COR}
	A pre-operad $X \in \mathsf{PreOp}^G$ is 
	fibrant (in the model structure from Theorem \ref{PREOPMOD THM})
	iff $\gamma^{\**}X$ is fibrant in the dendroidal Segal space model structure on 
	$\mathsf{sdSet}^G$.
\end{corollary}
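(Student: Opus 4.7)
The plan is to prove the two directions separately, with the backward implication essentially immediate and the forward implication requiring the bulk of the technical work.

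For the backward direction, I will use that $\gamma^{\**}$ is fully faithful (being the inclusion of a reflective full subcategory of $\mathsf{sdSet}^G$, with $\gamma_!$ and $\gamma_{\**}$ as left and right adjoints), which implies $\gamma_{\**}\gamma^{\**} = \mathrm{id}$ on $\mathsf{PreOp}^G$. Then Proposition \ref{DSSFIB_PROP} applied to $\gamma^{\**}X$ yields that $\gamma_{\**}\gamma^{\**}X = X$ is fibrant in $\mathsf{PreOp}^G$.

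For the forward direction, I will combine the characterization of DSS from Proposition \ref{DSSCHAR_PROP} with the adjunction $\gamma_! \dashv \gamma^{\**}$. By Proposition \ref{DSSCHAR_PROP}, $\gamma^{\**}X$ is a DSS precisely when it has the right lifting property against the dendroidal Reedy trivial cofibrations \eqref{DENDTRIVCOF EQ} and against the Segal-core pushout-products $(\partial\Delta[n] \to \Delta[n]) \square (Sc[T] \to \Omega[T])$. By adjunction, each such lifting condition is equivalent to $X$ admitting the right lifting property against the $\gamma_!$-image in $\mathsf{PreOp}^G$. So the task is to show that $\gamma_! f$ is a trivial cofibration in $\mathsf{PreOp}^G$ for $f$ in these two classes. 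The crucial observation is that in both cases the map $f \colon A \to B$ is an isomorphism on $\eta$-evaluation (both sides yield $\Delta[n] \times \boldsymbol{E}(T)$ at $\eta$, using $\partial \Omega[T](\eta) = \Omega[T](\eta) = \boldsymbol{E}(T)$ when $T \neq G/H \cdot \eta$, and $Sc[T](\eta) = \Omega[T](\eta) = \boldsymbol{E}(T)$ always). Then Remark \ref{GAMMASH REM} identifies $\gamma^{\**}\gamma_! f$ with the pushout of $f$ along $A \to \gamma^{\**}\gamma_! A$, from which I will extract that $\gamma_! f$ is a normal monomorphism and that $\gamma^{\**}\gamma_! f$ remains a Rezk/joint equivalence; via the Quillen equivalence of Theorem \ref{ANOQUEQUIV THM}, this gives that $\gamma_! f$ is a weak equivalence in $\mathsf{PreOp}^G$.

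The exceptional case $T = G/H \cdot \eta$ of the dendroidal Reedy fibrancy condition (where $\partial \Omega[G/H \cdot \eta] = \varnothing$ so the isomorphism-at-$\eta$ property fails) must be handled separately: the lifting condition translates by adjunction into $X(\Omega[G/H \cdot \eta]) = X(\eta)^H$ having the right lifting property against horn inclusions $\Lambda^i[n] \to \Delta[n]$, which is automatic since $X(\eta)$ is discrete by the defining property of a pre-operad and fixed points of a discrete set are discrete.

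The main obstacle I anticipate is the verification that $\gamma^{\**}\gamma_! f$ remains a Rezk equivalence. Although the pushout description from Remark \ref{GAMMASH REM} invites an appeal to left properness of the Rezk model structure, the vertical leg $A \to \gamma^{\**}\gamma_! A$ need not be a monomorphism in general -- it collapses $\eta$-degenerate simplices lying in different $\Delta[n]$-orbits, so left properness does not apply directly. Instead, the Rezk equivalence property must be extracted from a direct analysis of the pushout, exploiting that the identifications introduced by $\gamma^{\**}\gamma_!$ occur compatibly on source and target of $f$, together with the Segal-type equivalences already available in the Rezk localization.
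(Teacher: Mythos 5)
Your proposed proof contains a genuine gap in the ``backward'' implication (that $\gamma^{\**}X$ being a dendroidal Segal space forces $X$ to be fibrant in the model structure of Theorem~\ref{PREOPMOD THM}). You invoke Proposition~\ref{DSSFIB_PROP}, but ``fibrant'' in that proposition's conclusion is the purely terminological notion of Notation~\ref{FIB_PREOP_NOT}, meaning only that $\gamma^{\**}$ of the pre-operad in question is again a dendroidal Segal space. Applying it to $\gamma^{\**}X$ and combining with $\gamma_{\**}\gamma^{\**}\simeq\mathrm{id}$ merely reproduces the hypothesis that $\gamma^{\**}X$ is a DSS; it does not establish model-structure fibrancy, and Notation~\ref{FIB_PREOP_NOT} flags this explicitly (``for now this is just terminology, foreshadowing Corollary~\ref{FIB_PREOP_COR}''). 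This is in fact the substantive direction of the corollary. The paper's argument forms the completion $\gamma^{\**}X \to \hat{X}$ of Remark~\ref{COMPLENOR REM}, notes that $\gamma_{\**}\hat{X}$ is fibrant in $\mathsf{PreOp}^G$ by Theorem~\ref{ANOQUEQUIV THM}, checks that the adjoint $X\to\gamma_{\**}\hat{X}$ is a normal monomorphism that is an iso at $\eta$, a DK-equivalence, and hence (by Corollary~\ref{DKCOM COR}(ii)) a trivial dendroidal Reedy cofibration in $\mathsf{sdSet}^G$, and finally uses the DSS hypothesis on $\gamma^{\**}X$ to produce a retraction exhibiting $X$ as a retract of $\gamma_{\**}\hat{X}$.

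Your ``forward'' direction follows the paper's route correctly, but the obstacle you anticipate at the end does not exist: left properness is not what is needed. The relevant maps $f$ are already \emph{trivial cofibrations} in the Rezk model structure on $\mathsf{sdSet}^G$ --- those in \eqref{DENDTRIVCOF EQ} are dendroidal Reedy trivial cofibrations, and the Segal-core pushout products are Rezk trivial cofibrations because $Sc[T]\to\Omega[T]$ is a localizing cofibration and, the dendroidal Reedy structure being simplicial by Remark~\ref{MAPSPACE REM}, its Bousfield localizations are again simplicial. Remark~\ref{GAMMASH REM} identifies $\gamma^{\**}\gamma_! f$ with a pushout of $f$ along $A\to\gamma^{\**}\gamma_! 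A$, and trivial cofibrations in any model category are closed under pushout along \emph{arbitrary} maps, monomorphism or not. So $\gamma^{\**}\gamma_! f$ is automatically a Rezk trivial cofibration (in particular a normal monomorphism and a Rezk equivalence), giving directly from Theorem~\ref{PREOPMOD THM} that $\gamma_! f$ is a trivial cofibration in $\mathsf{PreOp}^G$, with no detour through the Quillen equivalence of Theorem~\ref{ANOQUEQUIV THM} required.
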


\begin{proof}
	We start with the ``only if'' direction.
Recall that $\gamma^{\**} X$ is a dendroidal Segal space iff it has the right lifting property against the maps of the form
\begin{equation}\label{SOMEMAPS EQ}
	(\Lambda^i[n] \to \Delta[n]) \square (\partial \Omega[T] \to \Omega[T])
\qquad
	(\partial \Delta[n] \to \Delta[n]) \square (Sc[T] \to \Omega[T]).
\end{equation}
With the exception of the first type of maps when $T = G\cdot_H \eta$, in which case the lifting condition
against $\gamma^{\**} X$ is automatic since
$\gamma^{\**}X(\eta)$ is discrete, all other maps induce isomorphisms at the $\eta$-level, so that by 
Remark \ref{GAMMASH REM} applying $\gamma_{!}$ to these maps yields trivial cofibrations in 
$\mathsf{PreOp}^G$.
Thus, if $X \in \mathsf{PreOp}^G$ is fibrant, an adjunction argument shows that $\gamma^{\**}(X)$ indeed has the lifting property against all maps \eqref{SOMEMAPS EQ}, i.e. that
$\gamma^{\**}(X)$ is a dendroidal Segal space.

For the ``if'' direction, we form the completion 
$\gamma^{\**}X \to \hat{X}$
described in Remark \ref{COMPLENOR REM}.
Then $\gamma_{\**}\hat{X} \in \mathsf{PreOp}^G$
is fibrant by Theorem \ref{ANOQUEQUIV THM}
and the adjoint map $X \to \gamma_{\**}\hat{X}$
has the following properties:
\begin{inparaenum}
	\item[(i)] it is a normal monomorphism (this is inherited from $\gamma^{\**}X \to \hat{X}$; see the characterization of normal monomorphisms given before Theorem \ref{DSETGMODEL THM});
	\item[(ii)] it is an isomorphism at the $\eta$-level;
	\item[(iii)] it is a DK-equivalence when regarded as a map
	in $\mathsf{sdSet}^G$
	(since by Remark \ref{ONLYPREOP REM} $\gamma^{\**}\gamma_{\**}\hat{X} \to \hat{X}$ is tautologically a DK-equivalence);
	\item[(iv)] by Corollary \ref{DKCOM COR}(ii) it is hence a simplicial equivalence and thus
	a trivial dendroidal Reedy cofibration when regarded as a map
	in $\mathsf{sdSet}^G$. 
\end{inparaenum}	
	But then the hypothesis that
	$\gamma^{\**}X$ is a dendroidal Segal space yields a lift
\[
\begin{tikzcd}
      \gamma^{\**} X \ar[equal]{r} \ar{d}&
      \gamma^{\**} X
      \\
      \gamma^{\**}\gamma_{\**}\hat{X} \ar[dashed]{ru}
\end{tikzcd}
\]
showing that $X$ is a retract of $\gamma_{\**}\hat{X}$ and finishing the proof.
\end{proof}

\begin{remark}\label{INTERP REM}
      For any dendroidal Segal space 
      $X \in \mathsf{sdSet}^G$ one hence has complete equivalences
      \[
            \gamma_{\**} X \to X \to \tilde{X}
      \] 
      where $\gamma_{\**}X$ is a fibrant preoperad and $\tilde{X}$ is
      a complete dendroidal Segal space.
%
\end{remark}

\section{Indexing system analogue results}\label{INDEX SEC}

Just as in \cite[\S 9]{Per18}, we dedicate our final section
to outlining the generalizations of our results parametrized by the 
\textit{indexing systems} of Blumberg and Hill \cite{BH15}.
Or more precisely, we will work with the \textit{weak indexing systems}
of \cite[\S 9]{Per18}, \cite[\S 4.4]{BP17},
which are a slight generalization of indexing systems, 
and were also independently identified by Gutierrez and White in \cite{GW18}.

We begin by recalling the key notion of sieve.

\begin{definition}
A \textit{sieve} of a category $\mathcal{C}$
is a full subcategory $\mathcal{S} \subseteq \mathcal{C}$ such that for any arrow $c \to s$ in $\mathcal{C}$ such that $s \in \mathcal{S}$ it is also $c \in \mathcal{S}$.
\end{definition}

Note that a sieve $\mathcal{S} \subseteq \mathcal{C}$
determines a presheaf 
$\delta_{\mathcal{S}} \in \mathsf{Set}^{\mathcal{C}^{op}}$ 
via 
$\delta_{\mathcal{S}}(c) = \**$ if $c \in \mathcal{S}$ and
$\delta_{\mathcal{S}}(c) = \emptyset$ if $c \not \in \mathcal{S}$.
In fact, there is a clear bijection between sieves and such 
\textit{characteristic presheaves}, i.e. presheaves taking only the values $\**$ and $\emptyset$, 
and we will hence blur the distinction between the two concepts.

Sieves are prevalent in equivariant homotopy theory. Indeed, families $\mathcal{F}$ of subgroups of $G$ are effectively the same as sieves $\mathsf{O}_{\mathcal{F}} \subseteq \mathsf{O}_G$
of the orbit category
$\mathsf{O}_G$ (formed by the $G$-sets $G/H$).

Weak indexing systems can then be thought of as the operadic analogue of families. In particular, they are described by certain sieves $\Omega_{\mathcal{F}} \subseteq \Omega_G$,
though additional conditions are needed to ensure compatibility with the operadic composition and unit.
In the following, we abbreviate 
$\delta_{\mathcal{F}} = \delta_{\Omega_{\mathcal{F}}}$ and,
for each $G$-vertex $v$ of $T \in \Omega_G$, we write 
$T_v \hookrightarrow T$ for the orbital outer face whose only
$G$-vertex is $v$.

\begin{definition}
A \textit{weak indexing system} is a full subcategory
$\Omega_{\mathcal{F}} \subseteq \Omega_G$ such that:
\begin{itemize}
	\item[(i)] $\Omega_{\mathcal{F}}$ is a sieve of 
              $\Omega_G$;              
	\item[(ii)] for each $T \in \Omega_G$ it is
	$T \in \Omega_{\mathcal{F}}$ iff 
	$\forall_{v \in V_G(T)} T_v \in \Omega_{\mathcal{F}}$
	or, equivalently, if
        \begin{equation}\label{SEGCOMB EQ}
              \delta_{\mathcal{F}}(T) =
              \prod_{v \in \boldsymbol{V}_G(T)}\delta_{\mathcal{F}}(T_v).
        \end{equation}
\end{itemize}
\end{definition}

\begin{remark}\label{SEGCOMB REM}
Given (i), condition (ii) can be reinterpreted as combining the following: 
\begin{itemize}
	\item[(ii')] the characteristic presheaf $\delta_{\mathcal{F}}$ is Segal, i.e. 
	$\delta_{\mathcal{F}}(T) = 
	\delta_{\mathcal{F}}(Sc[T])$
	for all $T \in \Omega_G$;
	\item[(ii'')] $(G/G \cdot \eta) \in \Omega_{\mathcal{F}}$.
\end{itemize}

Here, (ii'') reflects the existence of units in $G$-operads,
which are encoded by the
$G$-trivial $1$-corolla $G/G\cdot [1]$
(note that by the sieve condition (i) it is
$(G/G\cdot [1]) \in \Omega_{\mathcal{F}}$
iff 
$(G/G\cdot \eta) \in \Omega_{\mathcal{F}}$).

Similarly, (ii') reflects the composition in $G$-operads.
Indeed, (i) and (ii'') imply that
all stick $G$-trees $G/H \cdot \eta$ are in 
$\Omega_{\mathcal{F}}$,
so that the right hand side of \eqref{SEGCOMB EQ}
can be reinterpreted as $\delta_{\mathcal{F}}(Sc[T])$
(more formally, $\delta_{\mathcal{F}}(Sc[T])$ is defined via an analogue of 
Notation \ref{UNILIMDEN NOT}, 
so as to obtain a functor
$\delta_{\mathcal{F}}(-) \colon 
(\mathsf{dSet}_G)^{op} \to \mathsf{sSet}$ 
and by reinterpreting $Sc[T]$ as an object in 
$\mathsf{dSet}_G$ via applying $\upsilon_{\**}$).
\end{remark}

\begin{remark}
The original notion of indexing system 
in \cite[Def. 3.22]{BH15} is recovered by demanding that all
$G$-trivial $n$-corollas $G/G \cdot C_n$ are in 
$\Omega_{\mathcal{F}}$.
\end{remark}

\begin{remark}\label{WHYF REM}
	The $\mathcal{F}$ in the notation $\Omega_{\mathcal{F}}$
is meant to suggest an alternate description of (weak) indexing systems in terms of families of subgroups.

Namely, given a weak indexing system $\Omega_{\mathcal{F}}$ and $n \geq 0$, we let $\mathcal{F}_n$
denote the family of those subgroups 
of $\Gamma \leq G \times \Sigma_n = G \times \mathsf{Aut}(C_n)$ which are graphs of partial homomorphisms
	$G \geq H \to \Sigma_n$
	such that the associated $G$-corolla $G\cdot_H C_n$ is in $\Omega_{\mathcal{F}}$.
	$\mathcal{F}$ then stands for the collection
	$\mathcal{F} = \{\mathcal{F}_{n}\}_{n \geq 0}$.

More generally, for each $U \in \Omega$, we similarly write
$\mathcal{F}_U$
for the family of graph subgroups of $G \times \mathsf{Aut}(U)$ encoding partial homomorphisms 
$G \geq H \to \mathsf{Aut}(U)$ such that
$G \cdot_H U \in \Omega_{\mathcal{F}}$.

The fact that each $\mathcal{F}_U$ is a family is a consequence of the sieve condition (i).
On the other hand, (ii) imposes more complex conditions on $\{\mathcal{F}_{n}\}_{n \geq 0}$
which \cite[Def. 3.22]{BH15} makes explicit.
\end{remark}

All results in the paper now extend to the context of a general weak indexing system $\Omega_{\mathcal{F}}$
by essentially replacing $\Omega_G$ with $\Omega_{\mathcal{F}}$ throughout. The following are some notable modifications:
\begin{itemize}
\item notions in $\mathsf{dSet}^G$ discussed in \S \ref{PREL SEC} such as 
	``$G$-normal monomorphism'', ``$G$-inner horn'', ``$G$-inner anodyne'', ``$G$-$\infty$-operad'' are replaced
	(by restricting $T \in \Omega_{G}$ to $T \in \Omega_{\mathcal{F}}$) with
	``$\mathcal{F}$-normal monomorphism'', ``$\mathcal{F}$-inner horn'', ``$\mathcal{F}$-inner anodyne'', ``$\mathcal{F}$-$\infty$-operad'';
\item the model structure on $\mathsf{dSet}^G$ from 
\cite[Thm. 2.1]{Per18} is replaced with the model structure
$\mathsf{dSet}^G_{\mathcal{F}}$ (on the \textit{same} underlying category) from 
\cite[Thm. 2.2]{Per18},
whose cofibrations are the $\mathcal{F}$-normal monomorphisms
and whose fibrant objects are the $\mathcal{F}$-$\infty$-operads;
\item genuine dendroidal sets $\mathsf{dSet}_G = \mathsf{Set}^{\Omega_G^{op}}$ are replaced with
$\mathcal{F}$-dendroidal sets
	$\mathsf{dSet}_{\mathcal{F}} = \mathsf{Set}^{\Omega_{\mathcal{F}}^{op}}$.
\end{itemize}
We briefly outline the main reasons why these substitutions do not affect our proofs.

Firstly, the characteristic edge lemma, Lemma \ref{CHAREDGE LEM}, extends automatically. Indeed, if $T \in \Omega_{\mathcal{F}}$ the sieve condition for $\Omega_{\mathcal{F}}$ implies 
that the filtrations produced by the original 
Lemma \ref{CHAREDGE LEM} must necessarily use only 
$\mathcal{F}$-inner horn inclusions.
Therefore, all results in \S \ref{HYPERSAT SEC}, 
most notably Proposition \ref{HYPER PROP} concerning hypersaturations,
extend to a general weak indexing system $\Omega_{\mathcal{F}}$
via the same proof.

Next, for \S \ref{CEDSS SEC}, one can again consider two different Reedy model structures on 
$\mathsf{sdSet}^G$.
Firstly, using the fact that $\Delta$ is Reedy and the model structure $\mathsf{dSet}^G_{\mathcal{F}}$, one obtains a
\textit{$\mathcal{F}$-simplicial Reedy model structure} on
$\mathsf{sdSet}^G$.
Secondly, using the fact that $\Omega^{op} \times G$ is generalized Reedy such that the families 
$\{\mathcal{F}_U\}$ in Remark \ref{WHYF REM}
are Reedy admissible (see Example \ref{FGRAPHREEDY EX})
together with the Kan model structure on 
$\mathsf{sSet}$, Theorem \ref{REEDYADM THM} yields a 
\textit{$\mathcal{F}$-dendroidal Reedy model structure}
on $\mathsf{sdSet}^G$.
Thus, by applying Proposition \ref{COMBMODSTR PROP} to combine the two structures,
one obtains a $\mathcal{F}$-joint/$\mathcal{F}$-Rezk model structure, which we denote
$\mathsf{sdSet}^G_{\mathcal{F}}$.
The remaining discussion in \S \ref{CEDSS SEC} then follows through to yield the analogue of Theorem \ref{INC0AGJ THM}.

\begin{theorem}\label{FINC0AGJ THM}
	The constant/$0$-th level adjunction
	\[
	c_!\colon 
	\mathsf{dSet}^G_{\mathcal{F}} \rightleftarrows \mathsf{sdSet}^G_{\mathcal{F}}
	\colon (-)_0
	\]
	is a Quillen equivalence.
\end{theorem}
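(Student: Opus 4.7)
The plan is to mirror the proof of Theorem \ref{INC0AGJ THM} line-by-line, with every appearance of $\Omega_G$ (and the associated model structures) replaced by its $\mathcal{F}$-analogue. First, the adjunction is Quillen: the functor $c_!$ sends each generating $\mathcal{F}$-normal monomorphism $\partial \Omega[T] \to \Omega[T]$, for $T \in \Omega_{\mathcal{F}}$, to the corresponding generating cofibration of $\mathsf{sdSet}^G_{\mathcal{F}}$ provided by the $\mathcal{F}$-analogue of Proposition \ref{SDSETJRCOF PROP}; and it sends weak equivalences of $\mathsf{dSet}^G_{\mathcal{F}}$ to levelwise constant maps of $\mathsf{sdSet}^G_{\mathcal{F}}$ that are tautologically $\mathcal{F}$-dendroidal Reedy equivalences, hence $\mathcal{F}$-joint equivalences.

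For the Quillen equivalence condition, given $A \in \mathsf{dSet}^G_{\mathcal{F}}$ and a map $c_!(A) \to X$ with $X$ fibrant in $\mathsf{sdSet}^G_{\mathcal{F}}$, I would perform a factorization
\[
    c_!(A) \overset{\sim}{\rightarrowtail} \widetilde{c_!(A)} \twoheadrightarrow X
\]
in the $\mathcal{F}$-simplicial Reedy model structure. The key step is to verify that $\widetilde{c_!(A)}$ is already $\mathcal{F}$-joint fibrant. Via the $\mathcal{F}$-analogue of Proposition \ref{JOINTFIBCHAR PROP}(ii), this reduces to showing that the simplicial structure maps $\widetilde{c_!(A)}_0 \to \widetilde{c_!(A)}_n$ are weak equivalences in $\mathsf{dSet}^G_{\mathcal{F}}$, which follows by two-out-of-three: the composite $A = c_!(A)_0 \to \widetilde{c_!(A)}_0 \to \widetilde{c_!(A)}_n$ equals the composite $A = c_!(A)_n \to \widetilde{c_!(A)}_n$, and both the latter and the first map above are weak equivalences (being levelwise components of a Reedy trivial cofibration). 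Applying then the $\mathcal{F}$-analogue of Corollary \ref{SDSETG COR}(iii) to the dendroidal Reedy fibration $\widetilde{c_!(A)} \to X$ between dendroidal Reedy fibrant objects shows that $c_!(A) \to X$ is an $\mathcal{F}$-joint equivalence iff the adjoint map $A \to X_0$ is a weak equivalence in $\mathsf{dSet}^G_{\mathcal{F}}$.

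The ``main obstacle'' is really only one of bookkeeping: ensuring every supporting ingredient of the proof of Theorem \ref{INC0AGJ THM} has a clean $\mathcal{F}$-analogue. The essential inputs are (i) the existence of the $\mathcal{F}$-dendroidal Reedy model structure from Theorem \ref{REEDYADM THM} together with the Reedy-admissibility of the families $\{\mathcal{F}_U\}$ cited in Example \ref{FGRAPHREEDY EX}; (ii) the $\mathcal{F}$-hypersaturation result generalizing Proposition \ref{HYPER PROP}, which holds automatically since the sieve property of $\Omega_{\mathcal{F}}$ guarantees that every tree appearing in the characteristic-edge filtrations built by Lemma \ref{CHAREDGE LEM} remains inside $\Omega_{\mathcal{F}}$; and (iii) Proposition \ref{COMBMODSTR PROP} applied to the $\mathcal{F}$-simplicial and $\mathcal{F}$-dendroidal Reedy structures to produce the $\mathcal{F}$-joint model structure $\mathsf{sdSet}^G_{\mathcal{F}}$. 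Given these, the proofs of both the $\mathcal{F}$-analogues of Proposition \ref{JOINTFIBCHAR PROP} and of Corollary \ref{SDSETG COR} use only hypersaturation arguments and Reedy calculations internal to $\Omega_{\mathcal{F}}$, and hence transfer verbatim.
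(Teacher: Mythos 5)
Your proof is correct and mirrors the paper's approach faithfully: the paper itself treats Theorem \ref{FINC0AGJ THM} by exactly the discussion you reproduce (the sieve condition keeps the characteristic-edge filtrations inside $\Omega_{\mathcal{F}}$, the families $\{\mathcal{F}_U\}$ are Reedy-admissible by Example \ref{FGRAPHREEDY EX}, Proposition \ref{COMBMODSTR PROP} produces the joint structure), and then asserts that the proof of Theorem \ref{INC0AGJ THM} transfers, which is the factorization-plus-Corollary-\ref{SDSETG COR}(iii) argument you spell out. One small terminological slip: the factorization is performed in the $\mathcal{F}$-simplicial Reedy model structure, so $\widetilde{c_!(A)} \to X$ is an $\mathcal{F}$-simplicial (not dendroidal) Reedy fibration — but since the analogue of Corollary \ref{SDSETG COR}(iii) only requires the \emph{objects} to be $\mathcal{F}$-dendroidal Reedy fibrant (which holds because both are $\mathcal{F}$-joint fibrant), your argument is unaffected.
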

The modifications for \S \ref{PREOP SEC} are entirely straightforward, with the model structure 
$\mathsf{sdSet}^G_{\mathcal{F}}$
inducing a model structure
$\mathsf{PreOp}^G_{\mathcal{F}}$ via the obvious analogue of 
Theorem \ref{PREOPMOD THM}, and yielding the analogue of Theorem \ref{ANOQUEQUIV THM}.
\begin{theorem}\label{FANOQUEQUIV THM}
The adjunction
\[
	\gamma^{\**} \colon \mathsf{PreOp}^G_{\mathcal{F}}
\rightleftarrows
	\mathsf{sdSet}^G_{\mathcal{F}} \colon \gamma_{\**}
\]
is a Quillen equivalence.
\end{theorem}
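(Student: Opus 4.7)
The plan is to adapt the proof of Theorem \ref{ANOQUEQUIV THM} verbatim, with the minor $\mathcal{F}$-modifications needed to ensure each intermediate result still goes through. As before, it is tautological that $\gamma^{\**}$ preserves and detects cofibrations and weak equivalences in $\mathsf{PreOp}^G_{\mathcal{F}}$, since normal monomorphisms and weak equivalences in $\mathsf{PreOp}^G_{\mathcal{F}}$ are literally defined via $\gamma^{\**}$ as pullbacks of the corresponding classes in $\mathsf{sdSet}^G_{\mathcal{F}}$ (just as in the proof of Theorem \ref{PREOPMOD THM}). Hence it suffices to show that for every fibrant $X \in \mathsf{sdSet}^G_{\mathcal{F}}$ the counit $\gamma^{\**} \gamma_{\**} X \to X$ is an $\mathcal{F}$-joint equivalence.

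First I would establish the $\mathcal{F}$-analogue of Proposition \ref{CSKETALT PROP}(i): if $X$ is $\mathcal{F}$-dendroidal Reedy fibrant, then so is $\gamma^{\**} \gamma_{\**} X$. The argument from Proposition \ref{CSKETALT PROP} goes through unchanged once one verifies that $\mathsf{csk}_{\eta} X_0$ is $\mathcal{F}$-dendroidal Reedy fibrant. This reduces to the $\eta$-matching map $(\mathsf{csk}_{\eta} X_0)(\eta) \to M_{\eta}(\mathsf{csk}_{\eta} X_0)$ being a suitable equivariant Kan fibration, which is (as in the original proof) simply the identity map on $X_0(\eta)$ viewed as a constant simplicial set; the key point making this legal is that $G/G \cdot \eta \in \Omega_{\mathcal{F}}$ by Remark \ref{SEGCOMB REM}(ii''), so that $\eta$ is a degree-$0$ object of the generalized Reedy category $\Omega^{op} \times G$ (with Reedy-admissible families $\{\mathcal{F}_U\}$, cf. Example \ref{FGRAPHREEDY EX}) on which the $\mathcal{F}$-dendroidal Reedy structure is built.

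Next I would record the $\mathcal{F}$-version of Corollary \ref{SDSETG COR}(iii), stating that for $\mathcal{F}$-dendroidal Reedy fibrant $X,Y \in \mathsf{sdSet}^G_{\mathcal{F}}$ a map $X \to Y$ is an $\mathcal{F}$-joint equivalence iff it is an $\mathcal{F}$-dendroidal equivalence iff $X_0 \to Y_0$ is a weak equivalence in $\mathsf{dSet}^G_{\mathcal{F}}$. This corollary follows formally from the $\mathcal{F}$-analogue of Proposition \ref{JOINTFIBCHAR PROP}(ii) together with the $\mathcal{F}$-analogue of Proposition \ref{SDSETJRCOF PROP}, both of which transfer automatically: the cofibrations and generating trivial cofibrations of the two Reedy structures take the same form with $T \in \Omega_G$ replaced by $T \in \Omega_{\mathcal{F}}$, and the sieve condition on $\Omega_{\mathcal{F}}$ ensures that all pushout-product manipulations stay inside $\Omega_{\mathcal{F}}$.

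With these two ingredients in hand, the proof is completed as in Theorem \ref{ANOQUEQUIV THM}: both $X$ and $\gamma^{\**}\gamma_{\**} X$ are $\mathcal{F}$-dendroidal Reedy fibrant, the identity $(\gamma^{\**}\gamma_{\**} X)_0 = X_0$ holds by inspection of \eqref{GAMMASTAR_EQ} (since $X_0(\eta)$ is already discrete, being a level of the preoperad $\gamma_{\**} X$), and hence the counit is an $\mathcal{F}$-joint equivalence by the $\mathcal{F}$-analogue of Corollary \ref{SDSETG COR}(iii). The main obstacle is really only bookkeeping: confirming that each of Propositions \ref{SDSETJRCOF PROP}, \ref{JOINTFIBCHAR PROP}, \ref{CSKETALT PROP} and Corollary \ref{SDSETG COR} transfer to the $\mathcal{F}$-setting, which they do because (a) the sieve axiom keeps all trees appearing in the arguments inside $\Omega_{\mathcal{F}}$, (b) Proposition \ref{HYPER PROP} and its corollaries extend to $\mathcal{F}$ as noted in the text, and (c) $G/G\cdot\eta \in \Omega_{\mathcal{F}}$ ensures that the degree-$0$ generators and the $\eta$-coskeleton behave as in the $G$ case.
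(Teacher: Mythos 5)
Your proposal is correct and takes essentially the same approach as the paper, which in \S\ref{INDEX SEC} simply asserts that Theorem~\ref{FANOQUEQUIV THM} is obtained by making the $\mathcal{F}$-substitutions in the proof of Theorem~\ref{ANOQUEQUIV THM}; you have spelled out the required $\mathcal{F}$-analogues of Proposition~\ref{CSKETALT PROP}(i), Proposition~\ref{SDSETJRCOF PROP}, Proposition~\ref{JOINTFIBCHAR PROP}, and Corollary~\ref{SDSETG COR}(iii) and correctly identified that condition (ii'') of Remark~\ref{SEGCOMB REM} (that $G/G\cdot\eta \in \Omega_{\mathcal{F}}$) is what keeps the $\eta$-coskeleton argument intact. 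One small slip worth correcting: the $\eta$-matching map in the analogue of Proposition~\ref{CSKETALT PROP}(i) is $X_0(\eta)\to\ast$ (since $M_\eta = \ast$ as $\eta$ has degree $0$), not the identity on $X_0(\eta)$, though the point you rely on --- that a map of constant equivariant simplicial sets is automatically a fibration --- is unaffected.
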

For \S \ref{HMPTYGEN SEC}, $\mathcal{F}$-dendroidal Segal spaces
$X \in \mathsf{sdSet}^G$
are defined in the natural way by localizing the $\mathcal{F}$-dendroidal Reedy model structure against the Segal core inclusions
$Sc[T] \to \Omega[T], T \in \Omega_{\mathcal{F}}$.
The most notable difference is then that in Notation \ref{GCOR NOT} and afterwards one works only with 
$\mathcal{F}$-corollas, i.e. $G$-corollas
$C \in \Omega_{\mathcal{F}}$,
and thus only with $\mathcal{F}$-profiles, thus obtaining a notion of $\mathcal{F}$-fully faithfullness and of $\mathcal{F}$-DK-equivalence 
(essential surjectivity needs not be changed due to condition (ii'') in Remark \ref{SEGCOMB REM} implying that all the stick $G$-trees $G/H \cdot \eta$ are in $\Omega_{\mathcal{F}}$).
Thus, noting that the Segal condition 
(ii') in Remark \ref{SEGCOMB REM}
ensures that the grafted $G$-trees 
$T=C \amalg_{Ge} D$ in \eqref{HOMOTCIRC EQ}
are in $\Omega_{\mathcal{F}}$
whenever $C,D$ are $\mathcal{F}$-corollas,
the remaining discussion in 
\S \ref{HMPTYGEN SEC}, \S \ref{REZKCOMP SEC}
generalizes to yield the analogue of 
Theorem \ref{COMPIFFDK THM}.
\begin{theorem}\label{FCOMPIFFDK THM}
A map of $X \to Y$ of $\mathcal{F}$-dendroidal Segal spaces is a $\mathcal{F}$-complete equivalence iff it is a $\mathcal{F}$-DK-equivalence.
\end{theorem}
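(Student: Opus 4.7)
The plan is to adapt the proof of Theorem \ref{COMPIFFDK THM} essentially verbatim, relying on the $\mathcal{F}$-analogues of the preliminary results already sketched in this section. First I would invoke the $\mathcal{F}$-analogue of Proposition \ref{COMPLE PROP} to reduce to the case where $X, Y$ are $\mathcal{F}$-complete fibrant. The completion $\tilde{X} = \delta^{\**}(X^{J^{\bullet}})$ carries over: the cube \eqref{BIGCUBE EQ} and the decomposition \eqref{OTIMESDECOMP EQ} stay inside $\Omega_{\mathcal{F}}$ when $C$ is an $\mathcal{F}$-corolla, because the $G$-vertices of $C \star \eta$ and $\eta \star C$ are either the original $C$ or unary $G$-corollas on stick trees $G/H\cdot\eta$, both of which lie in $\Omega_{\mathcal{F}}$ by the Segal characterization \eqref{SEGCOMB EQ} combined with condition (ii''). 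Similarly, the Segal argument in the proof of Proposition \ref{JDDK PROP} only uses faces of trees in $\Omega_{\mathcal{F}}$, which remain in $\Omega_{\mathcal{F}}$ by the sieve condition (i).

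Next, I would apply Proposition \ref{COMBMODSTR PROP}(ii) to the $\mathcal{F}$-joint Reedy structure $\mathsf{sdSet}^G_{\mathcal{F}}$: between $\mathcal{F}$-complete fibrant objects, $\mathcal{F}$-complete equivalences coincide with $\mathcal{F}$-simplicial equivalences. The $\mathcal{F}$-analogue of Corollary \ref{DKCOM COR} then takes care of both implications. The ``only if'' direction (simplicial equivalence implies fully faithfulness) follows from the $\mathcal{F}$-analogue of Remark \ref{DKCOM REM} applied to diagram \eqref{DKCOM EQ} with $T$ an $\mathcal{F}$-corolla, where the top vertical maps are trivial fibrations by the $\mathcal{F}$-Segal condition. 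For the converse, one reduces to proving that if $f$ is an $\mathcal{F}$-DK-equivalence, then $X(\eta)^H \to Y(\eta)^H$ is a Kan equivalence for every $H \leq G$.

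For this final step the argument at the tail of Theorem \ref{COMPIFFDK THM} applies without change. Since the sieve condition combined with (ii'') forces every stick tree $G/H\cdot \eta$ to lie in $\Omega_{\mathcal{F}}$, the simplicial sets $X(\eta)^H$ and the homotopy-equivalence subspaces are genuinely available in the $\mathcal{F}$-context. One thus restricts to the simplicial Segal spaces $\iota^{\**}(X^H), \iota^{\**}(Y^H) \in \mathsf{ssSet}$, which are complete because $X, Y$ are, and invokes Rezk's \cite[Thm. 7.7]{Rez01} to conclude that the DK-equivalence $\iota^{\**}(X^H) \to \iota^{\**}(Y^H)$ is a Kan equivalence on $0$-th levels, i.e. that $X(\eta)^H \to Y(\eta)^H$ is a weak equivalence.

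The main obstacle, such as it is, consists in carefully verifying that the auxiliary constructions surviving from \S \ref{HMPTYGEN SEC}--\S \ref{REZKCOMP SEC} remain internal to $\Omega_{\mathcal{F}}$. The delicate point is the grafting operation used throughout \S \ref{HMPTYGEN SEC} (see \eqref{HOMOTCIRC EQ} and Proposition \ref{ASSOC PROP}): given $\mathcal{F}$-corollas $C, D$ grafted along $Ge$ to form $T$, the Segal characterization (ii') of Remark \ref{SEGCOMB REM} guarantees $T \in \Omega_{\mathcal{F}}$, and analogously for the three-vertex trees appearing in the associativity diagram \eqref{FOURSQ EQ}. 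Once these verifications are in place the proof proceeds exactly as before.
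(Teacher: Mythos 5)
Your proposal is correct and follows essentially the same route the paper takes, which in \S\ref{INDEX SEC} amounts to noting that the key verification is that grafted $G$-trees (and the pieces of $\Omega[1]\otimes\Omega[C]$) remain in $\Omega_{\mathcal F}$ by the Segal condition (ii') of Remark \ref{SEGCOMB REM} together with the sieve condition and (ii''), after which the arguments of \S\ref{HMPTYGEN SEC}--\S\ref{REZKCOMP SEC} carry over verbatim. You are somewhat more explicit than the paper in tracking which auxiliary trees (e.g.\ $C\star\eta$, $\eta\star C$, the three-vertex associativity trees) need to be checked, but the underlying argument is the same.
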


\appendix

\section{Equivariant Reedy model structures}\label{EQREED AP}

In \cite{BM11} Berger and Moerdijk extend the notion of Reedy category so as to allow for categories $\mathbb{R}$
 with non-trivial automorphism groups 
 $\mathsf{Aut}(r)$ for $r \in \mathbb{R}$.
For such $\mathbb{R}$ and suitable model category $\mathcal{C}$ they then show that there is a 
\textit{Reedy model structure}
on $\mathcal{C}^{\mathbb{R}}$
defined by modifying the usual characterizations of
Reedy cofibrations, weak equivalences and fibrations
(see \cite[Thm. 1.6]{BM11} or
Theorem \ref{REEDYADM THM} below)
to be determined by the $\mathsf{Aut}(r)$-projective model structures
on $\mathcal{C}^{\mathsf{Aut}(r)}$
for each $r \in \mathbb{R}$.

The purpose of this appendix is to show that,
under suitable conditions, this can also be done by replacing
the $\mathsf{Aut}(r)$-projective model structures
on $\mathcal{C}^{\mathsf{Aut}(r)}$
with the more general 
$\mathcal{C}^{\mathsf{Aut}(r)}_{\mathcal{F}_r}$
model structures for 
$\{\mathcal{F}_r\}_{r \in \mathbb{R}}$
a nice collection of families of subgroups of each 
$\mathsf{Aut}(r)$.

To do so, we first need some key notation.
For each map $r \to r'$ in the category $\mathbb{R}$ we will write
$\mathsf{Aut}(r \to r')$ for its automorphism group in the arrow category and write
\begin{equation}\label{PIDEFR EQ}
\begin{tikzcd}
\mathsf{Aut}(r) &
\mathsf{Aut}(r \to r') \ar{r}{\pi_{r'}} \ar{l}[swap]{\pi_{r}} &
\mathsf{Aut}(r')
\end{tikzcd}
\end{equation}
for the obvious projections. We now introduce our equivariant generalization of
the ``generalized Reedy categories''
of \cite[Def. 1.1]{BM11}, 
the novelty of which is in axiom (iv).

\begin{definition}\label{GENRED DEF}
A \textit{generalized Reedy category structure} on a
small category $\mathbb{R}$ consists of
wide subcategories 
$\mathbb{R}^+$, $\mathbb{R}^-$
and a degree function $|\minus| \colon ob(\mathbb{R}) \to \mathbb{N}$ such that:
\begin{itemize}
	\item[(i)] non-invertible maps in $\mathbb{R}^+$ (resp. $\mathbb{R}^-$) raise (lower) degree; isomorphisms preserve degree;
	\item[(ii)] $\mathbb{R}^+ \cap \mathbb{R}^- = \mathsf{Iso}(\mathbb{R})$;
	\item[(iii)] every map $f$ in $\mathbb{R}$ factors as
	$f = f^{+} \circ f^{-}$ with $f^{+} \in \mathbb{R}^+$, $f^{-} \in \mathbb{R}^-$, and this factorization is unique up to isomorphism
	(meaning that if 
	$f = \tilde{f}^{+} \circ \tilde{f}^{-}$ is another such factorization, then
	$f^+ = \tilde{f}^+ \circ \phi$, $\phi \circ f^- = \tilde{f}^-$ for some isomorphism $\phi$).
\end{itemize}
Let $\{\mathcal{F}_r\}_{r \in \mathbb{R}}$
be a collection of families of subgroups of the groups $\mathsf{Aut}(r)$.
The collection $\{\mathcal{F}_r\}$ is called 
\textit{Reedy-admissible} if:
\begin{itemize}
	\item[(iv)] for all maps
	$r \to r'$ in $\mathbb{R}^-$ one has
	$\pi_{r'}\left( \pi_r^{-1} (H) \right) \in \mathcal{F}_{r'}$
	for all $H \in \mathcal{F}_r$.
\end{itemize}
\end{definition}

We note that condition (iv) above should be thought of as a constraint on the pair 
$(\mathbb{R},\{\mathcal{F}_r\})$.
The original setup of \cite{BM11} then deals with the case
where $\{ \mathcal{F}_r \} =
 \left\{ \left\{ e \right\} \right\}$
is the collection of trivial families. Indeed, our setup recovers
the setup in \cite{BM11}, as follows.

\begin{example}\label{RECAXIV EX}
	When $\{ \mathcal{F}_r \} =
 \left\{ \left\{ e \right\} \right\}$, Reedy-admissibility coincides with axiom (iv) in \cite[Def. 1.1]{BM11},
stating that if $\theta \circ f^{-} = f^{-}$
for some $f^- \in \mathbb{R}^{-}$ and 
$\theta \in \mathsf{Iso}(\mathbb{R})$ then $\theta$ is an identity.
\end{example}

\begin{example}
For any generalized Reedy category $\mathbb{R}$
(i.e. if $\mathbb{R}$ satisfies (i),(ii),(iii)),
the collection $\{\mathcal{F}_{\text{all}}\}$
of the families of all subgroups of $\mathsf{Aut}(r)$
is Reedy-admissible.
\end{example}

\begin{example}
	Let $G$ be a group and set $\mathbb{R} = G \times (0 \to 1)$ with $\mathbb{R} = \mathbb{R}^+$
	(and thus necessarily $\mathbb{R}^- = \mathsf{Iso}(\mathbb{R})$).
	Then any pair 
	$\{\mathcal{F}_0,\mathcal{F}_1\}$
	of families of subgroups of $G$ is Reedy-admissible.
	
	Similarly, set $\mathbb{R} = G \times (0 \leftarrow 1)$
	with $\mathbb{R} = \mathbb{R}^-$. Then a pair
	$\{\mathcal{F}_0,\mathcal{F}_1\}$
	of families of subgroups of $G$ is Reedy-admissible
	iff $\mathcal{F}_0 \supseteq \mathcal{F}_1$.
\end{example}

\begin{example}\label{GGRAPHREEDY EX}
	Letting $\mathbb{S}$ denote any generalized Reedy category in the sense of \cite[Def. 1.1]{BM11} (cf. Example \ref{RECAXIV EX}) and $G$ a group,
	we set $\mathbb{R} = G \times \mathbb{S}$
	with $\mathbb{R}^+ = G \times \mathbb{S}^+$ and 
	$\mathbb{R}^- = G \times \mathbb{S}^-$.
	Further, for each $s \in \mathbb{S}$ we write
	$\mathcal{F}_s^{\Gamma}$ for the family of 
	$G$-graph subgroups of $G \times \mathsf{Aut}_{\mathbb{S}}(s)$, i.e., those subgroups 
	$\Gamma \leq G \times \mathsf{Aut}_{\mathbb{S}}(s)$ 
	which are graphs of partial homomorphisms
	$G \geq H \to \mathsf{Aut}_{\mathbb{S}}(s)$. We note that $G$-graph subgroups are also characterized by the condition $\Gamma \cap \mathsf{Aut}_{\mathbb{S}}(s) = \{e\}$.
	
	Reedy admissibility of the collection $\left\{\mathcal{F}_s^{\Gamma}\right\}$ then follows since for every map 
	$s \to s'$ in $\mathbb{S}^-$ one has that the homomorphism
	$\pi_s \colon \mathsf{Aut}_{\mathbb{S}}(s \to s')
	\to \mathsf{Aut}_{\mathbb{S}}(s)$ is injective
	(we note that this is a restatement of axiom (iv) in \cite[Def. 1.1]{BM11} for $\mathbb{S}$; see Example \ref{RECAXIV EX}).
\end{example}

Our primary example of interest is obtained by setting
$\mathbb{S} = \Omega^{op}$ in the previous example.
Moreover, in this case we are also interested 
in certain subfamilies
$\{\mathcal{F}_U\}_{U \in \Omega}
\subseteq
\left\{\mathcal{F}_U^{\Gamma}\right\}_{U \in \Omega}$.

\begin{example}\label{FGRAPHREEDY EX}
	Let $\mathbb{R} = G \times \Omega^{op}$ and let
	$\{\mathcal{F}_U\}_{U \in \Omega}$ be the family of graph subgroups determined by a weak indexing system $\mathcal{F}$ (see Remark \ref{WHYF REM}).
	Then $\{\mathcal{F}_U\}$ is Reedy-admissible.
	To see this, recall first that each $\Gamma \in \mathcal{F}_U$ encodes 
	a $H$-action on $U \in \Omega$ for some $H \leq G$
	so that $G \cdot_H U$ is a $\mathcal{F}$-tree.
	Given a face map $\varphi \colon U' \hookrightarrow U$, 
	the subgroup $\pi^{-1}_U(\Gamma)$ is then determined by the largest subgroup $\bar{H}\leq H$ such that 
	$U'$ inherits the $\bar{H}$-action from $U$ along $\varphi$ (thus making $\varphi$ a $\bar{H}$-map), 
	so that $\pi_{U'}\left(\pi^{-1}_U(\Gamma)\right)$ is the graph subgroup encoding the $\bar{H}$-action on $U'$.
	Thus, we see that Reedy-admissibility is simply the sieve condition for the induced map of $G$-trees
	$G \cdot_{\bar{H}} U' \to G \cdot_H U$.
\end{example}

We now state the main result.
We will assume throughout that $\mathcal{C}$ is a model category such that for any group $G$ and family of subgroups $\mathcal{F}$,
the category $\mathcal{C}^G$ admits the
$\mathcal{F}$-model structure
with weak equivalences/fibrations detected by the fixed points
$X^H$ for $H \in \mathcal{F}$
(for example, this is the case whenever $\C$ is a cofibrantly generated cellular model category in the sense of \cite{Ste16}).

The definitions of $L_r$ and $M_r$ are recalled below the result.

\begin{theorem}\label{REEDYADM THM}
Let $\mathbb{R}$ be generalized Reedy and 
$\{\mathcal{F}_r\}_{r \in \mathbb{R}}$ a Reedy-admissible collection of families. 
Then there is a \textbf{$\{\mathcal{F}_r\}$-Reedy model structure} on
$\mathcal{C}^{\mathbb{R}}$ such that a map $A \to B$ is
\begin{itemize}
  \item a (trivial) cofibration if $A_r \underset{L_r A}{\amalg}L_r B \to B_r$ is a (trivial) $\mathcal{F}_r$-cofibration in $\mathcal{C}^{\mathsf{Aut}(r)}$, $\forall r \in \mathbb{R}$;
	\item a weak equivalence if $A_r \to B_r$ is a $\mathcal{F}_r$-weak equivalence in $\mathcal{C}^{\mathsf{Aut}(r)}$, $\forall r \in \mathbb{R}$;
	\item a (trivial) fibration if $A_r \to B_r \underset{M_r B}{\times }M_r A $ is a (trivial) $\mathcal{F}_r$-fibration in $\mathcal{C}^{\mathsf{Aut}(r)}$, $\forall r \in \mathbb{R}$.
\end{itemize}
\end{theorem}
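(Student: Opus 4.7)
The plan is to adapt the proof of \cite[Thm.~1.6]{BM11} essentially verbatim, replacing each appearance of the $\mathsf{Aut}(r)$-projective model structure on $\mathcal{C}^{\mathsf{Aut}(r)}$ with the $\mathcal{F}_r$-model structure $\mathcal{C}^{\mathsf{Aut}(r)}_{\mathcal{F}_r}$. The overarching idea is that the Berger--Moerdijk proof is built around factorizations constructed by induction on degree, where at each stage one factors a relative latching/matching map inside $\mathcal{C}^{\mathsf{Aut}(r)}$; one can perform these factorizations equally well in $\mathcal{C}^{\mathsf{Aut}(r)}_{\mathcal{F}_r}$, and then the task is to identify precisely where the trivial-family axiom (iv) of \cite[Def.~1.1]{BM11} is used and check that our relaxed Reedy-admissibility condition performs the same role.

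First, I would verify that the latching and matching functors $L_r, M_r \colon \mathcal{C}^{\mathbb R} \to \mathcal{C}^{\mathsf{Aut}(r)}$ remain well-defined, together with their canonical $\mathsf{Aut}(r)$-equivariant structures --- these steps use only axioms (i)--(iii) and carry over from \cite{BM11} unchanged. Next, I would establish the standard skeletal filtration: for any map $f \colon A \to B$, one builds $\mathrm{sk}_n f$ inductively by pushing out along the relative latching maps at objects of degree $n+1$, so that any map in the saturation of the $\mathcal{F}_r$-Reedy (trivial) cofibrations is built cellularly from maps assembled out of $\mathcal{F}_r$-(trivial) cofibrations in each $\mathcal{C}^{\mathsf{Aut}(r)}$. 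Factorizations in $\mathcal{C}^{\mathbb R}$ are then produced degreewise: assuming $X_{r'}$ has been constructed for $|r'| < |r|$, one factors the induced map
\[
A_r \coprod_{L_r A} L_r X \longrightarrow B_r \times_{M_r B} M_r X
\]
inside $\mathcal{C}^{\mathsf{Aut}(r)}_{\mathcal{F}_r}$ and takes the middle object to be $X_r$. Closure under 2-out-of-3 and retracts is checked pointwise, while the lifting axioms reduce, via the skeletal filtration, to the corresponding lifting axioms in each $\mathcal{C}^{\mathsf{Aut}(r)}_{\mathcal{F}_r}$.

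The main obstacle --- and the only point where our Reedy-admissibility condition (iv) genuinely enters --- is verifying that the inductive factorization above actually produces an object $X$ whose matching and latching objects have the right equivariant fibrancy/cofibrancy to feed into the next inductive step. Concretely, one needs: if $X$ has been built so that each $X_{r'}$ with $|r'| < |r|$ is $\mathcal{F}_{r'}$-fibrant in $\mathcal{C}^{\mathsf{Aut}(r')}$, then $M_r X$ is $\mathcal{F}_r$-fibrant in $\mathcal{C}^{\mathsf{Aut}(r)}$ (and dually for $L_r X$ in the cofibrant case). Since fibrancy in $\mathcal{C}^{\mathsf{Aut}(r)}_{\mathcal{F}_r}$ is detected on $H$-fixed points for $H \in \mathcal{F}_r$, and fixed points commute with the limit defining $M_r$, unpacking the action via the projections in \eqref{PIDEFR EQ} shows that $(M_r X)^H$ is computed as a limit of the $\pi_{r'}(\pi_r^{-1}(H))$-fixed points of the various $X_{r'}$ indexed by non-isomorphism maps $r \to r'$ in $\mathbb R^-$. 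Reedy-admissibility of $\{\mathcal{F}_r\}$ is precisely the statement that each such subgroup belongs to $\mathcal{F}_{r'}$, so the inductive $\mathcal{F}_{r'}$-fibrancy of $X_{r'}$ guarantees the required fibrancy of $(M_r X)^H$. This is exactly the role played by axiom (iv) of \cite[Def.~1.1]{BM11} in the trivial-family case (cf.\ Example~\ref{RECAXIV EX}): there it forces $\pi_{r'}$ restricted to $\pi_r^{-1}(\{e\})$ to be the identity, which is what makes Berger--Moerdijk's fixed-point analysis transparent. Once this single key input is in place, the remainder of the argument --- closure of (trivial) (co)fibrations under composition and retracts, the pushout-product and identification of trivial (co)fibrations with weak-equivalence (co)fibrations --- is formal bookkeeping inherited from \cite{BM11}.
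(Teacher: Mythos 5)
Your proposal follows the same route as the paper: one inherits the combinatorics of latching/matching objects from \cite{BM11} (axioms (i)--(iii) only), constructs factorizations degreewise in $\mathcal{C}^{\mathsf{Aut}(r)}_{\mathcal{F}_r}$, and isolates Reedy-admissibility as the one new input needed to ensure the inductive step closes up on the matching side. You also correctly pinpoint how admissibility enters: the stabilizer of $(r \to r')$ in the relevant action category is $\pi_r^{-1}(H)$, acting on $X_{r'}$ through $\pi_{r'}$, so inductive $\mathcal{F}_{r'}$-fibrancy applies precisely because $\pi_{r'}(\pi_r^{-1}(H)) \in \mathcal{F}_{r'}$. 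This is exactly the computation carried out in the paper via the $K \ltimes \mathbb{R}^-(r)$ category and Lemma \ref{GINJMIN LEM}.

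Two points of imprecision worth flagging. First, your parenthetical ``and dually for $L_r X$ in the cofibrant case'' suggests Reedy-admissibility is used symmetrically on the latching side, but it is not: since an $\mathcal{F}_{r'}$-(trivial) cofibration is automatically a \emph{genuine} (trivial) cofibration (Corollary \ref{FTRIVALL COR}), and restriction along any homomorphism preserves genuine cofibrations (Corollary \ref{RESGEN COR}), the latching argument (Lemma \ref{REEDYTRCOF LEM}) goes through without invoking admissibility at all. The asymmetry is the whole point of Remark \ref{NOTDUAL REM}. Second, the inductive claim must be formulated for a \emph{map} $X \to Y$, showing that $M_r X \to M_r Y$ is an $\mathcal{F}_r$-fibration given that the relative matching maps $X_{r'} \to Y_{r'} \times_{M_{r'}Y} M_{r'}X$ are $\mathcal{F}_{r'}$-fibrations in lower degree; the absolute fibrant-object version you state is a shadow of this but does not directly feed the factorization argument. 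Finally, you should also say explicitly that the classes labelled ``trivial cofibration'' and ``trivial fibration'' in the statement really coincide with (cofibration)$\cap$(weak equivalence) and (fibration)$\cap$(weak equivalence); this is Lemmas \ref{REEDYTRCOF LEM}(ii) and \ref{REEDYTRFIB LEM}(ii) and requires the same inductive latching/matching analysis, so it is not free.
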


The proof of Theorem \ref{REEDYADM THM}
is given near the end of the appendix after establishing some routine generalizations of the key lemmas in \cite{BM11}.
We note that the work in \cite{BM11} has two main components: a formal analysis of the 
\textit{latching} and \textit{matching objects}
$L_r A$ and $M_r A$, which depends only on axioms 
\cite[Def. 1.1]{BM11}(i),(ii),(iii), 
and a model category analysis,
which depends on the extra axiom 
\cite[Def. 1.1]{BM11}(iv).

Since axioms (i),(ii),(iii) in Definition \ref{GENRED DEF} repeat \cite[Def. 1.1]{BM11}(i),(ii),(iii), we will only briefly recall the definitions of latching and matching objects.
Writing $\iota_n \colon \mathbb{R}_{\leq n} \to \mathbb{R}$ for the inclusion of the full subcategory of those $r \in \mathbb{R}$ with $|r|\leq n$, we have adjunctions
\begin{equation}\label{SKELADJ EQ}
\begin{tikzcd}[column sep =5em]
	\mathcal{C}^{\mathbb{R}} \ar{r}[swap]{\iota_n^{\**}} 
	&
	\mathcal{C}^{\mathbb{R}_{\leq n}}
	\ar[bend right]{l}[swap,midway]{\iota_{n,!}}
	\ar[bend left]{l}{\iota_{n,\**}}
\end{tikzcd}
\end{equation}
One then defines \textit{$n$-skeleta}
by $\mathsf{sk}_n A = \iota_{n,!} \iota_n^{\**} A$
and \textit{$n$-coskeleta}
by $\mathsf{csk}_n A = \iota_{n,\**} \iota_n^{\**} A$
as well as 
\textit{$r$-latching objects} by
$L_r A = \left(\mathsf{sk}_{|r|-1} A\right)_r$
and 
\textit{$r$-matching objects} by
$M_r A = \left(\mathsf{csk}_{|r|-1} A\right)_r$.
Axioms (i),(ii),(iii) then imply that
$\mathsf{sk}_n A$ (resp. $\mathsf{csk}_n A$)
depends only on the restriction to 
$\mathbb{R}_{\leq n}^{+}$ (resp. $\mathbb{R}_{\leq n}^{-}$). We refer the reader to \cite[\S 4,\S 6]{BM11} for a detailed discussion.

We now turn to the model categorical analysis, which depends on the Reedy-admissibility condition (iv)
in Definition \ref{GENRED DEF}, and is the actual novelty of this appendix.
We first recall the following, cf. \cite[Props. 6.5 and 6.6]{BP17}.
\begin{proposition}
      \label{FGTRL_PROP}
Let $\phi \colon G \to \bar{G}$ be a homomorphism and
$\mathcal{F}$, $\bar{\mathcal{F}}$ families of subgroups of
$G, \bar{G}$. Then the leftmost (resp. rightmost) adjunction below
is a Quillen adjunction 
\[
	\bar{G} \cdot_G (\minus)
	\colon \C^G_{\mathcal{F}}
		\rightleftarrows
	\C^{\bar{G}}_{\bar{\mathcal{F}}} \colon
	\mathsf{res}^{\bar{G}}_G
\qquad
	\mathsf{res}^{\bar{G}}_G
	\colon	\C^{\bar{G}}_{\bar{\mathcal{F}}}
		\rightleftarrows
	\C^G_{\mathcal{F}} \colon
	\mathsf{Hom}_G(\bar{G},\minus)
\]
provided that for $H \in \mathcal{F}$ it is
$\phi(H) \in \bar{\mathcal{F}}$
(resp. for $\bar{H} \in \bar{\mathcal{F}}$ it is
$\phi^{-1}(H) \in \mathcal{F}$).
\end{proposition}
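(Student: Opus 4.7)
The plan is to verify each Quillen adjunction by checking that the appropriate adjoint preserves the relevant half of the model structure, in both cases reducing to a routine calculation with fixed points or stabilizers together with the explicit description of generating (trivial) cofibrations in the $\mathcal{F}$-model structures.

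For the leftmost adjunction, I would check that the right adjoint $\mathsf{res}^{\bar{G}}_G$ preserves (trivial) fibrations. The key observation is that, for $H \leq G$, the $H$-fixed points of $\mathsf{res}^{\bar{G}}_G X$ coincide with the $\phi(H)$-fixed points of $X$, since the $H$-action on the restriction factors through $\phi(H)$. Hence if $H \in \mathcal{F}$, the hypothesis $\phi(H) \in \bar{\mathcal{F}}$ ensures that $f^{\phi(H)}$ is a (trivial) fibration in $\C$ whenever $f$ is one in $\C^{\bar{G}}_{\bar{\mathcal{F}}}$. By the hypothesis on the $\mathcal{F}$-model structure (weak equivalences/fibrations detected by fixed points for subgroups in $\mathcal{F}$), this is exactly what is required.

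For the rightmost adjunction, I would instead check that the left adjoint $\mathsf{res}^{\bar{G}}_G$ preserves (trivial) cofibrations. Since the $\bar{\mathcal{F}}$-model structure is cofibrantly generated by maps of the form $\bar{G}/\bar{H} \cdot i$ for $\bar{H} \in \bar{\mathcal{F}}$ and $i$ a generating (trivial) cofibration of $\C$, and since $\mathsf{res}^{\bar{G}}_G$ preserves colimits, it suffices to check the claim on these generators. Decomposing the $G$-set $\mathsf{res}^{\bar{G}}_G(\bar{G}/\bar{H})$ into orbits, the stabilizer of a coset $\bar{g}\bar{H}$ computes as $\{g \in G : \bar{g}^{-1}\phi(g)\bar{g} \in \bar{H}\} = \phi^{-1}(\bar{g}\bar{H}\bar{g}^{-1})$. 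Since $\bar{\mathcal{F}}$ is closed under conjugation, the hypothesis applied to the conjugate $\bar{g}\bar{H}\bar{g}^{-1} \in \bar{\mathcal{F}}$ forces this stabilizer to lie in $\mathcal{F}$. Therefore $\mathsf{res}^{\bar{G}}_G(\bar{G}/\bar{H}) \simeq \coprod_j G/K_j$ with $K_j \in \mathcal{F}$, and thus $\mathsf{res}^{\bar{G}}_G(\bar{G}/\bar{H} \cdot i)$ is a coproduct of generating (trivial) cofibrations in $\C^G_{\mathcal{F}}$.

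Neither argument presents a substantial obstacle; the mildest subtlety is the orbit/stabilizer bookkeeping in the second case, but this is entirely formal once the stabilizer formula is written down. No nontrivial model-categorical input beyond the detection of (trivial) fibrations via fixed points and the standard cofibrantly generated description of generating (trivial) cofibrations is needed.
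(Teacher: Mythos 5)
Your argument for the first adjunction is essentially identical to the paper's: both use the fixed-point formula $(\mathsf{res}^{\bar G}_G Y)^H = Y^{\phi(H)}$ to see that the right adjoint preserves (trivial) fibrations. For the second adjunction, however, you take the dual route: the paper again verifies the right adjoint preserves (trivial) fibrations, via the fixed-point formula $\left(\mathsf{Hom}_G(\bar G, X)\right)^{\bar H} = \prod_{[\bar g] \in \phi(G)\backslash \bar G / \bar H} X^{\phi^{-1}(\bar g \bar H \bar g^{-1})}$, whereas you check the left adjoint $\mathsf{res}^{\bar G}_G$ preserves (trivial) cofibrations by decomposing $\mathsf{res}^{\bar G}_G(\bar G/\bar H)$ into $G$-orbits. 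The two arguments hinge on the identical stabilizer computation $\mathsf{Stab}_G(\bar g\bar H) = \phi^{-1}(\bar g\bar H\bar g^{-1})$ and invocation of closure of $\bar{\mathcal F}$ under conjugation; the paper's fixed-point product formula for $\mathsf{Hom}_G(\bar G,-)$ is really just the cotensor-side shadow of your tensor-side orbit decomposition. The paper's uniform right-adjoint approach has a mild advantage in generality: the section's standing hypothesis only assumes that the $\mathcal F$-model structure on $\C^G$ \emph{exists} with weak equivalences and fibrations detected via fixed points for $H \in \mathcal F$ -- it does not baldly assume cofibrant generation. Your left-adjoint argument additionally requires the $\bar{\mathcal F}$-model structure to be cofibrantly generated with generators $\bar G/\bar H \cdot i$, which holds under the parenthetical sufficient condition (cellular, cofibrantly generated $\C$) but is strictly speaking an extra hypothesis. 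In practice, of course, all the applications in this paper have it, so the discrepancy is cosmetic.
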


\begin{proof}
For $H \leq G$, $\bar{H} \leq \bar{G}$, the fixed point formulas
$\left(\mathsf{res}^{\bar{G}}_G Y \right)^H = Y^{\phi(H)}$
and
$\left(\mathsf{Hom}_{G}(\bar{G},X)\right)^{\bar{H}}=
\prod_{[\bar{g}] \in \phi(G) \backslash \bar{G}/\bar{H}}
X^{\phi^{-1}(\bar{g}\bar{H}\bar{g}^{-1})}$
show that the right adjoints preserve (trivial) fibrations.
\end{proof}

For $\mathcal{F}_{\text{all}}$
the family of all subgroups of $G$
the model structure
$\C^G_{\mathcal{F}_{\text{all}}}$ is called the 
\textit{fine/genuine model structure}.
We regard this as the default model structure, 
and hence denote it simply as $\C^G$.

\begin{corollary}\label{RESGEN COR}
For any homomorphism $\phi \colon G \to \bar{G}$, the functor
$\mathsf{res}^{\bar{G}}_G \colon 
\C^{\bar{G}} \to \C^G$
preserves all four genuine classes of 
cofibrations, trivial cofibrations, fibrations and trivial fibrations.
\end{corollary}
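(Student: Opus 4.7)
The plan is to deduce the corollary directly from Proposition \ref{FGTRL_PROP} by observing that the genuine model structure corresponds to the family $\mathcal{F}_{\text{all}}$ of all subgroups, for which both of the hypotheses in Proposition \ref{FGTRL_PROP} are automatic.

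More precisely, I would first note that $\C^G = \C^G_{\mathcal{F}_{\text{all}}}$ and $\C^{\bar{G}} = \C^{\bar{G}}_{\bar{\mathcal{F}}_{\text{all}}}$ by the convention stated just before the corollary. For the first (``left'') adjunction in Proposition \ref{FGTRL_PROP}, the hypothesis requires that $\phi(H) \in \bar{\mathcal{F}}_{\text{all}}$ whenever $H \in \mathcal{F}_{\text{all}}$, which is automatic since $\phi(H)$ is always a subgroup of $\bar{G}$. Hence $\bar{G} \cdot_G (\minus) \dashv \mathsf{res}^{\bar{G}}_G$ is Quillen, so $\mathsf{res}^{\bar{G}}_G$ preserves fibrations and trivial fibrations as a right Quillen functor.

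For the second (``right'') adjunction, the hypothesis requires that $\phi^{-1}(\bar{H}) \in \mathcal{F}_{\text{all}}$ whenever $\bar{H} \in \bar{\mathcal{F}}_{\text{all}}$, which is again automatic since $\phi^{-1}(\bar{H})$ is always a subgroup of $G$. Hence $\mathsf{res}^{\bar{G}}_G \dashv \mathsf{Hom}_G(\bar{G}, \minus)$ is Quillen, so $\mathsf{res}^{\bar{G}}_G$ preserves cofibrations and trivial cofibrations as a left Quillen functor. Combining both observations gives the claim.

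There is no real obstacle here: the entire content is the trivial remark that $\mathcal{F}_{\text{all}}$ is preserved under both direct and inverse images along any group homomorphism, which makes Proposition \ref{FGTRL_PROP} applicable in both directions simultaneously. The only thing worth double-checking is the convention that $\C^G$ always denotes the genuine structure on both sides, which is explicitly stated in the sentence immediately preceding the corollary.
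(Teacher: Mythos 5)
Your proof is correct and is exactly the intended argument: the paper gives no separate proof for this corollary precisely because it is the instance of Proposition~\ref{FGTRL_PROP} with $\mathcal{F}=\mathcal{F}_{\text{all}}$ and $\bar{\mathcal{F}}=\bar{\mathcal{F}}_{\text{all}}$, where both hypotheses on images and preimages of subgroups are vacuously satisfied, so $\mathsf{res}^{\bar{G}}_G$ is simultaneously right Quillen (via the left adjunction) and left Quillen (via the right adjunction).
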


\begin{corollary}\label{FTRIVALL COR}
For any $\mathcal{F}$, a $\mathcal{F}$-(trivial) cofibration is also a genuine
$G$-(trivial) cofibration.	
\end{corollary}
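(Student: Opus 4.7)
The plan is to derive this as an essentially tautological consequence of comparing the fibration classes of the two model structures, using the characterization of cofibrations and trivial cofibrations via left lifting properties.

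First I would unpack the definitions. By hypothesis, the fibrations (resp.\ trivial fibrations) in the genuine model structure $\C^G = \C^G_{\mathcal{F}_{\text{all}}}$ are those maps $f$ such that $f^H$ is a fibration (resp.\ trivial fibration) in $\C$ for \emph{every} subgroup $H \leq G$, while the $\mathcal{F}$-fibrations (resp.\ $\mathcal{F}$-trivial fibrations) only require this condition for $H \in \mathcal{F}$. Since $\mathcal{F} \subseteq \mathcal{F}_{\text{all}}$, we obtain the inclusions
\[
\{\text{genuine fibrations}\} \subseteq \{\mathcal{F}\text{-fibrations}\},
\qquad
\{\text{genuine trivial fibrations}\} \subseteq \{\mathcal{F}\text{-trivial fibrations}\}.
\]

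Second, I would invoke the standard characterization: cofibrations are the maps with the left lifting property against trivial fibrations, and trivial cofibrations are the maps with the left lifting property against fibrations. Applied to the $\mathcal{F}$-model structure, a map $i$ is an $\mathcal{F}$-(trivial) cofibration iff it has the LLP against all $\mathcal{F}$-(trivial) fibrations. Combined with the two inclusions above, this automatically gives LLP against all genuine (trivial) fibrations, so $i$ is a genuine (trivial) cofibration, establishing the claim.

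There is essentially no obstacle; the result is tautological once one notes that restricting the class of testing maps (passing from $\mathcal{F}_{\text{all}}$ to $\mathcal{F}$) enlarges the class of fibrations, hence shrinks the class of cofibrations in the opposite direction --- i.e., makes the LLP condition strictly stronger. Equivalently, one may phrase this as saying that the identity pair $\mathrm{id} \colon \C^G_{\mathcal{F}} \rightleftarrows \C^G \colon \mathrm{id}$ is a Quillen adjunction, whose right adjoint preserves (trivial) fibrations by the comparison above, and whose left adjoint therefore preserves (trivial) cofibrations.
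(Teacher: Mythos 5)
Your proof is correct and reflects the paper's own (implicit) reasoning: the result is an immediate consequence of the fact that shrinking the family $\mathcal{F} \subseteq \mathcal{F}_{\mathrm{all}}$ enlarges the classes of $\mathcal{F}$-(trivial) fibrations, hence shrinks the classes of (trivial) cofibrations via lifting properties. Your final rephrasing in terms of the identity Quillen adjunction is exactly the instance of Proposition \ref{FGTRL_PROP} (with $\phi = \mathrm{id}_G$) from which the paper derives this corollary.
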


\begin{remark}\label{COFNOTRES REM}
	It is clear
	from the definitions
	that 
	$X \to Y$ is a $\mathcal{F}$-weak equivalence/$\mathcal{F}$-fibration iff it is a genuine
	$H$-weak equivalence/$H$-fibration
	for each $H \in \mathcal{F}$.
	However, while a $\mathcal{F}$-cofibration is necessarily a genuine $H$-cofibration for each $H \in \mathcal{F}$, the converse is rarely true.
	
For example, consider $\mathcal{C} = \mathsf{sSet}$ and
	$\mathcal{F}=\{\{e\}\}$ the family consisting only of the trivial subgroup. Then the $\mathcal{F}$-cofibrant objects
	turn out to be the $G$-free objects of $\mathsf{sSet}^G$, but all objects are
	genuine $\{e\}$-cofibrant, since this just means that their restriction to
	$\mathsf{sSet}^{\{e\}}=\mathsf{sSet}$ is cofibrant.
%
\end{remark}

Lemmas \ref{BLALIFT LEM} and \ref{BLAFACT LEM} below formalize straightforward arguments implicit in the proofs of \cite[Lemma 5.2]{BM11} and \cite[Thm 1.6]{BM11}.

\begin{definition}
Consider a commutative diagram
\begin{equation}\label{BLA EQ}
	\begin{tikzcd}
		A \ar{r} \ar{d} & X \ar{d}
	\\
		B \ar{r} \ar[dashed]{ru} & Y
	\end{tikzcd}
\end{equation}
in $\C^{\mathbb{R}}$. A collection of maps 
$f_r \colon B_r \to X_r$ for $|r|\leq n$ 
that induce a lift of the restriction to $\C^{\mathbb{R}_{\leq n}}$ is called a \textit{$n$-partial lift}.

Similarly, given a map $f\colon X \to Y$ in $\mathcal{C}^{\mathbb{R}}$, 
a factorization 
$\iota_{n}^{\**}X \to A \to \iota_{n}^{\**}Y$
of $\iota_{n}^{\**} f$ in 
$\mathcal{C}^{\mathbb{R}_{\leq n}}$
is called a \textit{$n$-partial factorization}.
\end{definition}

\begin{lemma}\label{BLALIFT LEM}
	Let $\C$ be any bicomplete category, and consider a commutative diagram as in \eqref{BLA EQ}. Then any $(n-1)$-partial lift uniquely induces commutative diagrams
\begin{equation}\label{BLALIFT EQ}
	\begin{tikzcd}
		A_r \amalg_{L_r A} L_r B \ar{r} \ar{d} & X_r \ar{d}
	\\
		B_r \ar{r} \ar[dashed]{ru} & Y_r \times_{M_r Y} M_r X
	\end{tikzcd}
\end{equation}
in $\mathcal{C}^{\mathsf{Aut}(r)}$
for each $r \in \mathbb{R}$ such that $|r|=n$. Furthermore, extensions of the 
$(n-1)$-partial lift to a $n$-partial lift are in bijection with choices of $\mathsf{Aut}(r)$-equivariant lifts in the diagrams \eqref{BLALIFT EQ} for $r$ ranging over representatives of the isomorphism classes of $r \in \mathbb{R}$ with $|r|=n$.
\end{lemma}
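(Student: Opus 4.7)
The plan is to observe that this lemma is purely formal (requiring only that $\mathcal{C}$ be bicomplete) and to exploit the universal properties of latching and matching objects, together with the unique factorization axiom (iii) in Definition \ref{GENRED DEF}.

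First I would construct the induced square \eqref{BLALIFT EQ}. Given the $(n-1)$-partial lift $\{f_{r'} \colon B_{r'} \to X_{r'}\}_{|r'|<n}$, for each $r$ with $|r|=n$, the canonical map $L_r B \to B_r$ is the colimit of the maps $\alpha_{\**}\colon B_{r'} \to B_r$ for non-isomorphism arrows $\alpha \colon r' \to r$ in $\mathbb{R}^+$. Composing each such $B_{r'} \to B_r$ leg with $f_{r'}$ and then $X_{r'} \to X_r$ provides a cocone over the diagram defining $L_r B$ with target $X_r$, hence a unique map $L_r B \to X_r$, which assembles (together with the given $A_r \to X_r$) into the top horizontal map of \eqref{BLALIFT EQ}. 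Dually, the matching object $M_r X$ is a limit over non-isomorphism arrows $r \to r'$ in $\mathbb{R}^-$, and postcomposing $B_r \to B_{r'} \xrightarrow{f_{r'}} X_{r'}$ yields a cone producing the unique map $B_r \to M_r X$; together with $B_r \to Y_r$ this assembles into the bottom horizontal map of \eqref{BLALIFT EQ}. Commutativity and $\mathsf{Aut}(r)$-equivariance of the resulting square follow from the corresponding universal properties and the naturality in $\mathbb{R}$ of all structures involved.

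Next I would establish the bijection between extensions to $n$-partial lifts and choices of $\mathsf{Aut}(r)$-equivariant lifts for representatives $r$. A $n$-partial lift extends the $(n-1)$-partial one exactly by specifying maps $f_r \colon B_r \to X_r$ for every $r$ with $|r|=n$, compatible with all arrows in $\mathbb{R}_{\leq n}$. Any map $r' \to r$ with $|r'| \leq |r|=n$ factors (uniquely up to isomorphism, by axiom (iii)) through its $\mathbb{R}^- \circ \mathbb{R}^+$ decomposition, and such factorizations only involve objects of degree $\leq n$; the compatibility constraints therefore split cleanly into compatibility with the $\mathbb{R}^+$-maps into $r$ (encoded by the domain square $A_r \coprod_{L_r A} L_r B \to B_r$) and the $\mathbb{R}^-$-maps out of $r$ (encoded by the codomain square $X_r \to Y_r \times_{M_rY} M_rX$). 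Giving $f_r$ compatible with everything in $\mathbb{R}_{\leq n}$ is thus exactly giving a lift in \eqref{BLALIFT EQ}, and this $f_r$ is automatically $\mathsf{Aut}(r)$-equivariant since the source and target squares are $\mathsf{Aut}(r)$-equivariant and the lift is.

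The main obstacle, such as it is, will be justifying why it suffices to choose lifts only for representatives of isomorphism classes rather than every $r$ with $|r|=n$. For this I would argue that once $f_r$ is fixed for a representative $r$, its value on any isomorphic object $\bar{r}$ is forced: picking an isomorphism $\phi\colon r \xrightarrow{\simeq} \bar{r}$ forces $f_{\bar{r}} = X(\phi) \circ f_r \circ B(\phi)^{-1}$, and $\mathsf{Aut}(r)$-equivariance of $f_r$ is precisely the condition that ensures this definition is independent of the choice of $\phi$ (different choices differ by an element of $\mathsf{Aut}(r)$). Conversely, such a family $\{f_{\bar{r}}\}$ assembled this way is automatically compatible with all isomorphisms of degree-$n$ objects, so the partial-lift compatibility conditions reduce to the single condition at each representative, completing the desired bijection.
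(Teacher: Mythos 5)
Your proof is correct and fills in exactly the routine details that the paper (following \cite{BM11}) leaves implicit; there is no competing proof in the paper to compare against. The key points — constructing the square from the universal properties of $L_r B \simeq \colim_{\mathbb{R}^+(r)} d^{\**}B$ and $M_r X \simeq \lim_{\mathbb{R}^-(r)} t^{\**}X$, reducing arrow-compatibility to the $\pm$-factorization of axiom (iii) (so that any non-isomorphism between degree-$n$ objects passes through strictly lower degree), and transporting along isomorphisms using $\mathsf{Aut}(r)$-equivariance to reduce to representatives — are all present and handled correctly.
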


\begin{lemma}\label{BLAFACT LEM}
	Let $\C$ be any bicomplete category.
Then extensions
\[
\begin{tikzcd}
	\mathbb{R}_{\leq n-1} \ar{r}{A} \ar[hookrightarrow]{d}&
	\mathcal{C}
\\
	\mathbb{R}_{\leq n} \ar[dashed]{ru}[swap]{\tilde{A}}
\end{tikzcd}
\]
are determined uniquely up to unique isomorphism (in $\mathcal{C}^{\mathbb{R}_{\leq n}}$)
by choices of $\mathsf{Aut}(r)$-equivariant factorizations
\[
\left(\iota_{n-1,!}A \right)_r \dashedrightarrow
\tilde{A}_r \dashedrightarrow
\left(\iota_{n-1,\**}A\right)_r
\]
for $r$ ranging over representatives of the isomorphism classes of $r \in \mathbb{R}$ with $|r|=n$.
	
More generally, given a map 
$X \to Y$ in $\mathcal{C}^{\mathbb{R}}$,
extensions of a $(n-1)$-partial factorization
$\iota_{\leq n-1}^{\**}X \to A \to \iota_{\leq n-1}^{\**} Y$ in $\mathcal{C}^{\mathbb{R}_{\leq n-1}}$
to a $n$-partial factorization
$\iota_{\leq n}^{\**}X \to \tilde{A} \to \iota_{\leq n}^{\**} Y$ in $\mathcal{C}^{\mathbb{R}_{\leq n}}$
are determined uniquely up to unique isomorphism 
by choices of $\mathsf{Aut}(r)$-equivariant factorizations
\[
	X_r \amalg_{L_r X} \left(\iota_{n-1,!}A \right)_r \dashedrightarrow
	\tilde{A}_r \dashedrightarrow
	Y_r \times_{M_r Y} \left(\iota_{n-1,\**}A\right)_r
\]
for $r$ ranging over representatives of the isomorphism classes of $r \in \mathbb{R}$ with $|r|=n$.
\end{lemma}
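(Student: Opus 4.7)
The plan is to derive the result as a purely formal consequence of the adjunction chain $\iota_{n-1,!} \dashv \iota_{n-1}^{*} \dashv \iota_{n-1,*}$ together with the Reedy factorization axiom (iii) of Definition \ref{GENRED DEF}; no Reedy-admissibility (axiom (iv)) nor any model-categorical input is needed. I would first treat the absolute statement, and then derive the relative case by running the same argument in an appropriate double slice category.

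For the absolute statement, choose one representative per isomorphism class of $r \in \mathbb{R}$ with $|r|=n$. Unwinding the definition of a functor on $\mathbb{R}_{\leq n}$, an extension $\tilde{A}$ of $A$ consists, for each such representative $r$, of an object $\tilde{A}_r \in \mathcal{C}$ carrying an $\mathsf{Aut}(r)$-action, together with $\mathsf{Aut}(r)$-equivariant morphisms $A_{r'} \to \tilde{A}_r$ for every $f\colon r' \to r$ and $\tilde{A}_r \to A_{r''}$ for every $f\colon r \to r''$ (with $|r'|,|r''| \leq n-1$), all compatible with pre- and post-composition by maps in $\mathbb{R}_{\leq n-1}$. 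Since $L_r A = (\iota_{n-1,!}A)_r$ is the colimit of the $A_{r'}$ over all $f\colon r' \to r$ with $|r'|\leq n-1$, and dually $M_r A = (\iota_{n-1,*}A)_r$ is the corresponding limit, the compatible families of incoming and outgoing maps assemble into $\mathsf{Aut}(r)$-equivariant morphisms $L_r A \to \tilde{A}_r$ and $\tilde{A}_r \to M_r A$.

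The remaining compatibilities concern composites $r' \to r \to r''$ with $|r'|,|r''| \leq n-1$: the induced map $A_{r'} \to A_{r''}$ must equal $A$ applied to the composite $r' \to r''$ in $\mathbb{R}$. By axiom (iii), any such composite factors through some intermediate object of degree at most $\min(|r'|,|r''|) \leq n-1$, hence already lies in $\mathbb{R}_{\leq n-1}$ and is governed by $A$. Tracing through, this forces the composite $L_r A \to \tilde{A}_r \to M_r A$ to coincide with the canonical natural map $L_r A \to M_r A$ induced by $A$, which can be described as the transpose under the adjunctions of the identity $\iota_{n-1}^{*}\iota_{n-1,!}A = A = \iota_{n-1}^{*}\iota_{n-1,*}A$ (using fully faithfulness of $\iota_{n-1}$). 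Extension data thus reduces exactly to an $\mathsf{Aut}(r)$-equivariant factorization of this fixed canonical map, one per representative, and the uniqueness-up-to-unique-isomorphism statement follows since any change of chosen representative induces a canonical isomorphism of the resulting factorizations.

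For the relative case, the same analysis applies once one additionally tracks the maps $X_r \to \tilde{A}_r$ and $\tilde{A}_r \to Y_r$ with their compatibility with the given $(n-1)$-partial factorization: the incoming data (the maps $X_r \to \tilde{A}_r$ and $L_r A \to \tilde{A}_r$ agreeing on $L_r X$) glues into a single $\mathsf{Aut}(r)$-equivariant map out of the pushout $X_r \amalg_{L_r X} L_r A$, and dually the outgoing data collapses into a map into the pullback $Y_r \times_{M_r Y} M_r A$. The main subtlety---and what I expect to be the only real obstacle---is checking that the canonical map from this pushout to this pullback agrees with the composite through $\tilde{A}_r$ for every extension, so that the extension data genuinely amounts to a factorization of this canonical map; this is a direct diagram chase invoking axiom (iii) in exactly the same way as in the absolute case.
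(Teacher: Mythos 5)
The paper does not give an explicit proof of this lemma; it is stated immediately after the sentence ``Lemmas \ref{BLALIFT LEM} and \ref{BLAFACT LEM} below formalize straightforward arguments implicit in the proofs of [BM11, Lemma 5.2] and [BM11, Thm 1.6],'' and no proof follows. Your argument is the expected one and is essentially correct, so there is no discrepancy of approach to report, only a few places where the argument is compressed more than the statement's precision warrants.

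The core of your argument is sound: the colimit and limit descriptions of $(\iota_{n-1,!}A)_r$ and $(\iota_{n-1,\ast}A)_r$ as cocone/cone over $(\iota_{n-1}\downarrow r)$ and $(r\downarrow\iota_{n-1})$, the observation that the only constraint linking them is that $L_rA\to\tilde A_r\to M_rA$ must equal the canonical map (because every composite $r'\to r\to r''$ with endpoints of degree $<n$ already lives in $\mathbb{R}_{\leq n-1}$ and is governed by $A$), and the identification of that canonical map as the adjoint transpose of the fully-faithfulness isomorphism. Two points deserve more care. First, you treat only the ``forward'' direction in detail (extension $\Rightarrow$ factorization); the reverse direction --- that a choice of equivariant factorizations really does assemble into a well-defined functor on $\mathbb{R}_{\leq n}$ --- is where Definition~\ref{GENRED DEF}(iii) earns its keep: a non-invertible map between two degree-$n$ objects factors through degree $<n$, but only \emph{uniquely up to isomorphism}, and one must check that the reconstructed $\tilde A$ is independent of this choice. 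You gesture at this with ``extension data thus reduces exactly to a factorization,'' but spelling it out is what makes the ``unique up to unique isomorphism'' clause honest. Second, your opening unwinding slightly conflates ``extension'' with ``extension data at representatives''; the extension is defined on all degree-$n$ objects, and it is precisely the $\mathsf{Aut}(r)$-equivariance that makes the transport across isomorphisms coherent and yields the uniqueness-up-to-unique-iso statement you assert in the last sentence of the absolute case. Neither point invalidates the argument, but they are the substance of the ``straightforward arguments'' being formalized, so they should not be elided.
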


In the next result, by $\{\mathcal{F}_r\}$-cofibration/trivial cofibration/fibration/trivial fibration 
we mean a map as described in 
Theorem \ref{REEDYADM THM}, regardless of whether such a model structure exists.

\begin{corollary}\label{BLALIFT COR}
Let $\mathbb{R}$ be generalized Reedy and 
$\{\mathcal{F}_r\}$ an arbitrary family of subgroups of $\mathsf{Aut}(r)$, $r \in \mathbb{R}$.
Then a map in $\mathcal{C}^{\mathbb{R}}$ 
is a $\{\mathcal{F}_r\}$-cofibration (resp. trivial cofibration) iff it has the left lifting property 
with respect to all 
$\{\mathcal{F}_r\}$-trivial fibrations (resp. fibrations),
and vice-versa for the right lifting property.
\end{corollary}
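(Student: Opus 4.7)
The plan is to deduce all four equivalences from Lemma \ref{BLALIFT LEM} (for the forward directions) combined with Lemma \ref{BLAFACT LEM} and the retract argument (for the converses). The four cases are parallel, so I will describe only the cofibration/trivial fibration equivalence in detail.

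For the ``only if'' direction, given a $\{\mathcal{F}_r\}$-cofibration $f \colon A \to B$, a $\{\mathcal{F}_r\}$-trivial fibration $g \colon X \to Y$, and a commutative square \eqref{BLA EQ}, I would produce a lift by induction on degree. Given an $(n-1)$-partial lift, Lemma \ref{BLALIFT LEM} reduces the extension to degree $n$ to choosing, for each representative $r$ of each isomorphism class of degree $n$, an $\mathsf{Aut}(r)$-equivariant lift of the square \eqref{BLALIFT EQ}. By definition of $\{\mathcal{F}_r\}$-cofibration and $\{\mathcal{F}_r\}$-trivial fibration, the left leg of \eqref{BLALIFT EQ} is a $\mathcal{F}_r$-cofibration and the right leg is a $\mathcal{F}_r$-trivial fibration in $\mathcal{C}^{\mathsf{Aut}(r)}$, so the required lift exists by the lifting axiom of the $\mathcal{F}_r$-model structure on $\mathcal{C}^{\mathsf{Aut}(r)}$.

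For the ``if'' direction, I would employ the standard retract argument. Using Lemma \ref{BLAFACT LEM}, I first construct a factorization $A \xrightarrow{j} C \xrightarrow{q} B$ of $f$ with $j$ a $\{\mathcal{F}_r\}$-cofibration and $q$ a $\{\mathcal{F}_r\}$-trivial fibration inductively on degree: at each new degree $n$, the lemma reduces the extension to a choice, for each representative $r$ of degree $n$, of an $\mathsf{Aut}(r)$-equivariant factorization of
\[
A_r \amalg_{L_r A} L_r C \longrightarrow B_r \times_{M_r B} M_r C
\]
in $\mathcal{C}^{\mathsf{Aut}(r)}$ (using the standard identifications $(\iota_{n-1,!} C_{\leq n-1})_r = L_r C$ and $(\iota_{n-1,\ast} C_{\leq n-1})_r = M_r C$), which one selects as a $\mathcal{F}_r$-cofibration followed by a $\mathcal{F}_r$-trivial fibration via the factorization axiom of $\mathcal{C}^{\mathsf{Aut}(r)}_{\mathcal{F}_r}$. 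The forward direction already established ensures that $f$ has the LLP against $q$, so the square with rows $(j,q)$ and verticals $(f, \mathrm{id}_B)$ admits a section $s \colon B \to C$ with $sf = j$ and $qs = \mathrm{id}_B$, exhibiting $f$ as a retract of $j$; since the relative latching map construction is functorial in $f$ and preserves retracts, and since retracts of $\mathcal{F}_r$-cofibrations remain $\mathcal{F}_r$-cofibrations in $\mathcal{C}^{\mathsf{Aut}(r)}$, we conclude that $f$ is a $\{\mathcal{F}_r\}$-cofibration. The remaining three equivalences are proved identically, substituting the appropriate classes and using the factorization \emph{trivial cofibration followed by fibration} where needed. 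The main point is that Lemmas \ref{BLALIFT LEM} and \ref{BLAFACT LEM} reduce both lifting problems and factorization constructions to pointwise problems in the $\mathcal{F}_r$-model structures, where the standard axioms apply directly; I do not foresee a substantial obstacle, and notably Reedy-admissibility of $\{\mathcal{F}_r\}$ (axiom (iv)) is not needed for this corollary, as it will only enter in the proof of Theorem \ref{REEDYADM THM} itself.
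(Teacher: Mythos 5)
Your proof is correct and supplies exactly the standard argument that the paper leaves implicit (the corollary is stated without a written proof): the forward directions are degree-wise inductions via Lemma~\ref{BLALIFT LEM}, reducing to the lifting axiom in each $\mathcal{C}^{\mathsf{Aut}(r)}_{\mathcal{F}_r}$, and the converses are the retract argument built on Lemma~\ref{BLAFACT LEM}, using that relative latching/matching maps are functorial in arrows and that the $\mathcal{F}_r$-(trivial) cofibration and (trivial) fibration classes are closed under retracts. You are also right that Reedy-admissibility (Definition~\ref{GENRED DEF}(iv)) plays no role here; it only enters later, in Lemma~\ref{REEDYTRFIB LEM} and the proof of Theorem~\ref{REEDYADM THM}.
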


\begin{lemma}\label{GINJ LEM}
Let $\mathbb{S}$ be generalized Reedy with $\mathbb{S}=\mathbb{S}^+$, $K$ a group, and $\pi \colon \mathbb{S} \to K$ a functor.

Consider a map $A \to B$ in $\C^{\mathbb{S}}$ such that for all 
$s \in \mathbb{S}$
the maps 
$
  A_s \amalg_{L_s A} L_s B \to B_s
$	
are $\mathsf{Aut}(s)$-genuine (resp. trivial) cofibrations. 
Then $\mathsf{Lan}_{\pi\colon \mathbb{S} \to K}(A \to B)$
is a $K$-genuine (trivial) cofibration.
\end{lemma}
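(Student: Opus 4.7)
The plan is to filter $A \to B$ by degree and show that after applying $\mathsf{Lan}_\pi$ each stage becomes a pushout of a $K$-genuine (trivial) cofibration. For $n \geq -1$ set
\[
      A^{(n)} := A \cup_{\iota_{n,!} \iota_n^{\**} A} \iota_{n,!} \iota_n^{\**} B \;\in\; \mathcal{C}^{\mathbb{S}},
\]
so that $A^{(-1)} = A$ and $\colim_n A^{(n)} = B$ (the latter because each object of $\mathbb{S}$ has finite degree, so $(\iota_{n,!}\iota_n^{\**} B)_s \to B_s$ is an isomorphism once $n \geq |s|$). Since $\mathbb{S} = \mathbb{S}^+$, one computes directly that $A^{(n-1)} \to A^{(n)}$ is the identity at every level of degree $< n$, while at each $s$ with $|s|=n$ it is precisely the relative latching map $A_s \cup_{L_s A} L_s B \to B_s$ in $\mathcal{C}^{\mathsf{Aut}(s)}$ (using that, at such $s$, one has $A^{(n-1)}_s = A_s \cup_{L_s A} L_s B$ and $A^{(n)}_s = B_s$).

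Writing $F^s_{\mathbb{S}} \colon \mathcal{C}^{\mathsf{Aut}(s)} \to \mathcal{C}^{\mathbb{S}}$ for the left adjoint to evaluation at $s$, Lemma \ref{BLAFACT LEM} applied to the $(n-1)$-partial factorization $\iota_{n-1}^{\**} A \to \iota_{n-1}^{\**} A^{(n-1)} \to \iota_{n-1}^{\**} B$ identifies the extension of this factorization across degree $n$ as the pushout
\[
\begin{tikzcd}
\coprod_{[s]:\,|s|=n} F^s_{\mathbb{S}}\!\left( A_s \cup_{L_s A} L_s B \right) \ar{r} \ar{d} & A^{(n-1)} \ar{d} \\
\coprod_{[s]:\,|s|=n} F^s_{\mathbb{S}}\!\left( B_s \right) \ar{r} & A^{(n)}
\end{tikzcd}
\]
with the coproducts running over representatives of isomorphism classes. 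The functor $\mathsf{Lan}_\pi$, being a left adjoint, preserves these pushouts and coproducts; moreover, as both $\mathsf{Lan}_\pi \circ F^s_{\mathbb{S}}$ and $\mathsf{Lan}_{\phi_s}$ are left adjoint to $\phi_s^{\**} = (\text{ev}_s) \circ \pi^{\**}$ for $\phi_s := \pi|_{\mathsf{Aut}(s)} \colon \mathsf{Aut}(s) \to K$, one has $\mathsf{Lan}_\pi F^s_{\mathbb{S}} = \mathsf{Lan}_{\phi_s} = K \cdot_{\mathsf{Aut}(s)} (-)$. Applying $\mathsf{Lan}_\pi$ to the above pushout thus exhibits $\mathsf{Lan}_\pi A^{(n-1)} \to \mathsf{Lan}_\pi A^{(n)}$ as a pushout of $\coprod_{[s]} K \cdot_{\mathsf{Aut}(s)} \!\left( A_s \cup_{L_s A} L_s B \to B_s \right)$ in $\mathcal{C}^K$.

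By hypothesis each $A_s \cup_{L_s A} L_s B \to B_s$ is an $\mathsf{Aut}(s)$-genuine (trivial) cofibration, so Proposition \ref{FGTRL_PROP} applied to $\phi_s$ with source and target both equipped with their genuine model structures (the condition $\phi_s(H) \in \mathcal{F}_{\text{all}}$ being vacuous) implies that each summand $K \cdot_{\mathsf{Aut}(s)} \!\left( A_s \cup_{L_s A} L_s B \to B_s \right)$ is a $K$-genuine (trivial) cofibration. Since these are closed under coproducts, pushouts, and transfinite composition, the composite $\mathsf{Lan}_\pi A \to \colim_n \mathsf{Lan}_\pi A^{(n)} = \mathsf{Lan}_\pi B$ is a $K$-genuine (trivial) cofibration, as required. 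The only step requiring genuine care is the pushout identification in the second paragraph: one needs to check that, in the universal description provided by Lemma \ref{BLAFACT LEM}, the choice of factorization $A^{(n)}_s = B_s$ for $|s|=n$ agrees with the explicit formula defining $A^{(n)}$, which reduces to unwinding $\iota_{n,!}$ and $L_s$ using $\mathbb{S} = \mathbb{S}^+$.
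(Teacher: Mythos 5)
Your proof is correct, but it takes a genuinely different route from the paper's.

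The paper's proof is adjunction-based and uses the lifting characterization directly: to show $\mathsf{Lan}_\pi(A \to B)$ is a $K$-genuine (trivial) cofibration, one shows it has the left lifting property against all $K$-(trivial) fibrations $X \to Y$, which by adjunction means $A \to B$ lifts against $\pi^{\**}(X \to Y)$. By Corollary \ref{BLALIFT COR}, this reduces to checking that $\pi^{\**}(X \to Y)$ satisfies the matching-object condition for a $\{\mathcal{F}_{\text{all}}\}$-Reedy (trivial) fibration; and since $\mathbb{S} = \mathbb{S}^+$ forces all matching objects to be terminal, this condition becomes simply that $\phi_s^{\**}(X \to Y)$ is an $\mathsf{Aut}(s)$-(trivial) fibration, which is Corollary \ref{RESGEN COR}. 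In contrast, you work directly on the cofibration side, constructing $\mathsf{Lan}_\pi(A \to B)$ cellularly out of the maps $K \cdot_{\mathsf{Aut}(s)}\bigl(A_s \amalg_{L_s A} L_s B \to B_s\bigr)$, using the skeletal filtration, the fact that $\mathsf{Lan}_\pi$ preserves colimits and that $\mathsf{Lan}_\pi \circ F^s_{\mathbb{S}} = \mathsf{Lan}_{\phi_s} = K \cdot_{\mathsf{Aut}(s)} (-)$, and then Proposition \ref{FGTRL_PROP}. Both arguments are valid. The paper's route is shorter and needs only one observation (the vanishing of matching objects), exploiting the duality between $\mathbb{S}^+$-skeleta and the matching condition that appears in the lifting characterization. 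Your route is more constructive and in fact yields the a priori stronger statement that $\mathsf{Lan}_\pi(A \to B)$ is built cellularly from genuine $K$-cells; the cost is that you must establish the pushout decomposition for the filtration step. That decomposition is correct (you verified $A^{(n-1)}_s = A_s \amalg_{L_s A} L_s B$ and $A^{(n)}_s = B_s$ at degree $n$, and since $\mathbb{S}(s,t)$ is empty for $|t| < |s|$ and consists only of isomorphisms when $|t| = |s|$, the free diagrams $F^s_{\mathbb{S}}$ contribute nothing below degree $n$ and exactly the relative latching maps at degree $n$), but the justification via Lemma \ref{BLAFACT LEM} is, as you note, the step that needs spelling out — strictly speaking Lemma \ref{BLAFACT LEM} gives the classification of extensions, and one still has to match the universal property of the pushout square to that classification. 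This is a standard unwinding, but it is exactly the content the paper's proof sidesteps by arguing on the fibration side.
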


\begin{proof}
By adjunction, one needs only show that for any 
$K$-fibration $X \to Y$ in $\mathcal{C}^K$,
the map $\pi^{\**}(X \to Y)$
has the right lifting property against all maps $A \to B$ in $\C^{\mathbb{S}}$ as in the statement.
By Corollary \ref{BLALIFT COR}, it thus suffices to check
that the maps
\[
	(\pi^{\**} X)_s \to 
	(\pi^{\**} Y)_s \times_{M_s \pi^{\**} Y} M_s \pi^{\**} X
\]
are $\mathsf{Aut}(s)$-fibrations. But since $M_s Z = \**$ 
(recall $\mathbb{S}=\mathbb{S}^+$)
this map is just $X \to Y$ with the $\mathsf{Aut}(s)$-action induced by
$\pi \colon \mathsf{Aut}(s) \to K$, hence 
Corollary \ref{RESGEN COR} finishes the proof.
\end{proof}

\begin{lemma}\label{GINJMIN LEM}
Let $\mathbb{S}$ be generalized Reedy with $\mathbb{S}=\mathbb{S}^-$, $K$ a group, and $\pi \colon \mathbb{S} \to K$ a functor.

Consider a map $X \to Y$ in $\C^{\mathbb{S}}$ such that for all 
$s \in \mathbb{S}$
the maps 
$
	X_s \to Y_s \times_{M_s Y} M_s X
$	
are $\mathsf{Aut}(s)$-genuine (resp. trivial) fibrations.
Then 
$\mathsf{Ran}_{\pi\colon \mathbb{S} \to K}(A \to B)$
is a $K$-genuine (trivial) fibration.
\end{lemma}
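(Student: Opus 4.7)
The plan is to argue by direct analogy with the proof of Lemma \ref{GINJ LEM}, using the adjunction $\pi^{\**} \dashv \mathsf{Ran}_\pi$. First, I would translate the claim across this adjunction: showing that $\mathsf{Ran}_\pi(X \to Y)$ is a $K$-genuine (trivial) fibration in $\C^K$ is equivalent to showing that $X \to Y$ has the right lifting property in $\C^{\mathbb{S}}$ against $\pi^{\**}(A \to B)$ for every $K$-genuine (trivial) cofibration $A \to B$ in $\C^K$.

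Next, I would construct such lifts inductively on degree via Lemma \ref{BLALIFT LEM}: extending an $(n-1)$-partial lift to an $n$-partial lift amounts, for each isomorphism class of $s \in \mathbb{S}$ with $|s| = n$, to solving the $\mathsf{Aut}(s)$-equivariant lifting problem
\begin{equation}
\begin{tikzcd}
(\pi^{\**} A)_s \amalg_{L_s \pi^{\**} A} L_s \pi^{\**} B \ar{r} \ar{d} & X_s \ar{d} \\
(\pi^{\**} B)_s \ar{r} \ar[dashed]{ru} & Y_s \times_{M_s Y} M_s X
\end{tikzcd}
\end{equation}
The key simplification, dual to the one exploited in Lemma \ref{GINJ LEM} (where $\mathbb{S} = \mathbb{S}^+$ forced $M_s Z = \**$), is that $\mathbb{S} = \mathbb{S}^-$ forces $\mathbb{S}^+ = \mathsf{Iso}(\mathbb{S})$, whence $L_s Z = \emptyset$ for every $Z \in \C^{\mathbb{S}}$ and every $s \in \mathbb{S}$. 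The lifting problem therefore collapses to lifting $\pi^{\**}(A \to B)$, with $\mathsf{Aut}(s)$ acting via $\pi|_{\mathsf{Aut}(s)} \colon \mathsf{Aut}(s) \to K$, against $X_s \to Y_s \times_{M_s Y} M_s X$ in $\C^{\mathsf{Aut}(s)}$.

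Finally, this $\mathsf{Aut}(s)$-equivariant lifting problem is solved by combining the hypothesis with Corollary \ref{RESGEN COR}: the corollary asserts that restriction along $\pi|_{\mathsf{Aut}(s)} \colon \mathsf{Aut}(s) \to K$ preserves genuine (trivial) cofibrations, so $\pi^{\**}(A \to B)$ is an $\mathsf{Aut}(s)$-genuine (trivial) cofibration; meanwhile $X_s \to Y_s \times_{M_s Y} M_s X$ is by hypothesis an $\mathsf{Aut}(s)$-genuine (trivial) fibration, and a lift therefore exists. The argument presents no real obstacle beyond bookkeeping, being entirely formal dual to the proof of Lemma \ref{GINJ LEM} with matching objects and right Kan extensions playing the roles that latching objects and left Kan extensions played there; the only point requiring explicit verification is the collapse $L_s = \emptyset$ in the case $\mathbb{S} = \mathbb{S}^-$.
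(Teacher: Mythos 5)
Your argument is correct and is exactly the duality the paper intends --- its own proof is the one-line remark that the claim ``follows dually to the previous proof,'' and your expansion (adjunction $\pi^* \dashv \mathsf{Ran}_\pi$, collapse of $L_s Z$ to the initial object because $\mathbb{S}^+ = \mathsf{Iso}(\mathbb{S})$, reduction to Corollary \ref{RESGEN COR}) is precisely that dualization, with Lemma \ref{BLALIFT LEM} unfolding what the paper would phrase via Corollary \ref{BLALIFT COR}. Do tighten the ``(trivial)'' bookkeeping, though: to conclude that $\mathsf{Ran}_\pi(X\to Y)$ is a fibration you test against \emph{trivial} cofibrations $A\to B$, and to conclude it is a trivial fibration you test against plain cofibrations, so the parentheticals on ``fibration'' and ``cofibration'' should alternate rather than track each other --- as written, your final lifting problem appears to pit a plain cofibration against a plain fibration, with neither guaranteed to be trivial.
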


\begin{proof}
This follows dually to the previous proof.
\end{proof}

\begin{remark}
      \label{GINJ REM}
Lemmas \ref{GINJ LEM} and \ref{GINJMIN LEM} generalize key parts of the proofs of \cite[Lemmas 5.3 and 5.5]{BM11}.  
The duality of their proofs reflects the duality in 
Corollary \ref{RESGEN COR}.
\end{remark}

\begin{notation}\label{KLTIMES NOT}
Given a subgroup $K \leq \mathsf{Aut}_{\Cat}(\mathbb{R})$ of the group of automorphisms of $\mathbb R$ as a category,
we write $K \ltimes \mathbb{R} \to K$ 
for the category obtained from $\mathbb{R}$ by formally adding action arrows
$r \to kr$ for $r \in \mathbb{R},k \in K$,
which satisfy three natural relations:
the maps $r \xrightarrow{e} e r$
are the identity $id_r$, and the two diagrams below commute
for all $k,\bar{k} \in K$ and maps $f: r \to r'$ in $\mathbb R$.
%
\[
      \begin{tikzcd}
            r \arrow[rr, "k"] \arrow[dr, "\bar{k}k"']
            &&
            k r \arrow[dl, "\bar{k}"]
            & 
            r \arrow[r, "k"] \arrow[d, "f"']
            &
            k r \arrow[d, "k f"]
            \\
            &
            \bar{k} k r
            &
            & 
            r' \arrow[r, "k"]
            &
            k r'
      \end{tikzcd}
\]
Note that 
if $\mathbb{R}=G$ is a group (regarding groups as one object categories) this recovers the usual $K \ltimes G$ construction and
if $K$ acts trivially on $\mathbb{R}$ this recovers the product of categories $K \times \mathbb{R}$.

Alternatively, $K \ltimes \mathbb{R}$
admits a succinct formal definition: 
if one regards $\mathbb{R}$ with its $K$-action as a
functor $ 
K \xrightarrow{\mathbb{R}} \mathsf{Cat}$, 
 the associated Grothendieck fibration is the natural functor $K \ltimes \mathbb{R} \to K$.



We note that for a functor
$A\colon K \ltimes \mathbb{R} \to \mathcal{C}$
the Kan extension
$\mathsf{Ran}_{K \ltimes \mathbb{R} \to K} A$
(resp. $\mathsf{Lan}_{K \ltimes \mathbb{R} \to K} A$)
is simply the (co)limit
$\lim_{\mathbb{R}} A$
(resp. $\colim_{\mathbb{R}} A$)
together with the naturally induced $K$-action.
\end{notation}

\begin{notation}\label{RPLUSR NOT}
We borrow the $\mathbb{R}^+(r)$ notation introduced after \cite[Rem. 1.5]{BM11} for the full subcategory of the overcategory $\mathbb{R} \downarrow r$
with objects the arrows 
$r' \xrightarrow{+} r$ in $\mathbb{R}^+$
with $|r'|<|r|$.
One has a \emph{domain} 
functor $d\colon \mathbb{R}^+(r) \to \mathbb{R}$ given by 
$(r'\to r) \mapsto r'$ such that
(cf. the discussion after \cite[Rem. 1.5]{BM11} and 
\cite[Lemma 4.4(i)]{BM11}):
\begin{enumerate}[label=(\roman*)]
\item by Definition \ref{GENRED DEF}(iii)
arrows in $\mathbb{R}^+(r)$ forget under $d$ to arrows in $\mathbb{R}^+$, so that
$\mathbb{R}^+(r)$ becomes generalized Reedy with
$|r' \to r| = |r'|$,
$\left(\mathbb{R}^+(r)\right)^+ = \mathbb{R}^+(r)$,
$\left(\mathbb{R}^+(r)\right)^- = \mathsf{Iso}\left(\mathbb{R}^+(r)\right)$;
\item for $(r' \to r) \in \mathbb{R}^+(r)$ the functor $d$ induces isomorphisms
$\left(\mathbb{R}^+(r)\right)^+(r'\to r) \xrightarrow{\simeq} \mathbb{R}^+(r')$;
\item for $A \in \mathcal{C}^{\mathbb{R}}$, $(r' \to r) \in \mathbb{R}^+(r)$
there are natural isomorphisms 
$L_{r' \to r} \left(d^{\**} A \right) \simeq L_{r'} A$;
\item for $A \in \mathcal{C}^{\mathbb{R}}$ there are natural isomorphisms  $L_r A \simeq \mathop{\colim}_{\mathbb{R}^+(r)} 
\left(d^{\**}A \right)$.
\end{enumerate}
Moreover, writing $\mathbb{R}^-(r)$ for the full subcategory of $r\downarrow \mathbb{R}$ with objects the 
$r \xrightarrow{-} r'$ in $\mathbb{R}^-$ with $|r'|<|r|$, 
one has a \emph{target} functor $t \colon \mathbb{R}^-(r) \to \mathbb{R}$ satisfying the dual properties of (i),(ii),(iii),(iv).
\end{notation}

\begin{remark}
	Lemma \ref{GINJ LEM} will be applied when
	$K \leq \mathsf{Aut}_{\mathbb{R}}(r)$ and
	$\mathbb{S} = K \ltimes \mathbb{R}^+(r)$ for $\mathbb{R}$ a given generalized Reedy category and $r \in \mathbb{R}$.
	Similarly, Lemma \ref{GINJMIN LEM} will be applied when
	$\mathbb{S} = K \ltimes \mathbb{R}^-(r)$.
	It is straightforward to check that in the $\mathbb{R}^+$ (resp. $\mathbb{R}^-$) case
	maps in $\mathbb{S}$ are identified with squares as on the left (right)
\begin{equation}\label{ARRINS EQ}
	\begin{tikzcd}
		r'' \ar{r}{+} \ar{d}[swap]{+} & r \ar{d}{\simeq}
	& &
		r \ar{r}{-} \ar{d}[swap]{\simeq} & r' \ar{d}{-}
	\\
		r' \ar{r}[swap]{+} & r
	& &
		r \ar{r}[swap]{-} & r''
	\end{tikzcd}
\end{equation}
where maps labelled $+$ are in $\mathbb{R}^+$,
maps labelled $-$ are in $\mathbb{R}^-$,
the horizontal maps are non-invertible, and the maps labelled $\simeq$ are automorphisms in $K$. 
Using the projection $\pi \colon \mathbb{S} \to K$ and the domain (resp. target) functor
$d \colon \mathbb{S} \to \mathbb{R}$ 
($t \colon \mathbb{S} \to \mathbb{R}$), 
Lemma \ref{GINJ LEM} (\ref{GINJMIN LEM}) will be applied to maps  
$d^{\**}A \to d^{\**} B$ ($t^{\**}A \to t^{\**} B$) in $\mathcal{C}^{\mathbb{S}}$
induced from maps
$A \to B$ in $\C^{\mathbb{R}}$ so that Notations \ref{KLTIMES NOT} and \ref{RPLUSR NOT}(iv) give the natural identifications below, compatibly with the $K$-actions.
\[
\mathsf{Lan}_{\pi} d^{\**} (A \to B) \simeq
(L_r A \to L_r B),
	\qquad
\mathsf{Ran}_{\pi} t^{\**} (A \to B) \simeq
(M_r A \to M_r B).
\]
Lastly, note that (in the $\mathbb{R}^+$ case) the natural inclusion
$\left(\mathbb{R}^+(r)\right)^+(r'\to r) \xrightarrow{\iota} \mathbb{S}^+(r \to r')$ is an equivalence of categories (indeed, by adding a $\searrow$ arrow to the left square in \eqref{ARRINS EQ}, the bottom left triangle becomes an object in the source of $\iota$ and the top right triangle an isomorphism in the target of $\iota$). 
Hence Notation \ref{RPLUSR NOT}(iii) (and its dual) also apply to $\mathbb{S}$.
\end{remark}

We can now 
prove the following lemmas, which are the key to Theorem \ref{REEDYADM THM}
(to simplify our proofs, 
the lemmas are stated 
as dually as possible, but care is needed; see Remarks \ref{COFNOTRES REM}, \ref{NOTDUAL REM}).

\begin{lemma}\label{REEDYTRCOF LEM}
Let $\mathbb{R}$ be generalized Reedy and 
$\{\mathcal{F}_r\}_{r \in \mathbb{R}}$ a Reedy-admissible family.

Suppose further that $A \to B$ is a $\{\mathcal{F}_r\}$-Reedy cofibration
in $\mathcal{C}^{\mathbb{R}}$. Then:
\begin{itemize}
	\item[(i)] for all $r \in \mathbb{R}$ and $H \in \mathcal{F}_r$
	the maps $L_r A \to L_r B$ and $A_r \to B_r$ are
	$H$-genuine cofibrations;
	\item[(ii)] the maps $A_r \to B_r$ are all $\{\mathcal{F}_r\}$-weak equivalences iff so are the maps $A_r \amalg_{L_r A} L_r B \to B_r$.
\end{itemize}
\end{lemma}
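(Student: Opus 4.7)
The plan is to prove parts (i) and (ii) simultaneously by induction on $n = |r|$. In the base case $n = 0$ we have $L_r A = L_r B = \varnothing$, so $A_r \amalg_{L_r A} L_r B = A_r$; then (i) is immediate from the Reedy cofibration hypothesis together with Corollary \ref{FTRIVALL COR}, while (ii) is tautological.

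The heart of the inductive step at $|r| = n$ is the following key claim: \emph{for every $H \in \mathcal{F}_r$, the map $L_r A \to L_r B$ is an $H$-genuine cofibration, and it is moreover an $H$-genuine trivial cofibration under the inductive assumption that each $A_{r'} \amalg_{L_{r'} A} L_{r'} B \to B_{r'}$ with $|r'| < n$ is an $\mathcal{F}_{r'}$-weak equivalence.} To prove this, I would apply Lemma \ref{GINJ LEM} with $K := H \leq \mathsf{Aut}(r)$, $\mathbb{S} := K \ltimes \mathbb{R}^+(r)$, and $\pi \colon \mathbb{S} \to K$, invoking the identification $\mathsf{Lan}_\pi(d^* A \to d^* B) \simeq (L_r A \to L_r B)$ recorded after Notation \ref{RPLUSR NOT}. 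The hypothesis of Lemma \ref{GINJ LEM} at $s = (r' \to r)$ amounts to the statement that $A_{r'} \amalg_{L_{r'} A} L_{r'} B \to B_{r'}$, equipped with the $\mathsf{Aut}_{\mathbb{S}}(s) = \pi_r^{-1}(H)$-action obtained by restricting the $\mathsf{Aut}(r')$-action along $\pi_{r'}$, is a $\pi_r^{-1}(H)$-genuine (trivial) cofibration. But the Reedy cofibration hypothesis (together with the inductive assumption in the trivial case) makes this map an $\mathcal{F}_{r'}$-(trivial) cofibration in $\mathcal{C}^{\mathsf{Aut}(r')}$, and Proposition \ref{FGTRL_PROP} (second adjunction, applied with source family the family of \emph{all} subgroups of $\pi_r^{-1}(H)$, so that its hypothesis $\phi^{-1}(\bar H) \in \mathcal{F}$ is vacuous) makes $\mathsf{res}^{\mathsf{Aut}(r')}_{\pi_r^{-1}(H)}$ left Quillen, hence a preserver of (trivial) cofibrations.

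With the key claim in hand, the rest is a routine two-step factorization argument applied to $A_r \to A_r \amalg_{L_r A} L_r B \to B_r$. For (i), the first map is a pushout (along an $H$-equivariant map) of the $H$-cofibration $L_r A \to L_r B$, hence an $H$-cofibration, while the second is an $\mathcal{F}_r$-cofibration by hypothesis, thus an $H$-cofibration by Corollary \ref{FTRIVALL COR}. For (ii), the middle object $A_r \amalg_{L_r A} L_r B$ receives from $A_r$ a pushout of the $H$-genuine trivial cofibration $L_r A \to L_r B$ for each $H \in \mathcal{F}_r$, hence this middle map is an $\mathcal{F}_r$-weak equivalence, and 2-out-of-3 delivers the stated equivalence between $A_r \to B_r$ and $A_r \amalg_{L_r A} L_r B \to B_r$ being $\mathcal{F}_r$-weak equivalences.

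The main obstacle one must navigate is the group-theoretic restriction step in verifying the hypothesis of Lemma \ref{GINJ LEM}: a naive attempt to restrict $\mathcal{F}_{r'}$-trivial cofibrations along $\pi_{r'} \colon \pi_r^{-1}(H) \to \mathsf{Aut}(r')$ would appear to demand an analogue of Reedy-admissibility for the latching direction $\mathbb{R}^+$, namely $\pi_{r'}(\pi_r^{-1}(H)) \in \mathcal{F}_{r'}$, which Definition \ref{GENRED DEF}(iv) does not supply (since (iv) concerns only $\mathbb{R}^-$). The crucial observation is that by targeting the \emph{genuine} model structure on $\pi_r^{-1}(H)$, Proposition \ref{FGTRL_PROP}'s condition becomes vacuous and restriction becomes left Quillen unconditionally; this sidesteps Reedy-admissibility entirely, which is as expected, since (iv) is the condition that will be needed for the \emph{dual} matching-object lemma governing the fibration direction.
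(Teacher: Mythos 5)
Your proposal is correct and follows essentially the same approach as the paper: both apply Lemma \ref{GINJ LEM} with $\mathbb{S} = H \ltimes \mathbb{R}^+(r)$ to identify $L_r A \to L_r B$ as a $K$-genuine (trivial) cofibration, inducting on degree for (ii), and your direct appeal to the second adjunction of Proposition \ref{FGTRL_PROP} with target the genuine model structure is the same mechanism as the paper's chain through Corollaries \ref{FTRIVALL COR} and \ref{RESGEN COR}. Your closing observation about why Reedy-admissibility is not invoked here (and is only needed in the matching-object direction) is exactly the content of the paper's Remark \ref{NOTDUAL REM}.
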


\begin{proof}
We start with (ii), arguing by induction on $|r|$
that the analogue claim
restricted to
$|r|\leq n$ holds.
The $n=0$ case is obvious. 
Otherwise, if suffices to show that, for each $r$ 
with $|r|=n$
and $H \in \mathcal{F}_r$,
the map
$A_r \to B_r$ is a $H$-genuine weak equivalence iff 
so is $A_r \amalg_{L_r A} L_r B \to B_r$.

We apply Lemma \ref{GINJ LEM} with 
$K = H$ and 
$\mathbb{S} = H \ltimes \mathbb{R}^+(r)$
to the map $d^{\**}A \to d^{\**}B$. 
By Notation \ref{RPLUSR NOT}(iii), the hypothesis to be checked is that, for each $(r' \to r) \in \mathbb{S}$,
the map $A_{r'} \amalg_{L_{r'}A} L_{r'}B \to B_{r'}$ is a genuine trivial cofibration for the group
$\mathsf{Aut}_{\mathbb{S}}(r' \to r)$.
But by induction this map is a $\mathcal{F}_{r'}$-trivial cofibration, so this holds by Corollaries \ref{FTRIVALL COR} and \ref{RESGEN COR}.
%
%
%
It now follows that
the maps labelled $\sim$ in
\begin{equation}\label{WHATEV2 EQ}
\begin{tikzcd}[row sep=10]
   L_r A \ar[>->]{r}{\sim} \ar[d]  & 
   L_r B \ar[d] & 
\\
   A_r \ar[>->]{r}{\sim}&  A_r \amalg_{L_r A} L_r B 
    \ar[>->]{r} &
   B_r
\end{tikzcd}
\end{equation}
are $H$-genuine trivial cofibrations, finishing the proof of (ii).

Lastly, (i) follows from a streamlined version of the argument above (with no induction needed), 
with the same instance of Lemma \ref{GINJ LEM} showing that the maps 
$L_r A \to L_r B$ are $H$-genuine cofibrations and thus,
by again considering \eqref{WHATEV2 EQ},
that so are the maps $A_r \to B_r$. 
\end{proof}

\begin{lemma}\label{REEDYTRFIB LEM}
Let $\mathbb{R}$ be generalized Reedy and 
$\{\mathcal{F}_r\}_{r \in \mathbb{R}}$ a Reedy-admissible family.

Suppose further that $X \to Y$ is a $\{\mathcal{F}_r\}$-Reedy fibration
in $\mathcal{C}^{\mathbb{R}}$. Then:
\begin{itemize}
\item[(i)] for all $r \in \mathbb{R}$ and $H \in \mathcal{F}_r$
	the maps $M_r X \to M_r Y$ and $X_r \to Y_r$ are
	$H$-genuine fibrations;
\item[(ii)] the maps $X_r \to Y_r$ are all $\{\mathcal{F}_r\}$-weak equivalences iff so are the maps $X_r \to Y_r \times_{M_r Y} M_r X$.
\end{itemize}
\end{lemma}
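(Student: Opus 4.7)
The proof will follow the structure of Lemma \ref{REEDYTRCOF LEM} dually, using Lemma \ref{GINJMIN LEM} in place of Lemma \ref{GINJ LEM} and working with the categories $\mathbb{R}^-(r)$ instead of $\mathbb{R}^+(r)$. The essential conceptual difference, as foreshadowed by Remark \ref{NOTDUAL REM} and Example \ref{RECAXIV EX}, is that the Reedy-admissibility axiom (iv) plays a genuinely active role in the fibration case (whereas it was not required explicitly in the cofibration case).

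For (ii), I plan to prove by induction on $n$ the restricted claim for $|r| \leq n$. The base case $n=0$ is automatic since $M_r X = M_r Y = \**$. For the inductive step, fix $r$ with $|r|=n$ and $H \in \mathcal{F}_r$; it suffices to show $X_r \to Y_r$ is an $H$-genuine weak equivalence iff $X_r \to Y_r \times_{M_r Y} M_r X$ is. I would apply Lemma \ref{GINJMIN LEM} with $K=H$ and $\mathbb{S} = H \ltimes \mathbb{R}^-(r)$ to the map $t^{\**}X \to t^{\**}Y$. By the dual of Notation \ref{RPLUSR NOT}(iii), the hypothesis to check is that for each $(r \to r') \in \mathbb{S}$, the map $X_{r'} \to Y_{r'} \times_{M_{r'}Y}M_{r'}X$ is a genuine trivial fibration for the group $\mathsf{Aut}_{\mathbb{S}}(r \to r') = \pi_r^{-1}(H)$ (in the notation of \eqref{PIDEFR EQ}). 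Under the side of the iff being proved, the induction combined with the Reedy fibration hypothesis shows this map is a $\mathcal{F}_{r'}$-trivial fibration.

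Here enters Reedy-admissibility: the projection $\pi_{r'}$ sends $\pi_r^{-1}(H)$ onto $\bar{H} := \pi_{r'}(\pi_r^{-1}(H))$, which by axiom (iv) lies in $\mathcal{F}_{r'}$. Hence a $\mathcal{F}_{r'}$-trivial fibration is in particular a $\bar{H}$-genuine trivial fibration (Remark \ref{COFNOTRES REM}), and then Corollary \ref{RESGEN COR} applied to the surjection $\pi_r^{-1}(H) \twoheadrightarrow \bar{H}$ yields the required $\pi_r^{-1}(H)$-genuine trivial fibration. Lemma \ref{GINJMIN LEM} then concludes that $\mathsf{Ran}_\pi\, t^{\**}(X\to Y) \simeq (M_r X \to M_r Y)$ is an $H$-genuine trivial fibration. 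Pulling back gives that $Y_r \times_{M_r Y} M_r X \to Y_r$ is also one, and the iff follows by 2-out-of-3 applied to $X_r \to Y_r \times_{M_r Y} M_r X \to Y_r$.

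Part (i) I would handle by a streamlined (non-inductive) version of the same argument: directly apply Lemma \ref{GINJMIN LEM} using the $\mathcal{F}_{r'}$-fibration hypothesis (rather than trivial fibration) combined with the Reedy-admissibility routing through $\bar{H} \in \mathcal{F}_{r'}$, to conclude $M_r X \to M_r Y$ is an $H$-genuine fibration; then $X_r \to Y_r$ is the composite of the Reedy fibration $X_r \to Y_r \times_{M_r Y} M_r X$ (restricted to $H$-action) with the pulled-back $H$-fibration $Y_r \times_{M_r Y} M_r X \to Y_r$. The main subtlety, and where I expect to have to be most careful, is tracking the interplay between the three groups $H$, $\pi_r^{-1}(H)$, and $\bar{H}$, and in particular verifying that the Reedy-admissibility axiom is precisely what is needed to transport the $\mathcal{F}_{r'}$-fibration property to a $\pi_r^{-1}(H)$-fibration property via Proposition \ref{FGTRL_PROP}/Corollary \ref{RESGEN COR}.
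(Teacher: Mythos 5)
Your proposal is correct and follows essentially the same approach as the paper: an induction on degree applying Lemma \ref{GINJMIN LEM} to $t^{\**}X \to t^{\**}Y$ over $\mathbb{S} = H \ltimes \mathbb{R}^-(r)$, with Reedy-admissibility feeding $\bar{H}=\pi_{r'}(\pi_r^{-1}(H)) \in \mathcal{F}_{r'}$ into Corollary \ref{RESGEN COR}. Your unpacking of the role of the three groups $H$, $\pi_r^{-1}(H)$, and $\bar{H}$ makes explicit a step the paper states tersely, and you also silently correct the paper's typo ($t^{\**}A \to t^{\**}B$ for $t^{\**}X \to t^{\**}Y$).
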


\begin{proof}
For (ii) we again argue by induction on $|r|$. We need to
show that, for $r$ with $|r|=n$ and $H \in \mathcal{F}_r$, the map
$X_r \to Y_r$ is a $H$-genuine weak equivalence iff 
so is $X_r \to Y_r \times_{M_r Y} M_r X$.

We apply Lemma \ref{GINJMIN LEM} with 
$K = H$ and 
$\mathbb{S} = H \ltimes \mathbb{R}^-(r)$
to the map $t^{\**}A \to t^{\**}B$. 
By the dual of Notation \ref{RPLUSR NOT}(iii), the hypothesis to be checked is that, for each $(r \to r') \in \mathbb{S}$, 
the map $X_{r'} \to Y_{r'} \times_{M_{r'}Y} M_{r'} X$ is a genuine trivial fibration for the group
$\mathsf{Aut}_{\mathbb{S}}(r \to r') \simeq \pi_r^{-1}(H)$,
where $\pi_r$ is as in \eqref{PIDEFR EQ}.
But since $\{\mathcal{F}_r\}$ is Reedy-admissible (cf. Definition \ref{GENRED DEF}(iv)), we know by induction that this map is a 
$\pi_{r'}\left(\pi_r^{-1}(H)\right)$-trivial fibration, and this suffices by Corollary \ref{RESGEN COR}.
%
%
It now follows that the maps labelled $\sim$ in
\[
\begin{tikzcd}[row sep=10]
	X_r \ar[->>]{r}&
	Y_r \times_{M_r Y} M_r X \ar[d] \ar[->>]{r}{\sim} & 
	Y_r \ar[d]
\\
	&
	M_r X \ar[->>]{r}{\sim} &
	M_r Y
\end{tikzcd}
\]
are $H$-genuine trivial fibrations, finishing the proof of (ii).
(i) again follows similarly.
\end{proof}


\begin{remark}\label{NOTDUAL REM}
The proofs of Lemmas \ref{REEDYTRCOF LEM} and \ref{REEDYTRFIB LEM}
are similar, but not dual, since
Lemma \ref{REEDYTRFIB LEM} uses Reedy-admissibility 
while Lemma \ref{REEDYTRCOF LEM} does not.
This reflects the difference in the proofs of 
\cite[Lemmas 5.3 and 5.5]{BM11} as discussed in 
\cite[Remark 5.6]{BM11}, albeit with a caveat.

Setting $K=\{e\}$ in Lemma \ref{GINJ LEM} yields that
$\mathsf{lim}_{\mathbb{S}} (A \to B)$ is a cofibration provided that $A \to B$ is a genuine Reedy cofibration, i.e. a Reedy cofibration for $\{\mathcal{F}_{\text{all}}\}$ the families of all subgroups. 
On the other hand, the proof of \cite[Lemma 5.3]{BM11} argues that 
$\mathsf{lim}_{\mathbb{S}} (A \to B)$ is a cofibration provided that $A \to B$ is a projective Reedy cofibration, i.e. a Reedy cofibration for $\{\{e\}\}$ the collection of trivial families 
(note that all projective cofibrations are genuine cofibrations, so that our claim is more general).
Since the cofibration half of the 
analogue of Corollary \ref{RESGEN COR}
for projective model structures only holds if 
$\phi \colon G \to \bar{G}$ is injective, 
the proof of \cite[Lemma 5.3]{BM11} also requires an injectivity check that is not needed in our proof of Lemma \ref{REEDYTRCOF LEM}.
\end{remark}

\begin{proof}[proof of Theorem \ref{REEDYADM THM}]
Lemmas \ref{REEDYTRCOF LEM}(ii) and \ref{REEDYTRFIB LEM}(ii) say that the characterizations of trivial cofibrations (resp. trivial fibrations) in the statement of Theorem \ref{REEDYADM THM} are correct, i.e. that they describe the maps that are both cofibrations (resp. fibrations) and weak equivalences.	

	We refer to the model category axioms in \cite[Def. 1.1.3]{Hov99}. 	
	Both 2-out-of-3 and the retract axioms are immediate
(note that retracts commute with Kan extensions).	
	The lifting axiom follows from Corollary \ref{BLALIFT COR}
	while the task of building factorizations $X \to A \to Y$ of a given map $X \to Y$ follows by a similar standard argument from Lemma \ref{BLAFACT LEM} by iteratively factoring the maps
\[
	X_r \amalg_{L_r X} L_r A \to Y_r \times_{M_r Y} M_r A
\]
in $\mathcal{C}^{\mathsf{Aut}(r)}$, 
thus building both $A$ and the factorization inductively
(recall that $L_r A$, $M_r A$ depend only on 
the restriction $\iota_{|r|-1}^{\**} A$, and are thus well defined in each inductive step).
\end{proof}

\begin{remark}\label{ITERREEDY REM}
If $\mathbb{R}$, $\mathbb{S}$ are generalized Reedy categories,
then 
$\mathbb{R} \times \mathbb{S}$ is again generalized Reedy 
with $|(r,s)| = |r|+|s|$,  
$\left(\mathbb{R} \times \mathbb{S}\right)^+ =
\mathbb{R}^+ \times \mathbb{S}^+$,
$\left(\mathbb{R} \times \mathbb{S}\right)^- =
\mathbb{R}^- \times \mathbb{S}^-$.
Moreover, given corresponding Reedy admissible families
$\{\mathcal{F}_r\}_{r \in \mathbb{R}}$,
$\{\mathcal{F}_s\}_{s \in \mathbb{S}}$,
one has induced admissible families 
$\{\mathcal{F}_{(r,s)}\}_{(r,s) \in \mathbb{R} \times \mathbb{S}}$
where 
$\mathcal{F}_{(r,s)} =
\{H \times \bar{H} \colon H \in \mathcal{F}_r,\bar{H} \in \mathcal{F}_s\}$.
It is then straightforward to check that the $\{\mathcal{F}_{(r,s)}\}$-Reedy model structure on 
$\mathcal{C}^{\mathbb{R} \times \mathbb{S}}
\simeq (\mathcal{C}^{\mathbb{R}} )^{\mathbb{S}}$
can also be obtained iteratively as the
$\{\mathcal{F}_{s}\}$-Reedy model structure over the 
$\{\mathcal{F}_{r}\}$-Reedy model structure over the model structure on $\mathcal{C}$ (and vice-versa).

We note that, given a map $f \colon A \to B$ in $\mathcal{C}$
and writing $l_{(-)}f$
for the relative latching map
$A_{(-)} \amalg_{L_{(-)}A} L_{(-)}B
\to B_{(-)}$,
one has $l_{(r,s)} f \simeq l_r l_s f \simeq l_s l_r f$
and dually for relative matching maps.
\end{remark}

Lastly, we briefly discuss the cofibrant generation of equivariant Reedy model structures.

For $K\in \mathsf{Set}, X \in \mathcal{C}$ we write
$K \cdot X = \coprod_K X$ and
$\{K,X\}=X^{\times K}$ for the standard tensoring and cotensoring of $\mathcal{C}$ over $\mathsf{Set}$. One then has a two-variable adjunction
\[
	\mathsf{Set}^{\mathbb{R}} \times \mathcal{C}
	\xrightarrow{(-)\cdot(-)}
	\mathcal{C}^{\mathbb{R}}
\qquad
	\mathcal{C}^{op} \times  \mathcal{C}^{\mathbb{R}}
	\xrightarrow{\mathcal{C}(-,-)}
	\mathsf{Set}^{\mathbb{R}}
\qquad
	\left(\mathsf{Set}^{\mathbb{R}}\right)^{op} \times \mathcal{C}^{\mathbb{R}}
	\xrightarrow{\{-,-\}_{\mathbb{R}}}
	\mathcal{C}
\]
where 
$\{K_{\bullet},X_{\bullet}\}_{\mathbb{R}} =
 \int_{r \in \mathbb{R}} \{K_r,X_r\}$
 is the end (sometimes called the weighted limit \cite[\S 7.1]{Ri14}).

The following is essentially \cite[Lemma 3.5]{RV14}, rewritten in our notation.

\begin{lemma} For $X \in \mathcal{C}^{\mathbb{R}}, r \in \mathbb{R}$ there is a natural identification
$
M_r X \simeq \{\mathsf{sk}_{|r|-1} \mathbb{R}(r,-),X \}_{\mathbb{R}}
$.
\end{lemma}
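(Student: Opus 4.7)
The plan is to deduce the identification by chaining three standard general facts about pointwise Kan extensions and weighted limits, applied to the inclusion $\iota_n\colon \mathbb{R}_{\leq n} \hookrightarrow \mathbb{R}$ where $n = |r|-1$.

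First I would unpack the definitions on both sides. By construction $M_r X = (\mathsf{csk}_{n} X)_r = (\iota_{n,*}\iota_n^{*} X)_r$, while the right hand side, using that $\mathsf{sk}_{n}Y = \iota_{n,!}\iota_n^{*}Y$ in $\mathsf{Set}^{\mathbb{R}}$, can be rewritten as $\{\iota_{n,!}\iota_n^{*}\mathbb{R}(r,-),X\}_{\mathbb{R}}$. So it suffices to exhibit a natural isomorphism
\[
(\iota_{n,*}\iota_n^{*}X)_r \;\simeq\; \{\iota_{n,!}\iota_n^{*}\mathbb{R}(r,-),X\}_{\mathbb{R}}.
\]

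Next I would invoke the pointwise formula for the right Kan extension along $\iota_n$: for any $Y \in \mathcal{C}^{\mathbb{R}_{\leq n}}$ one has
\[
(\iota_{n,*}Y)_r \;\simeq\; \{\iota_n^{*}\mathbb{R}(r,-),Y\}_{\mathbb{R}_{\leq n}},
\]
which is a direct application of the Yoneda end formula $(\iota_{n,*}Y)_r \simeq \int_{r' \in \mathbb{R}_{\leq n}} \{\mathbb{R}(r,\iota_n r'), Y_{r'}\}$. Applying this to $Y = \iota_n^{*}X$ yields $M_r X \simeq \{\iota_n^{*}\mathbb{R}(r,-),\iota_n^{*}X\}_{\mathbb{R}_{\leq n}}$.

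Finally I would invoke the ``change-of-base'' identity for weighted limits, namely that for $K \in \mathsf{Set}^{\mathbb{R}_{\leq n}}$ there is a natural isomorphism
\[
\{K,\iota_n^{*}X\}_{\mathbb{R}_{\leq n}} \;\simeq\; \{\iota_{n,!}K,X\}_{\mathbb{R}},
\]
which is just the tensor-hom adjunction $\iota_{n,!} \dashv \iota_n^{*}$ transposed through the $\mathsf{Set}$-cotensoring $\{-,-\}$. Taking $K = \iota_n^{*}\mathbb{R}(r,-)$ gives the desired identification. Naturality in $X$ is automatic since each step is natural, so no further verification is required. I do not anticipate any real obstacle here: the whole argument is a formal rearrangement via Yoneda and standard (co)end calculus, none of which depends on the Reedy structure on $\mathbb{R}$ (as should be expected, since the statement is one about Kan extensions).
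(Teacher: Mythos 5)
Your argument is correct and lands on the same underlying end calculus as the paper's, but it is organized more modularly. The paper starts from the pointwise formula $M_r X \simeq \int_{\bar r \in \mathbb{R}_{<|r|}}\{\mathbb{R}(r,\bar r), X_{\bar r}\}$ (your first step) and then grinds through the remaining identification directly -- Yoneda expansion of $X_{\bar r}$, Fubini for ends, the $\{-,-\}$ adjunction, and finally coYoneda to recognize $\mathsf{sk}_{|r|-1}\mathbb{R}(r,-)$. What you do instead is absorb that whole chain into a single cited fact, the base-change adjunction $\{K,\iota_n^{*}X\}_{\mathbb{R}_{\leq n}} \simeq \{\iota_{n,!}K,X\}_{\mathbb{R}}$ for $\mathrm{Lan}_{\iota_n} \dashv \iota_n^{*}$, together with the observation that $\mathsf{sk}_n$ in $\mathsf{Set}^{\mathbb{R}}$ is literally $\iota_{n,!}\iota_n^{*}$. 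The two proofs are the same computation; yours names the two lemmas that the paper's calculation is silently verifying, which makes the structure clearer, while the paper's inline version is self-contained and does not presuppose the reader knows the weighted-limit base-change formula. Either is a reasonable write-up; there is no gap in yours.
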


\begin{proof}
Abbreviating $\mathbb{R}_{\leq n-1}$ as $\mathbb{R}_{<n}$, this follows from the calculation
\begin{align}
	M_r X 
\simeq &
	\int_{\bar{r} \in \mathbb{R}_{<|r|}} 
\left\{\mathbb{R}(r,\bar{r}),X_{\bar{r}}\right\}
\simeq
	\int_{\bar{r} \in \mathbb{R}_{<|r|}} 
\left\{\mathbb{R}(r,\bar{r}),
	\int_{\hat{r} \in \mathbb{R}} \left\{\mathbb{R}(\bar{r},\hat{r}) ,X_{\hat{r}} \right\}\right\}\simeq
\\
\simeq&
	\int_{\bar{r} \in \mathbb{R}_{<|r|}} \int_{\hat{r} \in \mathbb{R}}
\left\{\mathbb{R}(r,\bar{r}),
	\left\{\mathbb{R}(\bar{r},\hat{r}) ,X_{\hat{r}} \right\}\right\}
\simeq
	\int_{\hat{r} \in \mathbb{R}} \int_{\bar{r} \in \mathbb{R}_{<|r|}}
\left\{\mathbb{R}(r,\bar{r})\times \mathbb{R}(\bar{r},\hat{r}) ,X_{\hat{r}} \right\}\simeq
\\
\simeq &
	\int_{\hat{r} \in \mathbb{R}} 
\left\{\int^{\bar{r} \in \mathbb{R}_{<|r|}}\mathbb{R}(r,\bar{r})\times \mathbb{R}(\bar{r},\hat{r}) ,X_{\hat{r}} \right\}
\simeq
	\int_{\hat{r} \in \mathbb{R}} 
\left\{\left(\mathsf{sk}_{|r|-1} \mathbb{R}(r,-)\right)(\hat{r}) ,X_{\hat{r}} \right\} =
\{\mathsf{sk}_{|r|-1} \mathbb{R}(r,-),X \}_{\mathbb{R}} 
\end{align}
where the second step is the Yoneda lemma, the fourth step is Fubini together with adjointness for $\{-,-\}$, and the sixth step is the coYoneda lemma \cite[Ex. 1.4.6]{Ri14}.
\end{proof}

Combining this lemma with the observation that, if a group $H$ acts on 
$K \in \mathsf{Set}^{\mathbb{R}}$,
then $\{K/H,X\}_{\mathbb{R}} = \left(\{K,X\}_{\mathbb{R}}\right)^H$ now yields the following.

\begin{proposition}\label{REEDYCOFGEN PROP}
Suppose the model category $\mathcal{C}$ is cofibrantly generated,
$\mathbb{R}$ is generalized Reedy and 
$\{\mathcal{F}_r\}_{r \in \mathbb{R}}$
is a Reedy-admissible collection of families.

Then the $\{\mathcal{F}_r\}$-Reedy model structure on 
$\mathcal{C}^{\mathbb{R}}$ is also cofibrantly generated.

More explicitly, letting 
$\mathcal{I}$ (resp. $\mathcal{J}$)
denote a set of generating (resp. trivial) cofibrations of $\mathcal{C}$,
the generating cofibrations for $\mathcal{C}^{\mathbb{R}}$
are the maps of the form
\[
\left(\left(\mathsf{sk}_{|r|-1} \mathbb{R}(r,-) \to
 \mathbb{R}(r,-)\right)/H \right)
\square
i,
\qquad
r \in \mathbb{R}, H \in \mathcal{F}_r, i \in \mathcal{I}
\]
while the generating trivial cofibrations are the maps of the form
\[
\left(\left(\mathsf{sk}_{|r|-1} \mathbb{R}(r,-) \to
 \mathbb{R}(r,-)\right)/H \right)
\square
j,
\qquad
r \in \mathbb{R}, H \in \mathcal{F}_r, j \in \mathcal{J}.
\]
\end{proposition}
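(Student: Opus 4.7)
The plan is to verify, using the model structure already produced by Theorem \ref{REEDYADM THM}, that the two proposed generating sets characterize the trivial fibrations and fibrations (respectively) via right lifting, at which point cofibrant generation follows by a standard small object argument, with smallness of domains inherited pointwise from $\mathcal{C}$.

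The core of the argument is a single adjunction calculation. For any map $K \to L$ in $\mathsf{Set}^{\mathbb{R}}$, any $i \colon A \to B$ in $\mathcal{C}$, and any $X \to Y$ in $\mathcal{C}^{\mathbb{R}}$, the two-variable adjunction involving $(-)\cdot(-)$ and $\{-,-\}_{\mathbb{R}}$ turns the lifting problem for $(K \to L) \square i$ against $X \to Y$ into the equivalent lifting problem for $i$ against the map
\[
\{L,X\}_{\mathbb{R}} \to \{K,X\}_{\mathbb{R}} \times_{\{K,Y\}_{\mathbb{R}}} \{L,Y\}_{\mathbb{R}}
\]
in $\mathcal{C}$. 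I specialize this to $L = \mathbb{R}(r,-)/H$ and $K = \mathsf{sk}_{|r|-1}\mathbb{R}(r,-)/H$ for $r \in \mathbb{R}$, $H \in \mathcal{F}_r$. Combining Yoneda (which gives $\{\mathbb{R}(r,-),X\}_{\mathbb{R}} \simeq X_r$), the preceding lemma (which gives $\{\mathsf{sk}_{|r|-1}\mathbb{R}(r,-),X\}_{\mathbb{R}} \simeq M_r X$), and the identity $\{K'/H,X\}_{\mathbb{R}} = \{K',X\}_{\mathbb{R}}^H$ noted in the text, this pullback becomes $(Y_r \times_{M_r Y} M_r X)^H$, and the source becomes $X_r^H$. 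Thus the lifting condition reduces to asking that $(X_r \to Y_r \times_{M_r Y} M_r X)^H$ have the right lifting property against $i$.

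Now, running $r$ over $\mathbb{R}$, $H$ over $\mathcal{F}_r$, and $i$ over $\mathcal{I}$, the accumulated condition is precisely that each $X_r \to Y_r \times_{M_r Y} M_r X$ be an $\mathcal{F}_r$-trivial fibration in $\mathcal{C}^{\mathsf{Aut}(r)}$, since the $\mathcal{F}_r$-trivial fibrations in $\mathcal{C}^{\mathsf{Aut}(r)}$ are exactly those maps whose $H$-fixed-point maps are trivial fibrations in $\mathcal{C}$ for all $H \in \mathcal{F}_r$. By Theorem \ref{REEDYADM THM}, this is precisely the characterization of trivial fibrations in the $\{\mathcal{F}_r\}$-Reedy model structure. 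The same argument with $\mathcal{J}$ in place of $\mathcal{I}$ identifies the maps with the right lifting property against the second set as exactly the fibrations.

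The main obstacle, if any, is bookkeeping: one must confirm that the fixed-point identity $\{K'/H,X\}_{\mathbb{R}} = \{K',X\}_{\mathbb{R}}^H$ really commutes with the pullback/pushout-product manipulations above, and that the passage from ``right lifting against the proposed generators'' to ``trivial fibration in the Reedy structure'' is genuinely an iff rather than only one implication. Both checks are formal once one remembers that $(-)^H$ is a right adjoint (to $\mathsf{Aut}(r) \cdot_{H} (-)$ on $\mathcal{C}^{\mathsf{Aut}(r)}$) and therefore commutes with the pullbacks and with the formation of the maps $X_r \to Y_r \times_{M_r Y} M_r X$. The residual smallness hypothesis needed to close out cofibrant generation follows from the pointwise nature of colimits in $\mathcal{C}^{\mathbb{R}}$ together with the assumption that $\mathcal{C}$ is cofibrantly generated.
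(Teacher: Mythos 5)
Your proposal is correct and matches the paper's intended argument: the paper presents no separate proof of Proposition \ref{REEDYCOFGEN PROP}, merely stating that it follows by combining the preceding lemma (identifying $M_rX$ as $\{\mathsf{sk}_{|r|-1}\mathbb{R}(r,-),X\}_{\mathbb{R}}$) with the fixed-point identity $\{K/H,X\}_{\mathbb{R}}=\{K,X\}_{\mathbb{R}}^H$, which is precisely the two-variable-adjunction computation you carry out, reducing right lifting against the proposed generators to the $\mathcal{F}_r$-(trivial) fibration conditions from Theorem \ref{REEDYADM THM}.
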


\begin{example}\label{REEDYCOFGEN EX}
For $\mathbb{R} = \Delta^{op}$, it is well known that
$\mathsf{sk}_{n-1} \Delta^{op}(n,-) = \mathsf{sk}_{n-1} \Delta[n] = \partial \Delta[n]$.

Similarly, letting $\mathbb{R} = G \times \Omega^{op}$ as in 
Example \ref{FGRAPHREEDY EX} and letting 
$\Gamma \in \mathcal{F}_U$ be a graph subgroup encoding a $H$-action on $U$, then
\begin{align}
	\left( \mathsf{sk}_{|U|-1} \left( G \times \Omega^{op} \right)(U,-) \right)/\Gamma
= &
	\left( \mathsf{sk}_{|U|-1} \left( G \cdot \Omega[U] \right) \right)/\Gamma
=
	\left( G \cdot \mathsf{sk}_{|U|-1}  \Omega[U] \right)/\Gamma =
\\
= &
	\left( G \cdot \partial \Omega[U] \right)/\Gamma
=
	G \cdot_H \partial \Omega[U] = \partial \Omega[G \cdot_H U]
\end{align}
where the third equality is \cite{BM11} (see also \cite[Cor 5.63]{Per18})
and the last equality is \eqref{T_DECOMP_EQ}.
\end{example}

\bibliography{biblio}{}

\bibliographystyle{alpha}

\end{document}